\documentclass[12pt]{amsart}
\usepackage{amssymb, amsmath}
\usepackage{enumitem}
\usepackage{mathtools}
\usepackage{verbatim}
\usepackage{amsthm}
\usepackage{caption}
\usepackage{soul}
\usepackage{bbm}
\usepackage{hyperref}
\usepackage{fp}
\usepackage{environ}
\usepackage{bm}
\usepackage[usenames,dvipsnames]{xcolor}
\usepackage{unixode}
\usepackage{etoolbox}

\usepackage[normalem]{ulem}

\definecolor{kwred}{RGB}{0,0,0}
\newtoggle{newenv}
\newcommand{\newcolor}{\color{kwred}}
\newcommand{\oldcolor}{\color{black}}
\newenvironment{enew}{\newcolor\toggletrue{newenv}}{\oldcolor\togglefalse{newenv}}
\newcommand{\new}[1]{\iftoggle{newenv}{#1}{\color{kwred}#1\color{black}}}
\newcommand{\removed}[1]{\new{\sout{{ #1}}}}

\renewcommand{\removed}[1]{}

\hyphenation{quasi-con-for-mal}


\def\tempmake#1{\def\tempmaker##1{\def\temp{{\csname #1\endcsname{##1}}}\expandafter\let\csname ##1\endcsname\temp}}
\def\makemake#1#2{\tempmake{#2}\expandafter\let\csname #1\endcsname\tempmaker}
\def\makeme#1{\makemake{make#1}{math#1}}
\makeme{bb}
\makeme{cal}
\makeme{bf}
\makeme{frak}

\makemake{makeop}{operatorname}
\makeop{PSL}
\makeop{Vol}
\makeop{ad}
\makeop{SL}
\makeop{GL}
\makeop{Deck}

\makefrak{sl}

\makebb{R}
\makebb{Q}
\makebb{C}
\makebb{Z}
\makebb{N}
\def\d{\partial}

\makebb{H}
\makebb{T}

\makecal{A}
\makecal{F}
\makecal{U}
\newcommand{\G}{\Isom(\H^3)}
\newcommand{\nbhd}{\mathcal N}
\makebf{P}

\let\from\colon
\def\Re{\operatorname{Re}}
\DeclareMathOperator{\Isom}{Isom}

\DeclareMathOperator{\csch}{csch}
\DeclareMathOperator{\sech}{sech}
\let\Im\Pim

\def\suchThat{:}
\newcommand{\neigh}{\nbhd}
\newcommand{\ddm}[1]{\diam{\d #1}}
\let\following\circ
\newcommand{\alert}[1]{\textbf{#1}}


\newcommand{\cem}{C(\epsilon, M)}
\newcommand{\size}[1]{\# #1}
\newcommand{\sizep}[1]{\size{(#1)}}
\newcommand{\measure}[1]{\left|#1\right|}
\newcommand{\umbrella}{U}

\newcommand{\umb}{\text{umb}}
\newcommand{\randomUmbrella}{\hat{\umbrella}}
\newcommand{\randomUmbrellaBoundarySize}[1]{R^{N_{\new{\umb}}} e^{K\max(0, #1 - h_T) + 2h_T}/|\Gamma|}
\newcommand{\dumb}{\partial _{\operatorname{ext}}U}
\newcommand{\dout}{\d_{\operatorname{out}}}

\newcommand{\goodChains}{C_{\epsilon, R}}
\newcommand{\goodCurves}{\Gamma_{\epsilon, R}}
\newcommand{\ogoodCurves}{\Gamma^*_{\epsilon, R}}

\newcommand{\goodCurvesNumber}{\epsilon^2 e^{4R}/R}
\newcommand{\goodPants}{\Pi_{\epsilon, R}}
\newcommand{\ogoodPants}{\Pi^*_{\epsilon, R}}
\newcommand{\oogoodPants}{\hat\Pi^*_{\epsilon, R}}
\newcommand{\goodCurvesWithHeight}[1]{\goodCurves^{<#1}}
\newcommand{\ogoodCurvesWithHeight}[1]{\goodCurves^{*<#1}}

\newcommand{\goodPantsWithHeight}[1]{\Pi^{<#1}_{\epsilon, R}}
\newcommand{\goodPantsWithHeightMoreThan}[1]{\Pi^{\ge #1}_{\epsilon, R}}
\newcommand{\ogoodPantsWithHeightMoreThan}[1]{\Pi^{*\ge #1}_{\epsilon, R}}
\newcommand{\ogoodPantsWithHeight}[1]{\Pi^{*<#1}_{\epsilon, R}}

\newcommand{\Group}{\G}
\newcommand{\Pants}{\Pi}
\newcommand{\pants}{\pi} 
\newcommand{\Curves}{\Gamma}
\newcommand{\dc}{d_\C}
\newcommand{\isomHThree}{\G}
\let\IsomHThree\isomHThree
\newcommand{\isomHTwo}{\Isom(\H^2)}
\newcommand{\HThree}{\H^3}
\newcommand{\FrameHThree}{\mathcal{F}\HThree}
\newcommand{\theManifold}{\isomHThree/\Lattice}

\newcommand{\transportAngle}{\mathbf{\theta}} 
\newcommand{\transportCL}{w}

\newcommand{\squareRootUnitNormalBundle}[1]{N^1(\sqrt{#1})}
\newcommand{\df}{\d^{\F}}
\newcommand{\Mobius}{M\"obius}


\newcommand{\normalGroup}[1]{\C/(\length(#1)\Z + 2 \pi i \Z)}
\newcommand{\normalHalfGroup}[1]{\C/(2 \pi i \Z + \hll(\gamma) \Z)}
\newcommand{\invariants}{\mathbf I}
\newcommand{\goodInvariants}[1]{\mathbb I_{\epsilon, R}(#1)}
\newcommand{\goodInvariantsZ}[1]{\mathbb I_{\epsilon, R}^0(#1)}
\newcommand{\spaceOfInvariants}{\mathbb I}
\newcommand{\numberOfConnections}{\mathbf n}

\newcommand{\crunchedGoodPantsPerCurve}{{\numberOfGoodPants}/{\numberOfGoodCurves}}
\newcommand{\goodPantsPerCurve}{\frac{\numberOfGoodPants}{\numberOfGoodCurves}}
\newcommand{\numberOfGoodPants}{\abs{\Pi}}
\newcommand{\numberOfGoodCurves}{\abs{\Gamma}}

\newcommand{\umbrellaBoundarySize}[1]{R^2 e^{K\pnnpart{h(#1) - h_T}}}
\newcommand{\umbrellaNumberOfComponents}[1]{R e^{K(\pnnpart{h(#1) - h_T)}}}
\newcommand{\baldSpotSize}{e^{2(h(\gamma) - h_C)}}

\newcommand{\HWUnnecessaryExcursions}{4\log R}
\newcommand{\maxStepThreeUmbrellaBend}{30\epsilon} 
\newcommand{\minStepThreeUmbrellaBend}{20\epsilon}
\newcommand{\maxVectorFieldBend}{100\epsilon}
\newcommand{\PTiltUp}{10\epsilon}

\newcommand{\gp}{\Pi_{\epsilon, R}}

\newcommand{\lattice}{\Gamma}
\newcommand{\rootnormal}{N^1(\sqrt \gamma)}
\newcommand{\hll}{\mathbf{hl}}
\newcommand{\length}{\mathbf{l}}
\newcommand{\foot}{\mathbf{foot}}

\def\norm#1{\left\Vert {#1} \right \Vert}
\def\abs#1{\left| {#1} \right|}
\def\inv{^{-1}}
\def\invof#1{\inv(#1)}
\newcommand\nnpart[1]{\max(0, #1)}
\newcommand\pnnpart[1]{\nnpart{#1}}
\newcommand{\itutu}[4]{#1 & #2 \\ #3 & #4}
\newcommand{\tutu}[4]{\left(\begin{smallmatrix} \itutu{#1}{#2}{#3}{#4} \end{smallmatrix} \right)}

\newcommand{\cheesef}[1]{\footnote{#1}}

\def\<{\left<}
\def\>{\right>}

\newcommand{\e}{\epsilon}
\renewcommand{\bold}[1]{\medskip \noindent {\bf #1 }\nopagebreak}
\newcommand{\ann}[1]{\marginpar{\tiny{#1}}}
\newcounter{annc}
\newcommand{\annf}[1]{\stepcounter{annc}$^{\theannc}$\ann{\theannc: #1}}

\newcommand{\MargulisConst}{\epsilon_0}
\newcommand{\Lattice}{\Gamma}
\newcommand{\diam}[1]{\operatorname{diam}(#1)}

\newenvironment{tododo}{\subsection*{To Do}\begin{enumerate}}{\end{enumerate}\medskip}

\let\draftnewpage\newpage

\renewcommand{\bold}[1]{\medskip \noindent {\bf #1 }\nopagebreak}

\let\nt\newtheorem
\nt{theorem}{Theorem}[section]
\nt{lemma}[theorem]{Lemma}
\nt{corollary}[theorem]{Corollary}
\theoremstyle{definition}
\nt{definition}[theorem]{Definition}
\nt{remark}[theorem]{Remark}
\nt{question}[theorem]{Question}
\nt{pthm}[theorem]{Proposed Theorem}

\newcommand{\ntentry}[3]{$#1$& \texttt{#2} & #3\\}
\newenvironment{notationTable}{\noindent\begin{tabular}{cll}}{\end{tabular}}


\numberwithin{equation}{subsection}
\captionsetup{figurewithin=section}

\renewcommand{\draftnewpage}{}
\renewcommand{\annf}[1]{}
\RenewEnviron{todo}{}
\RenewEnviron{tododo}{}
\renewcommand{\alert}[1]{}
\renewcommand{\cheesef}[1]{}

\title[Surface subgroups of finite covolume Kleinian groups]{Nearly Fuchsian surface subgroups of finite covolume Kleinian groups}
\author{Jeremy Kahn and Alex Wright}

\begin{document}
\begin{abstract}
Let $\Lattice < \PSL_2(\C)$ be discrete, cofinite volume, and noncocompact. We prove that for all $K > 1$, there is a subgroup $H < \Lattice$ that is $K$-quasiconformally conjugate to a discrete cocompact subgroup of $\PSL_2(\R)$. Along with previous work of Kahn and Markovic \cite{KM:Immersing}, this proves that every finite covolume Kleinian group has a nearly Fuchsian surface subgroup. 
\end{abstract}
\maketitle
\thispagestyle{empty}

\tableofcontents

\section{Introduction}

\bold{Main result.} In this paper, we study the geometry of finite volume, complete, non-compact hyperbolic three manifolds $M=\H^3/\Gamma$, or equivalently their associated Kleinian groups $\Lattice < \PSL_2(\C)$.  We say that a surface subgroup $H < \Lattice$ is \emph{$K$-quasi-Fuchsian} if its action on $\partial \H^3$ is $K$-quasiconformally conjugate to the action of a cocompact Fuchsian group of $\PSL_2(\R)$. We say that a collection of quasi-Fuchsian surface subgroups is \emph{ubiquitous} if, for any pair of  hyperbolic planes $\Pi, \Pi'$ in $\H^3$ with distance $d(\Pi, \Pi')>0$, there is a surface subgroup whose boundary circle lies between $\partial \Pi$ and $\partial \Pi'$. Our main result is the following.  

\begin{theorem}\label{main theorem}
Assume $\H^3/\Gamma$ has finite volume, and is not compact. For all $K > 1$, the collection of $K$-quasi-Fuchsian surface subgroups of $\Gamma$ is 
ubiquitous. 
\end{theorem}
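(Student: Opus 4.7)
The plan is to extend the Kahn–Markovic pants-assembly strategy from the cocompact setting to the cusped case of a noncompact finite covolume $\Lattice$. The core obstacle is that $(\epsilon, R)$-good pants in $M = \H^3/\Lattice$ can penetrate arbitrarily deep into the cusps, so those whose ``height'' $h$ (maximal distance a cuff or seam travels into a cusp) exceeds a threshold fail to be equidistributed in the unit normal bundle of their cuffs and thus cannot be paired by the standard KM matching. I attack this with two devices: \emph{umbrellas}, which replace a deep pant by a weighted sum of shallow pants with the same averaged boundary data, and a \emph{correction operator} on formal sums of shallow good curves, which absorbs the residual matching discrepancy.

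Concretely, fix $\epsilon > 0$ small and $R$ large, together with a target height $h_T$ and cutoff height $h_C \gg h_T$, each a multiple of $\log R$. Let $A_0 = \goodPantsWithHeight{h_C}$ denote the set of good pants of height at most $h_C$. Using mixing of the frame flow together with estimates for cusp excursions (analogous to those used for closed geodesics in \cite{KM:Immersing}), first establish that $A_0$ has cardinality $\comparableTo \epsilon^2 e^{4R}/R$ per good curve of bounded height and is approximately equidistributed in the unit normal bundle of every such curve; simultaneously, the count of good pants or curves of height $\geq h$ decays like $e^{-K(h - h_T)}$ for some $K > 2$. For each pair $(P,\gamma)$ with $P\in A_0$ whose cuff $\gamma$ has height $>h_T$ and so cannot be paired directly, construct an umbrella $Q(P,\gamma)$: a rational formal sum of good pants of height $\leq h_T$ built by piecing many short cusp excursions together, whose average boundary data along $\gamma$ reproduces that of $P$. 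Adjoining these umbrellas to $A_0$ gives an augmented formal sum $A_1$ of pants all of whose cuffs lie at height $\leq h_T$.

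Finally apply a correction operator $g\colon \Q\goodCurvesWithHeight{h_T} \to \Q\goodCurvesWithHeight{h_T + C_{gpc}}$, adapted from the cocompact construction, that eliminates the residual boundary discrepancy along each cuff; the equidistribution of $A_0$ on shallow cuffs guarantees that such a $g$ exists with polynomially bounded norm. The resulting nonnegative rational formal sum of good pants has matching boundary data at every cuff and so, by the quasi-Fuchsian assembly criterion of \cite{KM:Immersing}, represents an immersed $K$-quasi-Fuchsian surface subgroup. Restricting the initial collection of pants to those entirely between a prescribed pair of hyperbolic planes $\Pi,\Pi'$ (which is possible by the equidistribution of $A_0$) then yields the ubiquity statement.

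I expect the main obstacle to be balancing the three parameters $R$, $h_T$, $h_C$. The umbrella attached to a pant of height $h$ has boundary size on the order of $\umbrellaBoundarySize{h}$, so the total correction introduced on the shallow curves is of order $\finalEstimate$ on a baseline count of $\goodCurvesNumber$ good curves. Choosing $h_C - h_T$ large relative to $R$ is required for the umbrellas to be small relative to the baseline, but $h_C$ must also be small enough that $A_0$ exhausts most good pants up to negligible error. The combinatorial and measure-theoretic bookkeeping to verify that these ranges overlap, and that nonnegativity of the final formal sum is preserved after correction, is the most delicate part of the argument; everything else is essentially an adaptation of \cite{KM:Immersing}.
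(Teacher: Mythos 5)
Your high-level architecture---cutoff height, umbrellas, and a correction step leading to a matched assembly---matches the paper's strategy, and your discussion of the key quantitative tension between the size of the umbrellas and the scarcity of deep pants is the right one. But there is a concrete gap in the proposed construction that would make it fail as written.

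You propose building each umbrella $Q(P,\gamma)$ as ``a rational formal sum of good pants of height $\leq h_T$ built by piecing many short cusp excursions together.'' The paper explicitly reports trying this and explains why it does not work: when you glue good pants outward from a deep cuff $\gamma$, trying to continually tilt away from the cusp, a single new cuff can enter the cusp several times, and the tilting directions required by those several excursions are in general incompatible; worse, the new pants you add introduce new cusp excursions, so the process need not terminate or produce boundary below $h_T$. The paper's resolution is a new type of component, the \emph{good hamster wheel}: a planar surface with $R+2$ boundary components ($2$ outer rims, $R$ inner cuffs) in which each newly created boundary component has at most one cusp excursion, and in which the construction can be arranged to avoid introducing unnecessary excursions. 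The umbrella is then built recursively from hamster wheels, not pants, and the termination and area bound (Theorems \ref{umbrella height}, \ref{umbrella area}) rely on this specific geometry. Without this component---or a genuinely different mechanism with the same ``no new excursions and at most one excursion per new cuff'' property---your umbrella construction has no reason to terminate below the target height.

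This also propagates into the final assembly step. You invoke ``the quasi-Fuchsian assembly criterion of \cite{KM:Immersing}'' to conclude, but that criterion is stated for assemblies of good pants only. Because the umbrellas are built from hamster wheels, the local-to-global theorem must be extended to assemblies containing both pants and hamster wheels. That extension is the content of the appendix (Theorem \ref{good close to Fuchsian}), and it is not a routine adaptation: it requires redefining ``good'' and ``well-matched'' for hamster wheels, reworking the comparison to a perfect model, and new trigonometric estimates for the hamster-wheel geometry. Separately, your ``correction operator'' $g$ is a plausible mechanism, but the paper instead handles the residual imbalance via a randomized (averaged) version of the umbrella construction (Theorem \ref{random umbrella}) combined with a perturbed Hall-marriage matching (Theorem \ref{perturbed matching}); the role of randomization is to ensure the umbrella boundaries spread their weight thinly enough over good curves that they can be absorbed as a small perturbation at each cuff. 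Either way, the critical missing ingredient is the hamster-wheel component; everything downstream of the umbrella construction depends on it.
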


Informally speaking, this result says that every cusped hyperbolic three manifold contains a great many almost isometrically immersed closed hyperbolic surfaces.  

\bold{Related results.}  Kahn and Markovic proved the analogous statement when $\Gamma$ is cocompact \cite{KM:Immersing}. This was a key tool in Agol's resolution of the virtual Haken conjecture \cite{agol}. Kahn and Markovic went on to prove the Ehrenpreis Conjecture using related methods \cite{KM:Ehrenpreis}.  

Prior to Theorem \ref{main theorem}, there were no known examples of $\Gamma$ as in Theorem \ref{main theorem} that contained a $K$-quasi-Fuchsian surface subgroup for all $K>1$, except for trivial examples containing a Fuchsian surface subgroup. However, Masters and Zhang showed that every such $\Gamma$ has a $K$-quasi-Fuchsian surface subgroup for some $K>1$ \cite{masters-zhang-2008,  masters-zhang-2009}. See also Baker and Cooper \cite{baker-cooper}.

Independently of the current work, and using different methods, Cooper and Futer proved that the collection of $K$-quasi-Fuchsian surface subgroups is ubiquitous for \emph{some} $K>1$ \cite{cooper-futer}. This result, and  also Theorem \ref{main theorem}, answers a question of Agol \cite[Question 3.5]{delp-hoffoss-manning}. Cooper and Futer use their results to give a new proof \cite[Corollary 1.3]{cooper-futer} of the result of Wise \cite{wise} that $\Gamma$ acts freely and cocompactly on a $CAT(0)$ cube complex. Later Groves and Manning \cite{groves-manning} used Cooper and Futer's results to give a new proof of the result of Wise \cite{wise} that $\Gamma$ is virtually compact special.

Hamenst\"adt showed that  cocompact lattices in rank one simple Lie groups of non-compact type distinct from $SO(2m, 1)$ contain surface subgroups \cite{hamenstadt}. Kahn, Labourie, and Mozes have proven that cocompact lattices in complex simple Lie groups of non-compact type contain surface subgroups that are ``closely aligned'' with an $\SL_2(\R)$ subgroup \cite{kahn-labourie-mozes}. 

\bold{Motivation and hopes for the future.} One may view Theorem \ref{main theorem}, as well as \cite{KM:Immersing, KM:Ehrenpreis, hamenstadt}, as special cases of the general question of whether lattices in Lie groups $\mathcal{G}$ contain surface subgroups that are ``close" to lying in a given subgroup isomorphic to $PSL(2,\R)$. The solution to the original surface subgroup conjecture is the case where $\mathcal{G}=PSL(2,\C)$ and $\Gamma$ is cocompact; the solution to the Ehrenpreis conjecture is the case where $\mathcal{G}=PSL(2,\R)\times PSL(2,\R)$ and $\Gamma=\Gamma_0\times\Gamma_0$ is cocompact; and Hamenst\"adt's work concerns the case where $\mathcal{G}$ is a rank one Lie group, $\mathcal{G}\neq SO(2m, 1)$,  and $\Gamma$ is cocompact. 

Other cases of special interest include the punctured Ehrenpreis Conjecture, where $\mathcal{G}=PSL(2,\R)\times PSL(2,\R)$ and $\Gamma=\Gamma_0\times\Gamma_0$ is not cocompact, and higher rank Lie groups such as $\mathcal{G}=SL(n,\R)$. 

We hope that the methods in this paper will be applied to other cases where $\Gamma$ is not cocompact. In many cases, significant additional work is required.


Farb-Mosher  asked if there is a convex cocompact surface subgroup of the mapping class group \cite[Question 1.9]{MCG}. By work of Farb-Mosher and Hamenst\"adt, this is equivalent to asking of there is a surface bundle over a surface whose fundamental group is Gromov hyperbolic \cite{hamenstadt:extensions}, a question previously asked by Reid \cite[Question 4.7]{reid}.

There are several difficulties in applying the approach of  \cite{KM:Immersing} to the mapping class group, including that the moduli space of Riemann surfaces is not compact, and it is not homogeneous. 
Our primary goal in proving Theorem \ref{main theorem} was to attempt to isolate and resolve some of the difficulties of working with a non-compact space. 

\bold{Finding surface subgroups in the compact case.} Kahn and Markovic's strategy is to build quasi-Fuchsian subgroups of the fundamental group of  a \emph{compact} hyperbolic  3-manifold out of good pants. Each pants can be viewed as a map from a three-holed sphere into the 3-manifold, and one says the pants is good if the complex lengths of the geodesic representatives of the three peripheral curves (``cuffs") are within a small constant $\epsilon$ of a given large constant $2R$. Each such geodesic is a cuff for a great many pants, and these pants are equidistributed about the cuff. Kahn and Markovic very carefully glue pairs of pants with common cuffs to obtain a  quasi-Fuchsian surface. This requires the  equidistribution of pants around each possible cuff, which in turn depends on uniform and exponential mixing of the geodesic (and frame) flow.  

\bold{The need for a cutoff in the noncompact case.}
When the 3-manifold is not compact, 
mixing does not imply the same equidistribution estimates in regions where the injectivity radius is small.
In order to obtain the required equidistribution that was used in the cocompact case, 
we have to restrict ourselves to good pants that do not go too far out the cusp. 

\bold{The appearance of ``bald spots".}
By restricting the pants that we can use based on height---how far out they go into the cusp---we create ``bald spots" around some good curves: there are regions in the unit normal bundle to the curve that have no feet of the pants that we want to use. (The pants that have feet in these regions go too far out the cusp to be included in our construction).  The resulting imbalance makes it impossible to construct a nearly Fuchsian surface using one of every good pants that does not go too far out the cusp, and far more complicated, if not impossible, to construct a quasi-Fuchsian one.  

\bold{Umbrellas.} The main new idea of this paper is to first build, by hand, quasi-Fuchsian surfaces with one boundary component that goes into the cusp, and all remaining boundary components in the thick part. These umbrellas, as we call them, are used to correct the failure of equidistribution, and we wish to assemble a collection of umbrellas and good pants into a quasi-Fuchsian surface.   

\begin{figure}[ht!]
\includegraphics[width=\linewidth]{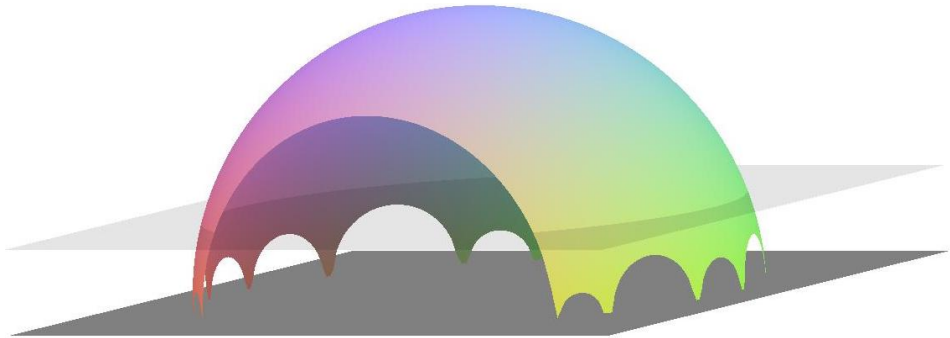}
\caption{A picture of a perfect umbrella, visualized in $\H^3$ with the vertical direction corresponding to a cusp, and one horoball shown.}
\label{F:Umbrella}
\end{figure}

\bold{The construction of the umbrella.} 
We first tried to construct the umbrellas out of good pants. The idea is to glue together many good pants, gradually and continually tilting to move away from the cusp. A major difficulty is that a cuff might enter the cusp several times, and we might require different, incompatible tilting directions for each. Furthermore, as the umbrella is constructed, new cusp excursions might be introduced. We solve these problems simultaneously by using a component we call a good hamster wheel. The advantage of the hamster wheel is that each boundary component created by adding a hamster has at most one cusp excursion, and no unnecessary cusp excursions are created. 

Each hamster wheel can be viewed as a map from a many times punctured sphere into the 3-manifold. As is the case for good pants, the complex length of each boundary circle will be required to be within a small constant $\epsilon$ of a large constant $2R$, however the boundary circles do not all play the same role. There are two outer boundaries, and $R$ inner boundaries; see Figure \ref{F:HamsterWheel}. We use only hamster wheels one of whose outer boundaries is in the compact part of the three manifold, and we glue the other outer boundary to one of the boundary components of umbrella that is under construction.  

\begin{figure}[ht!]
\makebox[\textwidth][c]{
\includegraphics[width=1.02\linewidth]{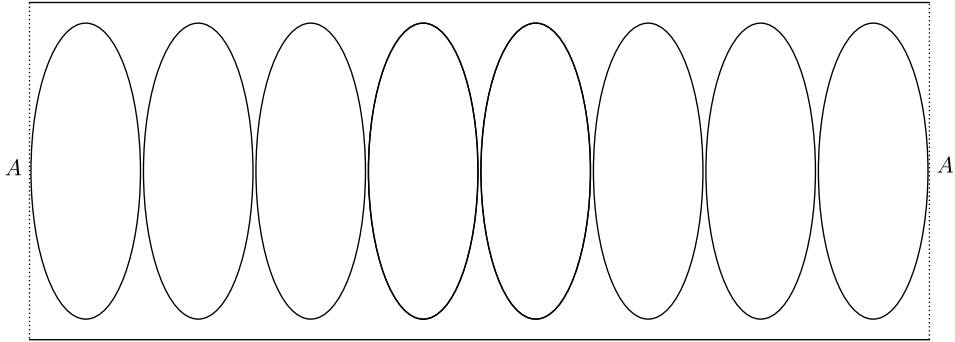}}
\caption{Identifying the left and right edges gives a schematic picture of a hamster wheel. The top boundary circle, the bottom boundary circle, and each of the $R$ many inner boundary circles each have length close to $2R\gg1$. Each inner boundary comes extremely close to each of its neighbors, but stays definite distance away from the two outer boundary circles. }
\label{F:HamsterWheel}
\end{figure}

\bold{More details and the key estimate.} We begin the proof of Theorem \ref{main theorem} by taking one pair of every pair of $(R, \epsilon)$-good pants of height at most some cutoff $h_C$, where $h_C$ is \color{kwred}a large \color{black} multiple of $\log R$. We observe that if we allow ourselves to use all good pants, rather than only those of height at most $h_C$, then the pants are sufficiently well equidistributed around each cuff of height below $h_C$ to allow us to match them while requiring that the bending between matched pants is very small. Then, for each pants we use above height $h_C$ (with \color{kwred}a \color{black} boundary \color{kwred}component \color{black} below $h_C$), we build an umbrella to replace it. The boundary of the umbrella is below a target height $h_T$,  a \color{kwred}much smaller \color{black} multiple of $\log R$.  

We have only somewhat crude control on the distribution of the boundaries of the umbrellas, but we wish to match the remaining pants and boundaries of umbrellas. So we need that the total size of the boundary of all umbrellas is relatively small, so that the umbrellas don't ruin the equidistribution of good components about each cuff. 

If we keep the target height $h_T$ constant, there are two effects to increasing the cutoff height $h_C$. First, this increases the size of the umbrellas, because the umbrellas must span the  distance $h_T-h_C$ down to  the target height. Second,  fewer cuffs of pants will come close to the cutoff height $h_C$, so fewer umbrellas are required. The key quantitative comparison that makes our construction viable is that the second effect is more significant that the first. 

\color{kwred}Formally speaking, we first fix the constant $h_C/\log R$, then pick the large constant $h_T/\log R$, and then pick $R$ to be very large depending on these other constants.\color{black}

\bold{Extending existing technology.} Our proof makes heavy use of the technology developed in \cite{KM:Immersing}, in a context more general than it was originally developed. Namely, we use the ``local to global" result giving that a good assembly is nearly Fuchsian, but unlike in \cite{KM:Immersing} we use both pants and hamster wheels, instead of just pants. 
In the course of generalizing the results of \cite{KM:Immersing} to the setting of hamster wheels,
we found that it would help to completely rework the corresponding section of that paper;
the result of this reworking is found in the appendix. 
This new approach the local to global theorem is simpler, much more constructive, and generalizes more readily to other semi-simple Lie groups.

\bold{Organization.}  Section \ref{S:ComponentsAndAssemblies} introduces the basics of good components and assemblies, and ends by stating the local to global theorem that is proved in the appendix. 

In Section \ref{mixing} we collect the necessary results on the construction, counting, and equidistribution of good pants, and the construction of good hamster wheels. Our forthcoming paper \cite{KW:counting} details how these are derived from mixing and the Margulis argument. Our perspective focuses on orthogeodesic connections as in \cite{KM:Ehrenpreis}, rather than tripods as in \cite{KM:Immersing}.  Some readers may wish to only skim this section on first reading. 

Section \ref{section umbrella} is in a sense the heart of the paper. It constructs and proves the necessary estimates on the umbrellas. All required facts about the umbrellas are specified in Theorem \ref{theorem umbrella}, and its ``randomized" version Theorem \ref{random umbrella}. These theorems are used in Section \ref{sec:matching}, which concludes our arguments and constructs the nearly Fuchsian surface subgroup. Of particular importance are subsection \ref{bald}, which calculates the size of the bald spot, and the final subsection \ref{endgame}, which contains the key computation that, when the cutoff height is chosen large enough, the total number of boundaries of umbrellas below the target height is insignificant. 

\bold{Acknowledgements.} 
Much of the work on this project took place during the MSRI program in spring 2015 and the IAS program in fall 2015, and we thank MSRI and IAS. 
This research was conducted during the period when AW served as a Clay Research Fellow.
The authors acknowledge support from U.S. National Science Foundation grants DMS 1107452, 1107263, 1107367 ``RNMS: GEometric structures And Representation varieties" (the GEAR Network).
This material is based upon work supported by the National Science Foundation under Grant No. DMS 1352721.
This work was supported by a grant from the Simons Foundation/SFARI (500275, JK).

We thank Darryl Cooper and David Futer for explaining their work, and Vladimir Markovic for valuable suggestions including the idea to use some sort of wheel component. 

\section{Good and perfect components and assemblies}\label{S:ComponentsAndAssemblies} 
\subsection{Geodesics and normal bundles}
A \emph{geodesic} in $M$ is a unit speed constant velocity\footnote{zero acceleration}
map $\gamma\from T \to M$, 
where $T$ is a Riemannian 1-manifold. 
We say that $\gamma$ is a closed geodesic if $T$ is a closed 1-manifold (in which case it is necessarily a 1-torus). 
We say that $\gamma$ is \emph{subprime} if $\gamma_*(\pi_1(T))$ is a proper subgroup of a cyclic subgroup of $\pi_1(M)$; otherwise we say that $\gamma$ is \emph{prime}. 
If $\gamma$ is subprime it has an index $n > 1$
such that there is a prime geodesic $\hat \gamma \from \hat T \to M$ 
 and an $n$ to 1 covering map $\pi \from T \to \hat T$ 
such that $\gamma = \hat \gamma \circ \pi$. 

If $\gamma\from T\to M$ is a geodesic, we can form the unit normal bundle $N^1(\gamma)$ as
$$
N^1(\gamma) = \{ (t, v)  \mid t \in T, \,v \in T_{\gamma(t)}M,\, \norm{v} = 1, \,\< v, \gamma'(t) \> = 0 \}.
$$

For the most part, 
we will think of two geodesics $\gamma\from T \to M$ and $\eta\from U \to M$ as equivalent whenever there is an isometry $h\from T \to U$ such that $\gamma = \eta \circ h$. But we will find it helpful in the sequel---especially when we want to deal with (and then forget about) subprime geodesics---selected a representative $\gamma\from T_\gamma \to M$ for each equivalence class of geodesics. We can then think of a geodesic as an equivalence class. 

By an oriented geodesic we mean a geodesic $\gamma\from T \to M$, as defined above, along with an orientation on $T$, up to orientation preserving reparametrization. 
\subsection{Components}
Let $Q_0$ be a compact hyperbolic Riemann surface with geodesic boundary. A map $f\from Q_0 \to M$ up to free homotopy is
 canonically equivalent to a homomorphism $\rho\from \pi_1(Q_0) \to \pi_1(M)$, up to postconjugation. We define a \emph{component of type $Q_0$} to be such a map $f$ up to free homotopy such that the associated homomorphism $\rho$ is injective \color{kwred}and such that every non-trivial element of the image is homotopic to a closed geodesic\color{black}. 
 For any component $[f]$ we can find a representative $\hat f$ for which $\hat f|_{\d Q_0}$ has minimal length;
this will occur if and only if $\hat f(\d_i Q_0)$ is a geodesic, 
for each component $\d_i Q_0$ of $\d Q_0$. 
We call such a representative a \emph{nice} representative. 
From now on, 
when we introduce a representative $f$ of a component, 
we will assume that it is nice. 
We observe the following:
\begin{lemma}
Any two nice representatives for the same component are homotopic through nice representatives. 
\end{lemma}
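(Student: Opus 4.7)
The plan is to produce the homotopy via the straight-line (geodesic) homotopy in the universal cover $\H^3 \to M$. Fix a basepoint in $Q_0$, use a free homotopy between $f_0$ and $f_1$ to identify basepoints in $M$, and choose lifts $\tilde f_0, \tilde f_1 \from \tilde Q_0 \to \H^3$ that are equivariant with respect to the same representation $\rho \from \pi_1(Q_0) \to \isomHThree$. Define $\tilde f_t(x)$ to be the point at parameter $t \in [0,1]$ on the unique unit-speed geodesic segment from $\tilde f_0(x)$ to $\tilde f_1(x)$. Because isometries of $\H^3$ preserve geodesic segments pointwise, $\tilde f_t$ remains $\rho$-equivariant and descends to a continuous homotopy $f_t \from Q_0 \to M$ from $f_0$ to $f_1$.

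It remains to verify that each $f_t$ is nice, i.e., restricts to a geodesic on every boundary component. Fix $\partial_i Q_0$ with boundary class $c_i \in \pi_1(Q_0)$. The definition of a component requires $\rho(c_i)$ to be nontrivial and homotopic to a closed geodesic, hence hyperbolic; let $\ell_i \subset \H^3$ be its axis. Niceness of $f_0$ together with equivariance forces the image of $\tilde f_0|_{\partial_i \tilde Q_0}$ to be a $\rho(c_i)$-invariant geodesic line, which must therefore be $\ell_i$; the same holds for $\tilde f_1$. Both restrictions are constant-speed parametrizations of $\ell_i$ with the same speed and orientation, as both data are determined by the conjugacy class $[\rho(c_i)]$ together with the chosen orientation on $\partial_i Q_0$. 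Hence $\tilde f_0|_{\partial_i \tilde Q_0}$ and $\tilde f_1|_{\partial_i \tilde Q_0}$ differ by a translation along $\ell_i$.

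Because the geodesic in $\H^3$ between any two points of $\ell_i$ lies within $\ell_i$, the restriction $\tilde f_t|_{\partial_i \tilde Q_0}$ takes values in $\ell_i$ and is itself a one-parameter family of constant-speed parametrizations; descending, $f_t|_{\partial_i Q_0}$ is a geodesic in the paper's sense for every $t$, so $f_t$ is nice. The only delicate bookkeeping is the claim that the two boundary parametrizations agree in speed and orientation — this is where the hypothesis that $f_0$ and $f_1$ represent the same component (so that the induced maps on $\pi_1(\partial_i Q_0) \to \pi_1(M)$ agree after the basepoint identification) is essential, as it rules out an orientation reversal between the two lifts. I do not foresee any further substantive obstacle.
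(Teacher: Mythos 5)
Your proof is correct and is essentially the same argument as the paper's: the paper passes from an arbitrary free homotopy $F$ between the two nice representatives to the associated \emph{geodesic homotopy} (each point's track is replaced by the constant-speed geodesic homotopic rel endpoints to the original track) and then observes that a geodesic homotopy between nice representatives stays nice. Your universal-cover construction of $\tilde f_t$ as the pointwise geodesic interpolation between $\rho$-equivariant lifts is exactly this geodesic homotopy, and your verification that boundary tracks remain inside the axis $\ell_i$ of $\rho(c_i)$ is the content of the paper's final assertion. One small remark: the ``same speed and orientation'' bookkeeping you flag as delicate is not really load-bearing for niceness --- once both boundary lifts map into $\ell_i$ and are $\rho(c_i)$-equivariant, the interpolated restriction also lands in $\ell_i$ and has $\rho(c_i)$-invariant connected image, which already forces $f_t(\partial_i Q_0)$ to be the full closed geodesic. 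The speed/orientation observation is what makes the interpolated boundary map a constant-speed parametrization, which is aesthetically nice but not required by the paper's definition of a nice representative.
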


\begin{proof}
For any homotopy $F\from [0, 1] \times X \to M$ there is an associated \emph{geodesic homotopy} $\hat{F}$ for which the path of each point $x\in M$ is a constant speed geodesic homotopic rel endpoints to the path under $F$. A geodesic homotopy between nice representatives is a homotopy through nice representatives.  
\end{proof}

Now let $f\from Q \to M$ be a nice component, 
and let $\alpha$ be a component of $\d Q$. When we discuss good components, we will parameterize them by a multiset of hyperbolic surfaces $Q_i$, so that different $f_i\from Q_i \to M$ will be considered to associated to different (but possibly isometric) elements $Q_i$ of the multiset. In particular, when we specify a  $\alpha \in \d Q$, it will be implicit that we have also specified $f:Q\to M$. 

Associated to each $\alpha$ is a geodesic $\gamma$ given by the image of $f|_\alpha$. This geodesic $\gamma$ is called a $\emph{cuff}$ of $f$. Sometimes we may also refer to $\alpha$ as a cuff, but it carries more information than the geodesic $\gamma$, namely $\alpha$ carries the information of the associated component. 

Let $g\from T_\gamma\to M$ be the selected parametrization of $f|_\alpha$. 
If $f|_\alpha$ is prime, 
then there is a unique homeomorphism $h\from \alpha \to T_\gamma$ such that $f|_\alpha = g \circ h$. 
If $f|_\alpha$ is subprime of index $n$,
then there are $n$ such homeomorphisms. 
From now on, 
for each component $f$, 
we will add a choice of homeomorphism $h$ to the information (``package'') for the component. 
As we will see, this will permit us to treat subprime geodesics on an equal footing with the prime ones. 

\subsection{Assemblies in \texorpdfstring{$M$}{M}.}
Suppose now that $\F = \{ f_i\from Q_i \to M\}$ is a multiset of components.
Suppose moreover that we have a fixed-point-free involution $\tau$ on the the multiset $\cup_i \d Q_i$,
and suppose that for each   $ \alpha\in \cup_i \d Q_i$,
both $\alpha$ and $\tau(\alpha)$ determine the same geodesic in $M$. 

Then we can identify $\alpha$ and $\tau(\alpha)$ by means of their identifications with the selected domain for their associated geodesics in $M$, and we obtain the \emph{assembly} $\F/\tau$. 
It is a surface ``made out of" the $Q_i$
(which have been joined, but not isometrically), 
along with a map of this surface into $M$. 
Letting $\A = (\F, \tau)$ be the data for the assembly,
we let $S_\A = \bigcup Q_i /\tau$ be the surface formed by joining the $Q_i$,
and $f_\A\from S_\A \to M$ be the result of joining the $f_i$ by $\tau$.
When $S_\A$ is connected, we let $\rho_\A \from \pi_1(S_\A) \to \Isom(\H^3)$ be the associated surface group representation. 
We observe that when $S_\A$ is not connected, that we partition $\A$ into subassemblies that then determine the components of $S_\A$. 

What we just described is more properly called a complete assembly or a closed assembly. 
If we are given an involution $\tau$ on a proper subset of $\cup_i \d Q_i$, 
then we obtain a surface which still has some boundary, 
which we will call an incomplete assembly. 

\subsection{Oriented components and assemblies}
Let us now assume that every perfect model $Q_i$ is oriented. 
This of course induces orientations on $\cup_i \d Q_i$ 
\new{(where each $Q_i$ lies to the left of each element of $\d Q_i$)}, 
and we now assume that $\tau$ is orientation reversing on each $\alpha$. 
Then $\cup_iQ_i/\tau$ will be oriented, and $\F/\tau$ is an oriented assembly. 

We observe that, given $\F = \{ f_i\from Q_i \to M\}$, 
there may be no such $\tau$ that respects (that is to say, reverses) orientation. 
In fact, there is such a $\tau$ if and only if $\d\F = 0$, 
where $\d\F$ assigns to each oriented geodesic $\gamma$ in $M$,
the difference between the number of $\alpha \in \cup_i \d Q_i$ for which $\F|_\alpha = \gamma$, 
and the number for which $\F|_\alpha = \gamma^{-1}$. 

\subsection{The doubling trick}  \label{components:doubling}
We now introduce the \emph{doubling trick},
which is an effective way to get a family $\F$ for which $\d\F = 0$. 
\new{It is simply an application of the observation that for any bijection $\sigma\from S \to S$ (where $S$ is any set),
we can define an involution $\tau\from S_- \coprod S_+\to S_- \coprod S_+$, where $S_-$ and $S_+$ are two copies of $S$,
by $\tau(x_-) = (\sigma(x))_+$ and $\tau(x_+) = (\sigma^{-1}(x))_-$. }

So suppose we have a family $\F = \{f_i\from Q_i \to M\}$, where the $Q_i$ are again oriented. 
We can form the family $\F^{-1} = \{f_i\from Q_i \to M\}$, where the only difference is that we have reversed the orientation of each of the $Q_i$. 
Letting $2\F$ denote $\F \coprod \F^{-1}$,
we observe that $\d(2\F) = 0$.

Now for each geodesic $\gamma$ for $M$,
let $\F(\gamma)$ denote the \new{submultiset of $\cup_i \d Q_i$ comprising those }$\alpha$
 whose associated geodesic in $M$ is $\gamma$ or $\gamma^{-1}$. 
(Of course, $\F(\gamma)$ may be empty). 
Suppose that for each $\gamma$ we have a bijection $\sigma_\gamma\from \F(\gamma) \to \F(\gamma)$. 
Now we can take $2\F(\gamma)$
 and divide it into those $\alpha$ with the same orientation as $\gamma$ 
 (call the set of them $\F_+(\gamma)$), 
and those guys with the opposite orientation ($\F_-(\gamma)$).
Then each $\alpha \in \F(\gamma)$ has an $\alpha_+$ and an $\alpha_-$,
and we define an involution $\tau$ on $2\F(\gamma)$ by $\tau(\alpha_+) = (\sigma(\alpha))_-$, 
and $\tau(\alpha_-) = (\sigma^{-1}(\alpha))_+$. 
This involution will result in a closed oriented assembly. 

In the previous two paragraphs, 
it might be more instructive to think of the original $Q_i$'s as orientable but not oriented. 
Then we take two copies of each $f_i\from Q_i \to M$, one with each orientation of $Q_i$, 
to obtain an oriented family with two components for each original component. 
We then define $\F(\alpha)$ and $2\F(\alpha) = \F_+(\alpha) \coprod \F_-(\alpha)$ as above;
the point is that none of this construction uses the original orientations of the $Q_i$. 

There is also a more subtle version of the doubling trick, which proceeds as follows:
First, suppose that we have an oriented partial assembly $A = (\F, \tau)$. 
We can form the assembly $A\inv$ by reversing the orientations of each $Q_i$ that appears in $\F$,
while leaving the $\tau$ unchanged. 
We observe that invariants of $A$ that depend on the orientation of $A$ will be negated in $A\inv$. 
Nonetheless $A\inv$ is a perfectly legitimate partial assembly, and the total boundary of $A\inv$ will be the same as the total boundary of $A$, except with all the orientations reversed. 

Suppose now that we have a set $\A = \{A_i\}$ of oriented partial assemblies\removed{\ (which we maybe think again as orientable partial assemblies)}. Then we can form $\A\inv = \{A_i\inv\}$, and let $2\A = \A \coprod \A\inv$. For each geodesic $\alpha$ in $M$, we can proceed as before, replacing $\F$ with $\A$ and using only the outer (unmatched) boundaries of the $A_i$. The result with be a complete assembly which will have each $A_i$ and $A_i\inv$ as a subassembly. 

\subsection{Pants and hamster wheels}
We now describe the two types of components that will appear in this paper. 
First we will fix a large integer $R$ (how large $R$ must be will be determined later in the paper). 
Now we let the perfect pants $P_R$ be the unique planar compact hyperbolic surface with geodesic boundary, such that the boundary has three components of length $2R$. 
Let us denote the three boundary components of $P_R$ by $C_i$, $i \in \Z/3\Z$.

Let us now define the $R$ perfect hamster wheel $H_R$.
It is a planar hyperbolic Riemann surface with $R+2$ geodesic boundaries, each of length $2R$.
It admits a $\Z/R\Z$ action, 
which fixes two of the boundaries setwise (we call these the outer cuffs),
and cyclically permutes the other $R$ boundaries (the inner cuffs).
We claim that there is a unique surface
 (with geodesic boundary)
with these properties,
and we call it $H_R$. 

We can also construct $H_R$ as follows. 
Let $Q_{R}$ denote the pants with cuff lengths 2, 2 and $2R$.
If we collapse the cuff of length $2R$ to a point,
 we obtain an annulus;
the map of $Q_{R}$ to this annulus induces a map $q_R$ of $\pi_1(Q_{R})$ to $\Z$. 
We then let $H_R$ be the regular cyclic cover of degree $2R$ of $Q_R$
 induced by $q_R\invof{\left< 2R \right>}$. 
It has $R$ cuffs that map by degree 1 to the cuff of length $2R$ in $Q_R$;
 these are the inner cuffs. 
It has 2 cuffs that map by degree $R$ to one of the cuffs of length 2 in $Q_R$;
 these are the outer cuffs. 

\subsection{Orthogeodesics and feet}
Suppose that $\alpha_i\from T_i \to M$, $i = 0, 1$,
 are two closed geodesics in $M$ with distinct images. 
Suppose that $\eta\from [0, 1]\to M$ and $x_i \in T_i$ are such that $\eta(i) = \alpha_i(x_i)$, $i = 0,1$. 
Then there is a unique $\hat\eta\from [0, 1] \to M$ such that $\hat\eta$ has constant velocity,  
is homotopic to $\eta$ through maps with the $x_i$ condition described above
 (where $x_i \in T_i$ is permitted to vary), 
and is such that $\< \hat\eta'(i) , \alpha'(\hat x_i)\> = 0$ 
 ($\hat\eta$ is orthogonal to the $\alpha_i$). We call $\hat\eta$ the \emph{orthogeodesic} properly homotopic to $\eta$.
 If $\alpha_0$ and $\alpha_1$ intersect, it may be that $\eta$ is the constant function; 
otherwise we can reparameterize $\hat\eta$ to $\widehat\eta\from [0, a] \to M$ with unit speed;
we then let the \emph{feet} of $\eta$ be $\widehat\eta\:'(0)$ and $-\widehat\eta\:'(a)$;
there is one foot of $\widehat\eta$ on each of the $\alpha_i$. 

Now let $f\from Q_0 \to M$ be a nice component. 
For any orthogeodesic $\eta_0$ for $Q_0$ between $\alpha_0$ and $\alpha_1$, 
we can form the orthogeodesic $\eta$ for $f(\eta_0)$ between $f|_{\alpha_0}$ and $f|_{\alpha_1}$. 
With the identification of  $\alpha_i$ with the canonical domain for $f|_{\alpha_i}$,
we can think of the feet of $\eta$ as elements of $N^1(f|_{\alpha_i})$. 

\subsection{Good and perfect pants}  \label{components:good-pants}

For each $i$, 
we let $a_i$ be the unique simple orthogeodesic in $P_R$ connecting $C_{i-1}$ and $C_{i+1}$.

Now let $f\from P_R \to M$ be a pants in $M$. 
As described above, 
we can determine the three cuffs of $f$,
as well as the three \emph{short orthogeodesics} $\eta_i$ induced by the $f|_{a_i}$.

We define the half-length of $\gamma_i$, denoted $\hll(\gamma_i)$, to be the complex distance from $\eta_{i-1}$ to $\eta_{i+1}$ along $\gamma_i$, which is defined to be the complex \color{kwred}distance \color{black}  between lifts of $\eta_{i-1}$ and $\eta_{i+1}$ that differ by a lift of the positively oriented segment of $\gamma_i$ joining $\eta_{i-1}$ and $\eta_{i+1}$. It is equivalent to use the positively oriented segment of $\gamma_i$ joining $\eta_{i+1}$ to $\eta_{i-1}$, and hence two times  $\hll(\gamma_i)$ is equal to the complex translation length $\length(\gamma)$ of $\gamma_i$ \cite[Section 2.1]{KM:Immersing}.

We define an $(R, \epsilon)$-good pair of pants in $\isomHThree$ to be a $\isomHThree$ conjugacy class of representations $\rho$ as above for which 
$$|\hll(\gamma_i)-R|<\epsilon.$$
Note that, as a complex distance, $\hll(\gamma_i)$ is an element of $\mathbb{C}/(2\pi i \mathbb{Z})$. Here we mean that there exists a lift to $\mathbb{C}$ for which this inequality holds. Since $\epsilon$ will always be small, this lift is unique if it exists, so for any $(R, \epsilon)$-good pair of pants we may consider $\hll(\gamma_i)$ to be a complex number by considering this lift. 

An $R$ perfect pair of pants in $\isomHThree$ is an $(R,0)$-good pair of pants. In other words, it is the conjugacy class of a Fuchsian representation associated to a Fuchsian group uniformizing a hyperbolic pair of pants with three boundary components of length $2R$. 

We define good and perfect pants in $\Gamma$ to be $\Gamma$ conjugacy classes of representations $\rho: \Pi_0\to \Gamma$ for which the associated $\isomHThree$ conjugacy class of representations to $\isomHThree$ is good or perfect. 

Given $\rho: \Pi_0\to \isomHThree$ as above, the unit normal bundle $N^1(\gamma_i)$ to $\gamma_i$ is a torsor for the group $\C/(2\pi i \Z + \length(\gamma_i)\Z)$. (A torsor is a space with a simply transitive group action.) Each $\eta_i$ determines a point in $N^1(\gamma_{i-1})$ and a point in $N^1(\gamma_{i+1})$, which are the normal directions pointing along the unoriented geodesic arc $\eta_i$ at each endpoint, and which are called the feet of $\eta_i$. The difference of two elements of $N^1(\gamma)$ is a well defined element of $\C/(2\pi i \Z + \length(\gamma)\Z)$, and the condition 
$$|\hll(\gamma_i)-R|<\epsilon$$
above can be rephrased as saying that the difference of the two feet on $\gamma_i$ is within $\e$ of $R$. 

\subsection{Good and perfect hamster wheels} \label{goodHW}

 \label{generating hamsters}

\subsubsection*{Slow and Constant Turning Vector Fields}
{
\def\u{\mathbf u}

Suppose that $\gamma$ is a closed geodesic in a hyperbolic 3-manifold.
Recall that the unit normal bundle for $\gamma$ is a torsor 
 for $V_\gamma := \C/(2\pi i \Z + l(\gamma) \Z)$;
it fibers over $\gamma$,
and a unit normal field $\u$ for $\gamma$ is a section of this bundle. 
For any such $\u$ we have a curve on $V_\gamma$ up to translation
 and hence $\u$ has a \emph{slope} at each point of $\gamma$. 
A \emph{constant turning normal field} for $\gamma$
 is a smooth unit normal field $\u$ with constant slope. 
This constant slope has value $(\theta + 2\pi k)/b$,
where $l(\gamma) = b + i\theta$,
and $k \in \Z$. 
In the case where $|\theta + 2\pi k| < \pi$,
we say that $\u$ is a \emph{slow and constant turning normal field}. 
(Typically we will have $\theta < \epsilon$, 
 and $b$ close to $2R$, 
 so the slope will indeed be small). 
The space of slow and constant turning vector fields for $\gamma$
 (at least when $\gamma$ is $(R, \epsilon)$-good)
 is a 1-torus (a circle)
 and every slow and constant turning vector field is determined 
  by its value at one point of $\gamma$.  
If $v \in N^1(\gamma)$, and $x \in \gamma$ is the base point of $v$, 
we will use $v - \u$ as a shorthand for $v - \u(x)$, which is an element of $V_\gamma$.  
}

The \emph{rims} of the perfect hamster wheel $H_R$ are defined to be the outer boundary geodesics. The \emph{rungs} are defined to be the minimal length orthogeodesics joining the two rims. Each hamster wheel has $2$ rims and $R$ rungs.  

Suppose now we have a hamster wheel $f\from H_R \to M$. 
We let $\gamma_l, \gamma_r$ be the images by $f$ of the rims of $H_R$, 
and we let $\lambda_i, i \in \Z/R\Z$, be the orthogeodesics for $\gamma_l$ and $\gamma_r$ corresponding (by $f$) to the rungs of $H_R$.
(Then $\gamma_l$ and $\gamma_r$ are the rims for $f$, and the $\lambda_i$ are the rungs). 

For each $i$, we let $\eta_{il}$ be the foot of $\lambda_i$ at $\gamma_l$, and likewise define $\eta_{ir}$. 
We say that $f$ is $\epsilon, R$-\emph{good} if there exist slow and constant turning vector fields $v_l$ and $v_r$ on $\gamma_l$ and $\gamma_r$ such that the following statements hold:
\begin{enumerate}
\item
For each $i$, the complex distance between $\gamma_l$ and $\gamma_r$ along $\lambda_i$ satisfies
\begin{equation}
|d_{\lambda_i}(\gamma_l, \gamma_r) - (R - 2 \log \sinh 1) | < \epsilon/R.
\end{equation}
\item
For each $i$,
we have
\begin{equation} \label{eq:dslow}
|\eta_{il} - v_l| < \epsilon /R
\end{equation}
and 
\begin{equation} 
|d_{\gamma_l}(\eta_{il}, \eta_{(i+1)l}) - 2| < \epsilon/R
\end{equation}
and likewise for $l$ replaced by $r$. 
\end{enumerate}

\begin{enew}
We will refer to the geodesics of length $O(e^{-R/2})$ connecting adjacent inner cuffs of the hamster wheel as short orthogeodesics; they have length within a constant multiple of the length of the short orthogeodesic on a perfect pants. We call the orthogeodesics connecting each inner cuff to each outer cuff (in the natural and obvious homotopy class) the \emph{medium} orthogeodesics. They have length universally bounded above and below. We observe that the short and medium orthogeodesics in the perfect hamster wheel are all disjoint and divide it into $2R$ components, each bounded by a length 2 segment of outer cuff, two length $R/2$ segments from two consecutive inner cuffs, the short orthogeodesic between them, and the two medium orthogeodesics from the two consecutive inner cuffs to the outer cuff. 
\end{enew}

For each inner cuff $\gamma$ of $f$,
we will choose two \emph{formal feet} which are unit normal vectors to $\gamma$,
as follows.
For each adjacent inner cuff, 
we consider the foot on $\gamma$ of the short orthogeodesic to the adjacent cuff;
we call the two resulting feet $\alpha_+$ and $\alpha_-$.
We will see that $|\alpha_+ - \alpha_- - R|$ is small;
it follows that we can find unique $\alpha'_+$ and $\alpha'_-$ such that $\alpha'_+ - \alpha_+$ is small and equal to $\alpha_- - \alpha'_-$,
and $\alpha'_+ - \alpha'_- = \hll(\gamma)$. 
We then let $\{ \alpha'_+,\alpha'_-\}$ be the formal feet of $\gamma$.
Because $\alpha'_+ - \alpha'_- = \hll(\gamma)$,
there is a unique slow and constant turning vector field through $\alpha_-'$ and $\alpha_+'$. 

\new{We have thus defined \emph{the} slow and constant turning vector field for each inner cuff of a hamster wheel. When we construct a hamster wheel (for example by Theorem \ref{hamsters for loners}) to be good with respect to given slow and constant turning vector fields on the outer cuffs, we can then think of those fields as \emph{the} slow and constant turning fields for that hamster wheel.}

\subsection{Good assemblies} For the purposes of this paper, 
a \emph{good component} is a good pants or a good hamster wheel. 
(It is possible to provide a much more general definition). 

For consistency of terminology, we now define the formal feet 
 and the slow and constant turning vector field
 of a good pants, so that these terms will be defined for all good components. 
The formal feet for a good pants are just the actual feet (of the short orthogeodesics),
and the slow and constant turning vector field for a good pants is the unique such field that goes through the feet.

Given two good components and a curve $\gamma$ that occurs in both of their boundaries, we say that they are well-matched if \color{kwred}either of \color{black} the following holds.
\begin{enumerate}
 \item \label{ga:formal}
Both components have formal feet on this curve. In this case, let one formal \color{kwred}foot \color{black} be $\alpha_0 \in \color{kwred}N^1(\gamma)\color{black}$, 
and let the other be $\alpha_1$. Then we require that $|\alpha_0 - \alpha_1 - (1 + i \pi)| < \epsilon/R$. 
 \item \label{ga:constant}
Otherwise
 the constant turning fields form a bend of at most $\maxVectorFieldBend$. 
\end{enumerate}
In the first case, the two component must be oriented, since the definition of complex distance uses the orientation, and we require the two components to induce opposite orientations on the curve. In the second case, no orientations are required, but if the components have orientations we typically also require that the two components to induce opposite orientations on the curve.

\subsection{Good is close to Fuchsian}
In the appendix we will prove the following theorem:
\begin{theorem} \label{good close to Fuchsian}
\new{There exists $R_0 > 0$  
such that for all $K > 1$ there exists $\epsilon > 0$ for all $R > R_0$}:
Let $\A$ be an $(R, \epsilon)$-good assembly in $M$ such that $S_\A$ is connected, 
and let $\rho_\A$ be the corresponding surface group representation.
Then $\rho_\A$ is $\new{K}$-\color{kwred}quasiconformally \color{black} conjugate to a Fuchsian group. 
\end{theorem}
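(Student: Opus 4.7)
My plan is to construct, for each good assembly $\A$ with $S_\A$ connected, an explicit $\rho_\A$-equivariant map $\Phi\from \H^2 \to \H^3$ (where $\H^2$ is identified with the universal cover of $S_\A$ equipped with its natural hyperbolic structure coming from gluing the $Q_i$) and to show that $\Phi$ is a quasi-isometric embedding whose QI constants tend to $1$ as $\epsilon \to 0$ and $R \to \infty$. Standard boundary-extension results for quasi-isometric embeddings of $\H^2$ into $\H^3$ then give that $\rho_\A$ is quasiconformally conjugate to a Fuchsian group, with quasiconformal constant $K$ as close to $1$ as we wish.

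The construction of $\Phi$ proceeds component by component. Each good component $f_i\from Q_i \to M$ lifts to a map from the universal cover $\tilde Q_i \subset \H^2$ into $\H^3$. Because $f_i$ is $(R,\epsilon)$-good, this lift is $C(\epsilon/R)$-close to the inclusion of a totally geodesic copy of $\tilde Q_i$ into an isometrically embedded hyperbolic plane $P_i \subset \H^3$; for pants this is the standard fact that good pants are $\epsilon$-close to their perfect Fuchsian model, and for hamster wheels it follows from the slow-and-constant-turning condition together with the length/foot estimates, iterated across the $R$ inner cuffs (here the $1/R$ in the hamster wheel definition is essential to prevent a linear-in-$R$ accumulation of error). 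At each cuff $\gamma$ shared by two components, the well-matched condition --- either the formal-feet condition (\ref{ga:formal}) with a twist of $1 + i\pi$ up to $\epsilon/R$, or the constant-turning bending bound (\ref{ga:constant}) of at most $\maxVectorFieldBend$ --- guarantees that the two adjacent model planes $P_i$ and $P_j$ meet along an axis that is $C\epsilon$-close to $\gamma$ with bending angle $O(\epsilon)$. Gluing the model planes along these bends produces a bent (pleated) plane in $\H^3$, and $\Phi$ is defined to map each $\tilde Q_i$ isometrically to the relevant face and to interpolate in a small collar near each cuff.

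The main step --- and the main obstacle --- is proving that the bent plane $\Phi(\H^2)$ is $(1 + \delta(\epsilon,1/R))$-quasi-isometric to $\H^2$. This is a ``local-to-global'' statement: small local bending and large cuff separation (of order $R$) must together imply that geodesics in the bent plane remain close to their $\H^2$ analogues. The argument will use a Morse-type lemma for quasi-geodesic paths in $\H^3$: trace a geodesic segment of $\H^2$ through the component decomposition, note that it crosses each component for length at least comparable to $R$ (by the lengths of the short orthogeodesics and the hamster wheel width $R - 2\log\sinh 1$), and estimate the deviation contributed by each bend as a geometric series in $e^{-R}\cdot\epsilon$. The existing argument in \cite{KM:Immersing} handled the pants-only case; the extension to hamster wheels is the new content and is exactly what the reworked appendix delivers, using the slow-and-constant-turning vector fields as a coherent framing that propagates across an entire hamster wheel without amplification of errors. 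Once the QI constants for $\Phi$ are established, the boundary extension is standard: discreteness and faithfulness of $\rho_\A$ follow from the quasi-isometric embedding, and the quasiconformal conjugacy to a Fuchsian group follows by pulling back the round circle $\partial\H^2$ under the boundary map of $\Phi$.
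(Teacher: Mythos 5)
Your high-level strategy (build an equivariant map $\Phi\from\H^2\to\H^3$ by welding perfect models of components along the cuffs, prove it is a near-isometric quasi-isometric embedding, then extend to the boundary) is broadly the same as the paper's, which constructs a perfect model $\hat\A$ and a comparison map on frame bundles and shows bounded distortion before applying the quasi-symmetry/quasi-conformal extension machinery. However, there is a genuine gap at the core of your local-to-global step. You write that a geodesic segment of $\H^2$ ``crosses each component for length at least comparable to $R$ (by the lengths of the short orthogeodesics \ldots)'' and therefore the total deviation is a geometric series in $e^{-R}\epsilon$. This is not true: the short orthogeodesics between adjacent cuffs of a pants (or between adjacent inner cuffs of a hamster wheel) have length on the order of $e^{-R/2}$, not $R$. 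Consequently a geodesic of bounded length in $\H^2$ can cross on the order of $e^{R/2}$ boundary geodesics of the decomposition, so the number of bending loci is not bounded, the crossings of individual components are not ``comparable to $R$,'' and there is no geometric series to sum.

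This is precisely the difficulty the paper's appendix is organized around (see the introduction to the appendix: ``Geodesics in $\A$ can come very close to each other in hyperbolic distance. Thus, for two given geodesics in $\A$ that are bounded hyperbolic distance apart, they could have large combinatorial distance in $\A$.''). The paper overcomes it by replacing a term-by-term deviation estimate with the matrix-product bound of Theorem \ref{thm:app:main-matrix-estimate}: the relevant control is not on the number $n$ of crossings, which may be huge, but on the quantity $\sum_i |a_i|e^{|b_i|}$, where $a_i\sim e^{-R/2}$ is the short-orthogeodesic length and $b_i$ is the (complex) offset along the cuff. That sum stays bounded precisely because each $|a_i|$ is exponentially small, and the per-term error is multiplicative in $\epsilon$ relative to $|a_i|e^{|b_i|}$ rather than additive. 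Your proposal needs either this product estimate or some equivalent device for ``linear sequences'' of exponentially many nearby cuffs; without it, a Morse-lemma argument that counts components does not close. (Your remarks on the hamster wheels --- the $1/R$ scaling in the goodness condition preventing linear accumulation across $R$ inner cuffs, and slow-and-constant-turning fields serving as a coherent framing --- are accurate and match the role played by Theorem \ref{thm:app:omnibus} and the compliance estimates in the appendix.)
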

This theorem, which we use in Section \ref{endgame}, implies that a good assembly is homotopic to a nearly geodesic immersion of a surface. This is discussed at the end of Section \ref{subsec:conclusions}. 
\begin{remark}
By embedding the good assembly in a holomorphic disk of good assemblies and using the Schwarz Lemma (as in the discussion at the end of Section 2 of \cite{KM:Immersing}), it should be possible to prove that an $(R, \epsilon)$-good assembly is $(1+ C\epsilon)$-quasiconformally conjugate to a Fuchsian group, where $C$ is a universal constant. 
\end{remark}

\section{Mixing and counting}
\label{mixing}

We begin by giving some basic results on counting geoodesic connections, which are derived from mixing of the geodesic flow using the Margulis argument. We then give applications to counting good pants and good hamster wheels. 

First we recall that there is a universal constant $\MargulisConst$, independent of $\Lattice$, such that horoballs about different cusps consisting of points with injectivity radius at most $\MargulisConst$ are disjoint; \new{see, for example, \cite[Chapter D]{BenedettiPetronio}}. We  define the \emph{height} of a point in $\H^3/\Lattice$ to be the signed distance to the boundary of these horoballs, so that the height is positive if the point lies in the union of these horoballs and negative otherwise. We  define the \emph{height} of a closed geodesic in $\H^3/\Lattice$ to be the maximum height of a point on the geodesic, and the \emph{height} of a pants to be the maximum of the heights of its cuffs.

Given a geodesic segment or closed geodesic, we call the cusp excursions the maximal subintervals of the geodesic above height $0$. In the case of a geodesic segment, we say that a cusp excursion is intermediate if it does not include either endpoint of the segment. So, if a geodesic starts or ends in the cusp, it will have an initial or terminal cusp excursion, and  all other cusp excursions  are  intermediate. Hence the non-interemediate cusp excursions are either initial or terminal, and we will sometimes use the term ``non-intermediate'' to mean ``either initial or terminal''.

\subsection{Counting good curves} First we recall the count for closed geodesics, which is a standard application of the Margulis argument. Let $\goodCurves$ denote the set of $(R, \epsilon)$-good curves. Then 
\begin{equation}\label{E:curvecount}
\lim_{R\to\infty} \frac{\#(\goodCurves)}{\goodCurvesNumber}=c_\epsilon,
\end{equation}
where $c_\e$ is a non-zero constant depending on $\epsilon$ \new{\cite[Theorem 8]{ParryPollicott}; see also \cite{SarnakWakayama, MargulisMohammadiOh} and the references therein}. 

Let $\goodCurves^{\geq h}$ denote the \color{kwred}set \color{black} of $(R, \epsilon)$-good curves with height at least $h$. We have the following crude estimate.  

\begin{lemma}\label{high curve count}
\color{kwred}There exists a constant $C(\Gamma, \epsilon)>0$ such that 
$$\frac{\#(\goodCurves^{\geq h})}{\#(\goodCurves)} \leq C(\Gamma, \epsilon) R e^{-2h}$$
for all $R$ sufficiently large and $h\geq 0$. \color{black}
\end{lemma}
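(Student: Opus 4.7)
The plan is to relate the count of high-altitude good curves to the total arc-length that good curves spend in a neighborhood of height $h$, then bound the latter by a Margulis-style equidistribution estimate against the small Liouville volume of the cusp region at height $h$.

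\textbf{Step 1: Cusp volume decay.} Let $B_h \subset T^1 M$ denote the set of unit tangent vectors whose basepoint has height at least $h$. Since the horospherical cross-section of the cusp at height $h$ has area bounded by $C_\Lambda e^{-2h}$, I would verify that the Liouville measure satisfies $\mu(B_h) \leq C(\Lambda) e^{-2h}$ for all $h \geq 0$.

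\textbf{Step 2: Lower bound on time spent near the peak.} Fix a small absolute constant $c > 0$. If $\gamma$ is a closed geodesic with $\mathrm{height}(\gamma) \geq h$, then by elementary hyperbolic geometry of cusp excursions (the geodesic near its maximum height looks, after lifting, like a vertical arc in the upper half space model), $\gamma$ spends arc-length at least some absolute constant $c_1 = c_1(c)$ inside $B_{h-c}$. Summing over good curves of height $\geq h$ gives
\[
c_1 \cdot \#(\Gamma_{\epsilon,R}^{\geq h}) \;\leq\; \sum_{\gamma \in \Gamma_{\epsilon,R}} \mathrm{length}(\gamma \cap B_{h-c}).
\]

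\textbf{Step 3: Equidistribution estimate.} The sum on the right counts, with multiplicity, pairs consisting of a good curve together with a point on it lying in $B_{h-c}$. The same Margulis argument that yields \eqref{E:curvecount}, when run with a test function instead of a constant, gives the uniform upper bound
\[
\sum_{\gamma \in \Gamma_{\epsilon,R}} \mathrm{length}(\gamma \cap S) \;\leq\; C(\epsilon,\Lambda) \, \mu(S) \, e^{4R}
\]
for any measurable $S \subseteq T^1 M$. Applying this with $S = B_{h-c}$ and using Step~1 gives an upper bound of $C(\epsilon,\Lambda) e^{4R} e^{-2h}$. Dividing by $\#(\Gamma_{\epsilon,R}) \sim c_\epsilon \epsilon^2 e^{4R}/R$ produces the claimed factor $R e^{-2h}$, with the $R$ arising precisely from the $1/R$ normalization in \eqref{E:curvecount}.

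\textbf{Main obstacle.} The delicate point is justifying the equidistribution upper bound of Step~3 with constants uniform in $h$ (especially if one eventually wants to take $h$ growing with $R$). Pointwise Margulis counting only gives leading-order asymptotics with error terms, so one must either appeal to an effective form of mixing (as in \cite{MargulisMohammadiOh} or the forthcoming \cite{KW:counting}) or rephrase the count of excursions into a count of geodesic arcs running from the horospherical torus at height $h - c$ back to itself, where the Margulis bound is transparent. Either way, once the uniform upper bound is in hand, the remaining estimates are routine.
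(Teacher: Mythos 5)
Your strategy is genuinely different from the paper's, but the gap you flag in Step~3 is the crux, not a technicality you can defer. You propose the uniform upper bound $\sum_{\gamma} \mathrm{length}(\gamma \cap S) \leq C(\epsilon,\Gamma)\,\mu(S)\,e^{4R}$ for arbitrary measurable $S$, with constants independent of the height of $S$. But this is exactly the kind of cusp-uniform equidistribution estimate that fails to follow from mixing alone in the noncompact setting: the Margulis argument compares thickened geodesics to shadows of small balls, and the comparison constants degrade with the injectivity radius, which goes to zero in the cusp. In fact the whole paper is organized around precisely this failure of equidistribution in the thin part (that is why the cutoff height and the umbrellas are introduced at all), so presenting the upper bound of Step~3 as a routine consequence of "the same Margulis argument that yields \eqref{E:curvecount}" is not defensible without substantially more work. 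As a result the chain of inequalities does not close.

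The fallback you sketch at the end — rephrasing the count as geodesic arcs running from the horospherical torus back to itself — is in fact essentially what the paper does, and it sidesteps the equidistribution issue entirely. The paper takes a high curve $\gamma$, replaces it by the homotopic curve $\hat\gamma$ that minimizes length outside the $0$-horoball, and notes that $\hat\gamma$ decomposes into an outer orthogeodesic from the horosphere to itself (of length roughly $2R - 2h$) and an inner segment in the cusp. The orthogeodesics are counted by the crude \emph{a priori} packing estimate of \cite{KW:counting}, giving $O(e^{4R - 4h})$; the number of ways to close up in the cusp is a lattice-point count, giving $O(e^{2h})$; the product is $O(e^{4R - 2h})$, and dividing by $\#(\goodCurves)$ gives $Re^{-2h}$. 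No equidistribution in the cusp is needed — only a one-sided packing bound, which is robust. If you want to salvage your measure-theoretic version, you would need to prove the Step~3 inequality either via the argument just described (in which case the reformulation is doing all the work) or via an effective mixing estimate with explicit dependence on height, and the latter is not cheaper.
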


Note that when $h \leq \frac12\log R$, the lemma has no content. 

\begin{proof}
We will show the estimate holds when the height of $\gamma$ lies in $[h, h+1)$; the estimate for the total number of $\gamma$ with height at least $h$ then follows by summing the obvious geometric series. 

We will call the set of points in $M$ with height at least 0 the 0-horoball, and its boundary the 0-horosphere. 
Given any $\gamma$ with an excursion into the 0-horoball,
we let $\hat\gamma$ be the closed curve freely homotopic to $\gamma$
that minimizes its length outside of (this entry into) the 0-horoball,
and minimizes its length in the 0-horoball subject to the first constraint. 
Then $\hat \gamma$ will comprise two geodesic segments,
one leaving and then returning to the 0-horoball and meeting the 0-horosphere orthogonally (the outer part), 
and the other inside the 0-horoball. We let $v(\hat \gamma)$ and $w(\hat \gamma)$ be the entry and exit points for the 0-horoball, respectively.

\color{kwred} Fix a horosphere in $\H^3$ mapping to the closed horosphere containing the entry and exit points, and identify it with $\C$ in such a way that distance in the horosphere is the usual Euclidean distance in $\C$. Fix a preimage $v_0(\hat \gamma)$ of $v(\hat \gamma)$. Lift $\hat \gamma$ to a path through $v_0(\hat \gamma)$, and let $w_0(\hat \gamma)$ be the point where the lift exits the horoball, so $w_0(\hat \gamma)$ is a preimage of the point $w(\hat \gamma)$. 

There is a universal constant $C>0$, independent of everything, such that the height of $\hat \gamma$ is at most $C$ greater than the height of $\gamma$. Thus we see that the Euclidean distance in $\C$ between $v_0(\hat \gamma)$ and $w_0(\hat \gamma)$ is at most $Ce^h$ for some $C>1$. 

The lifts of $w(\hat \gamma)$ to the given horoball form a lattice in $\C$ (an orbit of a cusp group of $\Gamma)$. Up to translation, only finitely many lattices arise in this way, since there are only finitely many cusps. Thus we see that the number of possibilities for $w_0(\hat \gamma)$ given $v_0(\hat \gamma)$ is at most $C' e^{2h}$ for some $C'>1$. 

Each $\hat \gamma$ can be reconstructed from the outer part of $\gamma$ as well as the choice of $w_0(\hat \gamma)$. The former is an orthogeodesic between the closed horosphere and itself. \color{black} The \emph{a priori} counting estimate \new{(Lemma 2.2) } in \cite{KW:counting} gives an upper bound of $C'' e^{2(2R-2h)}$ for the number of orthogeodesics. Hence the number of $\gamma$ is at most  $C(\Gamma, \epsilon) e^{4R-4h} e^{2h}$ for some $C(\gamma, \epsilon)>0$.
 \end{proof}

\subsection{Counting geodesic connections in \texorpdfstring{$\HThree/\Lattice$}{H3/Gamma}}
\label{sec:counting connections}

Suppose that $\gamma_0$ and $\gamma_1$ are oriented closed geodesics in $\theManifold$. 
 (We could make a similar statement for general geodesics, 
  but we would have no cause to use it).
A \emph{connection} between $\gamma_0$ and $\gamma_1$ is a geodesic segment $\alpha$ 
 that meets $\gamma_0$ and $\gamma_1$ at right angles at its endpoints. 
In the case where $\gamma_0 = \gamma_1$ we will frequently call $\alpha$ a \emph{third connection}.
For each such $\alpha$ we let $n_i(\alpha)$ be the unit vector that points in toward $\alpha$
 at the point where $\alpha$ meets $\gamma_i$.
We also let $\transportAngle(\alpha)$ be the angle between the tangent vector to $\gamma_1$ 
 where it meets $\alpha$
 and the parallel transport along $\alpha$ 
  of the tangent vector to $\gamma_0$ where it meets $\alpha$, and define $\transportCL(\alpha)=l(\alpha)+i\transportAngle(\alpha)$.  
Thus given $\alpha$,
 we have a triple
 $$
 \invariants(\alpha) \equiv (n_0(\alpha), n_1(\alpha), \transportCL(\alpha))
 \in 
 \spaceOfInvariants(\gamma_0, \gamma_1) \equiv N^1(\gamma_0) \times N^1(\gamma_1) \times \C/2\pi i \Z.
 $$
 Viewing $\C/2\pi i \Z$ as $S^1\times \R$, 
we put the measure on $\spaceOfInvariants(\gamma_0, \gamma_1)$
 that is the product of Lebesgue measure on the first three coordinates of $\spaceOfInvariants(\gamma_0, \gamma_1)$
  (normalized to total measure 1)
 times $e^{2t} dt$ on $\R$. 
We also have a metric on $\spaceOfInvariants(\gamma_0, \gamma_1)$ that is just the $L^2$ norm of the distances in each coordinate. 
We then have the following theorem, \color{kwred}where $\neigh_\eta(A)$ denotes the set of  points with distance less than $\eta$ to the set $A$ and $\neigh_{-\eta}(A)$ denotes the set of points with distance greater than $\eta$ to the complement of $A$.\color{black}
\begin{theorem} \label{counting connections}
There exists $q>0$ depending on $\Gamma$ such that the following holds when $R^-$ is sufficiently large. Suppose  $A \subset \spaceOfInvariants(\gamma_0, \gamma_1)$, and let $R^-$ be the  infimum of the fourth coordinate of values in $A$. Assuming the heights of the associated points in $\gamma_0$ and $\gamma_1$ are bounded over $A$ by $qR^-$, and set $\eta = e^{-qR^-}$.  
Then the number of connections $\numberOfConnections(A)$ for $\alpha$ between $\gamma_0$ and $\gamma_1$
 that have $\invariants(\alpha) \in A$ satisfies
\[
(1 - \eta) |\neigh_{-\eta}(A)| \le {32\pi^2}{\numberOfConnections(A) |\HThree/\Gamma|} \le (1 + \eta) |\neigh_\eta(A)|.
\]
Furthermore if $\numberOfConnections^{\geq h}(A)$ denotes the number of connections with at least one intermediate cusp excursion of height at least $h$, and $R^+$ is the supremum of the fourth coordinates of values in $A$, then $\numberOfConnections^{\geq h}(A)$ is \new{at most}
\[
\new{C(\Gamma)} R^+ e^{2R^+-2h} 
\]
when $A$ is contained in a ball of unit diameter.
\newline\removed{The implicit constant can be taken to depend only on $\Gamma$.}  
\end{theorem}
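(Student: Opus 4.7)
The plan for the first inequality is the standard Margulis-style counting argument via exponential mixing of the frame flow on $\FrameHThree/\Gamma$, localized to the part of the frame bundle bounded away from the cusps. I would first rewrite $\numberOfConnections(A)$ as a correlation: a point $n_0 \in N^1(\gamma_0)$, together with the direction of $\gamma_0$, determines a frame on $\FrameHThree/\Gamma$, and a connection with invariants $(n_0, n_1, \ell + i\theta)$ corresponds exactly to the event that $g_\ell R_\theta$ (geodesic flow for time $\ell$ followed by rotation by $\theta$) carries this frame to the frame along $\gamma_1$ at $n_1$. Thickening the transversals $N^1(\gamma_i)$ by embedded product neighborhoods $U_i \subset \FrameHThree/\Gamma$ of width $\rho = e^{-qR^-/2}$ and slicing $A$ into small boxes, the count $\numberOfConnections(A)$ becomes a Riemann sum over $A$ of correlations $|U_0|^{-1}|U_1|^{-1}\int_{\FrameHThree/\Gamma} \one_{U_0}\cdot(\one_{U_1}\circ g_\ell R_\theta)$, with the normalization constant $32\pi^2$ arising from the product of angular volumes of the two circular normal bundles and the rotation parameter. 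Applying exponential mixing of the frame flow --- which for nonuniform lattices in $\PSL_2(\C)$ holds with a rate that is uniform on compact subsets of the frame bundle, in the style of Mohammadi--Oh's effective equidistribution --- equates each such correlation to the product of masses divided by $|\HThree/\Gamma|$, up to multiplicative error $O(e^{-c\ell})$. Choosing $q$ small relative to the mixing rate $c$ bounds this error by $\eta = e^{-qR^-}$, and smoothing the Riemann sum to the indicator $\one_A$ produces the sandwich inequalities with the $\nbhd_{\pm\eta}(A)$ hulls. The hypothesis that heights are $\leq qR^-$ is used precisely so that the tubes $U_i$ are embedded, have bounded $C^1$ geometry, and lie in the region of uniform exponential mixing.

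\textbf{Second bound via cusp geometry.} For $\numberOfConnections^{\geq h}(A)$ I would follow the argument from the proof of Lemma \ref{high curve count} above. Given a connection $\alpha$ with an intermediate cusp excursion $E$ reaching height $h' \geq h$, lift $\alpha$ to $\HThree$ and identify the relevant horosphere with $\C$. The entry point $v$ and exit point $w$ of $E$ lie on a translate of a cusp lattice of $\Gamma$, and $|v - w|$ is comparable to $e^{h'}$. For fixed $v$ and fixed $h' \in [h, R^+]$, the number of possibilities for $w$ is $O(e^{2h'})$. The complementary portion of $\alpha$ (from $\gamma_0$ to $v$ and from $w$ to $\gamma_1$) is an orthogeodesic configuration of total length at most $R^+ - 2h' + O(1)$, which by the a priori counting estimate of Lemma 2.2 of \cite{KW:counting} has $O(e^{2(R^+ - 2h')})$ realizations. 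Multiplying, summing the resulting geometric series in $h' \geq h$ (dominated by the $h' = h$ term), and inserting an $O(R^+)$ factor for the at most $O(R^+)$ possible locations of $E$ inside a connection of length $\leq R^+$, gives the claimed bound $C(\Gamma) R^+ e^{2R^+ - 2h}$. The assumption that $A$ sits in a ball of unit diameter pins down the remaining invariants up to bounded ambiguity, so no further multiplicative factors appear.

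\textbf{Main obstacle.} The delicate step is the first bound. The constant $q$ must balance three competing demands: (i) the $C^1$ norms incurred by smoothing $\one_{U_i}$ at scale $\rho$ should not overwhelm the mixing gain over length $\ell \geq R^-$; (ii) the tubes $U_i$ must be embedded at heights $\leq qR^-$, which requires $\rho \ll e^{-qR^-}$; and (iii) the exponential mixing rate, uniform on the thick part, must still yield an error $\leq e^{-qR^-}$ for geodesics whose geometry lives at height $\leq qR^-$. This is the step at which the noncompactness of $\HThree/\Gamma$ genuinely interacts with the counting; once $q$ is fixed in terms of the intrinsic mixing rate of $\Gamma$, everything else is a routine geometric reduction.
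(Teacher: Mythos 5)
The paper does not actually prove this theorem internally: the first bound is cited as Theorem 3.11 of \cite{KW:counting}, and the second is explicitly omitted as ``quite similar to'' Lemma \ref{high curve count}. Your reconstruction follows exactly the strategy the paper points to, and both halves are sound. For the first bound, the Margulis-style scheme --- thicken the transversals $N^1(\gamma_i)$ to embedded tubes, express $\numberOfConnections(A)$ as a Riemann sum of frame-flow correlations, and invoke exponential mixing --- is what the companion paper carries out, and your ``main obstacle'' paragraph correctly identifies the three-way tension (tube width, embeddedness at height $\leq qR^-$, and the $C^1$-norm cost of smoothing) that forces the particular form of the constant $q$. One phrasing to tighten: exponential mixing from the spectral gap holds globally, not merely ``uniformly on compact subsets''; what actually degrades near the cusps is the geometry of the thickened transversals (injectivity radius, hence embeddedness and the measure of the tubes), which you in fact say correctly in the next paragraph. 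Also $N^1(\gamma_i)$ is a $2$-torus, not a circle, so the verbal derivation of $32\pi^2$ is not quite the accounting you describe, though that constant is imported from \cite{KW:counting} in any case.

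For the second bound, your adaptation of Lemma \ref{high curve count} is correct: bin the first intermediate excursion by height $h' \in [h'', h''+1)$, count $O(e^{2h'})$ choices of exit point given the entry point, apply the a priori orthogeodesic count $O(e^{2(R^+-2h')})$ to the complementary portion, and sum the geometric series in $h'$. The $O(R^+)$ factor is exactly right; it is cleanest to see it as the sum over the length split $\ell_1+\ell_2 \approx R^+-2h'$ between the pre- and post-excursion segments when applying the a priori estimate to each piece, which is the same bookkeeping as your ``possible locations of $E$''. The unit-diameter hypothesis on $A$ pins the endpoint frames on $\gamma_0,\gamma_1$ to within bounded ambiguity, as you say, so no further volume factors enter.
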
 

The first part of the Theorem appears as Theorem 3.11 in  \cite{KW:counting};
the proof of the second part is quite similar to that of Lemma \ref{high curve count}, 
and is omitted. 

\subsection{Counting pants} 
By considering Theorem \ref{counting connections} with  $\gamma_0 = \gamma_1 = \gamma$, we can derive a count for the number of good pants that have $\gamma$ as a boundary.  

 There is a two-to-one correspondence between \emph{oriented} third connections $\alpha$
  (from $\gamma$ to itself)
 and unoriented pants with $\gamma$ as boundary. 
 (We need to orient $\alpha$ in order to distinguish $n_0(\alpha)$
  from $n_1(\alpha)$).
  Define $\spaceOfInvariants(\gamma) = \spaceOfInvariants(\gamma, \gamma)$. 
  We first describe how to determine in terms of 
  $$ \invariants(\alpha) = (n_0(\alpha), n_1(\alpha), \transportCL(\alpha))\in \spaceOfInvariants(\gamma)$$
 whether the corresponding pants is $(R, \epsilon)$-good.
 
 Suppose that $\alpha_1, \alpha_2$ are oriented geodesic arcs in a hyperbolic three manifold that meet at right angles at both of their endpoints, forming a bigon. 
 \new{(We assume that the start point of $\alpha_2$ is the end point of $\alpha_1$, and vice versa). }
 Let $e_1$ be the complex distance between two lifts of $\alpha_2$ that are joined by a lift of $\alpha_1$\new{---\emph{where we reverse the orientation of one of the lifts of $\alpha_2$---}}%
and similarly define $e_2$, so $e_1$ and $e_2$ lie in $\C/2\pi i \Z$ and have positive real part.
\new{We reverse the orientation of one of the lifts 
so that if $\alpha_1$ and $\alpha_2$ lie on a geodesic subsurface (so the lifts lie in a hyperbolic plane),
and we make right turns within that subsurface going from $\alpha_1$ to $\alpha_2$ and back,
then $e_1$ and $e_2$ have real representatives in $\C/2\pi i \Z$.}

 Let $\gamma'$ be the closed geodesic homotopic to the concatenation of $\alpha_1$ and $\alpha_2$ at both meeting points. Then the complex length $\length(\gamma')$ of $\gamma'$ is a function $h$ of $e_1$ and $e_2$. This function can be computed explicitly using hyperbolic geometry, giving the estimate 
$$h(e_1, e_2)=e_1+e_2-\new{2}\log(2)+O(e^{-\min(\Re(e_1), \Re(e_2))}).$$
The same estimate in the two dimensional case was used in \cite[Section 3.3]{KM:Ehrenpreis}. 

\new{Let $\gamma$ be an oriented closed geodesic. 
An oriented connection $\alpha$ from $\gamma$ to itself subdivides $\gamma$ into two  arcs $\gamma_1$ and $\gamma_2$,  
and we then orient both $\gamma_i$ so that they start and the end point of $\alpha$ and end at the start point of $\alpha$. }
Both $\gamma_i$ meet $\alpha$ at each of their endpoints at right angles. 
If we set $\alpha_1=\alpha$ and $\alpha_2=\gamma_1$, then in the notation above we have 
$$e_2\equiv n_1(\alpha)-n_0(\alpha)\in  \normalGroup \gamma;$$
\new{we let $u_1$ be the the lift of $n_1(\alpha) - n_0(\alpha)$ to $\C/(2\pi i \Z)$ with real part between $0$ and  $\Re \length(\gamma)$. }
We also have $e_1=\transportCL(\alpha)\new{-i\pi}$. 
\new{(Thus $e_1$ is real in the case of a third connection for a Fuchsian group.) }
Thus the length of one of the new cuffs of the pants with $\alpha$ as the third connection
 is $$h(e_1, e_2) = h(\transportCL(\alpha)\new{-i\pi}, \new{u_1}).$$
\new{Likewise the length of the other new cuff is $h(\transportCL(\alpha) - i\pi, u_2)$,
where $u_2$ is the appropriate lift of $n_0(\alpha) - n_1(\alpha)$.
We observe that $u_1 + u_2 = \length(\gamma)$.} 
  
   
We define $ \mathbb{J}_{\epsilon, R}(\gamma) \subset \spaceOfInvariants(\gamma)$ to be the set of parameters for which these computations and the $h$ function show that the pants obtained from $\alpha$ and $\gamma$  have complex lengths of all cuffs within $2\epsilon$ of $2R$: it is the subset of $(n_0, n_1, \transportCL \new{-i\pi})\in \spaceOfInvariants(\gamma)$ for which 
$$|h(\new{u_1}, \transportCL \new{-i\pi}) - 2R| < 2\epsilon \quad\text{and}\quad |h(\new{u_2}, \transportCL \new{-i\pi}) - 2R| < 2\epsilon,$$%
\new{where $u_1$ and $u_2$ are defined in terms of $n_0$ and $n_1$ as above. }
 The set of solutions in $\C/2\pi i\Z$ to 
 $$|\new{u_1 + w \new{-i\pi} -2\log(2)}-2R|<2\epsilon\quad\text{and}\quad |\new{\length(\gamma) - u_1 + w \new{-i\pi}-2\log(2)}-2R|<2\epsilon$$
has two components, \new{one in which $u_1$ has imaginary part close to $0$ and $w$ has imaginary part close to $\pi$, and the other in which the roles are reversed}. Similarly, $\mathbb{J}_{\epsilon, R}$ has two components. One corresponds to good pants, and the other corresponds to pants with a cuff with half length that is $i\pi$ off from good. 
 Define $\goodInvariants \gamma$ to be the component corresponding to good pants.

Since  the unit normal bundle $N^1(\gamma)$ is a  torsor
 for $\normalGroup \gamma$, the map $n \mapsto n + \hll(\gamma)$ is an involution of $N^1(\gamma)$. 
We denote by $\rootnormal$ the quotient of $N^1(\gamma)$ by this involution. We can think of $\rootnormal$ as the ``square root'' of the normal bundle of $\gamma$. It  is a torsor for $\normalHalfGroup \gamma$.

 We let $ \ogoodPants$ be the set of all \emph{oriented} $(R, \epsilon)$-good pants in $\lattice$, and $\ogoodPants(\gamma)$ be the set of pants in $\ogoodPants$ for which $\gamma$ is a cuff.  We let $\oogoodPants(\gamma)$ be the set of pants $P$ in $\ogoodPants(\gamma)$, along with a choice of orientation of the third connection for $P$. For any $P \in \oogoodPants(\gamma)$, we define $ \invariants(P)\in \spaceOfInvariants_{\new{\epsilon, R}}(\gamma)$ to be the invariants of the associated third connection for $\gamma$.   
   
\new{Given two normal vectors $n_0, n_1 \in N^1(\gamma)$, 
there are four possible values in $N^1(\gamma)$ for $(n_0 + n_1)/2 \equiv n_0 + (n_1 - n_0)/2$,
and therefore two possible values for the same expression in $\rootnormal$. 
When $\alpha \in \goodInvariants\gamma$,
the elements $n_1 -n_0$ and $n_0 - n_1$ of  $\normalGroup \gamma$ both have lifts to $\C$ that are close to $R$.
Then of the four possible values for $(n_0 + n_1)/2$,
we can find two values $p$ and $q$ such that $p - n_0$ and $q - n_1$ have lifts to $\C$ that are close to $R/2$.
We then have $p - q = \hll(\gamma)$,
and therefore $p$ and $q$ define the same point in $\rootnormal$;
we let $(n_0 + n_1)/2$ denote this point (in $\rootnormal$) in this case. 
Thinking of $\alpha$ as a third connection for $\gamma$ determined by a pants $P \in \oogoodPants(\gamma)$,
and letting $\invariants(P)  = (n_0, n_1, w)$, 
we observe that $p$ and $q$ are the feet of the two \emph{short} orthogeodesics in $P$ to $\gamma$. }
We may then define  $\pi\from \goodInvariants\gamma \to \rootnormal$ 
 by $$\pi(n_0, n_1, w) = (n_0 + n_1)/2.$$

 
We let $u = \pi \following \invariants$. We observe that reversing the orientation of $\alpha$ only reverses the order of $n_0$ and $n_1$ in $\invariants(\alpha)$, without changing $\transportCL(\alpha)$, and hence $u$ is well-defined on $\ogoodPants$.

We can now state and prove the following theorem:
\newcommand{\joe}{C_{\textrm{count}}}
\begin{theorem} \label{counting pants}
There exist positive constants $q, s$ depending on $\Gamma$ such that for any $\epsilon>0$ the following holds when $R$ is sufficiently large. Let $\gamma$ be an $(R, \epsilon)$-good curve that goes at most $sR$ into the cusp. If $B \subset \rootnormal$,
then 
\begin{align*}
(1-\delta)\Vol(\nbhd_{-\delta}(B))
 &\le \frac{ \# \{ P \in \ogoodPants(\gamma) \mid u(P) \in B \}}
  {\joe(\epsilon)\epsilon^{\color{kwred}4 \color{black}}  e^{\new{4R-l(\gamma)}}/|\HThree/\Gamma|}\!\!\!
&\le (1+\delta)\Vol(\nbhd_\delta(B)),
\end{align*}
where $\delta = e^{-qR}$ and \new{$\joe(\epsilon) \to 1$ as $\epsilon \to 0$}. 
\end{theorem}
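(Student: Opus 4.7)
The strategy is to express the pants count as a count of oriented third connections from $\gamma$ to itself and invoke Theorem \ref{counting connections}. Set $A = \pi^{-1}(B) \cap \goodInvariants\gamma \subset \spaceOfInvariants(\gamma)$. Each pants in $\ogoodPants(\gamma)$ has a unique simple third connection between the two other cuffs, extending to a third connection from $\gamma$ to itself, and its two orientations yield a two-to-one correspondence between oriented third connections $\alpha$ with $\invariants(\alpha) \in \goodInvariants\gamma$ and elements of $\oogoodPants(\gamma) = \ogoodPants(\gamma) \times \{\pm\}$. Since $u(P) = \pi(\invariants(\alpha))$ is independent of the orientation, one obtains
\[
\#\{P \in \ogoodPants(\gamma) : u(P) \in B\} = \tfrac12 \numberOfConnections(A).
\]

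I then apply Theorem \ref{counting connections} with $\gamma_0 = \gamma_1 = \gamma$. The goodness conditions together with the estimate $h(e_1, e_2) = e_1 + e_2 - 2\log 2 + O(e^{-R})$ pin the fourth coordinate $t = \Re w$ of points of $A$ to within $O(\epsilon)$ of $2R + 2\log 2 - \Re\hll(\gamma)$, which is comparable to $R$. Thus $R^- \asymp R$, and choosing $s$ sufficiently small relative to the constant $q$ of Theorem \ref{counting connections} ensures the height of $\gamma$ is bounded by $qR^-$. Theorem \ref{counting connections} then yields
\[
\numberOfConnections(A) = (1 + O(e^{-qR}))\, |A|\,/\,(32\pi^2\,|\HThree/\Gamma|).
\]

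The heart of the proof is the volume estimate for $|A|$. The same estimate for $h$ reduces the pair of inequalities defining $\goodInvariants\gamma$ to two nearly independent complex-disk constraints: $n_1 - n_0$ lies within $O(\epsilon)$ of $\hll(\gamma)$ in $N^1(\gamma)$, and $w - i\pi$ lies within $O(\epsilon)$ of $2R + 2\log 2 - \hll(\gamma)$ in $\C/(2\pi i \Z)$. Changing variables $(n_0, n_1) \mapsto (u_{\mathrm{pre}}, v) := ((n_0+n_1)/2,\, n_1 - n_0)$ has unit Jacobian, and Fubini then decomposes $|A|$ as a product of three factors: $2\,\mathrm{Vol}(B)$, coming from pulling $B \subset N^1(\sqrt\gamma)$ back under the degree-two cover $\pi\colon N^1(\gamma) \to N^1(\sqrt\gamma)$; the normalized area $O(\epsilon^2)$ of the good $v$-disk; and the $e^{2t}d\theta\,dt$-weighted area of the good $w$-disk, which contributes $O(\epsilon^2)\cdot 16\,e^{4R - \Re l(\gamma)}$ since $2\Re w \approx 4R - \Re l(\gamma) + 4\log 2$. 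Collecting everything gives $|A| = c(\epsilon)\,\mathrm{Vol}(B)\,\epsilon^4\,e^{4R - l(\gamma)}$ with $c(\epsilon)$ tending to a positive constant as $\epsilon \to 0$. Inserting this into the previous step and defining $\joe(\epsilon)$ to absorb $c(\epsilon)/(64\pi^2)$ yields the stated estimate, with $\delta = e^{-q'R}$ for some $q' \le q$; the relationship between $\nbhd_{\pm\eta}(A)$ in $\spaceOfInvariants(\gamma)$ and $\nbhd_{\pm\delta}(B)$ in $N^1(\sqrt\gamma)$ follows from the Lipschitz behavior of $\pi$ on the region where the goodness constraints nearly hold.

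The main obstacle is the bookkeeping of numerical constants: tracking the normalization $(2\pi l(\gamma))^2 \cdot 2\pi$ on $\spaceOfInvariants(\gamma)$, the degree-two factor from $\pi$, the Jacobian of the linear change of variables, the $\pi\epsilon^2$ areas of the two good disks, and the prefactor $32\pi^2$ in Theorem \ref{counting connections}, so as to confirm $\joe(\epsilon) \to 1$ as $\epsilon \to 0$. A secondary technical point is verifying the Lipschitz control of $\pi$ on $\goodInvariants\gamma$ needed to convert thickenings of $A$ to thickenings of $B$; this amounts to inverting the map that reconstructs $(n_0, n_1)$ from $u$ and $v$, whose derivatives are bounded uniformly on the good locus, so the constant in $\delta$ can indeed be absorbed.
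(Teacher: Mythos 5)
Your proposal matches the paper's proof in structure and substance: the same reduction via the two-to-one map $\oogoodPants(\gamma)\to\ogoodPants(\gamma)$ (giving the factor $1/2$), application of Theorem~\ref{counting connections} to $A=\pi^{-1}(B)\cap\goodInvariants\gamma$, the measure-preserving change of variables $(n_0,n_1,w)\mapsto((n_0+n_1)/2,\,n_1-n_0,\,w)$ followed by Fubini, the $e^{2\Re w}\approx 16\,e^{4R-l(\gamma)}$ weight on the $w$-slice, and conversion of $\eta$-thickenings of $A$ into $\delta$-thickenings of $B$ via Lipschitz control of $\pi$. The one small expository imprecision is describing the good locus in $(v,w)$ as a product of two disks: the paper identifies it (after translating) as a ``round diamond'' $\{|v+w|<2\epsilon,\,|v-w|<2\epsilon\}$, a product only in the rotated coordinates $v\pm w$, which does not change the $\epsilon^4$ order or the existence of the limit of $\joe(\epsilon)$ but does matter for computing that limit exactly.
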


\begin{proof}
We have
\begin{equation} \label{o oo}
 \#\{  P\in \ogoodPants(\gamma) \mid u(P) \in B \} = \frac12\,  \#\{  P\in \oogoodPants(\gamma) \mid u(P) \in B \}.
\end{equation}
By definition (and abuse of notation), 
$$  \{  P\in \oogoodPants(\gamma) \mid u(P) \in B \} =  \{ \alpha \mid \invariants(\alpha) \in A \},$$
where $A=\pi^{-1}(B)\cap \goodInvariants \gamma$, and in the second set $\alpha$ is the third connection associated to $P$.
By Theorem \ref{counting connections},  
\begin{equation} \label{eta estimate}
(1 - \eta) |\neigh_{-\eta}(A)| \le {32\pi^2}{\numberOfConnections(A) |\HThree/\Gamma|} \le (1 + \eta) |\neigh_\eta(A)|
\end{equation}
\new{where $\eta = Ce^{-q_{\ref{counting connections}}R}$ for some universal $C$ and $q_{\ref{counting connections}}$ is the constant formerly known as $q$ in Theorem \ref{counting connections}. }
\new{We now need only bound $|\neigh_{\pm\eta}(A)|$ in terms of $|\neigh_{\pm\eta}(B)|$;
to first approximation, 
we simply want to compute $|A|$ in terms of $|B|$. }

\new{We define
$$
\rho \from \goodInvariants \gamma \to \rootnormal \times  \normalGroup \gamma \times \C/2\pi i \Z
$$
by
$$\rho:(n_0, n_1, \transportCL)\mapsto ((n_0+n_1)/2, n_1-n_0, \transportCL).$$
We write this map as $\rho=(\pi, \rho_0)$, where $\rho_0(n_0, n_1,\transportCL)=(n_1-n_0, \transportCL)$. }

Note also that since $\goodInvariants \gamma$ is defined only in terms of $n_1-n_0$ and $\transportCL$, we have 
$$ \rho(\goodInvariants \gamma )= \rootnormal \times \goodInvariantsZ \gamma ,$$
where $\goodInvariantsZ \gamma$ is one component of the set of pairs $$(v, \transportCL)\in \normalGroup \gamma \times \C/2\pi i \Z$$ for which 
 $$|h(v,\transportCL-i\pi)-2R|<2\epsilon\quad\text{and}\quad |h(-v,\transportCL-i\pi)-2R|<2\epsilon.$$
\new{Moreover $\rho$ is a 2 to 1 cover, whose fibers come from the two ways of ordering the unordered pair $\{n_0, n_1\}$.
We also observe that $\rho$ is locally measure-preserving between the measure given on $\spaceOfInvariants(\gamma)$ in Section \ref{sec:counting connections}, and the Euclidean measure for its codomain multiplied by $e^{2 \Re w}$. }

\begin{enew}
Since
\begin{equation*}
A = \rho^{-1}(B \times \goodInvariantsZ \gamma)
\end{equation*}
and $\rho$ is a locally measure-preserving 2 to 1 cover, 
we have 
\begin{equation}  \label{A B}
|A| =  2 |B|  |\goodInvariantsZ \gamma|.
\end{equation}
So we'd like to make a good estimate of $|\goodInvariantsZ \gamma|$ in terms of $\epsilon$ and $R$. 
 

Let $T_{\epsilon, R}$ be the set of $(v,\transportCL)\in \normalGroup \gamma \times \C/2\pi i \Z$ for which  
$$|v+\transportCL - i \pi -2\log(2) - 2R| < 2\epsilon \quad\text{and}\quad  | l(\gamma)-v + \transportCL -i\pi -2\log(2) - 2R| < 2\epsilon,$$%
where as usual we mean that for each inequality there exists lifts of $v,w$ to $\C$ satisfying the inequality. There are two components of $T_{\epsilon, R}$. 
Let $S_{\epsilon, R}$ denote the component approximating $\goodInvariantsZ \gamma$. Note that $S_{\epsilon, R}$ depends on $R$ only very weakly, since changing $R$ merely translates $S_{\epsilon, R}$. 
In particular, 
\begin{equation} \label{S C_1}
|S_{\epsilon, R}| = C_1(\epsilon) e^{4R - l(\gamma) + 4 \log2} = 16 C_1(\epsilon) e^{4R - l(\gamma)}
\end{equation}
where $C_1(\epsilon)$ is the integral of $|dv| |dw| e^{2 Re w}$ over the ``round diamond'' given by $|v +w|, |v-w| < 2\epsilon$ (and $|dv|$ and $|dw|$ are complex area integration terms, and hence each two real dimensional). 
It is not too hard to see that $C_1(\epsilon) \sim 4 \pi^2\epsilon^{4}$ as $\epsilon \to 0$. 
\end{enew}

We have 
$\rho_0(\nbhd_\delta(A)) \subset  \nbhd_{\delta+O(e^{-R})}(S_{\epsilon, R}).$ 
By assuming $R$ is sufficiently large, we get $\rho_0(\nbhd_\delta(A)) \subset \nbhd_{2\delta}(S_{\epsilon, R})$, 
and hence 
\begin{equation} \label{rho 1}
\rho(\nbhd_\delta(A)) \subset \nbhd_{\delta}(B) \times \nbhd_{2\delta}(S_{\epsilon, R}).
\end{equation}
Similarly we have 
\begin{equation} \label{rho 2}
\nbhd_{-2\delta}(B)\times \nbhd_{-2\delta}(S_{\epsilon, R})\subset \rho(\nbhd_{-\delta}(A)).
\end{equation}
\removed{Let $C(\epsilon)/2$ denote $\Vol(S_{\epsilon, R})$.} 
\newline
\removed{(The division by 2 here will cancel a factor of two coming from the 2 fold cover.) } 
\newline
Using the fact that $S_{\epsilon, R}$ has diameter bounded independently of $R$, 
assuming $\delta$ is small enough 
(which is to say $R$ is large enough) 
we  can assume that 
\begin{equation} \label{S bounds}
\Vol(\nbhd_{2\delta}(S_{\epsilon, R}))/\Vol(\nbhd_{-2\delta}(S_{\epsilon, R})) = 1+ \new{C\delta/\epsilon}.
\end{equation}

\new{%
Combining \eqref{o oo}, \eqref{eta estimate}, \eqref{A B}, \eqref{S C_1}, \eqref{rho 1}, \eqref{rho 2}, and \eqref{S bounds}, 
letting $$\joe(\epsilon) = \frac{16C_1(\epsilon)}{32\pi^2\epsilon^4},$$
letting $q$ be any positive number less than $q_{\ref{counting connections}}$,
and letting $R$ be sufficiently large (given $\epsilon$), 
we obtain the Theorem,
because all multiplicative constants will be absorbed by the large value of $\delta/\eta$.
\end{proof}
}


Directly combining Theorem \ref{counting pants} and the second part of Theorem \ref{counting connections}, we get the following. 

\begin{corollary} \label{counting from one endpoint}
In the same situation as in Theorem \ref{counting pants}, if $B$ is bounded then there are at least 
$$(1-\delta)\Vol(\nbhd_{-\delta}(B))e^{2R} \color{kwred}\epsilon^4/C\color{black} - \new{C(\Gamma)} Re^{2R-2h}$$%
such good pants $\alpha$ where the third connection does not have an intermediate cusp excursion of height at least $h$.
\end{corollary}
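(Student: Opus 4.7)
The plan is to apply Theorem \ref{counting pants} to obtain a lower bound on the total count of relevant good pants, and subtract an upper bound on the ``bad'' ones—those whose third connection makes an intermediate cusp excursion of height at least $h$—derived from the second part of Theorem \ref{counting connections}.

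First, I would invoke Theorem \ref{counting pants} on the set $B$. Because $\gamma$ is $(R,\epsilon)$-good, $|l(\gamma) - 2R| < \epsilon$, so $e^{4R - l(\gamma)}$ is comparable to $e^{2R}$ up to a factor of $e^{\pm\epsilon}$; the constants $\joe(\epsilon)$, $|\HThree/\Gamma|$, and this $e^{\pm\epsilon}$ factor may all be absorbed into a single $C = C(\Gamma,\epsilon)$, providing the lower bound
\[
\#\{P \in \ogoodPants(\gamma) : u(P) \in B\} \;\ge\; (1-\delta)\,\Vol(\nbhd_{-\delta}(B))\, e^{2R}\epsilon^4/C.
\]

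Next, I would upper bound the number of $P \in \ogoodPants(\gamma)$ with $u(P)\in B$ whose third connection has an intermediate cusp excursion of height $\ge h$. Recall from the proof of Theorem \ref{counting pants} that oriented good pants correspond $2$-to-$1$ to oriented third connections via $P\mapsto\alpha$, so it suffices to count oriented third connections $\alpha$ with $\invariants(\alpha) \in A := \pi^{-1}(B) \cap \goodInvariants{\gamma}$ having such an excursion, and then divide by $2$. Since $B$ is bounded and $\goodInvariantsZ{\gamma}$ has bounded diameter in its non-length coordinates (its length coordinate $w$ being confined to a bounded neighborhood of $R$), the set $A$ has diameter bounded independently of $R$. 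Cover $A$ by an $O(1)$ number of unit-diameter balls and apply the second part of Theorem \ref{counting connections} to each piece; since the supremum of the fourth coordinate over $A$ satisfies $R^+ \le R + O(\epsilon)$, each ball contributes at most $C(\Gamma)\, R\, e^{2R - 2h}$ connections. Summing and dividing by $2$ yields an upper bound of the form $C'(\Gamma)\, R\, e^{2R - 2h}$ for the number of bad oriented pants.

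Subtracting the second estimate from the first produces the claimed inequality. The hypothesis of Theorem \ref{counting connections} that the heights on $\gamma_0=\gamma_1=\gamma$ be bounded by $qR^-$ is met by choosing $s$ in Theorem \ref{counting pants} strictly less than $q$, using that $R^-$ is of the same order as $R$ on $\goodInvariants{\gamma}$. The main obstacle is therefore not any new geometric input but the bookkeeping: tracking the factor of $2$ between oriented pants and oriented third connections, replacing $e^{4R-l(\gamma)}$ by $e^{2R}$, packaging the $O(1)$ covering constants into $C(\Gamma)$, and checking the compatibility of the height hypotheses—none of which present real difficulty once both theorems are in hand.
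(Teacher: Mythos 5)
Your proof is correct and follows exactly the route the paper indicates (the paper gives no explicit proof, only the remark that the corollary follows by "directly combining Theorem \ref{counting pants} and the second part of Theorem \ref{counting connections}"). Your fleshing-out — lower-bounding the total count by Theorem \ref{counting pants} with $e^{4R-l(\gamma)}\asymp e^{2R}$ and $\joe(\epsilon),|\HThree/\Gamma|$ absorbed into $C$, upper-bounding the bad pants via the bad-excursion count for third connections, tracking the oriented-pants-to-oriented-third-connections factor, and verifying the height hypothesis via the choice of $s$ — is precisely the intended argument.
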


\subsection{Hamster wheels} \label{low hamster wheels}
We will not require an exact count of the number of good hamster wheels, but we will need to know that a great many exist. Motivated by Section \ref{generating hamsters}, we begin with the following consequence of Theorem \ref{counting connections}.

\begin{lemma}\label{counting rungs}
Let $s$  be as given by Theorem \new{\ref{counting pants}}.

Suppose that $\gamma_l, \gamma_r$ are closed geodesics that go at most $sR$ into the cusp, and $p\in N(\gamma_l), q \in N(\gamma_r)$. Let $\mathbf n$ denote the number of orthogeodesic connections $\lambda$ from $\gamma_l$ to $\gamma_r$ whose feet lie within $\e/(10R)$ of $p,q$, for which 
    $$
    |\mathbf d_{\lambda}(\gamma_l, \gamma_r) - (R - 2 \log \sinh 1)| < \epsilon/(10R),
    $$
    and which do not  have an intermediate cusp excursion of height at least $h$. 
For $\e$ fixed, $R$ large enough, and $h\geq \HWUnnecessaryExcursions$, then $\mathbf{n}$ is at least a constant times $R^{-6} e^{2R}$.
\end{lemma}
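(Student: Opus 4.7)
The plan is to apply Theorem \ref{counting connections} directly to $(\gamma_l,\gamma_r)$ and the set
\[
A = \{(n_0,n_1,w)\in\spaceOfInvariants(\gamma_l,\gamma_r) : |n_0 - p|,\ |n_1 - q|,\ |w - (R-2\log\sinh 1)| < \epsilon/(10R)\},
\]
which is precisely the set of invariants $\invariants(\lambda)$ of the connections $\lambda$ counted by $\mathbf n$. To apply the theorem I first need to verify its hypotheses: the infimum $R^-$ of the fourth coordinate on $A$ is $R - 2\log\sinh 1 - \epsilon/(10R) \sim R$, so $R^-$ is large; the diameter of $A$ is $O(\epsilon/R)$, so $A$ lies in a unit ball once $R$ is large; and, after possibly shrinking $s$ so that $s$ is no larger than the constant $q$ of Theorem \ref{counting connections}, the height bound $sR$ on the $\gamma_i$ satisfies $sR \le qR^-$.

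I would then estimate $|A|$ using the measure defined in Section \ref{sec:counting connections}: each of the two feet constraints contributes order $(\epsilon/R)^2$ from the normalized Lebesgue measure on $N^1(\gamma_i)$; the $S^1$-part of the $w$-constraint contributes order $\epsilon/R$; and the $\R$-part, weighted by $e^{2t}\,dt$ on a length-$\epsilon/(5R)$ interval centered near $t = R$, contributes order $(\epsilon/R)\,e^{2R}$. Because $\eta = e^{-qR^-}$ is exponentially small while the diameter of $A$ is only polynomially small in $R$, the neighborhood $\nbhd_{-\eta}(A)$ agrees with $A$ to leading order, and the first half of Theorem \ref{counting connections} gives
\[
\mathbf n(A) \ \ge\ C(\epsilon,\Gamma)\,R^{-6}\,e^{2R}
\]
for $R$ large.

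Finally, I would subtract off those $\lambda$ with $\invariants(\lambda) \in A$ that have an intermediate cusp excursion of height at least $h$. The second half of Theorem \ref{counting connections} (applicable since $A$ has diameter less than $1$) bounds their number by $C(\Gamma)\,R^+ e^{2R^+ - 2h}$; with $R^+ \sim R$ and $h \ge \HWUnnecessaryExcursions = 4\log R$ so that $e^{-2h} \le R^{-8}$, this is at most $C(\Gamma)\,R^{-7}\,e^{2R}$, strictly smaller than the lower bound above. Subtracting yields the claim. The one point requiring a moment of care---though it is not really an obstacle---is confirming that the factor $e^{-2h}$ from the cusp-excursion estimate, together with the prefactor $R^+ \sim R$, is enough to beat the polynomial $R^{-6}$ coming from the main term; this is exactly why the threshold $\HWUnnecessaryExcursions = 4\log R$ is chosen.
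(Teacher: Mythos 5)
Your proposal is correct and follows essentially the same approach as the paper's own proof. Both arguments apply the two halves of Theorem \ref{counting connections} directly to the set $A$ of invariants cut out by the foot and length constraints, compute $|A| \asymp (\epsilon/R)^6 e^{2R}$, and observe that the threshold $h \geq 4\log R$ makes the correction $O(Re^{2R-2h})$ subordinate to the main term; you have merely spelled out the measure computation and hypothesis verification that the paper leaves implicit.
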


\begin{proof}
The restrictions on $\lambda$ define a subset $A \subset \spaceOfInvariants(\gamma_l, \gamma_r)$ of measure a constant times $(\epsilon/10R)^6 e^{2R}$. It follows from the first part of Theorem \ref{counting connections} that the number of $\lambda$ satisfying all but the final restriction (no unnecessary cusp excursions) is bounded above and below by a constant times $(\epsilon/10R)^6 e^{2R}$. 

By the second part of Theorem \ref{counting connections}, the total number of $\lambda$ that unescessarily go at least height $h$ into the cusp is $O( R e^{2R-2h})$. When $h$ is large enough for this quantity to be $o(R^{-6} e^{2R})$ we get the result.
\end{proof}


\new{\begin{corollary} \label{better rungs}
If we replace $10R$ with $3R$ where it appears in Lemma \ref{counting rungs},
we can obtain the same estimate (with a different implicit multiplicative constant),
and furthermore require that $\lambda$ has no immediate excursion into the cusp of height more than $\log R + C(\epsilon, M)$ more than the height of the corresponding endpoint of $\lambda$. 
\end{corollary}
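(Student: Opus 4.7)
The first assertion, replacing $10R$ by $3R$, is cosmetic. Rerunning the proof of Lemma \ref{counting rungs} with $\epsilon/3R$ in place of $\epsilon/10R$ simply shrinks the admissible set $A \subset \spaceOfInvariants(\gamma_l,\gamma_r)$ in each of its six real coordinates by a factor of $3/10$, so the measure of $A$ is scaled by the constant $(3/10)^6$. Theorem \ref{counting connections} then still delivers the lower bound $\mathbf{n} \gtrsim R^{-6} e^{2R}$, with the implicit constant rescaled accordingly.

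To incorporate the additional constraint, I will bound the number of \emph{bad} connections: those satisfying the constraints from the first assertion but having an immediate (initial or terminal) cusp excursion reaching height more than $\log R + C(\epsilon,M)$ above the height of the corresponding endpoint. By symmetry it suffices to bound bad excursions near $\gamma_l$; doubling the bound then covers $\gamma_r$.

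The key geometric input is the standard cusp estimate: modelling a maximal cusp of $M$ by the upper half space with cusp at $\infty$, a geodesic issuing from a point of height $h_p$ whose subsequent trajectory reaches maximum height $h_p+t$ must have initial direction within angle $O(e^{-t})$ of the cusp-inward ``vertical'' direction, where the implicit constant depends only on the horoball structure of $M$. (If $h_p<0$, so the endpoint sits outside the cusp, a preliminary traverse to the cusp mouth only tightens the angular bound.) Thus a bad excursion near $\gamma_l$ forces the foot of $\lambda$ at $\gamma_l$ to lie in an angular strip of $N(\gamma_l)$ of width $O_M(e^{-C}/R)$ around the cusp-inward direction.

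Intersecting this strip with the admissible angular range for the $n_0$-coordinate in $A$ (of width $2\epsilon/3R$), the subset of $A$ giving bad connections has measure at most $O_M(e^{-C}/\epsilon) \cdot |A|$; this ratio is uniform in the position of $p$, because even when $p$ itself points nearly vertically the bad strip contributes at most its full length $O(e^{-C}/R)$ inside the $\epsilon/3R$-neighborhood. Choosing $C(\epsilon,M)$ large enough that this ratio is at most $1/4$, Theorem \ref{counting connections} gives that the number of bad connections near $\gamma_l$ is at most $\mathbf{n}/4$, and after doubling for $\gamma_r$ the total bad count is at most $\mathbf{n}/2$. Subtracting yields a good count of at least $\mathbf{n}/2 \gtrsim R^{-6}e^{2R}$. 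The main subtlety, and the step requiring the most care, is verifying the angular estimate uniformly over all positions of the endpoint on $\gamma_l$, including those far from the cusp or near multiple cusp entrances of $\gamma_l$; this reduces to the local analysis in a single cusp chart, since the angular constraint for a \emph{specific} deep excursion is determined purely by the direction relative to the cusp that excursion enters.
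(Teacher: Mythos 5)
Your proof is correct, but it takes a genuinely different route from the paper's. The paper's proof is shorter and more constructive: given $p$, by Remark~\ref{R:triv} one can find $p'$ with $|p-p'| < \epsilon/3R$ such that \emph{every} ray issuing from a vector within $\epsilon/10R$ of $p'$ rises at most $\log R + C(\epsilon)$, because $p'$ is chosen tilted a bit more than $\epsilon/10R + e^{-C}/R$ away from vertical but still within $\epsilon/3R$ of $p$. One then applies Lemma~\ref{counting rungs} verbatim to $p', q'$; all the connections it produces automatically satisfy the excursion bound, and their feet still lie within $\epsilon/3R$ of the original $p, q$. Your approach instead keeps the target points $p, q$ fixed, counts everything with feet within $\epsilon/3R$ of $p, q$, and then \emph{subtracts} an upper bound on the bad connections by estimating the measure of the near-vertical strip and invoking the upper-bound half of Theorem~\ref{counting connections}. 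Both arguments rest on the same angular estimate (Remark~\ref{R:triv}) and both are valid; the paper's version is cleaner because it never needs an upper-bound count for the bad set (and so sidesteps the verification that the $\eta$-neighborhood of the bad strip is still small and that the bad strip stays one-dimensionally thin uniformly over base points -- points you correctly flag as requiring care). Two small slips in your write-up: replacing $10R$ by $3R$ \emph{enlarges} the admissible set $A$ (scaling its measure by $(10/3)^6$, not $(3/10)^6$), and for endpoints of height $h_p < 0$ there is simply no immediate (i.e.\ initial or terminal) excursion at all under the paper's definition, so that case is vacuous rather than needing the ``tightened angular bound'' discussion.
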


\begin{remark}\label{R:triv}
In the remainder of the paper, we will make frequent use  the basic estimate that a geodesic that is bent away from the cusp by angle $\theta$ goes $-\log(\sin(\theta))$ higher into the cusp than its starting point, which is approximately $-\log(\theta)$ when $\theta$ is small. See Figure \ref{F:BasicBend}. 
 \begin{figure}[ht!]
\makebox[\linewidth][c]{
\includegraphics[width=.7\linewidth]{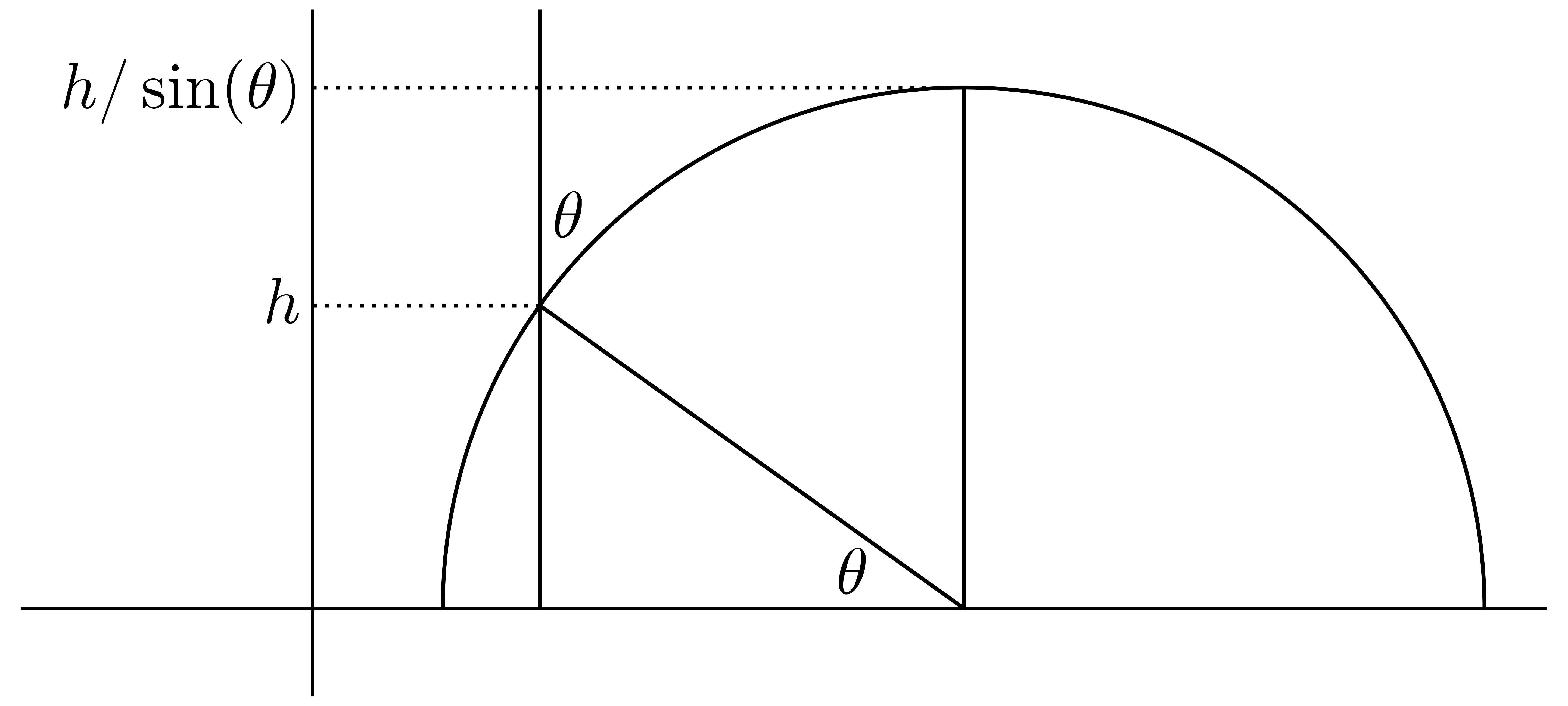}}
\caption{}
\label{F:BasicBend}
\end{figure}
\end{remark}

\begin{proof}[Proof of Corollary \ref{better rungs}]
Given $p \in N(\gamma_l)$, by Remark \ref{R:triv}
we can find $p' \in N(\gamma_l)$ such that $|p - p'| < \epsilon/3R$, 
and the ray from $p''$ goes up at most height $\log R + C(\epsilon)$ when $|p' - p''| < \epsilon/10R$. 
We can likewise find $q'$ given $q \in M(\gamma_r)$. 
Then we apply Lemma \ref{counting rungs} to $p'$ and $q'$. 
\end{proof}}

%
%

\new{%
\begin{theorem} \label{hamsters for loners} 
Given a good curve $\gamma_0$ with height less than $sR$ and a slow and constant turning vector field $V_0$ on $\gamma_0$, 
we can find a hamster wheel $\eta$ with $\gamma_0$ as one outer boundary (and $V_0$ as a good field on $\eta$),
such that 
\begin{enumerate}
\item \label{first}
the height of the other outer boundary $\gamma_1$ is at most $\log R$,
\item
no rung of $\eta$ has an intermediate excursion into the cusp of height more that $\HWUnnecessaryExcursions$
\item \label{start height}
no rung of $\eta$ has an excursion of height more than $2 \log R$ adjacent to $\gamma_1$,
\item \label{end height} \label{last}
no rung of $\eta$ has an excursion of height more than $\log R + C(\epsilon, M)$ more than its starting height adjacent to $\gamma_0$.
\end{enumerate}
\end{theorem}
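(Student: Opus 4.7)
I will construct $\eta$ by first choosing the second outer cuff $\gamma_1$ together with a slow and constant turning vector field $V_1$ in the thick part, and then independently selecting each of the $R$ rungs using Corollary \ref{better rungs}. The resulting configuration will automatically be an $(R, \epsilon)$-good hamster wheel satisfying the stated height bounds.

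\textbf{Setup.} Using $V_0$ on $\gamma_0$, place $R$ marked points $p_1, \ldots, p_R$ on $\gamma_0$ at complex distance $2$ apart, so that the $V_0(p_i)$ are the target feet on $\gamma_0$. By the count \eqref{E:curvecount} together with Lemma \ref{high curve count}, a positive fraction of $(R, \epsilon)$-good curves have height at most $\log R$ (in fact well below it); pick any such curve $\gamma_1$ and a slow and constant turning vector field $V_1$ on it. Mark $R$ points $q_1, \ldots, q_R$ on $\gamma_1$ at complex distance $2$ along $V_1$; the absolute position of $q_1$ is immaterial.

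\textbf{Choice of rungs.} For each $i \in \Z/R\Z$, apply Corollary \ref{better rungs} to the pair $(V_0(p_i), V_1(q_i))$ with height cutoff $\HWUnnecessaryExcursions$. Since both $\gamma_0$ and $\gamma_1$ go at most $sR$ into the cusp, the corollary produces an exponentially large family of orthogeodesic connections $\lambda_i$ from $\gamma_0$ to $\gamma_1$ whose feet are within $\epsilon/3R$ of $V_0(p_i)$ and $V_1(q_i)$, whose complex distance lies within $\epsilon/3R$ of $R - 2\log\sinh 1$, that have no intermediate cusp excursion above height $\HWUnnecessaryExcursions$, and whose initial excursion adjacent to $\gamma_0$ rises at most $\log R + C(\epsilon, M)$ above $h(p_i)$. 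Applying the corollary symmetrically from the $\gamma_1$ end (i.e.\ running the same argument with the roles of the endpoints reversed), we may simultaneously require that the initial excursion adjacent to $\gamma_1$ rises at most $\log R + C(\epsilon, M)$ above $h(q_i) \le \log R$, giving absolute height at most $2\log R$ once the additive constant is absorbed into the height bound on $\gamma_1$ (which for $R$ large we may tighten to $\log R - C(\epsilon, M)$ at no cost). Choose one such $\lambda_i$ for each $i$.

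\textbf{Verification and main subtlety.} Gluing the chosen rungs to the arcs of $\gamma_0$ and $\gamma_1$ between consecutive marked points produces a topological hamster wheel, and the $\epsilon/3R$ tolerances on feet, spacing, and complex distance immediately imply it is $(R, \epsilon)$-good with $V_0$ and $V_1$ as the official slow and constant turning vector fields on its two outer cuffs. Properties \eqref{first}--\eqref{last} are then immediate from the choice of $\gamma_1$ and from the excursion bounds built into Corollary \ref{better rungs} on both ends. The one point that could conceivably cause trouble is the apparent coupling of the $R$ rungs: they must all terminate on a single curve $\gamma_1$ with a single slow turning field $V_1$. This is resolved by the ordering of the construction: fix $\gamma_1$ and $V_1$ \emph{globally} before selecting any rung, at which point each $\lambda_i$ is chosen independently of the others from a family of size $\gtrsim R^{-6}e^{2R}$, so there is no consistency constraint remaining. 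Thus the main ``obstacle'' is really only notational, and the theorem reduces directly to $R$ independent applications of Corollary \ref{better rungs}.
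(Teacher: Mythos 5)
Your proof is essentially the same as the paper's: choose $\gamma_1$ below height $\log R$ (the paper uses $0.75\log R$ from the outset rather than your retroactive tightening), fix an arbitrary slow and constant turning field $V_1$ on it, mark $2R$ evenly spaced points on each outer cuff via $V_0, V_1$, and apply Corollary \ref{better rungs} independently to each pair to choose the rungs. One small remark: the concern you raise about needing to "apply the corollary symmetrically from the $\gamma_1$ end" is already handled by Corollary \ref{better rungs} as stated and proved — its proof perturbs \emph{both} endpoints ($p'$ given $p$ on $\gamma_l$, and likewise $q'$ given $q$ on $\gamma_r$), so the excursion bound is built in at both ends without any extra symmetric argument.
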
}


\new{
\begin{proof}
By Lemma \ref{high curve count},
we can find $\gamma_1$ of height at most $0.75 \log R$ (for large $R$).   
We arbitrarily choose a slow and constant turning vector field for $\gamma_1$.
We can then choose a bijection between evenly spaced marked points on $\gamma_0, \gamma_1$ as in Section \ref{generating hamsters}. 
For each pair of points given by the bijection, 
we apply Corollary \ref{better rungs} to the values of the slow and constant turning vector fields at those points.
In this way we obtain the rungs of $\eta$.
Moveover,
by our choice of $\gamma_1$,
and the conclusions of Lemma \ref{counting rungs} and Corollary \ref{better rungs},
we obtain properties \ref{first} through \ref{last}.
\end{proof}}
\section{The Umbrella Theorem} \label{section umbrella}
At the center of our construction is the following theorem, which we call the Umbrella Theorem: 
\begin{theorem} \label{theorem umbrella}
For $R > R_0(\epsilon)$:
Let $P$ be an unoriented good pair of pants
and suppose $\gamma_0 \in \d P$. 
Suppose that $h_T \ge \HWUnnecessaryExcursions$. 
Then we can find a good assembly of good components $U\equiv U(P, \gamma_0)$ such that we can write $\d U = \{\gamma_0\} \cup E$, and the following hold.
\begin{enumerate}
\item
For any good component $Q$ with $\gamma_0 \in \d Q$,
if $Q$ and $P$ are $\epsilon$-well-matched at $\gamma_0$ (for some orientations on $Q$ and $P$)
then $Q$ and $U$ are $2\epsilon$-well-matched at $\gamma_0$.
\item
$h(\alpha) < h_T$ for all $\alpha \in E$. 
\item
$\size E < \umbrellaBoundarySize{\gamma_0}$, where $K = \new{K(\epsilon)}$ is given by Theorem \ref{good close to Fuchsian}.
\end{enumerate}
\end{theorem}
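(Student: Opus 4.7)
The plan is to build $U(P, \gamma_0)$ inductively, by repeatedly attaching good hamster wheels to every boundary cuff of height $\ge h_T$, each tilted away from the cusp so that the descendant cuffs it creates drop in height at a definite geometric rate.

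At each stage I maintain a partial oriented assembly with one distinguished boundary $\gamma_0$ (never modified) and a multiset of \emph{unresolved} boundary cuffs. The initial assembly is $P$ itself, with its two other cuffs unresolved. Base case: when every unresolved cuff has height $< h_T$ it is moved into $E$ and the recursion terminates. Recursive step: for each unresolved $\alpha$ with $h(\alpha) \ge h_T$ (and $h(\alpha) \le sR$, which holds by Lemma \ref{high curve count} for the only good curves available), apply Theorem \ref{hamsters for loners} to attach a good hamster wheel $H_\alpha$ with $\alpha$ as one outer cuff, using the slow and constant turning vector field that $\alpha$ already carries from the previous step. Since $h_T \ge 4\log R$, the other outer cuff of $H_\alpha$ has height at most $\log R \le h_T$ and enters $E$. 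The $R$ inner cuffs of $H_\alpha$ become the new unresolved cuffs; crucially, by the advertised feature of the hamster wheel (stressed in the introduction), each new cuff has at most one cusp excursion, so on subsequent iterations there is a well-defined ``outward'' direction in which to tilt. Condition (1) is automatic because $P$ is itself a component of $U$ and is the only component meeting $\gamma_0$; condition (2) holds by construction.

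The quantitative heart of the argument is the claim that, with the appropriate choice of tilt, each inner cuff $\beta_i$ of $H_\alpha$ satisfies $h(\beta_i) \le h(\alpha) - c$ for some $c \ge (\log R)/K$. This combines properties (\ref{start height}) and (\ref{end height}) of Theorem \ref{hamsters for loners}, which confine the rungs of $H_\alpha$ to a slab whose height above $h(\alpha)$ is only $\log R + C(\epsilon,M)$, with the bend estimate of Remark \ref{R:triv}: bending the constant turning vector field by a small angle $\theta$ away from the cusp displaces the relevant cuff by $-\log\sin\theta$, producing a height drop on $\beta_i$ of at least $\log R - O(1)$ when $\theta$ is chosen on the scale $1/R$. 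Granting this height drop, if $u(h)$ denotes the supremum of $\#E$ over umbrellas rooted at cuffs of height $h$, the recursion yields
\begin{equation*}
u(h) \;\le\; R \cdot u(h - c) + 2,
\end{equation*}
whose solution is $\le R^{(h-h_T)/c} + O(R)$; choosing $K \ge (\log R)/c$, which is compatible with the $K = K(\epsilon)$ supplied by Theorem \ref{good close to Fuchsian} once $\epsilon$ is taken small enough, yields the target bound $R^2 e^{K \pnnpart{h(\gamma_0) - h_T}}$.

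The main obstacle is the height-drop claim itself, especially for cuffs $\alpha$ with multiple cusp excursions, possibly into different cusps, for which no single slow and constant turning vector field is simultaneously tilted outward at every excursion. This is the ``incompatible tilting directions'' phenomenon flagged in the introduction; its resolution relies on the first attachment of $H_\alpha$ cleaning up $\alpha$ into inner cuffs that each have at most one excursion (so the tilt direction becomes unambiguous on subsequent iterations), together with a careful use of the angular flexibility of the hamster wheel to place each excursion of $\alpha$ in its own angular window around the wheel, where the bending estimate can be applied locally. A subsidiary issue is verifying that the matching tolerance at $\gamma_0$ lies within the claimed $2\epsilon$; but because the first hamster wheel is attached at a cuff $\alpha \neq \gamma_0$ and $P$ is literally included as a component of $U$, the matching at $\gamma_0$ is exactly that of $P$, so condition (1) holds with room to spare.
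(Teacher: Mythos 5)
Your proposal departs from the paper's proof in several places, and the central gap is not a detail that can be patched: the claimed per-generation height drop is false, and the exponent $K$ in the bound on $\#E$ comes from quasiconformality, not from a recursion.

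\textbf{The height-drop claim is wrong.} You assert that each inner cuff $\beta_i$ of $H_\alpha$ satisfies $h(\beta_i) \le h(\alpha) - c$ with $c \ge (\log R)/K$, and you derive the bound $\#E \le R^2 e^{K\max(0, h(\gamma_0)-h_T)}$ from the resulting recursion. But in the construction of Section \ref{section umbrella} the heights can \emph{increase} from one generation to the next: a rung starting at a point of $\alpha$ at height $h'$ may, by Theorem \ref{hamsters for loners}(\ref{end height}), reach height $h' + \log R + C(\epsilon, M)$, so individual inner cuffs of $H_\alpha$ can be roughly $\log R$ \emph{higher} than $\alpha$, not $\log R/K$ lower. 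You have also reversed the effect of the tilt: by Remark \ref{R:triv}, a geodesic bent away from the cusp by angle $\theta$ rises by $-\log\sin\theta$, so the small tilt $\theta \sim 1/R$ you propose produces an \emph{increase} of order $\log R$; the paper instead bends by the much \emph{larger} constant $\minStepThreeUmbrellaBend$ to $\maxStepThreeUmbrellaBend$ precisely because a larger bend yields a smaller excursion, and anything larger would violate the $\maxVectorFieldBend$ well-matching tolerance. What the paper actually proves (Lemma \ref{under}, Theorem \ref{umbrella height}) is a \emph{global} height bound via nested bending half-planes $\P^+(H,\gamma)$: the entire umbrella lies under the first such half-plane, giving $h(U) \le h(\gamma_0) + \log R + C(\epsilon)$, with no claim of monotone descent. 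The count $\#E < \umbrellaBoundarySize{\gamma_0}$ then comes from Theorem \ref{area bound}, which transports the perfect-assembly estimate of Lemma \ref{perfect count} (at most $O(e^d)$ geodesics within $d$ of the top) to good assemblies via the $1/K$-Hölder continuity of a $K$-quasiconformal map (Theorem \ref{qc holder}, Lemma \ref{delta height ratio}). There is no way to manufacture the exponent $K(\epsilon)$, which tends to $1$, from a recursion whose step size is necessarily $O(1)$; the recursion would give $R^{\,\Theta(h-h_T)}$, which is far too large. In particular termination (and hence your condition (2)) is also unjustified, since the recursion could run forever if heights do not decrease.

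\textbf{Keeping $P$ inside $U$ is a genuine departure that creates its own problems.} The paper does \emph{not} include $P$ in $U$; Step 1 attaches a hamster wheel $H_0$ at $\gamma_0$ whose slow and constant turning field agrees with the feet of $P$, and condition (1) is then obtained with the $2\epsilon$ tolerance because $U$'s boundary data at $\gamma_0$ is a constant-turning field rather than formal feet. If you keep $P$ as a component, you must attach hamster wheels to the other cuffs of $P$, which can be above the cutoff height $h_C$ and indeed can exceed $sR$ (Lemma \ref{high curve count} gives a \emph{count}, not a uniform height bound, so it does not justify your assertion that $h(\alpha) \le sR$). Theorem \ref{hamsters for loners} is then inapplicable. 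The paper's construction deliberately avoids ever touching the high cuffs of $P$: it starts from $\gamma_0 < h_C \ll sR$ and Theorem \ref{umbrella height} keeps everything within $h(\gamma_0) + \log R + C(\epsilon) < sR$.

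In short, the combinatorial recursion is the wrong tool; the essential new idea in the paper's proof of Theorem \ref{theorem umbrella} is to apply the local-to-global Theorem \ref{good close to Fuchsian} to the umbrella itself to control its size.
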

Regarding the first point, we remark that the notion of being well-matched with $U$ will not depend on orientations. 

Outside the proof of this theorem we will write $E$ as $\dout (P, \gamma_0)$, so we always have $\d U(P, \gamma_0) = \{\gamma_0\} \cup \dout (P, \gamma_0)$, and we will refer to $E$ as the \emph{external boundary} of $U(P, \gamma_0)$. 

We will apply the Umbrella Theorem when $\gamma_0$ lies below some cutoff height but $P$ goes above this cutoff height. In this case the ``umbrella" $U(P, \gamma_0)$ will serve as a replacement for $P$, as the first statement gives that it can be matched to anything that $P$ can be matched to. 

The umbrella $U$ will be a recursively defined collection of hamster wheels; at each stage of the construction, we generate new hamster wheels, one of whose outer boundaries is an inner boundary of one of the previous hamster wheels (or, at the first stage, $\gamma_0$). The construction takes place in two steps. 
\begin{enumerate}
 \item
 We first attach one hamster wheel to $\gamma_0$ to divide up $\gamma_0$'s entries into the cusp.
 \item
 We then recursively add hamster wheels,  tilting gently away from the cusp, until we reach the target height. 
\end{enumerate}
Throughout we take care to avoid unnecessary cusp excursions, and to minimize the heights of the cusp excursions that are necessary. \color{kwred} The precise meaning of ``tilting" will be made clear below; informally, it means that there is some bend between the hamster wheels that is chosen to reduce the excursion into the cusp. \color{black}

The set of hamster wheels in the umbrella has the structure of a tree; the children of a hamster wheel $H$ are all those hamster wheels constructed at the next level of the recursive procedure that share a boundary with $H$.

\subsection{The construction of the umbrella}
In this subsection we describe the construction of the umbrella $U$, which by definition is a  good assembly; 
 the termination of this construction
  and the bound on the size of $U$ 
 will be shown in Section \ref{umbrella area}. The umbrella can be viewed as an unoriented assembly, since orientations are not required to define well-matching for hamster wheels, and it can also be viewed as an oriented assembly, with two possible choices for the orientations. Here we will treat it as an unoriented assembly. 

We will let $U = U_{1} \cup U_{2}$, where $U_{i}$ is constructed in Step i.  

\bold{Step 1 (Subdivide): } 
We begin by equipping $\gamma_0$ with a slow and constant turning vector field that, at the two feet of $P$, points in the direction of the feet. \color{kwred}We let $U_{1} = \{ H_0 \}$,  where $H_0$ is a good hamster wheel given by Theorem \ref{hamsters for loners}
  with $\gamma_0$ as one of the outer boundaries, and with the given slowly turning vector field. 
    \color{black}

%
  
 The first claim of Theorem \ref{theorem umbrella} follows from the construction of $H_0$ and the definition of well-matched. \color{kwred}   Each boundary of $H_0$ has at most one excursion of height at least $h_T$ into the cusp as long as we assume $h_T> 4 \log R$.\color{black}
   
\bold{Step 2 (Recursively add downward tilted hamster wheels): }
We recursively add hamster wheels to the umbrella using the following procedure. 
 For each inner boundary $\delta$  of a hamster wheel $H$ that we've already added to the umbrella, if $h(\delta)>h_T$
  we add a new hamster wheel $H_\delta$ with $\delta$ as the outer boundary as follows.
  
We will denote by $v_{\delta, H}$ the vector field on $\delta$ arising from $H$, and $-v_{\delta, H}$ the result of rotating this vector field by $\pi$. Informally, we may say that $v_{\delta, H}$ points into $H$, whereas $-v_{\delta, H}$ points in the opposite direction, away from $H$. 
  
  If $-v_{\delta, H}$ is within $\maxStepThreeUmbrellaBend$ of pointing straight down at the highest point of $\delta$, then we pick $H_\delta$ with this vector field. Otherwise, we rotate $-v_{\delta, H}$ away from the vertical upwards direction by an amount in $[\minStepThreeUmbrellaBend, \maxStepThreeUmbrellaBend]$, again measuring angle at the highest point of $\delta$, and pick $H_\delta$ with this new vector field.
  As in Step 1, we may arrange for $H_{\color{kwred}\delta\color{black}}$ to have at most one excursion of height at least $h_T$ into the cusp. 
  
  Step 2 continues until we have no inner boundaries with height at least $h_T$; we will show that Step 2 terminates with the following two results. Until these results are established, we must allow for the possibility that the umbrella has infinitely many components.

\begin{theorem} \label{umbrella height}
The height $h(U)$ of the highest geodesic in $U$ is at most the height of $\gamma_0$ plus  $\log R$
plus a constant depending on $\epsilon$. 
\end{theorem}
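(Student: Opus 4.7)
The plan is to show that the bound $h(\gamma_0) + \log R + C(\epsilon)$ is attained (if at all) by the initial hamster wheel $H_0$ added in Step 1, and that each subsequent hamster wheel $H_\delta$ added in Step 2 stays below this bound thanks to the tilting procedure.

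For $H_0$, Theorem \ref{hamsters for loners} directly bounds everything in sight. Its other outer boundary has height at most $\log R$ (property 1); its rungs have intermediate excursions of height at most $\HWUnnecessaryExcursions$ (property 2), excursions adjacent to $\gamma_1$ of height at most $2\log R$ (property 3), and excursions adjacent to $\gamma_0$ of height at most $\log R + C(\epsilon, M)$ above their starting heights on $\gamma_0$ (property 4). Inner cuffs are separated from the outer cuffs by medium orthogeodesics of universally bounded length (see Subsection \ref{goodHW}), so their heights are within an $O(1)$ constant of the surrounding rungs and outer cuffs. Combining, every geodesic in $H_0$ has height at most $h(\gamma_0) + \log R + C(\epsilon, M)$.

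For each $H_\delta$ from Step 2, I would show all its geodesics have height at most $h(\delta) + C'(\epsilon)$ for some $C'(\epsilon)$ depending only on $\epsilon$. The key is the tilting: by construction, the slow and constant turning field on $\delta$ used to build $H_\delta$ is, at the highest point of $\delta$, within $\maxStepThreeUmbrellaBend$ of pointing straight down. This direction lies strictly in the downward half-space relative to the cusp, so by the basic bend estimate (Remark \ref{R:triv}) the rung leaving the highest point of $\delta$ does not climb significantly. At lower points of $\delta$, the vector field varies continuously (by the slow turning property) and rungs start from lower heights, so the maximum height attained is bounded in terms of the starting point and the tilt. This improved bound supersedes the default bound from Theorem \ref{hamsters for loners}(4), which would otherwise allow rungs to climb by $\log R + C(\epsilon, M)$; here, the tilt replaces the $\log R$ term with a constant.

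Finally, inductively every $\delta$ arising in the recursion satisfies $h(\delta) \leq h(\gamma_0) + \log R + C(\epsilon, M)$, since the tilt forces actual descent on each iteration beyond $H_0$ and so the $C'(\epsilon)$ slack from Step 2 cannot accumulate. The main obstacle is making the Step 2 bound precise: one must verify that the tilt produces enough descent per iteration to dominate the additive slack $C'(\epsilon)$, handling both the ``easy'' case where $-v_{\delta, H}$ is already close to straight down and the case where an explicit rotation of at least $\minStepThreeUmbrellaBend$ is applied. Both rely on a careful application of Remark \ref{R:triv} combined with the controlled variation of the slow and constant turning field along $\delta$.
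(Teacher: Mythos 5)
Your proposal has a genuine gap, traceable to a misreading of the Step 2 tilting procedure. You claim that the slow and constant turning field chosen for $H_\delta$ is "at the highest point of $\delta$, within $\maxStepThreeUmbrellaBend$ of pointing straight down," but that is only true in the first branch of the construction. In the second branch — when $-v_{\delta,H}$ is \emph{not} already within $\maxStepThreeUmbrellaBend$ of straight down — the construction rotates $-v_{\delta,H}$ away from the \emph{upward} vertical by an amount in $[\minStepThreeUmbrellaBend, \maxStepThreeUmbrellaBend]$. If $-v_{\delta,H}$ points nearly straight up, the rotated field is still within roughly $\maxStepThreeUmbrellaBend$ of straight up, nowhere near straight down. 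In that regime Remark~\ref{R:triv} permits rungs to climb by roughly $-\log(\sin\minStepThreeUmbrellaBend)$, and nothing in your argument controls the compounding of this per-level increase along the depth of the recursion tree. Your assertion that "the $C'(\epsilon)$ slack from Step 2 cannot accumulate" because "the tilt forces actual descent" is exactly what fails here: a rotation of $\sim 20\epsilon$ starting from nearly vertical yields a direction that still points weakly upward, so descent does not kick in immediately, and a priori the recursion could linger near vertical for many generations.

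The paper avoids any per-step accumulation estimate with a genuinely global argument. It defines a fixed tilted half-plane $\P^+(H_1,\delta)$ whose height is bounded (the point being that $v_{\delta,H_1}$ is at least $\minStepThreeUmbrellaBend$ from vertical while $\P^+$ is bent back only by $\PTiltUp < \minStepThreeUmbrellaBend$, giving a $C(\epsilon)$ bound via Remark~\ref{R:triv}), and Lemma~\ref{under} — relying on the dihedral-angle estimate of Lemma~\ref{dihedral} — shows that in the hard regime each successive half-plane nests strictly beneath the previous one, hence beneath $\P^+(H_1,\delta)$. This gives a uniform height bound with no summation over levels. The regime where the wheel points down by at least $\theta_0/2$ is the only place monotone descent holds, and the paper handles it by a separate claim. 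Your proposal treats both regimes with a local height estimate plus an unjustified non-accumulation inference, which is precisely the step the nested-planes mechanism is designed to replace; Remark~\ref{R:triv} plus "controlled variation of the slow and constant turning field along $\delta$" does not, by itself, prevent slow accumulation of height across many levels.
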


\begin{theorem} \label{umbrella area}
The umbrella $U$ is finite 
 (in the sense that the construction terminates)
 and the number of components in $U$ is at most $\umbrellaNumberOfComponents{\gamma_0}$,
  where $K = \new{K(\epsilon)}$ is given by Theorem \ref{good close to Fuchsian}.
\end{theorem}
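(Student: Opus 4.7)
The plan is a recursive counting argument on the tree structure of the umbrella. The key geometric lemma to prove is that for any hamster wheel $H_\delta$ added in Step 2 with outer boundary $\delta$ of height $h(\delta) > h_T$, the number of inner boundaries of $H_\delta$ with height exceeding $h_T$ is bounded by some $B = B(\epsilon)$, and each such ``high'' inner boundary has height at most $h(\delta) - c$ for some $c = c(\epsilon) > 0$. The geometric content is that the tilting rule in Step 2 forces either the new hamster wheel to be tilted away from vertical by at least $\minStepThreeUmbrellaBend$, or to already point within $\maxStepThreeUmbrellaBend$ of straight down. In either case, by Remark \ref{R:triv}, a rung emanating into the cusp at angle $\theta \geq \minStepThreeUmbrellaBend$ rises by at most $-\log \sin \theta = O(\log(1/\epsilon))$ before descending, so only the inner cuffs geometrically near the ``top'' of $H_\delta$ can remain high---a number bounded in terms of $\epsilon$ alone.

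Given the lemma, define $N(h)$ to be the maximal number of hamster wheels in a Step-2 subtree rooted at an outer cuff of height $h$. The lemma yields the recursion
\[
N(h) \leq 1 + B \cdot N(h - c) \qquad \text{for } h > h_T,
\]
with $N(h) = 1$ for $h \leq h_T$, and iterating gives $N(h) \leq O(e^{K(h - h_T)})$ with $K = \log(B)/c$. By tuning $\epsilon$, this $K$ can be made compatible with the $K(\epsilon)$ of Theorem \ref{good close to Fuchsian}. Since $H_0$ in Step 1 contributes at most $R$ inner cuffs, each of height at most $h(\gamma_0) + \log R + C(\epsilon)$ by Theorem \ref{umbrella height}, the total count is $|U| \leq 1 + R \cdot N(h(\gamma_0) + \log R + C(\epsilon))$, which, after absorbing the additive $\log R$ and $C(\epsilon)$ into the constants inside the exponent, is bounded by $\umbrellaNumberOfComponents{\gamma_0}$. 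Finite termination is then automatic, since the recursion halts at depth at most $(h(\gamma_0) - h_T)/c$.

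The main obstacle is the geometric lemma itself: translating the tilting rule (stated in terms of slow and constant turning vector fields at a single point of $\delta$) into a concrete constraint on the height profile of all $R$ inner cuffs of $H_\delta$. One must leverage the near-Fuchsian structure of each $(\epsilon, R)$-good hamster wheel, so that its inner cuffs in $\mathbb{H}^3$ behave like those of an ideal perfect hamster wheel up to $\epsilon$-distortion, and then combine this with explicit hyperbolic trigonometry in the cusp via Remark \ref{R:triv} to convert bend angles into height decrements. Care must also be taken to handle the case in which the outer cuff $\delta$ itself has multiple cusp excursions, relying on the control over cusp excursions guaranteed by Theorem \ref{hamsters for loners}. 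The rest of the argument---the recursion and its solution---is then a routine computation.
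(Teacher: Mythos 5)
Your proposal takes a genuinely different route from the paper. The paper deduces Theorem~\ref{umbrella area} as a corollary of Theorem~\ref{area bound}, which it proves by invoking the global quasiconformal conjugacy from Theorem~\ref{good close to Fuchsian}, the H\"older estimate on quasiconformal maps (Theorem~\ref{qc holder}), and the elementary count in the perfect assembly (Lemma~\ref{perfect count}). In effect the paper transfers the problem to the perfect model, where it is trivial, paying only the exponent factor $K(\epsilon)$ coming from H\"older continuity. You instead propose a purely local, recursive count over the tree of hamster wheels.

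The gap is in your key geometric lemma: the claim that every high inner boundary of $H_\delta$ has height at most $h(\delta) - c$ is false. Indeed, you yourself compute the contradiction: a rung emanating at angle $\theta \geq \minStepThreeUmbrellaBend$ rises by up to $-\log\sin\theta \approx \log\bigl(1/(20\epsilon)\bigr)$ before turning back. So inner cuffs near the highest point of $\delta$ can sit at height $h(\delta) + O(\log(1/\epsilon))$, \emph{above} $h(\delta)$, and this can repeat for several generations while the accumulated tilt is still small. This is exactly why the paper's proof of Theorem~\ref{umbrella height} does not iterate a height decrement; it iterates the monotone decrease of the bounding half-planes $\P^+(\cdot,\cdot)$ via Lemma~\ref{under}, which is a strictly weaker statement than per-step height decay. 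Replacing your recursion variable with the height of $\P^+$ is the natural repair, but the decrement of that quantity per step is roughly $\epsilon/\alpha$ where $\alpha$ is the accumulated angle from vertical, so the resulting recursion is nonuniform and the bookkeeping to extract the exponential rate $e^{K(\epsilon)\ddd}$---with $K(\epsilon)\to 1$---becomes essentially a hand computation of what the quasiconformal comparison delivers for free. Your closing remark that ``by tuning $\epsilon$, this $K$ can be made compatible with the $K(\epsilon)$ of Theorem~\ref{good close to Fuchsian}'' is not justified, and in fact the whole reason the paper phrases the bound in terms of the quasiconformal constant is to make that compatibility automatic; any local argument would have to rederive it.
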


\subsection{The height of the umbrella} \label{section umbrella height}

We say that a good hamster wheel $H$ is \emph{very good} if
\begin{enumerate}
\item
 one outer boundary has height less that $h_T$,
\item
 the other outer boundary only has one excursion into the cusp with height greater than $h_T$, and
\item
 no inner boundary of $H$ has an intermediate excursion into the cusp of height greater than $4 \log R$.
 \end{enumerate}
We will call the outer boundary with height greater than $h_T$ the high outer boundary. 

Let $H <  \isomHThree$ be a very good hamster wheel group,
 and $\gamma$ a lift to the upper half space $\H^3$ of the high outer boundary of $H$ that meets the standard horoball about $\infty$ in the upper half space. \new{Assume that $v_{\gamma, H}$ points at least $20\e$ away from the vertical upwards direction at the highest point of $\gamma$.} 
We define a plane $\P(H, \gamma)\subset \H^3$ as follows. 
We let $x$ be the highest point of $\gamma$ 
 and we consider the plane through $\gamma$ tangent to $v_{\gamma, H}(x)$. Cutting along $\gamma$ divides this plane into two half planes. We bend the half plane  into which $v_{\gamma, H}(x)$ points up by $\PTiltUp$ to obtain a half plane that we call $\P^+(H, \gamma)$.  We then define $\P(H, \gamma)$ to be the plane containing $\P^+(H, \gamma)$. 
 See Figure \ref{F:DefOfP}. The intuition is that $\P^+(H, \gamma)$ is some sort of approximation to the part of the umbrella that is on the $H$ side of $\gamma$, but that because of the bending this part of the umbrella should lie below $\P^+(H, \gamma)$. 
 
\begin{figure}[ht!]
\makebox[\textwidth][c]{
\includegraphics[width=\linewidth]{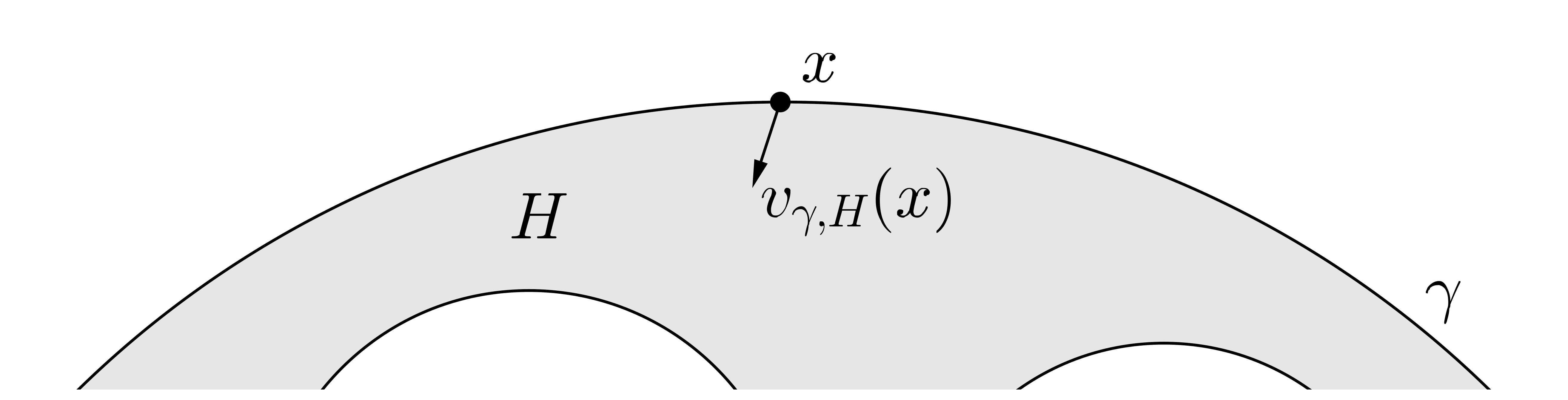}}
\caption{The definition of $\P(H, \gamma)$. In this picture $v_{\gamma, H}(x)$ seems to be pointing downwards, but the more worrisome situation is when it points upwards.}
\label{F:DefOfP}
\end{figure}

\begin{lemma}\label{under}
Let $H$ be a very good hamster wheel group,
 let $\gamma$ be the high outer boundary of $H$,
 and let $\gamma_1$ be some inner boundary of $H$ that intersects the standard horoball. 
 Suppose that $ v_{\gamma, H}(x)$ does not point within $\minStepThreeUmbrellaBend$ of straight up the cusp, and that $- v_{\gamma_1, H}(\color{kwred}y\color{black})$ does not point within $\maxStepThreeUmbrellaBend$ of straight down the cusp, \color{kwred}where $y$ is the highest point of $\gamma_1$\color{black}. 
Let $K$ be a hamster wheel attached to $\gamma_1$ with $\gamma_1$ as an outer boundary of $K$,
 and assume that $K$ bends down from $\gamma_1$ by at least $\minStepThreeUmbrellaBend$, as in Step 2 of our construction above. 
Then $\P^+(K, \gamma_1)$ lies below $\P(H, \gamma)$. 
\end{lemma}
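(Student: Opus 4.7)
The plan is to show that the $\minStepThreeUmbrellaBend$ downward rotation of $v_{\gamma_1, K}$ relative to $-v_{\gamma_1, H}$ forces $\P(K, \gamma_1)$ to tilt below $\P(H, \gamma)$ by at least $\minStepThreeUmbrellaBend - O(\epsilon/R)$, so that even after the $\PTiltUp$ upward bending defining $\P^+(K,\gamma_1)$, a net downward tilt of at least $\PTiltUp - O(\epsilon/R)$ remains; for $R$ sufficiently large this is positive, placing $\P^+(K, \gamma_1)$ strictly below $\P(H, \gamma)$.

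First I would treat the perfect (Fuchsian) limit. If $H$ were a perfect hamster wheel, all cuffs of $H$ would lie in a single hyperbolic plane $P$, and since $v_{\gamma,H}(x)$ is then tangent to $P$ at $x$, we would have $\P(H, \gamma) = P$; in particular $\gamma_1 \subset P$ and $-v_{\gamma_1, H}(y)$ is tangent to $P$. Because $y$ is the highest point of $\gamma_1$, the vertical up direction $u$ at $y$ lies in the plane perpendicular to $\gamma_1'(y)$; write
$$u = \cos\alpha\, t + \sin\alpha\, n,$$
where $t := -v_{\gamma_1, H}(y)$ and $n$ is the unit normal to $P$ at $y$ oriented so that $\sin\alpha \geq 0$ (pointing toward the cusp at infinity). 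The rotation defining $v_{\gamma_1, K}(y)$ sends $t$ to $\cos\theta\, t - \sin\theta\, n$ for some $\theta \in [\minStepThreeUmbrellaBend, \maxStepThreeUmbrellaBend]$, so $\P(K, \gamma_1)$ is tilted below $P$ by exactly $\theta$, and the $\PTiltUp$ upward bend gives that $\P^+(K, \gamma_1)$ lies below $P$ by $\theta - \PTiltUp \geq \PTiltUp$ of tilt.

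To handle the general good case I would propagate the error estimates from the goodness of $H$ (Section \ref{goodHW}): the feet of the short and medium orthogeodesics of $H$ involving $\gamma$ and $\gamma_1$ lie within $\epsilon/R$ of the slow and constant turning fields, and their complex lengths match the perfect values to within $\epsilon/R$. A direct hyperbolic-geometric computation then shows that $\gamma_1$ lies within hyperbolic distance $O(\epsilon/R)$ of $\P(H, \gamma)$ and that $-v_{\gamma_1, H}(y)$ is within angle $O(\epsilon/R)$ of the tangent to $\P(H, \gamma)$ at the nearest point to $y$. Substituting these error bounds into the perfect-case computation yields the desired net downward tilt of $\PTiltUp - O(\epsilon/R)$, which is strictly positive for $R$ large.

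The main obstacle is verifying the sign: one must check that ``rotating $-v_{\gamma_1, H}(y)$ away from the vertical upwards direction'' really corresponds to tilting \emph{below} $\P(H, \gamma)$, not above it. This is where both hypotheses of the lemma enter. The assumption that $v_{\gamma, H}(x)$ does not point within $\minStepThreeUmbrellaBend$ of straight up keeps $\P(H, \gamma)$ bounded away from a vertical plane through the cusp, so that the normal $n$ (and hence the sign of $\sin\alpha$) is unambiguous and the ``$-n$'' direction is genuinely downward. The assumption that $-v_{\gamma_1, H}(y)$ is not within $\maxStepThreeUmbrellaBend$ of straight down places us in the ``otherwise'' branch of Step 2 of the construction, guaranteeing that the downward rotation by at least $\minStepThreeUmbrellaBend$ actually takes place rather than defaulting to the identity.
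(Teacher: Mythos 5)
Your proposal has two genuine gaps, one definitional and one structural.

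First, the definitional one: you assert that in the perfect case $\P(H,\gamma)=P$, the plane of the hamster wheel, ``since $v_{\gamma,H}(x)$ is then tangent to $P$ at $x$.'' But re-read the definition of $\P(H,\gamma)$ in Section \ref{section umbrella height}: one starts with the plane through $\gamma$ tangent to $v_{\gamma,H}(x)$ and then bends the forward half-plane \emph{up} by $\PTiltUp$ to get $\P^+(H,\gamma)$, with $\P(H,\gamma)$ the plane containing it. So even when $H$ is perfect, $\P(H,\gamma)\neq P$; it is the plane through $\gamma$ tilted up by $\PTiltUp$ from $P$. That upward tilt is precisely what places $\gamma_1$ (and the rest of the umbrella on the $H$-side) strictly \emph{below} $\P(H,\gamma)$, not merely within $O(\epsilon/R)$ of it as you later claim. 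Your error also propagates: once the tilt of $\P(H,\gamma)$ is accounted for, the distance from $\gamma_1$ to $\P(H,\gamma)$ is of order $\epsilon$ (not $\epsilon/R$), which is comparable to $\minStepThreeUmbrellaBend-\PTiltUp$, so one cannot sweep it into a harmless error term without tracking signs and constants.

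Second, and more seriously, the structural gap: you argue that a downward \emph{tilt} of $\P^+(K,\gamma_1)$ relative to $\P(H,\gamma)$ of positive angle at the single point $y$ implies the global containment ``$\P^+(K,\gamma_1)$ lies below $\P(H,\gamma)$.'' This implication does not follow, because $\P^+(K,\gamma_1)$ and $\P(H,\gamma)$ do not share a boundary geodesic; two such objects can be transverse at one point and still cross elsewhere. This is exactly the difficulty the paper's proof is organized around. The paper introduces the auxiliary plane $L$ through $\gamma_1$ and the medium orthogeodesic $\eta$ from $\gamma$ to $\gamma_1$, and proves three separate claims: (i) $\gamma_1$ is disjoint from $\P(H,\gamma)$, which follows from Lemma \ref{dihedral} once one knows $\eta$ makes an angle at least $\PTiltUp-2\epsilon$ with $\P(H,\gamma)$; (ii) the half-plane $L^+$ (the side of $L$ cut off by $\gamma_1$ not meeting $\gamma$) lies below $\P(H,\gamma)$, because $L\cap\P(H,\gamma)$ is a geodesic through $\gamma\cap\eta$ missing $\gamma_1$; and (iii) $\P^+(K,\gamma_1)$ lies below $L^+$, where \emph{now} the dihedral-angle comparison is legitimate because both half-planes have boundary $\gamma_1$. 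Claims (ii) and (iii) are where the ``local tilt implies global containment'' reasoning is actually valid, and it is precisely the insertion of the auxiliary $L^+$, sharing a boundary geodesic with $\P^+(K,\gamma_1)$, that makes it so. Your proposal skips this and applies the tilt reasoning directly to two planes that are merely nearby, not hinged along a common geodesic; that step is not justified.

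Your discussion of why the hypotheses are needed (bounded away from vertical, the ``otherwise'' branch of Step 2) is a reasonable reading of the construction, but it does not repair the two issues above.
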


We will derive Lemma \ref{under} from the following lemma:
\begin{lemma} \label{dihedral}
Let $\gamma, \gamma_1$ be disjoint  geodesics in $\H^3$,
  let $\eta$ be their common orthogonal, let $\dc(\gamma, \gamma_1) = d + i\theta$, 
  and suppose that $\theta \le \pi/2$. 
Let $P$ be the plane through $\gamma$ and $\eta$. 
Let $y_1$ be a point on $\gamma_1$,
 and let $P_1$ be the plane through $\gamma$ and $y_1$. 
Then the dihedral angle between $P$ and $P_1$ is at most $\frac  {|\theta|}d$.
\end{lemma}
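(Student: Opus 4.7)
The plan is to work in Fermi coordinates $(s,\theta,r)$ around $\gamma$, in which the hyperbolic metric reads $\cosh^2(r)\,ds^2+\sinh^2(r)\,d\theta^2+dr^2$ and $\gamma=\{r=0\}$. The planes through $\gamma$ are exactly the half-planes $\{\theta=\text{const}\}$, and the dihedral angle between two of them is simply the difference of their $\theta$-values. After a rotation around $\gamma$ I may assume $P=\{\theta=0\}$, so the lemma reduces to bounding $|\theta(y_1)|$ over $y_1\in\gamma_1$. Throughout I will write $\alpha$ for the quantity called $\theta$ in the statement, to avoid clash with the Fermi coordinate.

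At the foot $F$ of $\eta$ on $\gamma_1$ one has $r(F)=d$, $\theta(F)=0$ and $\dot r(F)=0$; since the tangent to $\gamma_1$ at $F$ is orthogonal to $\eta$ and makes angle $\alpha$ with the parallel transport of $\gamma$'s tangent, the remaining components of this unit tangent are $\dot s(F)=\cos\alpha/\cosh d$ and $\dot\theta(F)=\sin\alpha/\sinh d$. The Killing fields $\partial_s$ and $\partial_\theta$ give conserved quantities $p_s=\cosh^2(r)\dot s\equiv\cosh d\cos\alpha$ and $p_\theta=\sinh^2(r)\dot\theta\equiv\sinh d\sin\alpha$, and unit speed gives $\dot r^2=1-p_s^2/\cosh^2 r-p_\theta^2/\sinh^2 r$. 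Since $r$ attains its minimum $d$ only at $F$ and $\dot\theta=p_\theta/\sinh^2 r$ has constant sign, $\theta(t)$ is monotone on each side of $F$ and antisymmetric under $t\mapsto-t$, so
\[
\max_{y_1\in\gamma_1}|\theta(y_1)|
=\left|\int_d^\infty\frac{p_\theta/\sinh^2 r}{\sqrt{1-p_s^2/\cosh^2 r-p_\theta^2/\sinh^2 r}}\,dr\right|.
\]

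The key technical step is to evaluate this integral. The substitution $w=\coth^2 r$ turns the quantity under the radical into a quadratic in $w$; it must factor as $p_\theta^2(\coth^2 d-w)(w+\cot^2\alpha)$, since $r=d$ corresponds to $w=\coth^2 d$ (where $\dot r=0$) and the product of the roots is pinned down by the constant term. The integral then becomes the standard half-period $\int dw/\sqrt{(w_+-w)(w-w_-)}$, and a half-angle identity---cleanly recorded via $|\sinh(d+i\alpha)|^2=\sinh^2 d+\sin^2\alpha$---collapses the answer to the closed form
\[
\max_{y_1\in\gamma_1}|\theta(y_1)| \;=\; \arctan\!\left(\frac{|\sin\alpha|}{\sinh d}\right).
\]

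The lemma now follows from the elementary inequalities $\arctan(x)\le x$ for $x\ge 0$ and $\sinh d\ge d$ for $d\ge 0$, which together give the stated bound $|\alpha|/d$. The only non-routine step in the whole argument is the integral evaluation, and this is where I expect the main work to lie; once the factorization of the quadratic under the radical is spotted, everything reduces to mechanical hyperbolic trigonometry.
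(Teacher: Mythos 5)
Your proof is correct, and it reaches the same intermediate identity as the paper — that the extremal dihedral angle equals $\arctan(|\sin\theta|/\sinh d)$, after which the elementary inequalities $\arctan x \le x$ and $\sinh d \ge d$ finish it off — but by a genuinely different route. The paper normalizes in the upper half-space model (endpoints of $\gamma$ at $0,\infty$, endpoints of $\gamma_1$ at $z, 1/z$), observes that planes through $\gamma$ project to lines through the origin in $\C$ so the dihedral angle is bounded by $\arg z$, and extracts $\arg z = \arg(\sinh d + i\sin\theta)$ from the cross-ratio formula for $\cosh d_\C$ by a short algebraic factorization $(q-1)(\bar q + 1) = (\sinh d + i\sin\theta)^2$. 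You instead work in Fermi coordinates around $\gamma$, use the two Killing fields $\partial_s, \partial_\phi$ to get Clairaut-type conserved momenta for the geodesic $\gamma_1$, and compute the total azimuthal turn of $\gamma_1$ as an explicit integral in $r$. The paper's version is shorter and requires no calculus; yours is arguably more conceptual and exposes the role of the conserved angular momentum $p_\phi = \sinh d\sin\theta$. Two small points worth tightening if you write this up: after the substitution $w = \coth^2 r$ the quantity under the radical is $p_\phi^2(\coth^2 d - w)(w + \cot^2\theta)/w$, not a bare quadratic (the extra $1/w$ is absorbed by the Jacobian and the $\csch^2 r$ in the numerator, so your final answer is right, but the intermediate claim as stated is off); and the integral runs from $w=1$ (i.e.\ $r=\infty$) to $w = \coth^2 d$, so it is not the full half-period $\int_{w_-}^{w_+}$ but the portion from $1$ to $w_+$, which is where the $\arctan$ rather than $\pi$ comes from. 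The degenerate case $\theta = 0$ (where $p_\phi = 0$ and $\cot^2\theta$ is undefined) should be noted as trivial.
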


\begin{proof}
We use the upper half \color{kwred}space \color{black} model. 
Without loss of generality, 
 assume the endpoints of $\gamma$ are 0 and $\infty$,
 and that the endpoints of $\gamma_1$ are $1/z$ and $z$, 
 with $\Re z, \Im z \ge 0$. 
The two endpoints of $\gamma$ are interchanged by $z\mapsto1/z$, as are the two endpoints of $\gamma_1$. 
Thus the endpoints of $\eta$ must be the two fixed points of $z\mapsto 1/z$, namely $-1$ and 1. See Figure \ref{F:DihedralAngle}.
 \begin{figure}[ht!]
\makebox[\textwidth][c]{
\includegraphics[width=\linewidth]{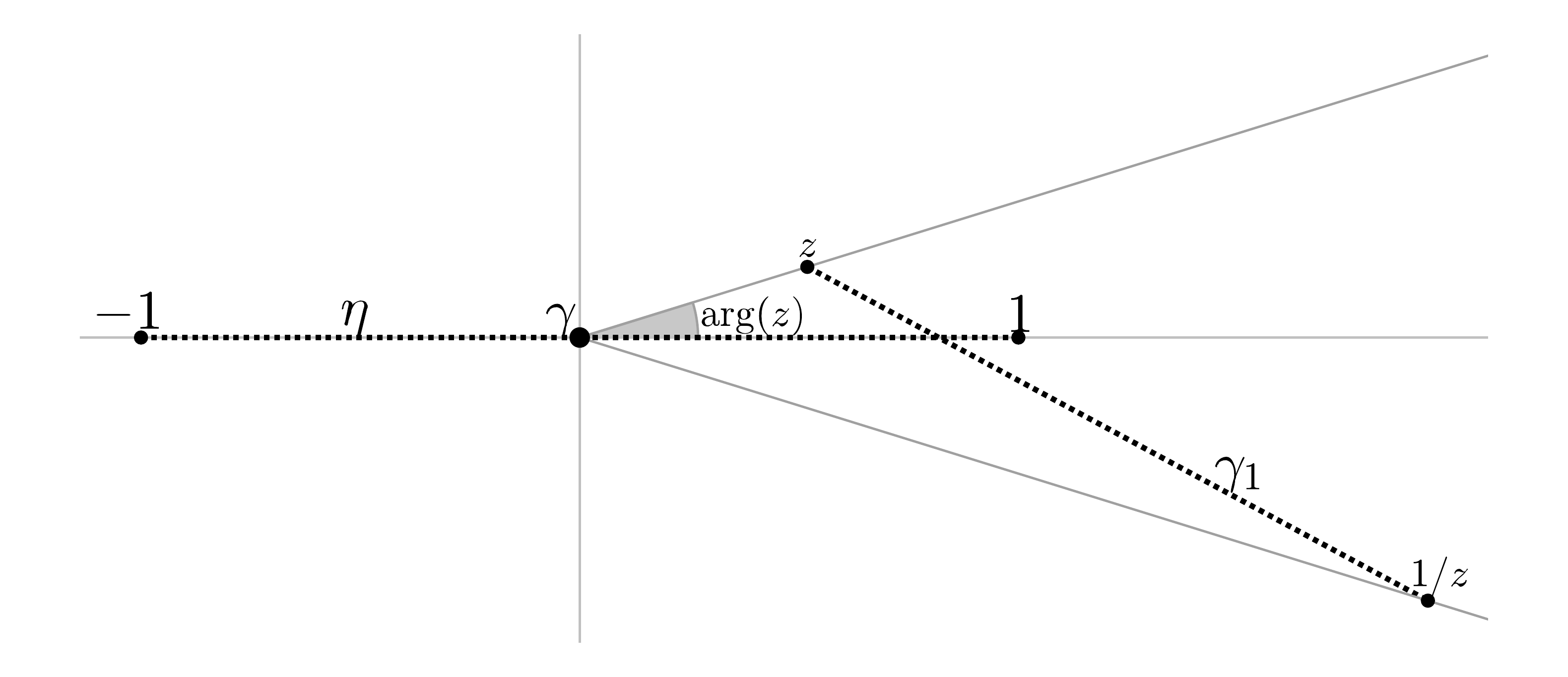}}
\caption{Proof of Lemma \ref{dihedral}. This picture in $\C$ depicts the endpoints of the geodesics $\eta$ and $\gamma_1$ in the upper half space model of $\H^3$, and, in dotted lines, their vertical projections to $\C$. The geodesic $\gamma$ projects to a single point (the origin). Any plane containing $\gamma$ will intersect $\C$ in a line through the origin.}
\label{F:DihedralAngle}
\end{figure}
The planes $P$ and $P_1$ meet the complex plane in lines through the origin.
For  $P$,
 this line goes through 1,
and for   $P_1$, 
 this line has argument between $-\arg z$ and $\arg z$. 
Therefore
 the dihedral angle between $P$ and $P_1$ is at most $\arg z$.

The complex distance between $\gamma$ and $\gamma_1$ is given by 
 \begin{equation} \label{distance from cr}
  \cosh(d+i\theta)= \frac{[0,z, 1/z, \infty]-1}{[0,z, 1/z, \infty]+1}=\frac{1+z^2}{1-z^2},
 \end{equation}
 using the cross ratio formula in \cite[p.150]{kour2}. 

By \eqref{distance from cr}, we have
\begin{equation}
 z^2 = \frac{q - 1}{q + 1}
\end{equation}
where 
\begin{align*}
 q &= \cosh(d + i\theta) \\
    &= \cosh d \cos \theta + i \sinh d \sin \theta.
\end{align*}
\new{
We observe that
\begin{equation}\label{eq:zq}
\arg z^2 = \arg \frac{q - 1}{q + 1} = \arg (q-1)(\overline q +1).
\end{equation}
Moreover,
\begin{align*}
(q-1)(\overline q + 1) &= |q|^2 + q - \overline q - 1 \\
&= \cosh^2 d \cos^2 \theta + \sinh^2 d \sin^2 \theta + 2i \sinh d \sin \theta - 1 \\
&=(1 + \sinh^2 d)(1-\sin^2 \theta) + \sinh^2 d \sin^2\theta -1 \\ & \qquad + 2i \sinh d \sin \theta  \\
&= \sinh^2 d - \sin^2 \theta + 2i \sinh d \sin \theta \\
&= (\sinh d + i \sin \theta)^2.
\end{align*}
This, \eqref{eq:zq}, and a sign check implies
\begin{equation}
\arg z = \arg (\sinh d + i \sin \theta)
\end{equation}
and hence
\begin{align*}
|\arg z| &= \left|\arctan \frac {\sin \theta}{\sinh d}\right| \\
&\le \left|\frac {\sin \theta}{\sinh d} \right|\\
&\le  \frac {|\theta|} d . \qedhere
\end{align*}}

\end{proof} 


\begin{proof}[Proof of Lemma \ref{under}]
Recall that $v_{\gamma, H}$ denotes the vector field along $\gamma$ determined by $H$. At the highest point of $\gamma$, by the definition of $ \P(H, \gamma)$ this vector field makes angle  $\PTiltUp$ with $ \P(H, \gamma)$. By the definition of a good curve, it follows that at every point of $\gamma$ in the given standard horoball, the angle between $v_{\gamma, H}$ and  $ \P(H, \gamma)$ is at least $\PTiltUp-\epsilon$. Hence by the definition of a good hamster wheel, it follows that for the medium orthogeodesic $\eta$ from $\gamma$ to $\gamma_1$, the angle between $\eta$ and $ \P(H, \gamma)$ must be at least $\PTiltUp-2\epsilon$. 

Our first claim is that $\P(H, \gamma) \cap \gamma_1 = \emptyset$. Otherwise, let $y_1 \in \P(H, \gamma) \cap \gamma_1$. 
By the definition of a hamster wheel, the complex distance $d+i\theta$ between $\gamma$ and $\gamma_1$ satisfies  
$$\left|\frac{d+i\theta}I -1\right| <\epsilon,$$
where $I\in \R$ is the distance in the ideal hamster wheel. In particular, $|\theta|/I < \epsilon$, so $|\theta|/d < 2\epsilon$. Hence by Lemma \ref{dihedral} the plane $P_1=\P(H, \gamma)$ through $y_1$ and $\gamma$ makes dihedral angle at most $3 \epsilon$ to the plane $P$ through $\gamma$ and the orthogeodesic $\eta$ from $\gamma$ to $\gamma_1$. This contradicts the fact from the previous paragraph that the angle between $\eta$ and $\P(H, \gamma)$ is at least $\PTiltUp-2\epsilon$. We have shown that $\gamma_1$ lies under $\P(P, \gamma)$.  

Let $L$ be the plane through $\gamma_1$ and $\eta$, and let $L^+$ be the half plane of $L$ (cut along $\gamma_1$) that does not intersect $\gamma$. Our second claim is that $\P(H, \gamma)$ does not intersect $L^+$, that is, $L^+$ lies below $\P(H, \gamma)$. Indeed, the intersection $L\cap \P(H, \gamma)$ contains the point $\gamma\cap \eta$, because $\eta\subset L$ and $\gamma\subset \P(H, \gamma)$. As a non-empty intersection of two planes, $L\cap \P(H, \gamma)$ must be a geodesic. By the previous claim, $L\cap \P(H, \gamma)$ does not intersect $\gamma_1$, hence this geodesic lies in $L\setminus L^+$, and the second claim is proved. 

Our final claim is that $\P^+(K, \gamma_1)$ lies below $L^+$.  
 To see this, \color{kwred} recall that $y$ denotes \color{black} the highest point of $\gamma_1$, and let $L_H^+$ denote the half plane through $\gamma_1$ and $-v_{\gamma_1, H}(y)$. Similarly let $L_K^+$ denote the half plane through $\gamma_1$ and $ v_{\gamma_1, K}(y)$.
 
We will now see that the final claim follows because $L^+$ is  close to $L_H^+$, and $\P^+(K, \gamma_1)$ is  close to $L_K^+$, and $L_K^+$ lies significantly below  $L_H^+$. To be more precise, by the definition of a good hamster wheel, $L_H$ makes angle at most $\epsilon$ with $\eta$.  Because of our construction of $K$, the angle between $L_K$ and $L_H$ is at least $\minStepThreeUmbrellaBend$. By definition of $\P^+(K, \gamma_1)$, it has angle $\PTiltUp$ from $L_K$. Hence $\P^+(K, \gamma_1)$ lies at angle at least $\minStepThreeUmbrellaBend-\PTiltUp-\epsilon$ below $L^+$.
 \end{proof}

\begin{proof}[Proof of Theorem \ref{umbrella height}]
Let $\delta$ be one of the boundaries of the hamster wheel $H_0$ constructed in Step 1. As we remarked in Step 1, the height of $\delta$ is at most the height of the starting geodesic $\gamma_0$ plus $\log(R)+O(1)$. Let $H_1$ denote the hamster wheel, which is constructed in the first iteration of Step 2, that has outer boundary $\delta$. By \color{kwred}Remark \ref{R:triv}\color{black}, the half plane $\P^+(H_1, \delta)$ goes at most a constant depending on $\epsilon$ farther into the cusp than $\delta$. 

Roughly speaking, we can establish the theorem by using Lemma \ref{under} repeatedly, until the assumptions are not satisfied because the umbrella is pointing too close to straight down the cusp. This last situation is in fact helpful, and we start by giving a separate argument to handle it. 

Let $H$ be any hamster wheel in the umbrella that has $H_1$ as an ancestor. Let $\gamma$ be the outer boundary of $H$ in the cusp, and $x$ be the highest point on $\gamma$. We first claim that if $\color{kwred}-\color{black}v_{\gamma, H}(x)$ points away from the cusp by more than $2\epsilon$, \color{kwred}i.e. has angle of more than $\pi/2+2\epsilon$ from vertical, \color{black}  then the same is true for the children of $H$, and moreover the children bend downward by some definite amount $\theta_0/2>0$ independent of $\epsilon$.

To prove the claim, let $\gamma_1$ be an inner boundary of $H$, and let $\eta$ be the medium orthogeodesic from $\gamma$ to $\gamma_1$. By the definition of a good hamster wheel, $\eta$ points down from the cusp by a non-zero amount where it meets $\gamma$. Hence, there is a universal constant $\theta_0$ so that $\eta$ points up into the cusp by angle at least $\theta_0$ where it \color{kwred}meets \color{black} $\gamma_1$. (Here we use the convention that at each endpoint of $\eta$, the direction of $\eta$ is given by a tangent vector pointing inwards along $\eta$.)  See Figure \ref{F:theta0}.  
\begin{figure}[ht!]
\makebox[\textwidth][c]{
\includegraphics[width=.7\linewidth]{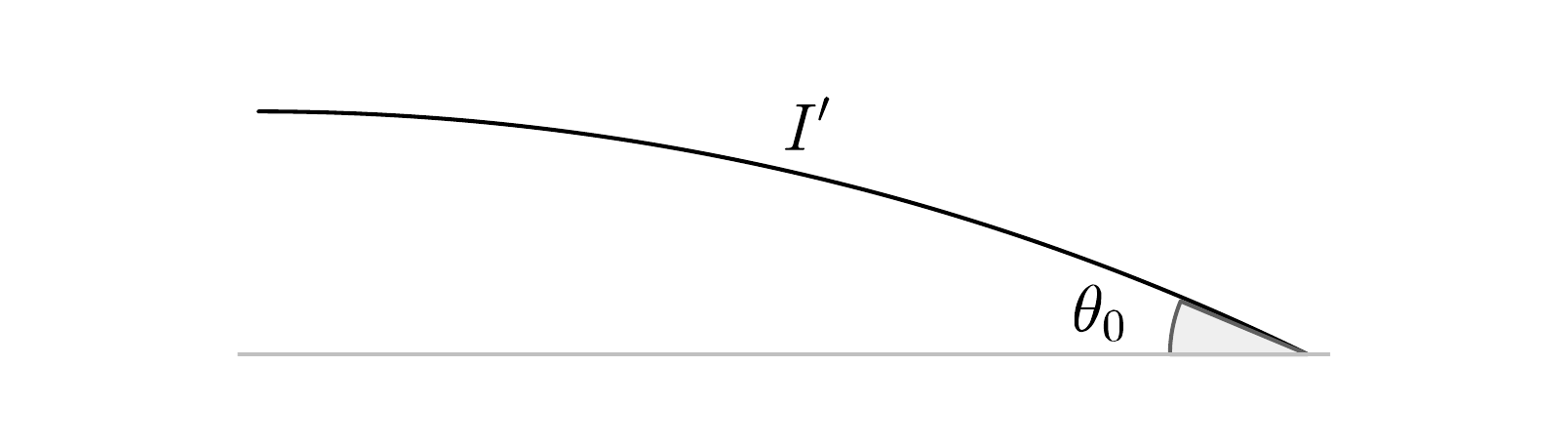}}
\caption{The definition of $\theta_0$. Here $I'$ denotes a lower bound for the length of $\eta$, which exists by comparison to a perfect hamster wheel. \color{kwred} Given a geodesic segment in the upper half plane of length $I'$ that starts tangent to the horizontal direction, $\theta_0$ is the angle to the horizontal at the other endpoint. \color{black}}
\label{F:theta0}
\end{figure}
It follows that, at the point where $\gamma_1$ meets $\eta$, the vector field $v_{\gamma_1, H}$ points up by angle at least $\theta_0-\epsilon$, and hence at the highest point of $\gamma$ it points up by at least $\theta_0-2\epsilon$. Thus if $K$ is attached to $H$ at $\gamma_1$, then $v_{\gamma_1, K}$ will point down by at least $\theta_0-2\epsilon$. This proves the claim. 

The claim gives that as soon as a hamster wheel points down by more than $\theta_0/2$, then so do all its descendants. If $H$ points down by more than $\theta_0/2$, then all the inner boundaries of $H$ have smaller height than the outer boundary. So, to prove the theorem it suffices to check the boundedness of height for hamster wheels for which $-v_{\gamma, H}(x)$ points upwards into the cusp, or points downwards into the cusp by at most $\epsilon$.  

Let $H$ be any such hamster wheel in the umbrella that has $H_1$ as an ancestor. Then by iterating  Lemma \ref{under}, we see that $K$ lies under $\P^+(H_1, \delta)$. This proves the theorem.
\end{proof}

\subsection{The area of the umbrella}
\def\ap{A_{\operatorname{perf}}}
\def\goodAlpha{\alpha}
\def\perfectAlpha{\alpha_{\operatorname{perf}}}
\def\perfectGamma{\gamma_{\operatorname{perf}}}
We proceed with the proof of Theorem \ref{umbrella area},
which will be a corollary of Theorem \ref{area bound},
which is a general statement about the number of components in the high part of a good assembly. 
Theorem \ref{area bound} will be established by applying Theorem \ref{good close to Fuchsian} and observing the same statement for perfect assemblies. 
We define the height of an assembly to \new{be } the height of the highest point on one of the geodesics in the assembly. 
\begin{lemma} \label{perfect count}
Let $\ap$ be a perfect assembly of hamster wheels. 
Then the number of geodesics $\gamma$ of $\ap$ 
 for which $h(\gamma) > h(\ap) - d$
 is $O(e^d)$. 
\end{lemma}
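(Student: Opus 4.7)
The plan is to use that a perfect assembly lives on a single totally geodesic plane $\Pi \subset \H^3$, and then to count high cuffs by a direct area comparison on $\Pi$ and its $\Gamma$-translates. Since the bending at each shared cuff of $\ap$ is zero, the developing map sends the universal cover $\tilde S$ isometrically into a single totally geodesic plane $\Pi \subset \H^3$: each perfect hamster wheel develops onto a piece of $\H^2$, and the perfect gluing (zero bending) forces adjacent pieces to lie in the same plane. The image in $\Pi$ is tiled by copies of the perfect hamster wheel $H_R$, each of hyperbolic area $2\pi R$ (by Gauss--Bonnet, since $\chi(H_R) = -R$) and bounded by $R+2$ cuffs of length $2R$.

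Next I would exploit the geometry of $\Pi$ in the upper half-space model. Since the assembly is compact, $\Pi$ is a Euclidean hemisphere rather than a vertical plane; let $p^* \in \Pi$ be its highest point at height $h^*_\Pi$. A short computation using $d_\Pi(p, p^*) = \operatorname{arccosh} e^{h^*_\Pi - h(p)}$ shows that the locus $\{p \in \Pi \suchThat h(p) > h^*_\Pi - r\}$ is a hyperbolic disk about $p^*$ of radius $r + O(1)$, hence of area $O(e^r)$. Any cuff $\gamma$ of $\ap$ with $h(\gamma) > h(\ap) - d$ has at least one lift to $\H^3$ of height exceeding $h(\ap) - d$; such a lift lies on some $\Gamma$-translate $g\Pi$ and passes through the hyperbolic disk about the top of $g\Pi$ of radius $O(h^*_{g\Pi} - h(\ap) + d)$.

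The number of tiles intersecting such a disk is at most the area of a slightly enlarged disk divided by $2\pi R$, giving $O(e^{h^*_{g\Pi} - h(\ap) + d}/R)$ tiles and therefore $O(e^{h^*_{g\Pi} - h(\ap) + d})$ cuff-lifts per translate. Summing over relevant $g$ and the finitely many cusps of $M$, and using that $\sum_g e^{h^*_{g\Pi} - h(\ap)}$ is bounded by a constant depending only on $M$, we obtain $O(e^d)$ cuff-lifts in total; since each cuff of $\ap$ contributes at least one high lift, the same bound holds for cuffs themselves. The hard part will be the uniform bound on $\sum_g e^{h^*_{g\Pi} - h(\ap)}$, which I expect to handle by a Margulis-style count for the $\Gamma$-orbit of geodesic planes approaching each cusp horoball; a direct attempt via the 1-Lipschitz property of $h\circ f$ and combinatorial adjacency of hamster wheels fails, because being 1-Lipschitz bounds value differences by distance but not the reverse.
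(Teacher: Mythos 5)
Your opening observation---that the developed perfect assembly lies in a single totally geodesic plane $\Pi\subset\H^3$---is correct and is the crux of the matter, and your computation that $\{p\in\Pi : h(p)>h^*_\Pi-r\}$ is a hyperbolic disk of radius $r+O(1)$ about the top of $\Pi$, hence of area $O(e^r)$, matches the paper. But you then lose the thread by reading $\ap$ as living in $M=\H^3/\Gamma$: you treat the cuffs of $\ap$ as closed geodesics in $M$ whose lifts spread over $\Gamma$-translates $g\Pi$, and you try to sum over $g$. In the application (proof of Theorem \ref{area bound}) one works entirely upstairs in $\H^3$, with $h$ the upper-half-space coordinate $\log z$ after normalizing so that $\infty$ is not in the limit set; the ``geodesics of $\ap$'' are then the lifts of cuffs inside the \emph{single} plane $\Pi$, and $h(\ap)$ is the sup of $h$ over those lifts. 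The lattice $\Gamma$ never appears---indeed the perfect Fuchsian group $\rho_{\ap}$ is not a subgroup of $\Gamma$, so ``$\Gamma$-translates of $\Pi$'' has no intrinsic meaning here; the bound $\sum_g e^{h^*_{g\Pi}-h(\ap)}=O(1)$ that you flag as the hard part is not just unproved but unneeded and of doubtful formulation.

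There is also an independent flaw in the tile count, even within one plane. A perfect hamster wheel $H_R$ has diameter of order $R$ (its $R+2$ cuffs have length $2R$), so a tile meeting a disk $D$ can have almost all its area at distance $\sim R$ from $D$; bounding the number of such tiles by $|\nbhd_{O(1)}(D)|/(2\pi R)$ is therefore invalid---to contain every meeting tile you would have to enlarge $D$ by $O(R)$, costing a spurious factor $e^{O(R)}$. The paper's proof sidesteps both problems at once: since everything lies in $\Pi$, the highest point $z$ of $\Pi$ is within a universal $\delta_0$ of some lifted cuff (the cuffs of a perfect assembly are $\delta_0$-dense in $\Pi$), so $h(z)\le h(\ap)+\delta_0$; the part of $\Pi$ above height $h(z)-d-\delta_0$ has area $O(e^d)$; and, crucially, every unit disk in $\Pi$ meets only $O(1)$ cuffs of the perfect assembly, so covering that region by $O(e^d)$ unit disks bounds the number of high cuffs directly, with no Gauss--Bonnet or tile-area argument needed.
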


\begin{proof}
Let $z$ be the highest point of the \emph{plane} of $\ap$. 
There is a universal constant $\delta_0$ such \color{kwred}$z$ \color{black}  lies within $\delta_0$ of $\ap$. 
Therefore $h(z) < h(\ap) + \delta_0$. 
The area of the part of the plane  that has height above $h(z) - d$ is  $O(e^d)$. 
Every disk of unit radius in this plane can intersect at most $O(1)$ geodesics of the perfect assembly. 
\end{proof}

To obtain a version of Lemma \ref{perfect count} for good assemblies, 
\color{kwred}we \color{black} will need a theorem on quasiconformal mappings:
\begin{theorem} \label{qc holder}
Suppose $f\from \C \to \C$ is $K$-quasiconformal, and $f(S^1)$ has diameter at most 2. 
Then for all $z_0, z_1 \in S^1$, we have
$$
\abs{f(z_0) - f(z_1)} < 64\abs{z_0 - z_1}^{1/K}.
$$
\end{theorem}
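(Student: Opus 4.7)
I would derive this from the $K$-quasi-invariance of the conformal modulus of ring domains, combined with the classical Teichm\"uller upper bound on the modulus of a ring whose complementary continua have specified points. The idea is to compare the modulus of a round annulus around the midpoint of $z_0, z_1$ with the modulus of its $f$-image, using the diameter hypothesis to produce a small reference point in the outer complementary continuum of the image ring.

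First, a trivial reduction: since $f(S^1)$ has diameter at most $2$, $|f(z_0) - f(z_1)| \leq 2$, so the claimed inequality is automatic whenever $64|z_0 - z_1|^{1/K} \geq 2$. Hence it suffices to treat the case where $\delta := |z_0 - z_1|$ is sufficiently small (specifically, $\delta < (1/32)^K$).

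For such $\delta$, set $m = (z_0 + z_1)/2$ and consider the round annulus $A = B(m, 1/2) \setminus \overline{B(m, \delta)}$. Since $|z_i - m| = \delta/2 < \delta$, the inner complementary continuum $\overline{B(m, \delta)}$ contains both $z_0$ and $z_1$, while the outer complementary continuum $E_\infty = \widehat{\C} \setminus B(m, 1/2)$ contains $\infty$ together with the point $p^\star := -m/|m| \in S^1$ (since $|p^\star - m| = 1 + |m| \geq 1$). The modulus of this round annulus is $\operatorname{mod}(A) = \frac{1}{2\pi}\log(1/(2\delta))$. Applying $K$-quasi-invariance yields $\operatorname{mod}(f(A)) \geq \operatorname{mod}(A)/K$. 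On the other hand, the image ring $f(A)$ has inner complementary continuum containing $f(z_0)$ and $f(z_1)$ at mutual distance $\alpha := |f(z_0) - f(z_1)|$, and outer complementary continuum containing $f(p^\star)$ with $|f(z_0) - f(p^\star)| \leq 2$ by the diameter hypothesis. After a M\"obius normalization sending $f(z_0) \mapsto 0$ and $f(p^\star) \mapsto \infty$, which preserves the modulus, the classical Teichm\"uller upper bound gives $\operatorname{mod}(f(A)) \leq \frac{1}{2\pi}\log(32/\alpha)$ in the regime $\alpha \to 0$.

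Combining the two inequalities yields $\alpha \leq 32(2\delta)^{1/K} \leq 64\delta^{1/K}$, using $2^{1/K} \leq 2$. The main obstacle is extracting the explicit constant $64$, which requires the sharp asymptotic form of the Teichm\"uller modulus estimate (Lehto--Virtanen, \emph{Quasiconformal Mappings in the Plane}) and careful control of the M\"obius normalization. An alternative route giving $64 = 16 \cdot 4$ is to reduce to Mori's theorem for $K$-quasiconformal self-maps of the unit disk (constant $16$): since $f(\overline{D})$ is a bounded Jordan region of diameter at most $2$, one quasiconformally extends $f|_{\overline{D}}$ to the sphere and applies a M\"obius normalization whose Koebe distortion is at most $4$ before invoking Mori.
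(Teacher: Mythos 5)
Your modulus-comparison strategy is the same as the paper's: bound the extremal modulus of the ring separating $\{z_0,z_1\}$ from a far-away point on $S^1$ (and $\infty$) from below, bound the modulus of the image ring from above via Teichm\"uller, and connect the two by $K$-quasi-invariance. The paper implements this by working directly with the extremal modulus $m(x,y;z)$ and quoting the Ahlfors estimate $\log\bigl(\tfrac{|z-x|}{|y-x|}\bigr)\le 2\pi m\le\log 16\bigl(\tfrac{|z-x|}{|y-x|}+1\bigr)$; your explicit round annulus is morally the proof of that lower bound, so the two routes are essentially identical.

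Two details prevent your version from producing the stated constant~$64$. First, you take the inner disk $\overline{B(m,\delta)}$, giving $\operatorname{mod}(A)=\tfrac{1}{2\pi}\log\tfrac{1}{2\delta}$; since $|z_i-m|=\delta/2$ the optimal choice is $\overline{B(m,\delta/2)}$, which gives $\operatorname{mod}(A)=\tfrac{1}{2\pi}\log\tfrac{1}{\delta}$ and recovers the paper's lower bound exactly. Second, the asserted bound $\operatorname{mod}(f(A))\le\tfrac{1}{2\pi}\log(32/\alpha)$ ``in the regime $\alpha\to0$'' is an asymptotic rather than a uniform inequality and cannot be invoked as stated. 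The correct uniform bound is the one the paper cites from Ahlfors: $\operatorname{mod}(f(A))\le\tfrac{1}{2\pi}\log 16\bigl(\tfrac{|f(p^\star)-f(z_0)|}{\alpha}+1\bigr)$, which, together with $|f(p^\star)-f(z_0)|\le2$ and $\alpha\le2$, gives $\operatorname{mod}(f(A))\le\tfrac{1}{2\pi}\log(64/\alpha)$. With these two corrections your chain of inequalities becomes $\tfrac1K\log\tfrac1\delta\le\log\tfrac{64}{\alpha}$, i.e.\ $\alpha\le64\,\delta^{1/K}$, which is precisely the claim. As written, however, your bookkeeping only delivers $128\,\delta^{1/K}$, and the alternative route via Mori's theorem and a ``Koebe distortion at most~$4$'' step is too vague to certify a specific constant.
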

To prove this theorem, we will require the following (from \color{kwred}\cite{ahlfors} \color{black} pp. 35--47, esp. (17)):
\begin{theorem}\label{T:mxyz}
For $x, y, z \in \C$, let $m(x, y; z)$ be the modulus of the largest annulus in $\C$ separating $x$ and $y$ from $z$. 
Then
\begin{equation}
\log \frac{\abs{z-x}}{\abs{y-x}} \le 2 \pi m \le \log 16\left(\frac{\abs{z-x}}{\abs{y-x}} + 1\right).
\end{equation}
\end{theorem}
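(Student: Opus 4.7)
The plan is to establish the two inequalities of Theorem \ref{T:mxyz} separately: the lower bound by exhibiting an explicit separating annulus, and the upper bound by invoking the Teichm\"uller extremal problem for annuli together with a classical estimate on the Teichm\"uller modulus function.

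For the lower bound $\log(|z-x|/|y-x|) \le 2\pi m$, I would translate so that $x=0$ and consider the family of concentric round annuli $A_{r_1, r_2} = \{w \in \C : r_1 < |w| < r_2\}$ with $|y| \le r_1 < r_2 \le |z|$. Such an annulus has bounded complementary component $\{|w| \le r_1\}$ containing both $0 = x$ and $y$, and unbounded complementary component $\{|w| \ge r_2\}$ containing $z$ and $\infty$; hence $A_{r_1,r_2}$ separates $\{x,y\}$ from $\{z\}$. Its modulus equals $\frac{1}{2\pi}\log(r_2/r_1)$, and taking $r_1 \to |y|^+$, $r_2 \to |z|^-$ yields $2\pi m \ge \log(|z|/|y|) = \log(|z-x|/|y-x|)$.

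For the upper bound, I would normalize by an affine map to put $x = 0$, $y = 1$, so that $|z| = R := |z-x|/|y-x|$. Any separating annulus has complementary continua $E_1 \supset \{0, 1\}$ (bounded) and $E_0 \supset \{z, \infty\}$ (unbounded). The Teichm\"uller extremal problem states that the modulus is maximized, among all such configurations, when (after a rotation placing $z$ on the positive real axis and a circular symmetrization of the complementary continua about that axis) $E_1 = [0, 1]$ and $E_0 = [R, +\infty)$. The extremal modulus $m_T(R)$ admits the closed form $2\pi m_T(R) = \pi K'(k)/K(k)$, where $K$ and $K'$ are the complete elliptic integrals of modulus $k$ and $\sqrt{1-k^2}$ respectively, with $k^2 = 1/(R+1)$. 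The classical estimate $\pi K'(k)/K(k) \le \log(16/k^2) = \log 16(R+1)$ then yields the upper inequality.

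The main obstacle will be the two nontrivial ingredients needed for the upper bound: the Teichm\"uller extremal property itself, and the explicit numerical bound $\pi K'(k)/K(k) \le \log(16/k^2)$. The first follows from circular (Steiner-type) symmetrization of the complementary continua, which preserves the separation condition while weakly increasing the conformal modulus; this is standard in geometric function theory. The second follows from the Landen duplication formula for the elliptic integral $K(k)$ combined with the asymptotic $K'(k)/K(k) \sim (1/\pi)\log(16/k^2)$ as $k \to 0^+$ and monotonicity of $m_T$ in $R$. Both ingredients are developed carefully in the portion of Ahlfors's \emph{Lectures on Quasiconformal Mappings} cited in the excerpt (Chapter~III, pp.~35--47, esp.~equation (17)), and I would import them rather than reprove them.
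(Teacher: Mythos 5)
Your proposal is correct in substance and follows essentially the same route as the paper, which gives no proof of Theorem \ref{T:mxyz} at all but simply quotes it from Ahlfors (pp.~35--47, esp.~(17)): your lower bound via concentric round annuli (trivial when $\abs{z-x}\le\abs{y-x}$, since $m\ge 0$) and your upper bound via the Teichm\"uller extremal problem together with the classical estimate $\mu(k)<\log(4/k)$, i.e.\ $\pi K'(k)/K(k)\le \log(16/k^2)$, is exactly the standard content behind that citation, and you import the two nontrivial ingredients from the same source.

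One slip worth fixing: the configuration you describe as extremal, $E_1=[0,1]$ and $E_0=[R,+\infty)$ lying on the same ray, is \emph{not} the Teichm\"uller extremal configuration; translating by $-1$ shows its modulus is the Teichm\"uller value for the ratio $R-1$. The extremal for rings separating $\{x,y\}$ from $\{z,\infty\}$ with $\abs{z-x}/\abs{y-x}=R$ has the two continua on \emph{opposite} rays through $x$, i.e.\ (after normalization) $E_1=[-1,0]$ and $E_0=[R,+\infty)$, and it is this ring whose modulus satisfies $2\pi m_T(R)=\pi K'(k)/K(k)$ with $k^2=1/(R+1)$. So the formula you quote, and hence the final bound $\log 16(R+1)$, are the correct ones---only the geometric description of the extremizer (and the rotation/symmetrization sketch) is off. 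Note also the implicit convention, which you adopt correctly and which the application in Theorem \ref{qc holder} requires, that the competing annuli have $x,y$ in their bounded complementary component (so $z$ is grouped with $\infty$); without this reading, arbitrarily thin annuli encircling $z$ would make $m$ infinite and the upper bound false.
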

We can now prove the theorem:
\begin{proof}[Proof of Theorem \ref{qc holder}]
If $\abs{z_1 - z_0} > 1$ then the statement is trivial. 
Otherwise we can find $y \in S^1$ such that $\abs{z_i - y} > 1$ for $i = 0, 1$. 
Letting $m$ be the largest modulus of an annulus in $\C$ separating $z_0$ and $z_1$ from $y$, \color{kwred} Theorem \ref{T:mxyz} implies that \color{black}
\begin{equation} \label{teich from ahlfors}
2 \pi m \ge \log\frac1 {\abs{z_1 - z_0}}.
\end{equation}
Letting $M$ be the largest modulus of an annulus in $\C$ separating $f(z_0)$ and $f(z_1)$ from $f(y)$, we find
\begin{align}
2 \pi M  &\le \log 16\left(\frac{ \abs{f(y) - f(z_0)} } {\abs{f(z_1) - f(z_0)}} + 1 \right) \\
		  &\le \log \frac{64}{\abs{f(z_1) - f(z_0)}}. 
\end{align}
Moreover, $M \ge m/K$ because $f$ is $K$-quasiconformal. 
The theorem follows from a simple calculation. 
\end{proof}

\new{
We will also need a weak converse to Theorem \ref{qc holder}:
\begin{lemma} \label{uniform injectivity}
For all $K, \epsilon$ there exists $\delta$:
Let $f\from \C \to \C$ be $K$-quasiconformal,
such that the diameter of $f(S^1)$ is at least 1.
Suppose that $x, y \in S^1$ satisfy $|x - y| \ge \epsilon$.
Then $|f(x) - f(y)| > \delta$.
\end{lemma}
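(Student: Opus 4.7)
The plan is to argue by contradiction using the standard compactness theorem for normalized $K$-quasiconformal self-maps of $\hat\C$. If the lemma fails for some $K$ and $\epsilon$, one obtains a sequence $f_n\from \C \to \C$ of $K$-quasiconformal maps with $\operatorname{diam}(f_n(S^1)) \ge 1$ and points $x_n, y_n \in S^1$ with $|x_n - y_n| \ge \epsilon$ but $|f_n(x_n) - f_n(y_n)| \to 0$; the goal is to produce a limiting $K$-quasiconformal map that identifies two distinct points, contradicting injectivity.

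The first step is normalization. Pre-composing each $f_n$ with a rotation of $\C$, I may assume $x_n = 1$, and after passing to a subsequence $y_n \to y$ with $|y - 1| \ge \epsilon$. Post-composing with an affine map I arrange $f_n(1) = 0$ and $\operatorname{diam}(f_n(S^1)) = 1$. Since $0 \in f_n(S^1)$, I may select $z_n \in S^1$ with $|f_n(z_n)| \ge 1/2$, and post-composing with a further affine rotation--dilation of modulus at most $2$ arrange $f_n(z_n) = 1$; the resulting $f_n(S^1)$ still has diameter at most $2$, and $|f_n(y_n)| \to 0$ is preserved. Passing to a subsequence $z_n \to z \in S^1$. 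Applying Theorem \ref{qc holder} at the pair $(1, z_n)$ gives $1 = |f_n(z_n) - f_n(1)| \le 64|z_n - 1|^{1/K}$, so $|z - 1| \ge 64^{-K}$; in particular $z \ne 1$.

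Next, extend each $f_n$ to $\hat\C$ by $f_n(\infty) = \infty$, so each $f_n$ becomes a $K$-quasiconformal self-map of $\hat\C$. Let $\sigma_n$ be the affine \Mobius\ transformation fixing $1$ and $\infty$ with $\sigma_n(z) = z_n$; since $z_n \to z \ne 1$, the maps $\sigma_n$ tend to the identity uniformly on $\hat\C$ in the spherical metric. Fix a \Mobius\ transformation $\alpha$ of $\hat\C$ sending $(0, 1, \infty)$ to $(1, z, \infty)$, and set $\tilde h_n = f_n \circ \sigma_n \circ \alpha$. Then each $\tilde h_n$ is $K$-quasiconformal with $\tilde h_n(0) = 0$, $\tilde h_n(1) = 1$, $\tilde h_n(\infty) = \infty$. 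By the standard compactness theorem for such normalized $K$-quasiconformal maps (see e.g.\ Lehto--Virtanen), after passing to a subsequence $\tilde h_n$ converges uniformly on $\hat\C$ to a $K$-quasiconformal map $\tilde h$; consequently $h_n := f_n \circ \sigma_n = \tilde h_n \circ \alpha^{-1}$ converges uniformly to the $K$-quasiconformal map $h := \tilde h \circ \alpha^{-1}$.

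To conclude, $h_n(1) = f_n(\sigma_n(1)) = f_n(1) = 0$ for all $n$, so $h(1) = 0$. Since $\sigma_n \to \operatorname{id}$ and $y_n \to y$, one has $\sigma_n^{-1}(y_n) \to y$, and the uniform convergence gives $f_n(y_n) = h_n(\sigma_n^{-1}(y_n)) \to h(y)$; combined with $f_n(y_n) \to 0$ this yields $h(y) = 0 = h(1)$, contradicting injectivity of the homeomorphism $h$ since $y \ne 1$. The main technical obstacle the plan must navigate is ensuring that the three domain points $1, z, \infty$ used to normalize remain pairwise distinct in the limit, and this is precisely where Theorem \ref{qc holder} is essential: its H\"older estimate rules out the degeneration $z_n \to 1$ that would otherwise break the compactness argument.
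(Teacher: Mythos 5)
Your proof is correct and rests on the same core idea as the paper's: a normal families/compactness argument for $K$-quasiconformal maps, with the lemma following by contradiction because a limit map would fail to be injective. The difference is in how the family is normalized. The paper normalizes geometrically, requiring $\operatorname{diam} f(S^1)=1$ exactly and $0\in f(\overline{\mathbb D})$, and simply asserts compactness of that family in the local uniform topology. You instead reduce to the textbook normalization $\tilde h_n(0)=0$, $\tilde h_n(1)=1$, $\tilde h_n(\infty)=\infty$, which forces you to manufacture a third stable normalization point $z_n$ on $S^1$ and to invoke Theorem \ref{qc holder} to prevent $z_n\to 1$; this is the extra work your route demands, but it buys you the standard, off-the-shelf compactness theorem rather than the slightly nonstandard variant the paper appeals to. Two small points worth making explicit if this were to be written out in full: extending $f_n$ to $\hat{\mathbb C}$ by $f_n(\infty)=\infty$ does indeed preserve $K$-quasiconformality (removability of $\infty$), and the intermediate post-compositions are affine of uniformly bounded modulus so that $|f_n(y_n)|\to 0$ is preserved through the normalizations — both of which you handle correctly.
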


\begin{proof}
First consider the case where the diameter of $f(S^1)$ is exactly 1,
and 0 lies in the image of the closed unit disk. 
The space of $K$-quasiconformal maps with these properties form a compact family in the local uniform topology.
Therefore,
if there were no such $\delta$,
we could find such a map $f$ and distinct points $x, y \in S^1$ such that $f(x) = f(y)$, a contradiction.

The slightly more general case of $f$ given in the Lemma follows by rescaling and translation.
\end{proof}}

We can then prove:
\def\chat{\hat\C}
\begin{lemma} \label{good diameter}
There exists a universal constant $\epsilon_0$ such that the following holds. 
Suppose that $A$ is a good assembly,
and let $\alpha$ be the highest geodesic on $A$. 
Suppose that the limit set of $A$ does not go through $\infty$.
Then the distance between the endpoints of $\alpha$ is at least $\epsilon_0$ times the diameter of the limit set of $A$.
\end{lemma}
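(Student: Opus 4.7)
My plan is to reduce to the perfect (Fuchsian) model via the $K$-quasiconformal conjugacy provided by Theorem \ref{good close to Fuchsian}, produce a cuff lift of the perfect assembly with widely separated endpoints on the limit circle, and transfer that bound back using Lemma \ref{uniform injectivity}. The key observation driving the argument is that a geodesic in $\H^3$ with endpoints $w_0, w_1 \in \C$ reaches maximum Euclidean height $|w_0-w_1|/2$, so the ``highest geodesic in $A$'' is exactly the cuff lift with maximum Euclidean endpoint distance; producing any cuff lift in $A$ with endpoints at distance $\ge \epsilon_0 D$ therefore forces $\alpha$ itself to have endpoints at the same lower bound.

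The geometric input is a uniform bound on the inradius of the components of $A$. A good pants has hyperbolic area $2\pi$ by Gauss--Bonnet, so the area bound for hyperbolic disks forces its inradius to be at most $\operatorname{arccosh}(2)$. A good hamster wheel $H_R$ is a covering of the pants $Q_R$ (cuffs $2, 2, 2R$) by a local isometry sending cuffs to cuffs, so $H_R$ has the same inradius as $Q_R$, again at most $\operatorname{arccosh}(2) =: C_{in}$. Consequently every point of the convex hull of $\Lambda_{\operatorname{perf}}$ in the hyperbolic plane $P$ preserved by $\rho_{\operatorname{perf}}$ lies within hyperbolic distance $C_{in}$ of some cuff lift of $A_{\operatorname{perf}}$.

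Fix the $K$-quasiconformal conjugacy $\tilde f$. By composing $\tilde f$ with Möbius transformations and a Euclidean similarity, normalize so that $P$ is the unit hemisphere in $\H^3$ with $\Lambda_{\operatorname{perf}} \subset S^1 \subset \C$, that $\tilde f(\infty) = \infty$, and that $D := \operatorname{diam}(\Lambda_A) = 1$; then $\tilde f|_\C \from \C \to \C$ is $K$-qc with $\operatorname{diam}(\tilde f(S^1))$ a universal constant. Let $p^*, q^* \in \Lambda_{\operatorname{perf}}$ realize $D_{\operatorname{perf}} := \operatorname{diam}(\Lambda_{\operatorname{perf}})$ and let $M$ be the hyperbolic midpoint of the geodesic from $p^*$ to $q^*$ in $P$; the inradius bound applied at $M$ produces a cuff lift $\beta^*$ of $A_{\operatorname{perf}}$ within hyperbolic distance $C_{in}$ of $M$. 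In the Poincaré disk model of $P$ the standard chord formula gives the endpoints of $\beta^*$ at chord distance at least $2\sech(C_{in})$ in disk-model coordinates, and the identity $|\phi_m(r)-\phi_m(s)| = |r-s|(1-|m|^2)/(|1-\bar m r||1-\bar m s|)$ for the Möbius $\phi_m$ centering the disk model at $M$ converts this to the Euclidean bound $|z_0^*-z_1^*| \ge c_0 D_{\operatorname{perf}}$ on $S^1$ for a universal $c_0 > 0$; in the complete case $\Lambda_{\operatorname{perf}} = S^1$, $M$ is the top of the hemisphere, the computation is immediate, and one may take $c_0 = \sech(C_{in})$.

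Finally, Lemma \ref{uniform injectivity} applied to $\tilde f|_\C$ at $z_0^*, z_1^*$ yields $|\tilde f(z_0^*) - \tilde f(z_1^*)| \ge \delta$ for some $\delta = \delta(K, c_0) > 0$. These images are the endpoints of the cuff lift $\beta \subset A$ corresponding to $\beta^*$ under the conjugacy, so since $\alpha$ is the highest geodesic in $A$ we conclude $|z_0 - z_1| \ge |\tilde f(z_0^*) - \tilde f(z_1^*)| \ge \delta$; undoing the rescaling gives $|z_0 - z_1| \ge \delta D$, so $\epsilon_0 = \delta$ is the desired universal constant. The main obstacle is the Möbius bookkeeping needed to achieve $\tilde f(\infty) = \infty$ and $\Lambda_{\operatorname{perf}} \subset S^1$ simultaneously, together with the non-central chord computation in the disk model when $D_{\operatorname{perf}}$ is small.
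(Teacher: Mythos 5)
Your overall strategy is the same as the paper's: observe the inradius bound for the perfect model so that the highest point of the perfect plane is within a universal distance of a geodesic, push this through the $K$-quasiconformal conjugacy using Lemma~\ref{uniform injectivity}, and read off the endpoint separation from the fact that height equals $\log$ of half the endpoint separation. The paper's own argument is these same three steps, stated tersely, so the plan is sound.

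There is one genuine gap in the execution, though, and it is exactly where you admit the argument is awkward. After normalizing so that $P$ is the unit hemisphere and $\operatorname{diam}(\Lambda_A)=1$, your non-central-chord computation only yields $|z_0^*-z_1^*|\ge c_0 D_{\operatorname{perf}}$, where $D_{\operatorname{perf}}$ is the Euclidean diameter of the perfect limit set inside $S^1$. Lemma~\ref{uniform injectivity} then gives $|\tilde f(z_0^*)-\tilde f(z_1^*)|\ge\delta(K,\,c_0 D_{\operatorname{perf}})$, \emph{not} $\delta(K,c_0)$ as you write; and since $D_{\operatorname{perf}}$ is a priori unconstrained (it can be tiny when the perfect model is an incomplete assembly with a thin Cantor limit set), the resulting $\epsilon_0$ is not universal. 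The fix is that there is no reason to accept a non-central $M$: you still have the freedom to precompose $\tilde f$ with a M\"obius transformation preserving the unit hemisphere (an isometry of $P$), and such a map can carry any two prescribed points of $\Lambda_{\operatorname{perf}}\subset S^1$ to $\pm1$. Doing so puts the diameter-realizing pair at $\pm1$, forces $D_{\operatorname{perf}}=2$, places $M$ at the disk center, and reduces the chord estimate to the ``complete case'' you already handled, at which point $\delta=\delta(K,\sech C_{in})$ really is universal.

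One smaller remark: the Gauss--Bonnet bound ``inradius $\le\operatorname{arccosh}(2)$'' assumes the disk $B(p,r)$ with $r=d(p,\partial Q)$ is embedded, which needs the injectivity radius at $p$ to be $\ge r$; this does hold for the perfect pants $P_R$ and for $Q_R$ (peripheral loops at $p$ have length $\ge 2\,d(p,\partial Q)$, and non-peripheral closed geodesics are long), but the verification is not automatic from area alone and should be flagged. The paper treats the whole ``bounded distance to a geodesic'' statement as self-evident, so this is a detail you are adding rather than one you were required to supply.
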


\new{
\begin{proof}
We first observe the same statement for a \emph{perfect} assembly $\ap$,
because any point on the plane for $\ap$ (e.g. the point of greatest height) must lie at a bounded distance from a geodesic in $\ap$. 
Then,
given $A$,
we let $\ap$ be the perfect version of $A$,
and $h\from \chat \to \chat$ be the $K$-quasiconformal map given by Theorem \ref{good close to Fuchsian};
we can assume that $h(\infty) = \infty$. 
Then the desired lower bound follows from the same for $\ap$ and Lemma \ref{uniform injectivity}.
\end{proof}}

We first need the following:
\begin{lemma} \label{delta height ratio}
Suppose that $\ap$ is  perfect assembly and $A$ is a good assembly,
 $\ap$ and $A$ have finite height,
 and $\ap$ and $A$ are related by a $K$-\color{kwred}quasiconformal \color{black} map fixing $\infty$. 
 Suppose that \color{kwred}$\eta\in \ap$ and $\eta'\in A$ are related by this quasiconformal map. 
Then \color{black}
$$
h(A) - h(\eta') \ge K^{-1} (h(\ap) - h(\eta)) - C_0. 
$$
\end{lemma}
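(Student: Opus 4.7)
I lift both assemblies to the upper half space model $\H^3 = \C \times \R_+$, arranged so that the cusp attaining the maximum height of $A$ (equivalently, since $f$ fixes $\infty$, of $\ap$) lies at $\infty$. In this model, a geodesic with endpoints $z, w \in \C$ reaches maximum upper half space coordinate $|z-w|/2$, so its height as measured in the manifold is $\log|z-w| + C_1$, where $C_1$ depends on the fixed horosphere about the cusp at $\infty$. Differences of heights are therefore logarithms of ratios of endpoint separations:
\[
h(\ap) - h(\eta) = \log\frac{|z_\ap^+ - z_\ap^-|}{|z_\eta^+ - z_\eta^-|}, \qquad h(A) - h(\eta') = \log\frac{|w_A^+ - w_A^-|}{|w_{\eta'}^+ - w_{\eta'}^-|},
\]
where $z_\ap^\pm$ (resp.\ $w_A^\pm$) are the endpoints in $\C$ of a lift of the highest geodesic of $\ap$ (resp.\ $A$) going into the cusp at $\infty$, $z_\eta^\pm$ are the endpoints of such a lift of $\eta$, and $w_{\eta'}^\pm = f(z_\eta^\pm)$ are the corresponding endpoints for $\eta'$.

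Next, I pre- and post-compose $f$ with scalings fixing $\infty$ so that the limit sets $\Lambda_p$ of $\ap$ and $\Lambda_A$ of $A$ both have diameter in $[1,2]$. These rescalings shift each individual height by an additive constant but preserve differences of heights and the $K$-quasiconformal property of $f$, so they can be absorbed into $C_0$. After normalization, Lemma \ref{good diameter} applied to both $\ap$ and $A$ gives $|z_\ap^+ - z_\ap^-|, |w_A^+ - w_A^-| \ge \epsilon_0$, while trivially $|z_\eta^+ - z_\eta^-| \le \mathrm{diam}(\Lambda_p) \le 2$.

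The key step is a Hölder estimate $|w_{\eta'}^+ - w_{\eta'}^-| \le C|z_\eta^+ - z_\eta^-|^{1/K}$ for a constant $C$ depending only on the normalization. To obtain it, I adapt the proof of Theorem \ref{qc holder}: the argument uses only a reference point $y$ whose distance to $z_0, z_1$ is bounded below while $f(y)$ stays in a bounded image set. I fix three points $a, b, c \in \Lambda_p$ that are pairwise separated by a definite fraction of $\mathrm{diam}(\Lambda_p)$ (such points exist in any perfect limit set, since it is the boundary of a complete hyperbolic surface and hence is not concentrated near any single point); for any pair $z_\eta^\pm$, at least one of $a, b, c$ serves as a valid reference point, and the modulus-of-annulus estimate in the proof of Theorem \ref{qc holder} then yields the Hölder estimate.

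Combining the pieces,
\[
h(A) - h(\eta') \ge \log \epsilon_0 - \log|w_{\eta'}^+ - w_{\eta'}^-| \ge \log \epsilon_0 - \log C - (1/K) \log|z_\eta^+ - z_\eta^-|,
\]
and adding and subtracting $(1/K)\log|z_\ap^+ - z_\ap^-|$, together with $|z_\ap^+ - z_\ap^-| \le 2$, yields the desired inequality with $C_0 = -\log \epsilon_0 + \log C + (1/K)\log 2$. The main obstacle is the extension of Theorem \ref{qc holder} to pairs in $\Lambda_p$ rather than pairs on $S^1$; once one verifies that any perfect limit set (with normalized diameter) contains three uniformly-separated reference points, the rest of the proof is bookkeeping.
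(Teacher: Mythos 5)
Your proof is correct and follows the same overall outline as the paper: express heights via logs of endpoint diameters, normalize by similarities fixing $\infty$, use the H\"older estimate from Theorem~\ref{qc holder}, use Lemma~\ref{good diameter} as a lower bound on the highest geodesic's endpoint separation, and combine.

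The one place you deviate, and where you spend most of your effort, is unnecessary. You treat $\Lambda_p$ as an abstract connected set and fashion an extension of the H\"older estimate using three uniformly separated reference points. But $\ap$ is a \emph{perfect} assembly, so $\rho_{\ap}$ is genuinely Fuchsian and $\Lambda_p$ is a round circle. Since $\ap$ has finite height, that circle is bounded in $\C$, so a translation-and-scaling of $\C$ (still fixing $\infty$, hence only shifting all heights by a common additive constant) moves $\Lambda_p$ to exactly $S^1$. The paper makes this normalization, after which Theorem~\ref{qc holder} applies verbatim with $z_\eta^\pm\in S^1$ and $f(S^1)=\Lambda_A$ of diameter $2$, and there is no need to re-run the modulus-of-annulus argument with a bespoke reference point. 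Your three-point argument does go through (a connected set of diameter $\ge 1$ contains three points pairwise $\ge 1/2$ apart, and the modulus estimates in the proof of Theorem~\ref{qc holder} only need $y$ bounded away from $z_0,z_1$ and $f(y)$ bounded), so the proof is valid; it is just longer than it needs to be. A smaller remark: you only need Lemma~\ref{good diameter} for the good assembly $A$, not for $\ap$ (where the trivial bound $\diam\le 2$ suffices after normalization), matching what the paper does.
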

\begin{proof}
Let $\perfectAlpha$ and $\goodAlpha$ be the highest geodesics of $\ap$ and $A$.
We normalize the limit set of $A$ so that it has diameter 2, 
and we assume that the limit set of $\color{kwred}\ap\color{black}$ is $S^1$.
Then we have, by Theorem \ref{qc holder},
\begin{equation}
\diam{\partial \eta'} \le 64 (\diam{\partial \eta})^{1/K}
\end{equation}
and by Lemma \ref{good diameter},
\begin{equation}
\diam{\partial \goodAlpha} \ge 2\epsilon_0.
\end{equation}
Moveover $\diam{\partial \perfectAlpha} \le 2$. 
Therefore
\begin{align*}
\log \frac {\ddm{\goodAlpha}}{\ddm{\eta'}} 
&\ge \log\frac{2\epsilon_0}{64} 
+ \frac 1K \log\frac 1 {\ddm{\eta}}  \\
&\ge \log\frac{2\epsilon_0}{64} 
+ \frac 1K \log\frac  {\ddm{\perfectAlpha}}{\ddm{\eta}} 
- \frac 1K \log 2 \\
&\ge \frac 1K\log \frac {\ddm{\perfectAlpha}}{\ddm\eta} - \log \frac{128}{2 \epsilon_0}.
\end{align*}
Then we just observe that 
$$
h(\beta) - h(\gamma) = \log \frac {\ddm \beta}{\ddm \gamma}
$$
for all geodesics $\beta$, $\gamma$, with endpoints in $\C$. 
\end{proof}

\def\ddd{\Delta}
We can now bound the number of high geodesics in any good assembly.
\begin{theorem} \label{area bound}
Suppose $A$ is a $K$-good assembly with $K$ sufficiently close to 1. 
Then the number of geodesics $\gamma$ with $h(\gamma) > h(A) - \ddd$
 is at most $C(K)e^{K\ddd}$. 
\end{theorem}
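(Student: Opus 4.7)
The plan is to reduce Theorem \ref{area bound} to the analogous statement for perfect assemblies (Lemma \ref{perfect count}) by transferring the height estimate through the quasiconformal conjugacy provided by Theorem \ref{good close to Fuchsian}, using the H\"older control captured in Lemma \ref{delta height ratio}.

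First I would set up the normalization. Applying Theorem \ref{good close to Fuchsian} to $A$ (and using a Fuchsian uniformization on the other side) yields a $K$-quasiconformal homeomorphism $h\from \chat \to \chat$ that identifies a perfect assembly $\ap$ with $A$ at the level of the limit set; here the geodesics of $\ap$ and of $A$ are in canonical bijection $\eta \leftrightarrow \eta'$ via $h$ acting on the pair of endpoints. Since the limit set of a $K$-quasi-Fuchsian group is a proper Jordan curve in $\chat$, I can pre- and post-compose with M\"obius transformations so that $\infty$ is fixed by $h$ and lies outside both limit sets. This guarantees that the heights $h(\ap)$ and $h(A)$ (and of each of their geodesics) are finite and computable from Euclidean diameters of boundary arcs, as used at the end of the proof of Lemma \ref{delta height ratio}.

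Next I apply Lemma \ref{delta height ratio} in its contrapositive form. Suppose $\gamma \in A$ satisfies $h(\gamma) > h(A) - \ddd$, and let $\gamma_{\operatorname{perf}} \in \ap$ be the corresponding geodesic. Then Lemma \ref{delta height ratio} gives
\begin{equation*}
\ddd > h(A) - h(\gamma) \ge K^{-1}\bigl(h(\ap) - h(\gamma_{\operatorname{perf}})\bigr) - C_0,
\end{equation*}
so $h(\ap) - h(\gamma_{\operatorname{perf}}) < K(\ddd + C_0) = K\ddd + KC_0$. Thus each high geodesic of $A$ corresponds to a geodesic of $\ap$ whose height differential is at most $K\ddd + KC_0$.

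Finally I apply Lemma \ref{perfect count} to the perfect assembly $\ap$ with $d = K\ddd + KC_0$. This bounds the number of such $\gamma_{\operatorname{perf}}$ by $O(e^{K\ddd + KC_0}) = C(K) e^{K\ddd}$. Since $\gamma \mapsto \gamma_{\operatorname{perf}}$ is injective (a geodesic is determined by its endpoints, and $h$ is a homeomorphism of the limit set), the same bound holds for geodesics of $A$, which gives the theorem. The only delicate point is the initial normalization, and more specifically checking that after moving $\infty$ off the limit set the constant $C_0$ in Lemma \ref{delta height ratio} remains universal; this is why the statement of that lemma already absorbed the diameter-normalization constants $\epsilon_0$ and $64$ from Lemma \ref{good diameter} and Theorem \ref{qc holder}, so no further work is needed here.
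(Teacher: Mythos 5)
Your proposal is correct and follows essentially the same route as the paper's own proof: normalize so $h(\infty)=\infty$, transfer the height-differential bound from $A$ to the perfect model $\ap$ via Lemma \ref{delta height ratio}, and then invoke Lemma \ref{perfect count}. The extra remarks you include (injectivity of the correspondence, uniformity of $C_0$ after normalization) are fine but implicit in the paper's argument.
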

\begin{proof}
Let $\ap$ be the perfect version of $A$,
 and let $h\from \chat \to \chat$ be a $K$-\color{kwred}quasiconformal \color{black} map given by Theorem \ref{good close to Fuchsian};
 by postcomposing $h$ with a hyperbolic isometry we can assume that $h(\infty) = \infty$
 and that $A$ has finite height.  
By Lemma \ref{delta height ratio},
every geodesic in $A$ that we want to count
maps to a geodesic $\perfectGamma$ in $\ap$ with $h(\perfectGamma) > h(\ap) - (K\ddd + KC_0)$.
By Lemma \ref{perfect count},
there are at most $C(K)e^{K\ddd}$ such geodesics in $\ap$. 
\end{proof}

Theorem \ref{umbrella area} then follows as a corollary to Theorem \ref{area bound}. The bound on the number of boundary geodesics of the umbrella in Theorem \ref{theorem umbrella} follows from the bound on the number of components in Theorem \ref{umbrella area} because each umbrella has at most $R+2$ boundary components. This concludes the proof of Theorem \ref{theorem umbrella}.  
\subsection{Semi-randomization with hamster wheels}
\new{
In our main construction, detailed in Section \ref{sec:matching}, we will match up a multi-set of umbrellas and pants. To accomplish the matching at a given closed geodesic, we will require that the number of umbrellas with the given geodesic as a boundary is negligible compared to the number of pants with the given geodesic as a boundary. To obtain such an estimate, we would need to know that the outer boundaries of umbrellas are at least somewhat reasonably divided up among all possible closed geodesics that might occur as their boundary. We are able to force this favorable situation by replacing the umbrellas with averages of umbrellas. Borrowing terminology from  \cite{KM:Ehrenpreis}, we call this semi-randomization. 
}
%

Recall from \cite[Definition 10.1]{KM:Ehrenpreis} that a function $f\from (X, \mu) \to (Y, \nu)$ between measure spaces is called $K$-semirandom if 
\begin{equation} \label{eq:sr}
f_*\mu \le K \nu.
\end{equation}
We can generalize this to a function $f$ from $X$ to measures on $Y$:
we let
\begin{equation} \label{eq:mm}
f_*\mu = \int f(x) \, d\mu(x)
\end{equation}
and define $K$-semirandom in this setting again by \eqref{eq:sr}.
By default, given a finite (multi-)set we give it the uniform probability distribution (counting multiplicity).
When $X$ and $Y$ are finite sets, a map from $X$ to measures on $Y$ is just a map from $X$ to weighted sums of elements of $Y$, and \eqref{eq:mm} becomes
$$
(f_*\mu)(y) = \sum_x f(x)(y) \mu(x),
$$
where $f(x)(y)$ is the weight that $f(x)$ gives to $y$. 
In the case of interest, $X$ will be a multiset of hamster wheels, $Y$ will be the set of good curves, and $f$ will be the boundary map from hamster wheels to formal sums of good curves. In this case we will let ``the boundary of the random element of $X$ is a $K$-semirandom good curve'' mean ``the boundary map $f$ is $K$-semirandom".

Let $\gamma$ be a good curve with formal feet,
 or at least a slow and constant turning vector field.
 We prove the following theorem, \color{kwred}where we refer to the constant $s$ from Lemma \ref{counting rungs}.\color{black}
\begin{theorem} \label{hamster semirandom}
Given $\gamma$ as above \color{kwred}with height at most $sR$\color{black},
there is a (multi-)set $H_\gamma$ of hamster wheels that have $\gamma$ as one outer boundary,
 and which match with the slow and constant turning vector field for $\gamma$,
 such that the average weight given by an element of $H_\gamma$
 (by the inner boundaries and the other outer boundary)
  to any good curve $\alpha$
 is at most  $C(\epsilon, M) R^{14} e^{2h(\gamma)}$ times the average weight given to $\alpha$ in the set of all good curves (which is \color{kwred}1 divided by \color{black} the number of good curves). 

In other words,
the non-$\gamma$ boundary
 of the \emph{random element} of $H_\gamma$ is a $C(\epsilon, M) R^{14} e^{2h(\gamma)}$-semirandom good curve. 
 
In addition,
the hamster wheels will have height at most \new{$\max(h(\gamma) + \log R, 4 \log R) + C(\epsilon, M)$}. 
\end{theorem}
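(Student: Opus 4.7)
The plan is to construct $H_\gamma$ as the multiset of all good hamster wheels obtained by applying Theorem \ref{hamsters for loners} (and really the underlying Corollary \ref{better rungs}) to $\gamma$ with the given vector field $v_\gamma$, ranging over all choices of: the other outer boundary $\gamma_1$ (a good curve of height at most $\log R$); a slow and constant turning vector field $v_1$ on $\gamma_1$; a cyclic bijection $\sigma$ between the $R$ marked points on $\gamma$ and on $\gamma_1$; and, independently for each $i$, a rung $\lambda_i$ satisfying the hypotheses of Corollary \ref{better rungs}. By that corollary and Theorem \ref{hamsters for loners}, the height of every such hamster wheel is bounded by $\max(h(\gamma) + \log R, 4\log R) + C(\epsilon,M)$. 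It remains to compare, for each good curve $\alpha$, the total multiplicity $N(\alpha)$ with which $\alpha$ appears as a non-$\gamma$ boundary of an element of $H_\gamma$, to the total $(R+1)|H_\gamma|$.

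For the lower bound on $|H_\gamma|$: by \eqref{E:curvecount} and Lemma \ref{high curve count}, the number of valid $\gamma_1$ is at least a constant times $R^{-1}e^{4R}$. For each $(\gamma_1, v_1, \sigma)$ (with a polynomial number of choices of $v_1$ and $\sigma$), Corollary \ref{better rungs} supplies at least $(c(\epsilon)R^{-6}e^{2R})^R$ valid $R$-tuples of rungs. However, the restriction that no rung have an excursion exceeding $\log R + C(\epsilon,M)$ above its starting height interacts unfavorably with the marked points on $\gamma$ that lie high in the cusp: at such points, $v_\gamma$ constrains the rung direction to a sector of angular width $\sim 1/R$, and the worst-case Euclidean displacement reduces the count of admissible rungs by a factor of $e^{2h(\gamma)}$ compared with the low-$\gamma$ case. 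This is the geometric source of the $e^{2h(\gamma)}$ in the final ratio.

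For the upper bound on $N(\alpha)$, we split into two cases. If $\alpha$ is the \emph{other outer boundary} (so $\gamma_1 = \alpha$ is fixed), the upper half of Theorem \ref{counting connections} bounds each rung count by at most $C R^{-6} e^{2R}$, yielding $N_{\text{outer}}(\alpha) \le C'(R^{-6}e^{2R})^R$ times polynomial factors. If $\alpha$ is an \emph{inner boundary at index $i$}, then $\alpha$ is freely homotopic to the closed curve formed by two consecutive rungs $\lambda_i, \lambda_{i+1}$ and two arcs of length approximately $2$ on the rims; fixing $\alpha$ therefore imposes a codimension-$4$, $O(\epsilon^4)$-sized constraint in $\spaceOfInvariants(\gamma,\gamma_1)\times\spaceOfInvariants(\gamma,\gamma_1)$ on the pair of rungs. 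Applying Theorem \ref{counting connections} again bounds the number of such pairs, and summing over the $R$ positions of $i$ contributes only a factor of $R$. Taking the ratio $N(\alpha)/((R+1)|H_\gamma|)$, multiplying by $\#\goodCurves \sim \epsilon^2 R^{-1} e^{4R}$ from \eqref{E:curvecount}, and collecting the polynomial losses yields the claimed bound $C(\epsilon,M) R^{14} e^{2h(\gamma)}$.

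The main obstacle is the careful bookkeeping of the many polynomial-in-$R$ factors arising from the $R$ independent rung choices, the cyclic bijections, the discretization of vector field positions, and the two cases of the semirandomness estimate; individually each factor is elementary, but tracking them tightly enough to land on the power $R^{14}$ requires care. A secondary obstacle is pinning down the precise horospherical-area argument that extracts the $e^{2h(\gamma)}$ deficit from the constraint on rungs near the highest point of $\gamma$, which must be done uniformly over the admissible range $h(\gamma) \le sR$.
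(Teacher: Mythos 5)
Your construction of $H_\gamma$ (ranging over all choices of the other outer boundary $\gamma_1$ and all $R$-tuples of rungs from Corollary~\ref{better rungs}) is essentially the same object the paper builds, just unrolled: the paper first proves Theorem~\ref{two curve hamster semirandom} with \emph{both} outer boundaries fixed, and then obtains Theorem~\ref{hamster semirandom} by averaging over a random second outer boundary $\gamma_2$ of height at most $0.75\log R$. Your one-shot version could in principle be carried out, so the overall decomposition is not the issue.

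The genuine gap is where you locate the $e^{2h(\gamma)}$ factor. You attribute it to a \emph{deficit in the denominator}: you claim that near the high points of $\gamma$ the excursion constraint of Corollary~\ref{better rungs} prunes admissible rungs by a factor of $e^{2h(\gamma)}$. That is not what happens, and in fact it contradicts Lemma~\ref{counting rungs} and Corollary~\ref{better rungs}: the lower bound there is a constant times $R^{-6}e^{2R}$ \emph{uniformly} in the height of the outer curves, for all heights up to $sR$, with no $e^{-2h}$ loss. If the deficit you describe were real, the per-rung count would degrade with $h(\gamma)$ and the overall construction would need a different treatment near the cusp, but the paper's whole point is that the lower bound on rungs is height-robust.

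The actual source of $e^{2h(\gamma)}$ is an \emph{excess in the numerator}. When you bound the number of pairs of rungs $(U,V)$ that produce a given inner curve $\alpha$, the count involves medium orthogeodesics from a roughly unit-length arc $\tau_1 \subset \gamma$ (near the feet of $U$ and $V$) to a unit arc $\sigma \subset \alpha$. High in the cusp, there are $O(e^{2h(\tau_1)})$ lifts of $\sigma$ within bounded distance of a given lift of $\tau_1$ (the same lattice-orbit-on-a-horosphere count used in Lemma~\ref{high curve count}), so there are $O(e^{2h(\tau_1)})$ choices of medium orthogeodesic, and similarly a factor $e^{2h(\tau_2)}$ for the other rim. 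Your sketch of the upper bound (``a codimension-$4$, $O(\epsilon^4)$-sized constraint'' plus Theorem~\ref{counting connections}) would give a count that is essentially \emph{independent} of $h(\gamma)$, so it misses exactly this factor; you then reintroduce $e^{2h(\gamma)}$ in the wrong place. To repair the argument you need the lift-counting estimate in the numerator (as in the proof of Theorem~\ref{two curve hamster semirandom}), not a claimed loss in the rung count.

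A secondary but real concern: your upper-bound discussion sums ``over the $R$ positions of $i$,'' but that $R$ should come from the choice of the arc $\tau_1$ (equivalently the index $i$), whereas the count of pairs $(U,V)$ \emph{given} $\tau_1$ is $O(Re^{2(h(\tau_1)+h(\tau_2))})$. Keeping these two factors of $R$ straight is where the $R^{13}$ versus $R^{14}$ in the paper's Theorems~\ref{two curve hamster semirandom} and~\ref{hamster semirandom} comes from, and your bookkeeping as written will not land there without the corrected upper bound.
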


This theorem will in turn follow from 
\begin{theorem} \label{two curve hamster semirandom}
Given $\gamma_1, \gamma_2$ good curves \color{kwred}with height at most $sR$ \color{black} with slow and constant turning vector fields,
 there is a (multi-)set $H_{\gamma_1, \gamma_2}$
  of hamster wheels that have $\gamma_1, \gamma_2$ as outer boundaries,
 and which match with the slow and constant turning vector fields for $\gamma_1$ and $\gamma_2$,
 such that the average weight given by an element of $H_\gamma$
 (by the inner boundaries)
 to any good curve $\alpha$
 is at most  $C(\epsilon, M)R^{13}e^{2(h(\gamma_1)+h(\gamma_2))}$ 
  times the average weight given to $\alpha$ in the set of all good curves.

In other words,
the total inner boundary
 of the \emph{random element} of $H_{\gamma_1, \gamma_2}$ is a $C(\epsilon, M)R^{13}e^{2(h(\gamma_1)+h(\gamma_2))}$-semirandom good curve. 

\new{In addition,
 the hamster wheels in $H_{\gamma_1, \gamma_2}$ have height at most $$\max(\max(h(\gamma_1), h(\gamma_2)) + \log R, 4 \log R) + C(\epsilon, M).$$} 
\end{theorem}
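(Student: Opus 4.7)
My plan is to construct $H_{\gamma_1, \gamma_2}$ as a multi-set of hamster wheels indexed by choices of $R$ rungs, following the recipe of Section \ref{generating hamsters}. Mark $R$ equally spaced points $p_i \in \gamma_1$ and $q_i \in \gamma_2$ compatible with the prescribed slow and constant turning vector fields, and fix a cyclic bijection between them. For each $i \in \Z/R\Z$, let $\Lambda_i$ denote the set of rungs from $\gamma_1$ to $\gamma_2$ that satisfy the hypotheses of Corollary \ref{better rungs} at the pair $(p_i, q_i)$; that corollary gives $|\Lambda_i| \asymp R^{-6} e^{2R}$, and ensures that each rung has no intermediate cusp excursion above $\HWUnnecessaryExcursions$ and no excursion of height more than $\log R + C(\epsilon, M)$ above its endpoint heights. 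A hamster wheel in $H_{\gamma_1, \gamma_2}$ is obtained by selecting one $\lambda_i \in \Lambda_i$ for each $i$. The height bound stated in the theorem follows directly: each inner cuff lies in a bounded neighborhood of two adjacent rungs together with the connecting arcs of $\gamma_1, \gamma_2$, so its height is at most $\max(h(\gamma_1), h(\gamma_2)) + \log R + C(\epsilon, M)$.

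Next I reduce the semirandom statement to a counting estimate. The weight assigned by the random element of $H_{\gamma_1, \gamma_2}$ to a good curve $\alpha$ is
$$
\mu(\alpha) \;=\; \frac{1}{R} \sum_{i \in \Z/R\Z} \frac{N_i(\alpha)}{|\Lambda_i|\, |\Lambda_{i+1}|},
$$
where $N_i(\alpha)$ denotes the number of pairs $(\lambda_i, \lambda_{i+1}) \in \Lambda_i \times \Lambda_{i+1}$ whose resulting inner cuff at position $i$ is freely homotopic to $\alpha$. This identity holds because the remaining $R-2$ rungs may be chosen freely, so each such pair contributes $\prod_{j \ne i, i+1} |\Lambda_j|$ hamster wheels having $\alpha$ as the $i$-th inner cuff, out of a total of $R \prod_j |\Lambda_j|$ inner-cuff slots. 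Combined with $|\Lambda_i|\,|\Lambda_{i+1}| \asymp R^{-12} e^{4R}$ and the count $|\goodCurves| \asymp \epsilon^2 e^{4R}/R$ from \eqref{E:curvecount}, the claim reduces to establishing
$$
N_i(\alpha) \;\le\; C(\epsilon, M)\, R^{2}\, e^{2(h(\gamma_1) + h(\gamma_2))}.
$$

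The main geometric step is to observe that two adjacent rungs $(\lambda_i, \lambda_{i+1})$, together with the resulting inner cuff $\alpha$ and the two short arcs of $\gamma_1, \gamma_2$ between consecutive marked points, form a right-angled hyperbolic hexagon. Specifying $\alpha$ (and the position $i$) thus pins down the feet of $\lambda_i, \lambda_{i+1}$ on $\gamma_1$ and $\gamma_2$ up to the $\epsilon/(3R)$ tolerance inherited from Corollary \ref{better rungs}. The resulting count is controlled by applying the first part of Theorem \ref{counting connections} to the pair $(\gamma_1, \alpha)$ (or equivalently $(\gamma_2, \alpha)$): because $\alpha$ has length close to $2R$ and height at most $\max(h(\gamma_1), h(\gamma_2)) + \log R + C(\epsilon, M)$, the error factor $\eta$ in that theorem is negligible and the main term gives $O(R^2)$ admissible configurations per position. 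The height factor $e^{2(h(\gamma_1) + h(\gamma_2))}$ arises because, relative to the uniform distribution on $\goodCurves$, the inner cuffs concentrate on curves of height comparable to $\max(h(\gamma_1), h(\gamma_2))$, and by Lemma \ref{high curve count} the density of good curves at such heights is $O(R e^{-2h})$ times the overall density.

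The hardest part will be the rigidity claim for the hexagon: verifying that the map $(\lambda_i, \lambda_{i+1}) \mapsto \alpha$ is boundedly many-to-one and carefully propagating the outer-cuff feet tolerances through the hexagon cosine relations to honest tolerances on the feet of $\alpha$ on $\gamma_1, \gamma_2$, so that Theorem \ref{counting connections} may be invoked on a set of controlled measure. Once $N_i(\alpha)$ is bounded as above, the displayed formula for $\mu(\alpha)$ immediately yields the semirandom constant asserted in the theorem, and the height conclusion on $H_{\gamma_1, \gamma_2}$ is part of the construction.
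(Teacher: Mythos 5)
Your construction of $H_{\gamma_1,\gamma_2}$ is the same as the paper's, and your reduction to a per-position counting bound on $N_i(\alpha)$ captures the right structural idea. However, your proposed mechanism for bounding $N_i(\alpha)$ has a genuine gap, and it is not the paper's argument.

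The paper bounds the number of rung pairs $(U,V)$ producing a given $\alpha$ by a direct lift count: once the two unit arcs $\tau_1\subset\gamma_1$, $\tau_2\subset\gamma_2$ between consecutive marked points are fixed, a pair $(U,V)$ is determined by the two \emph{medium} orthogeodesics $\eta_1,\eta_2$ from $\tau_1,\tau_2$ to $\alpha$. For a fixed unit segment $\sigma$ of $\alpha$, the number of medium orthogeodesics of bounded length from $\tau_1$ to $\sigma$ is $O(e^{2h(\tau_1)})$, because there are only that many lifts of $\sigma$ within bounded distance of a given lift of $\tau_1$; summing over $O(R)$ segments $\sigma$ and multiplying by $O(e^{2h(\tau_2)})$ for $\eta_2$ gives $O(Re^{2(h(\gamma_1)+h(\gamma_2))})$ pairs. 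The height factor $e^{2(h(\gamma_1)+h(\gamma_2))}$ is thus a product of two independent lift counts, one at each end.

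Your replacement of this by an application of Theorem \ref{counting connections} to $(\gamma_1,\alpha)$ plus a density heuristic via Lemma \ref{high curve count} does not reproduce this factor. Lemma \ref{high curve count} controls how many good curves lie above height $h$, and so could at best explain a boost of $e^{2\max(h(\gamma_1),h(\gamma_2))}$ from the scarcity of high curves; it cannot produce the product form $e^{2(h(\gamma_1)+h(\gamma_2))}$, which is strictly larger when both heights are positive. Moreover, the claim that Theorem \ref{counting connections} ``gives $O(R^2)$ admissible configurations per position'' is not justified: that theorem counts orthogeodesics between two fixed closed geodesics with invariants in a set $A$, whereas here the relevant count is of pairs of \emph{rungs} (connections $\gamma_1\to\gamma_2$, not $\gamma_1\to\alpha$) constrained to land near marked points and to produce $\alpha$; the conversion from one to the other is exactly the ``hexagon rigidity'' step you flag as the hardest part and do not carry out. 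Two smaller points: your $\mu(\alpha)$ carries an extra $1/R$ relative to the semirandom definition (the pushforward of the uniform measure by the total-inner-boundary map is $\mathbb{E}[\text{multiplicity of }\alpha]$, not that quantity divided by $R$), which then makes your target bound $N_i(\alpha)\le CR^2e^{2(h_1+h_2)}$ off by a factor of $R$ from what is needed (these two slips happen to cancel in your final exponent, but both are wrong as stated); and your height bound on the hamster wheels omits the $4\log R$ alternative coming from Corollary \ref{better rungs}, which matters when $h(\gamma_1),h(\gamma_2)$ are small.
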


\begin{proof}
We arbitrarily mark off the $2R$ points on $\gamma_1$ and $\gamma_2$ 
 (and pair them off in a way that respects the cyclic ordering) 
 in the way described in  Section \ref{generating hamsters},
 and then we let $H_{\gamma_1, \gamma_2}$ \new{be the (multi-)set of hamster wheels we can get over all choices of orthogeodesic connections (\new{rungs}) obtained from Corollary \ref{better rungs}.  }
Each connection $U$ that we choose in this construction is officially a geodesic segment connecting $\gamma_1$ and $\gamma_2$ with endpoints near the marked points on the two outer boundaries;
we can slide any such connection along small intervals in $\gamma_1$ and $\gamma_2$ to connections that literally connect the marked points; we will think of this modified connection as the official connection $U$ in this proof. 

Let us take any good curve $\alpha$. 
Suppose that it appears as an inner boundary of some $\eta \in H_{\gamma_1, \gamma_2}$. 
Then this inner boundary is made from two ``new connections'' $U$ and $V$,
 and $U$ meets $\gamma_1$ and $\gamma_2$ at its endpoints $u_1$ and $u_2$,
 and likewise for $V$.
Let $\tau_1$ be the segment from $u_1$ to $v_1$ (of approximately unit length),
and likewise define $\tau_2$. 
There are only $R$ choices for $\tau_1$,
 and the choice of $\tau_1$ determines the choice for $\tau_2$.
We claim that given these choices,
there are $O(Re^{2(h(\gamma_1) + h(\gamma_2))})$ choices of $U$ and $V$ that result in $\alpha$ as the corresponding inner curve. 

Let us carefully verify this claim. 
When we are given $\tau_1$ and $\tau_2$, 
and the medium orthogeodesics $\eta_1$ and $\eta_2$ from these segments to $\alpha$,
then $U$ and $V$ are determined.
For each unit length segment $\sigma$ of $\alpha$,
there are $O(e^{2h(\tau_1)})$ possible medium orthogeodesics $\eta_1$ 
 that connect $\tau_1$ to $\sigma$. 
This is because there are $O(e^{2h(\tau_1)})$ lifts of $\sigma$ to $\H^3$ 
 that lie within a bounded-diameter radius of a given lift of $\tau_1$.
Likewise there are $O(e^{2h(\tau_1)})$ possibilities for $\eta_2$.
Summing over $2R$ unit length segments $\sigma$,
 we obtain the claim. 

Given $\tau_1$ and $\tau_2$, by Lemma \ref{counting rungs},
there are at least $C(\epsilon, M)R^{-6}e^{2R}$ possible connections $U$,
and likewise for $V$, 
for a total of $C(\epsilon, M) R^{-12} e^{4R}$ possible pairs $U, V$.
\new{So the probability,
taking a random element of $H_{\gamma_1, \gamma_2}$,
that the inner curve corresponding to $\tau_1$ and $\tau_2$ is $\alpha$ is
$$
O(Re^{2(h(\gamma_1) + h(\gamma_2))}/(R^{-12} e^{4R}))
= O(R^{13}e^{-4R}e^{2(h(\gamma_1) + h(\gamma_2))}),
$$ 
and the expected number of times that $\alpha$ appears in a random hamster wheel 
is therefore $O(R^{14}e^{2(h(\gamma_1) + h(\gamma_2)) - 4R})$ (because we sum over all $\tau_1$ and $\tau_2$). }
Since the number of good curves $\alpha$ is $O(R^{-1}e^{4R})$,
we get a $CR^{13}e^{2(h(\gamma_1) + h(\gamma_2))}$-semirandom good curve as the boundary for the random hamster wheel 
 (with outer boundary $\gamma_1$ and $\gamma_2$). 
\end{proof}

\begin{proof}[Proof of Theorem \ref{hamster semirandom}]
Given $\gamma_1$ (and its slow and constant turning vector field),
\new{we take random good $\gamma_2$ 
 of height at most $0.75 \log R$,
 and then take the random element of $H_{\gamma_1, \gamma_2}$. }
The total inner boundary of the resulting (two-step) random hamster wheel
 is $CR^{13}e^{2(h(\gamma_1) + \frac 12 \log R)} = CR^{14}e^{2h(\gamma_1)}$-semirandom by Theorem \ref{two curve hamster semirandom},
 and the new outer boundary is of course 1-semirandom
 by construction.  
\end{proof}

We can now prove a randomized version of Theorem \ref{theorem umbrella}:
\begin{theorem} \label{random umbrella}
With the same hypotheses as in Theorem \ref{theorem umbrella}, but assuming $h_T>\HWUnnecessaryExcursions+2\log(R)$,
we can find a $\Q^+$ linear combination $\hat{U}\equiv \hat{U}(P, \gamma_0)$ of good assemblies of good components  
such that each good component in this linear combination satisfies properties 1 and 2 of Theorem \ref{theorem umbrella}
and we can write $\d \hat{U} = \gamma_0 + \sum n_\alpha \alpha$, 
where  
\begin{align*}
n_\alpha &< C(\epsilon, M) e^{2h_T}R^{14}\umbrellaBoundarySize{\gamma_0}/|\Gamma| \\
&= C(\epsilon, M) R^{N_{\new{\umb}}} e^{2h_T + K\max(0, h(\new{\gamma_0}) - h_T)}/|\Gamma|
\end{align*}
 for all $\alpha$. Here we define $N_{\new{\umb}}=16$ for convenience,
 \new{and we let $|\Gamma| = \epsilon^2 e^{4R}/R$}. 
\end{theorem}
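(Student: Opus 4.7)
I would construct $\hat U(P,\gamma_0)$ by running the same recursive algorithm as $U(P,\gamma_0)$ in Theorem \ref{theorem umbrella}, but with each deterministic choice of hamster wheel replaced by a uniform formal average over the multi-set supplied by Theorem \ref{hamster semirandom}. Concretely, in Step 1 I would attach to $\gamma_0$ the multi-set $H_{\gamma_0}$ (using the slow and constant turning vector field on $\gamma_0$ determined by the two feet of $P$), and in Step 2 I would recursively attach, to each inner boundary $\delta$ of height $>h_T$, the multi-set $H_\delta$ (with the tilted slow and constant turning vector field on $\delta$ prescribed by the construction in the proof of Theorem \ref{theorem umbrella}). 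This makes $\hat U$ a $\Q^+$-linear combination of assemblies, each of which is a genuine umbrella produced by the deterministic algorithm of Theorem \ref{theorem umbrella}. Consequently, conditions (1) and (2) of that theorem hold summand-by-summand, and termination of the recursion together with the bound on the number of components for each summand are inherited from Theorems \ref{umbrella height} and \ref{umbrella area}.

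\textbf{Estimating the coefficients.} The coefficient $n_\alpha$ equals the expected multiplicity of $\alpha$ as an external boundary of the random umbrella, and is bounded above by the expected sum, over all hamster wheels $H$ appearing in the construction, of the number of non-outer boundaries of $H$ equal to $\alpha$. For each such $H$, writing $\delta_H$ for its outer boundary glued to its parent, Theorem \ref{hamster semirandom} yields the conditional expectation bound $C(\epsilon,M) R^{14} e^{2h(\delta_H)}/|\Gamma|$. Summing, I would use Theorem \ref{area bound} to count the number of hamster wheels with outer boundary at each height (bounded by $Ce^{K(h(U)-h)}$), together with Theorem \ref{umbrella height} to bound $h(U)$ by $h(\gamma_0) + \log R + O(1)$. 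Combining the semi-random factor $R^{14}$ with the external boundary bound $\umbrellaBoundarySize{\gamma_0} = R^2 e^{K\max(0, h(\gamma_0)-h_T)}$ from Theorem \ref{theorem umbrella} and the target-height factor $e^{2h_T}$ (which accounts for the effective contribution of the bottom-level wheels where most external boundary components reside) produces the bound
\[
n_\alpha \;<\; \frac{C(\epsilon, M)\, R^{16}\, e^{2h_T + K\max(0,\, h(\gamma_0) - h_T)}}{|\Gamma|}.
\]

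\textbf{Main obstacle.} The principal technical point is the careful height accounting required to extract the factor $e^{2h_T + K\max(0, h(\gamma_0)-h_T)}$, which is substantially tighter than the factor $e^{2h(\gamma_0)}$ one would get by naively placing all outer heights at their maximum. The argument exploits the fact that hamster wheels whose outer boundary is deep in the cusp, although individually contributing large $e^{2h(\delta_H)}$ to the per-wheel bound, are exponentially few by Theorem \ref{area bound}; conversely, the bulk of the wheels (and of the external boundaries) live close to the target height $h_T$. The condition $K<2$, which may be arranged by taking $\epsilon$ small in Theorem \ref{good close to Fuchsian}, is what makes the stratified height sum converge in the required form. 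Finally, the hypothesis $h_T > \HWUnnecessaryExcursions + 2\log R$ ensures that Theorem \ref{hamster semirandom} can be invoked at every step of the recursion, since the hamster wheels it produces may exceed the outer boundary's height by $\log R + C(\epsilon, M)$, and this slack must be compatible with the control on the umbrella's geometry.
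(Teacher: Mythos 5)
Your construction takes a genuinely different route from the paper: you propose to randomize the choice of hamster wheel at \emph{every} level of the recursion, while the paper first builds the deterministic umbrella $U(P,\gamma_0)$ of Theorem \ref{theorem umbrella} down to the slightly lower target height $h_T - 2\log R$, and then attaches \emph{one} randomized layer $H_\gamma$ (from Theorem \ref{hamster semirandom}) to each external boundary $\gamma$ of $U$. Because every $\gamma\in\dumb(P,\gamma_0)$ already has height below $h_T$, the semi-random factor is uniformly bounded by $e^{2h_T}$, and the desired bound follows by simply multiplying $|\dumb(P,\gamma_0)| < \umbrellaBoundarySize{\gamma_0}$ by $C(\epsilon,M)R^{14}e^{2h_T}/|\Gamma|$. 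No height stratification is needed.

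Your height accounting for the fully-randomized construction does not close, and the gap is not cosmetic. You bound $n_\alpha$ by
$\sum_H C(\epsilon,M)R^{14}e^{2h(\delta_H)}/|\Gamma|$, stratify by height $h=h(\delta_H)$ using Theorem \ref{area bound} (roughly $e^{K(h(U)-h)}$ wheels with outer boundary at height $h$), and sum. This gives
\[
\sum_h e^{K(h(U)-h)}\cdot R^{14}e^{2h}/|\Gamma| \;=\; \frac{R^{14}e^{Kh(U)}}{|\Gamma|}\sum_h e^{(2-K)h}.
\]
Since $2-K>0$, the sum on the right is dominated by its \emph{largest} term, $h\approx h(U)\approx h(\gamma_0)+\log R$, so the whole expression is on the order of $R^{16}e^{2h(\gamma_0)}/|\Gamma|$. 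Your text asserts that ``the bulk of the wheels (and of the external boundaries) live close to the target height $h_T$,'' and that $K<2$ makes the stratified sum converge in the required form — but $K<2$ has exactly the opposite effect in this sum: it makes the \emph{top} level, where $e^{2h(\delta_H)}$ is largest, dominate. The ratio between your bound $e^{2h(\gamma_0)}$ and the required $e^{2h_T + K\max(0,h(\gamma_0)-h_T)}$ is $e^{(2-K)(h(\gamma_0)-h_T)}$, which is not absorbed by any available power of $R$. To rescue a fully-randomized construction you would need a refined version of Theorem \ref{hamster semirandom} showing that when $\delta_H$ is high in the cusp, the expected weight given to a fixed \emph{low-height} curve $\alpha$ is far smaller than the worst-case semi-random bound — an additional estimate the paper never needs, precisely because it randomizes only at the bottom. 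I would encourage you to restructure along those lines rather than attempt that refinement.
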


\begin{proof}
The result will follow by combining Theorems \ref{theorem umbrella} and  \ref{hamster semirandom}. Let $U(P,\gamma_0)$ be given by Theorem \ref{theorem umbrella} using target height $h_T-2\log(R)$. 
Each $\gamma\in \dumb (P, \gamma_0)$ comes with a slow and constant turning vector field, and we may thus consider the $H_\gamma$ provided by \ref{hamster semirandom}. If we pick an element of $H_\gamma$ for each $\gamma\in \dumb (P, \gamma_0)$, these elements can be matched to $U(P,\gamma_0)$ to give a good assembly. We define $\hat{U}(P,\gamma_0)$ to be the average over all ways of picking an element of $H_\gamma$ for each $\gamma\in \dumb (P, \gamma_0)$ of the resulting good assembly. 

All the good assemblies have height 
at most $h_T$, since each $\gamma\in \dumb (P, \gamma_0)$ has height at most $h_T-2\log(R)$, and since elements of $H_\gamma$ have height at most $\log(R)+O(1)<2\log(R)$ more than the height of $\gamma$.  

By definition we have $\gamma_0 + \sum n_\alpha \alpha$, where $n_\alpha$ is the sum over all $\gamma\in \dumb (P, \gamma_0)$ of the average weight given to $\alpha$ by $H_\gamma$. By Theorem \ref{theorem umbrella} we have $$|\dumb (P, \gamma_0)|<\umbrellaBoundarySize{\gamma_0},$$
and by Theorem \ref{hamster semirandom} the average weight given to $\alpha$ by each $H_\gamma$ is at most $C(\epsilon, M) R^{14} e^{2h(\gamma)}/|\Gamma|$, so the desired estimate on $n_\alpha$ follows. 
\end{proof}

\section{Matching and the main theorem}
\label{sec:matching}
\subsection{Spaces of good curves and good pants}
For any set $X$,
we let $\N X$, $\Z X$, and $\Q X$ denote the formal weighted sums of elements of $X$ (with coefficients in $\N$, $\Z$ and $\Q$). 
We can also think of these as maps from $X$ to $\N$, $\Z$, or $\Q$ with finite support, 
and we will often write $\alpha(x)$ for $\alpha \in \N X$ (or $\Z X$ or $\Q X$) and $x \in X$.
There are obvious maps $X \to \N X \to \Z X \to \Q X$,
and we will often apply these maps (or pre- or postcompose by them) without explicit comment. 
 
We recall the following definitions:
\begin{enumerate}
\item
$\goodCurves$ is the set of (unoriented) good curves.
\item
$\ogoodCurves$ is the set of \emph{oriented} good curves.
\item
$\goodChains$ is the abelian group $\Q\ogoodCurves/\left< \gamma + \gamma^{-1}\right>$ of \emph{good chains}.   
\item
$\goodPants$ is the set of (unoriented) good pants.
\item
$\ogoodPants$ is the set of \emph{oriented} good pants.
\end{enumerate}
We can think of $\goodChains$ as the abelian group of maps $\alpha\from \ogoodCurves \to \Q$ for which $\alpha(\gamma) + \alpha(\gamma\inv) = 0$ for all $\gamma \in \ogoodCurves$. 

There are obvious maps $\ogoodCurves\to\goodCurves$,  $\ogoodCurves\to\goodChains$
and $\ogoodPants \to \goodPants$.
For $\gamma \in \goodCurves$ and $\alpha \in \Q\goodPants$,
we let $\alpha(\gamma)$ be the restriction of $\alpha$ to the pants that have $\gamma$ as a boundary.%

We likewise define $\alpha(\gamma^*)$ for $\gamma^* \in \ogoodCurves$ and $\alpha \in \Q\ogoodPants$.  
In a similar vein,
we let $\goodPants(\gamma)$ denote the unoriented good pants that have $\gamma$ as a boundary,
and $\ogoodPants(\gamma^*)$ denote the \emph{oriented} good pants that have the oriented curve $\gamma^*$ as boundary. 

There are boundary maps $\d\from \ogoodPants \to \N\ogoodCurves$
 and $|\d|\from \goodPants \to \N\goodCurves$.
The former map induces a map $\d\from \ogoodPants \to \goodChains$  (and $\d\from \Z\ogoodPants\to\goodChains$) and the latter induces $|\d|\from \ogoodPants \to \N\goodCurves$
(and $|\d|\from \N\ogoodPants\to\N\goodCurves$).  
While $\d$ and $|\d|$ are both defined on $\N\ogoodPants$, 
they measure two different things:
if $\alpha \in \N\ogoodPants$ is a sum of oriented good pants,
then $|\d|\alpha(\gamma) \in \N$ is the number of pants in $\alpha$ that have $\gamma$ as an unoriented boundary,
and $\d\alpha(\gamma) \in \goodChains$ is the difference between the number of pants in $\alpha$ that have $\gamma$ as a boundary and the number that have $\gamma\inv$ as a boundary. 

There is a canonical identification of $N^1(\sqrt \gamma)$ with $N^1(\sqrt{\gamma^{-1}})$,
but they are different as torsors:
if $n_1 = n_0 + x$ in $N^1(\sqrt \gamma)$, 
 where $x \in \C/(\hll(\gamma)\Z + 2\pi i \Z)$,
then $n_1 = n_0 - x$ in $N^1(\sqrt{\gamma^{-1}})$.  
Finally $\foot_{\gamma^{-1}}\pi^{-1} = \foot_{\gamma}\pi$ 
(where we now identify $N^1(\sqrt \gamma)$ with $N^1(\sqrt{\gamma^{-1}})$, and write $\pi^{-1}$ for $\pi$ with reversed orientation).

We recall from \eqref{E:curvecount} that the total size of $\goodCurves$ is on the order\footnote{We say $f(R)$ is on the order of $g(R)$ if (both are positive and) $f(R)/g(R)$ is bounded above and below.}
of
 $\goodCurvesNumber$, 
which we denote by $\numberOfGoodCurves$. 
We also observe from Theorem \ref{counting pants}
that for each good curve $\gamma$ \color{kwred} of height at most $sR$\color{black},
$\abs{\goodPants(\gamma)}$ is on the order of $R\epsilon^4 e^{2R}$.
It follows that the total size of $\goodPants$ is on the order of $\epsilon^6 e^{6R}$, 
which we denote by $\numberOfGoodPants$.
For each good curve $\gamma$, 
$\abs{\goodPants(\gamma)}$ will then be on the order of $\crunchedGoodPantsPerCurve$.

\subsection{Matching pants}\label{marriage theorems}
At each good curve $\gamma$, we wish to match each oriented good pants with $\gamma$ as a cuff with another such good pants that induces the opposite orientation on $\gamma$. We can form a bipartite graph, whose vertices are oriented good pants with $\gamma$ as a cuff, and where we join two good pants by an edge if they are well-joined by $\gamma$. The Hall Marriage Theorem says that we can find a matching in this bipartite graph, i.e. a set of edges such that each vertex is incident to exactly one edge, if and only if and only if for each subset of each of the two subsets of the vertices given by the bipartite structure, the number of outgoing edges from that subset is at least the size of the subset. We now carry out this strategy. 

For $\gamma \in \ogoodCurves$,
we let $\tau\from N^1(\sqrt \gamma) \to N^1(\sqrt \gamma)$ be defined by
 $\tau(v) = v + i\pi +1$. 
In this subsection, we will prove the following theorem, 
which is very similar to Theorem 3.1 of \cite{KM:Immersing}.
\begin{theorem} \label{basic matching}
For all $W, \epsilon$ there exists $R_0$, such that for all $R > R_0$:
Let $\gamma$ be an oriented good curve with height at most $W \log R$.
Then there exists a permutation $\sigma_\gamma\from \goodPants(\gamma) \to \goodPants(\gamma)$ such that 
\begin{equation} \label{basic:approximate}
|\foot_\gamma(\sigma_\gamma(\pi)) - \tau(\foot_\gamma(\pi))| < \epsilon/R
\end{equation}
for all $\pi \in \ogoodPants(\gamma)$. 
\end{theorem}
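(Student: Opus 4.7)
The plan is to cast the existence of $\sigma_\gamma$ as a bipartite matching problem and verify Hall's condition using the quantitative equidistribution of feet of good pants given by Theorem \ref{counting pants}. The strategy closely follows Theorem 3.1 of \cite{KM:Immersing}.

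First I would set up the bipartite graph $G_\gamma$ whose two vertex classes are both copies of $\goodPants(\gamma)$, with a pants $\pi$ on the left joined to a pants $\pi'$ on the right exactly when $|\foot_\gamma(\pi') - \tau(\foot_\gamma(\pi))| < \epsilon/R$. A permutation $\sigma_\gamma$ satisfying \eqref{basic:approximate} is exactly a perfect matching in $G_\gamma$, so by the Hall Marriage Theorem it suffices to verify that $|N_G(A)| \geq |A|$ for every $A \subset \goodPants(\gamma)$, where $N_G(A)$ denotes the $G_\gamma$-neighborhood of $A$ among right-vertices.

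The hypothesis $h(\gamma) \leq W \log R$ forces $h(\gamma) \leq sR$ for $R$ sufficiently large, where $s$ is the constant from Theorem \ref{counting pants}, so that theorem applies to $\gamma$: with $\delta = e^{-qR}$ and $c_\gamma$ proportional to $\epsilon^4 e^{4R - l(\gamma)} / |\H^3/\Gamma|$, one obtains
$$(1-\delta)\, c_\gamma \Vol(\nbhd_{-\delta} B) \;\leq\; \#\{P \in \ogoodPants(\gamma) : u(P) \in B\} \;\leq\; (1+\delta)\, c_\gamma \Vol(\nbhd_\delta B)$$
for every Borel $B \subset N^1(\sqrt\gamma)$. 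Given $A \subset \goodPants(\gamma)$, let $F(A) \subset N^1(\sqrt\gamma)$ be the set of its feet. The upper bound gives $|A| \leq (1+\delta) c_\gamma \Vol(\nbhd_\delta F(A))$, while $N_G(A)$ contains every pants whose foot lies in the $\epsilon/R$-neighborhood of $\tau(F(A))$. Since $\tau$ is an isometric translation of $N^1(\sqrt\gamma)$, the lower bound yields
$$|N_G(A)| \;\geq\; (1-\delta)\, c_\gamma \Vol(\nbhd_{\epsilon/R - \delta} F(A)).$$
Because $\epsilon/R$ is enormously larger than $\delta = e^{-qR}$ for $R$ large, the $\epsilon/R$-thickening adds strictly more volume than the multiplicative factor $(1+\delta)/(1-\delta) \approx 1 + 2\delta$ demands, and one concludes $|N_G(A)| \geq |A|$.

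The main obstacle is precisely this last implication, converting approximate equidistribution into the strict Hall inequality. When $F(A)$ is not too close to filling $N^1(\sqrt\gamma)$, the $\epsilon/R$-thickening adds an amount of volume that easily dominates the $(1 \pm O(e^{-qR}))$ errors coming from equidistribution. When instead $F(A)$ nearly fills the torus $N^1(\sqrt\gamma)$, $|A|$ is within $O(\delta)$ of the total $|\goodPants(\gamma)|$, and $|N_G(A)|$ is likewise close to $|\goodPants(\gamma)|$ by applying equidistribution to the small complement of $\tau(F(A))^{+\epsilon/R}$, so Hall's condition holds trivially in that regime too.
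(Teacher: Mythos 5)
Your high-level strategy---recast the problem via Hall's Marriage Theorem and verify Hall's condition using the equidistribution in Theorem \ref{counting pants}---is exactly the paper's, which isolates the matching step as Theorem \ref{basic marriage}. But there is a genuine gap at the step where you assert that ``the $\epsilon/R$-thickening adds strictly more volume than the multiplicative factor $(1+\delta)/(1-\delta)$ demands.'' What you actually need is
\[
(1-\delta)\Vol\bigl(\nbhd_{\epsilon/R-\delta}(F(A))\bigr) \;\ge\; (1+\delta)\Vol\bigl(\nbhd_\delta(F(A))\bigr),
\]
i.e.\ that thickening $B := \nbhd_\delta(F(A))$ by roughly $\epsilon/R$ inflates its volume by a multiplicative factor of at least $1+2\delta$. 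For an arbitrary Borel subset of the torus $N^1(\sqrt\gamma)$, thickening does not a priori inflate volume by \emph{any} multiplicative factor: the required uniform lower bound is an isoperimetric (Cheeger) inequality for $N^1(\sqrt\gamma)$, which the paper supplies via Theorem \ref{thm:macro-cheeger} and Corollary \ref{cheeger}, giving $\Vol(\nbhd_\eta(B))/\Vol(B) > 1 + \eta/R$ whenever $\Vol(\nbhd_\eta(B)) \le \tfrac12 \Vol(N^1(\sqrt\gamma))$. This cannot be skipped: a Cheeger-extremal set (a band $[x_0,x_0+R/2]\times S^1$ bounded by two short circles) has area about $\pi R$ and gains only about $4\pi\eta$ in area under $\eta$-thickening, so the needed multiplicative gain of $2\delta$ is recovered only because $4\eta/R \ge 2\delta$ when $\eta = \epsilon/R$ and $\delta = e^{-qR}$ and $R$ is large---which is precisely the Cheeger estimate and must be invoked or reproved.

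Your treatment of the second regime is also not right as stated: the claim ``$|A|$ is within $O(\delta)$ of the total $|\goodPants(\gamma)|$'' when $F(A)$ nearly fills the torus is false (take $A$ to be half the pants, chosen so that their feet are spread out; then $F(A)$ nearly fills but $|A| \approx |\goodPants(\gamma)|/2$). The paper's clean resolution, in the proof of Theorem \ref{basic marriage}, is to formulate Hall's condition over subsets of $N^1(\sqrt\gamma)$ rather than over subsets of pants---defining $\#A$ for $A \subset N^1(\sqrt\gamma)$ as the number of pants whose foot lies in $A$---and then pass to the complement $A' = N^1(\sqrt\gamma) \setminus \nbhd_\eta(A)$, which \emph{does} satisfy the half-measure hypothesis of the Cheeger lemma; applying the first case to $A'$ then closes this regime. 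Working with subsets of the torus makes the complementation trick seamless, which is why the paper structures the argument that way.
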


\def\nsg{N^1(\sqrt \gamma)}
\def\perturbationAllowance{C'(\epsilon, M) R^{-3}}
\begin{enew}
We can also make a more general statement, which amounts to a perturbation of Theorem \ref{basic matching} by a small set $A'$ which may include hamster wheels (or even whole umbrellas with $\gamma$ as an boundary outer boundary). For the purpose of this Theorem, when $\gamma$ is an outer boundary of a hamster wheel (which may be part of an umbrella) $\pi$, we let $\foot_\gamma(\pi)$ be an arbitrary unit normal vector to $\gamma$ in the slow and constant turning vector field for $\pi$.  
\end{enew}
\begin{theorem} \label{perturbed matching}
\color{kwred} For any $\e>0$, there exists a constant $C'(\e, M)$ such that for all $W$,
there exists $R_0$, such that for all $R > R_0$: \color{black}
Suppose that $\gamma \in \ogoodCurves$ \color{kwred} of height at most $W\log R$\color{black},
and $A = A' + \sum_{\pi \in \goodPants(\gamma)} \pi$
is a formal $\Q^+$ linear sum of unoriented good pants (or good components, or good assemblies), 
and $|A'|  <\perturbationAllowance \goodPantsPerCurve$. 
Then
we can find $\sigma_\gamma\from A \to A$ such that for all $\pi\in A$ we have
\begin{equation} \label{perturbed:approximate}
|\foot_\gamma(\sigma_\gamma(\pi)) - \tau(\foot_\gamma(\pi))| < \epsilon/R,
\end{equation}
where here for convenience we use $A$ to also denote the multiset that results when we clear the denominators in $A$. 
\end{theorem}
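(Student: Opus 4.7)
The plan is to realize $\sigma_\gamma$ as a perfect matching in a bipartite graph via Hall's Marriage Theorem, paralleling the unperturbed Theorem \ref{basic matching} and Theorem 3.1 of \cite{KM:Immersing}. After clearing denominators so that $A$ is a finite multiset, I would form the bipartite graph $G_A$ with two copies of $A$ as vertex classes and an edge $\pi_1 \sim \pi_2$ precisely when $|\foot_\gamma(\pi_2) - \tau(\foot_\gamma(\pi_1))| < \epsilon/R$; a perfect matching in $G_A$ is then exactly a permutation $\sigma_\gamma$ satisfying \eqref{perturbed:approximate}. The values of $\foot_\gamma$ on the hamster-wheel or umbrella components appearing in $A'$ have some ambiguity along a slow and constant turning vector field, but this ambiguity is $O(\epsilon)$ and is absorbed into the matching tolerance.

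The key input for Hall's condition is the equidistribution of feet of good pants around $\gamma$ furnished by Theorem \ref{counting pants}: up to a factor of $1 + e^{-qR}$, for any reasonable $B \subset N^1(\sqrt{\gamma})$ the number of $\pi \in \goodPants(\gamma)$ with $\foot_\gamma(\pi) \in B$ equals $|B|$ times the density $|\goodPants(\gamma)|/|N^1(\sqrt{\gamma})|$. Since $\tau(v) = v + i\pi + 1$ is a measure-preserving translation of $N^1(\sqrt{\gamma})$, this yields a sharpened Hall condition for the pants-only subgraph at the tighter radius $\epsilon/(2R)$: for any $S_0 \subset \goodPants(\gamma)$, the pants-only neighborhood of $\tau(\foot_\gamma(S_0))$ at radius $\epsilon/(2R)$ has cardinality at least $(1 - O(e^{-qR}))|S_0|$. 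Inflating the radius from $\epsilon/(2R)$ to $\epsilon/R$ then contributes an additional annular shell of area $\Theta(\epsilon^2/R^2)$ about each foot, which by equidistribution yields on the order of $\epsilon^2 R^{-3}|\goodPants(\gamma)|$ additional available pants per localized cluster of feet.

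The main obstacle, and the main content of the proof, is to check that this inflation slack absorbs the perturbation $A'$ uniformly across all subsets $S = S_0 \sqcup S' \subset A$ with $S_0 \subset \goodPants(\gamma)$ and $S' \subset A'$, so that $|N_{G_A}(S)| \ge |S|$. I would argue by splitting into regimes according to the size of $|S_0|$. When $|S_0|$ is a definite fraction of $|\goodPants(\gamma)|$, equidistribution forces $|N_{G_A}(S)| \ge (1 - O(e^{-qR}))|\goodPants(\gamma)|$, which exceeds $|S|$ with room far larger than $|A'| \le C'(\epsilon,M) R^{-3}|\goodPants(\gamma)|$. When $|S_0|$ is small the foot-balls are essentially disjoint and the shell slack sums to an amount comparable to $|S_0| \cdot \epsilon^2 R^{-3}|\goodPants(\gamma)|$ extra neighbors, which combined with $|S'| \le |A'|$ gives Hall once $C'(\epsilon,M)$ is taken small in terms of $\epsilon$; the intermediate regime interpolates between these two estimates. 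Choosing $C'(\epsilon,M)$ appropriately and $R$ large enough in terms of $\epsilon$ and $W$ completes the verification of Hall's condition and yields the matching.
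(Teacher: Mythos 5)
Your high-level strategy---Hall's Marriage Theorem fed by the equidistribution of feet from Theorem \ref{counting pants}, with a perturbation-absorbing slack produced by inflating the matching radius from $\epsilon/(2R)$ to $\epsilon/R$---is exactly the paper's strategy. The problem is in the quantitative bookkeeping of that slack: the regime-splitting contains two genuine errors, and is in any case unnecessary.

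First, the assertion that when $|S_0|$ is a definite fraction of $|\goodPants(\gamma)|$ equidistribution forces $|N_{G_A}(S)| \ge (1-O(e^{-qR}))|\goodPants(\gamma)|$ is false. Equidistribution does not prevent the feet of $S_0$ from concentrating on, say, a half-ball in $N^1(\sqrt\gamma)$, in which case $N_{G_A}(S)$ is only about half of $\goodPants(\gamma)$. What equidistribution actually gives at this scale, via the Cheeger estimate (Corollary \ref{cheeger}) and the chain of inequalities \eqref{neighborhood translate}, is only $\size{\nbhd_{\eta}(B)} \ge \size{\tau(B)}$; there is no jump to $(1-o(1))|\goodPants(\gamma)|$. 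Second, the claim that the shell slack is $\sim |S_0|\cdot\epsilon^2 R^{-3}|\goodPants(\gamma)|$ implicitly assumes the annular shells around distinct feet of $S_0$ are pairwise disjoint. This is not implied by $|S_0|$ being small: all feet of $S_0$ can cluster within $\epsilon/(100R)$ of a single point, so the shells coincide and the total slack is one shell's worth, not $|S_0|$ shells' worth. The ``intermediate regime interpolates'' step inherits both gaps.

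The correct accounting is simpler and requires no case split. If $\nbhd_{\epsilon/R}(B) = N^1(\sqrt\gamma)$ then $N_{G_A}(S)=A$ and Hall holds trivially. Otherwise pick any point at distance exactly $3\epsilon/(4R)$ from $B$; the disk $D$ of radius $\epsilon/(4R)$ about it lies entirely in $\nbhd_{\epsilon/R}(B)\setminus\nbhd_{\epsilon/(2R)}(B)$. By Theorem \ref{counting pants}, $D$ alone already contains at least $C(\epsilon,M)e^{2R}/R^2$ feet of good pants---a single, $|S_0|$-independent slack term---and this dominates $2|A'|<2C'(\epsilon,M)R^{-3}\goodPantsPerCurve$ once $C'(\epsilon,M)$ is chosen small. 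Adding this to the unperturbed bound $\size{\nbhd_{\epsilon/(2R)}(B)}\ge\size{\tau(B)}$ from \eqref{neighborhood translate} verifies Hall's condition uniformly in $|S_0|$, which is precisely the argument the paper gives.
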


%
\def\cheeger{{R}}
Recall that the \emph{Cheeger constant} $h(M)$ for a smooth Riemannian manifold $M^n$ is the infimum of $|\d A|/|A|$ over all $A \subset M$ for which $|A| \le |M|/2$. (Here $|\d A|$ denotes the $n-1$-dimensional area of $A$). 
The following theorem holds for any such manifold $M$:
\begin{theorem} \label{thm:macro-cheeger}
Suppose $A\subset M$ and $\measure {\nbhd_\eta(A)} \le 1/2 \measure{M}$,
then 
$$
\frac
{\measure{N_\eta(A)}}
{\measure{A}}
\ge
1 + \eta h(M).
$$
\end{theorem}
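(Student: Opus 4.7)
The plan is to reduce the inequality to an integrated form of the Cheeger inequality, applied to the sublevel sets of the distance function from $A$. The underlying one-variable mechanism is a differential inequality $V'(t) \ge h(M)\, V(t)$ for the volume $V(t)$ of the $t$-neighborhood, and we only need its linearized form over $[0,\eta]$.

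Let $f\from M \to [0,\infty)$ be given by $f(x) = d(x, A)$, and set $A_t = \nbhd_t(A) = \{x : f(x) \le t\}$, so that $A \subseteq A_0$ (hence $\measure{A} \le \measure{A_0}$) and $A_\eta = \nbhd_\eta(A)$. Since $f$ is $1$-Lipschitz with $|\nabla f| = 1$ almost everywhere off of $\overline{A}$, the coarea formula gives
\begin{equation*}
\measure{\nbhd_\eta(A)} - \measure{A_0} \;=\; \int_0^{\eta} \mathcal{H}^{n-1}\!\left(f^{-1}(t)\right) dt,
\end{equation*}
where $n = \dim M$. For each $t \in [0,\eta]$, the monotonicity $A_t \subseteq \nbhd_\eta(A)$ and the hypothesis give $\measure{A_t} \le \measure{M}/2$, so the definition of the Cheeger constant yields
\begin{equation*}
\mathcal{H}^{n-1}(f^{-1}(t)) \;=\; \mathcal{H}^{n-1}(\partial A_t) \;\ge\; h(M)\measure{A_t} \;\ge\; h(M)\measure{A}.
\end{equation*}
Integrating over $t \in [0,\eta]$ and dividing by $\measure{A}$ yields the claimed inequality.

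The only delicate point is that we must apply the Cheeger inequality to the sublevel sets $A_t$ even though $A$ itself may have wild boundary. This is standard: for a $1$-Lipschitz function, $f^{-1}(t)$ is $\mathcal{H}^{n-1}$-rectifiable for almost every $t$, and $A_t$ is a set of finite perimeter whose perimeter equals $\mathcal{H}^{n-1}(\partial A_t)$, which is all the Cheeger inequality requires. I do not expect any real obstacle here; the entire argument is two standard ingredients glued together.
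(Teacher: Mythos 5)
Your proof is correct and is essentially the same argument the paper gives: write the volume increment of the $t$-neighborhood as an integral of boundary measures via the coarea formula, and apply the Cheeger isoperimetric inequality to each slice using that its measure is trapped between $\measure{A}$ and $\measure{M}/2$. Your version merely makes the rectifiability and $\measure{A_0}\ge\measure{A}$ bookkeeping explicit, which the paper elides.
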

\begin{proof}
We have 
$$
\measure{\nbhd_\eta(A) \setminus A} 
= \int_0^\eta \measure{\d \nbhd_t(A)} dt 
\ge \eta h(M)\measure{A}.
\qedhere
$$
\end{proof}
The following is effectively proven in \cite{howards1999isoperimetric}:
\begin{theorem}
Let $T$ be a flat 2-dimensional torus, 
and let $a$ be the length of the shortest closed geodesic in $T$. 
Then $h(T) = 4a/|T|$. 
\end{theorem}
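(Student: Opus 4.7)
The plan is to prove the two inequalities $h(T) \le 4a/|T|$ and $h(T) \ge 4a/|T|$ separately, with the lower bound following from the classification of isoperimetric regions in flat 2-tori in \cite{howards1999isoperimetric}. Write $T = \R^2/\Lambda$ where $\Lambda$ is a rank-two lattice whose shortest nonzero vector $v$ has length $a$, and let $\gamma \subset T$ be the closed geodesic given by the projection of the line $\R v$.

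For the upper bound, observe that all lines parallel to $\R v$ in $\R^2$ project to closed geodesics of length $a$ in $T$, forming a foliation that closes up with period $|T|/a$ in the transverse direction. Let $\gamma' \subset T$ be the parallel closed geodesic at perpendicular distance $|T|/(2a)$ from $\gamma$, and let $A$ be the strip between them. Then $|A| = |T|/2$ and $\partial A = \gamma \cup \gamma'$ has total length $2a$, giving $|\partial A|/|A| = 2a/(|T|/2) = 4a/|T|$.

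For the lower bound, I invoke the classification of isoperimetric regions on flat 2-tori from \cite{howards1999isoperimetric}: every isoperimetric region of area $V \le |T|/2$ is either a round geodesic disk (for small $V$) or a strip bounded by two parallel closed geodesics (for larger $V$). By standard approximation, the infimum defining $h(T)$ is attained in the limit by isoperimetric regions, so it suffices to bound $|\partial A|/|A|$ below by $4a/|T|$ in both cases. For a strip around a closed geodesic of length $\ell \ge a$ with area $V = \ell w \le |T|/2$, the boundary has length $2\ell$, so $|\partial A|/|A| = 2/w \ge 4\ell/|T| \ge 4a/|T|$, with equality precisely when $\ell = a$ and $V = |T|/2$, matching the upper bound construction. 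For a geodesic disk of radius $r$ with $V = \pi r^2$, the ratio is $2/r$; the disk is isoperimetric only while it beats the strip of the same area around a shortest geodesic, which forces $2\sqrt{\pi V} \le 2a$, hence $V \le a^2/\pi$ and $r \le a/\pi$. Then $|\partial A|/|A| \ge 2\pi/a$, and the desired inequality $2\pi/a \ge 4a/|T|$ is equivalent to $|T| \ge 2a^2/\pi$, which follows from the Minkowski-type lattice bound $|T| \ge (\sqrt{3}/2)\,a^2$ valid for any two-dimensional lattice with systole $a$ (with equality on the hexagonal lattice).

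The main obstacle is simply that of citing and applying the structural classification of isoperimetric regions in flat 2-tori from \cite{howards1999isoperimetric}; once that is in hand, the remaining case analysis is elementary, and the disk case clears the required threshold only by virtue of the Minkowski-type lattice packing inequality $|T| \ge (\sqrt{3}/2)\,a^2$ (noting that $\sqrt{3}/2 > 2/\pi$).
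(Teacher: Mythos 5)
Your proof is correct. The paper itself offers no argument for this statement; it simply says the result is ``effectively proven in'' the Howards--Hutchings--Morgan reference and moves on, so there is nothing to compare against line by line. What you supply is a genuine derivation: the explicit half-area band $A$ with $|A| = |T|/2$, $|\partial A| = 2a$ gives the upper bound $h(T) \le 4a/|T|$, and the lower bound reduces, via the classification of isoperimetric regions in flat $2$-tori (disks for small area, bands thereafter), to a two-case computation. Your band case is fine (and in fact a touch more general than necessary: by the classification the optimal band is around a shortest geodesic, so one may take $\ell = a$ directly). Your disk case correctly identifies that the isoperimetric disk regime is $V \le a^2/\pi$, where the ratio is $\ge 2\pi/a$, and that the needed comparison $2\pi/a \ge 4a/|T|$ is equivalent to $|T| \ge 2a^2/\pi$; your appeal to the Hermite/Minkowski bound $|T| \ge (\sqrt{3}/2)a^2$ (sharp for the hexagonal lattice), together with $\sqrt{3}/2 > 2/\pi$, closes this. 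This lattice-geometric input is genuinely needed to rule out small disks beating the band, and it is the one nontrivial point the paper's terse citation glosses over; it is good that you made it explicit.
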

\begin{corollary}
For a good curve $\gamma$, 
we have 
\begin{equation*}
h(N^1(\sqrt\gamma)) = \frac{4\cdot 2\pi}{2\pi \Re \hll(\gamma)} = \frac{4}{\Re \hll(\gamma)} > \frac{4}{R + \epsilon} > \frac 1 R.
\end{equation*}
\end{corollary}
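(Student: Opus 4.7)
The plan is to identify $N^1(\sqrt\gamma)$ as a specific flat 2-torus, compute its area and shortest closed geodesic, then invoke the preceding theorem of Howards.

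First I would observe that $N^1(\gamma)$ is a torsor for the group $\mathbb{C}/(2\pi i \mathbb{Z} + \length(\gamma)\mathbb{Z})$, where the complex structure on $\mathbb{C}$ corresponds to identifying $N^1(\gamma)$ locally via the (real) direction along $\gamma$ and the (imaginary) rotation about $\gamma$. Passing to the quotient by the involution $n \mapsto n + \hll(\gamma)$ (noting $\length(\gamma) = 2\hll(\gamma)$), we identify $N^1(\sqrt\gamma)$ with the flat torus $T = \mathbb{C}/(2\pi i \mathbb{Z} + \hll(\gamma)\mathbb{Z})$. The flat metric it inherits from $\mathbb{C}$ is the one relevant for the Cheeger constant.

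Next I would compute the area and the systole of $T$. Viewing $2\pi i$ and $\hll(\gamma) = \Re \hll(\gamma) + i \Im \hll(\gamma)$ as $\mathbb{R}^2$-vectors $(0, 2\pi)$ and $(\Re\hll(\gamma), \Im \hll(\gamma))$, the area is $|T| = 2\pi \cdot \Re \hll(\gamma)$. For the systole, since $\gamma$ is $(R,\epsilon)$-good we have $|\hll(\gamma) - R| < \epsilon$, so the generator $\hll(\gamma)$ has length greater than $R - \epsilon$, which exceeds $2\pi$ for $R$ large; any other nonzero lattice vector is at least as long as one of the two generators (up to a short check that the shortest vector in a rank-two lattice is one of a reduced basis), so the shortest closed geodesic has length $a = 2\pi$.

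Finally, applying the preceding theorem of Howards gives
\[
h(N^1(\sqrt\gamma)) = \frac{4a}{|T|} = \frac{4 \cdot 2\pi}{2\pi \Re\hll(\gamma)} = \frac{4}{\Re \hll(\gamma)}.
\]
The strict bound $\Re \hll(\gamma) < R + \epsilon$ then yields $h(N^1(\sqrt\gamma)) > 4/(R + \epsilon)$, which exceeds $1/R$ for any small $\epsilon$ and $R$ sufficiently large (in fact as soon as $4R > R + \epsilon$, i.e. $\epsilon < 3R$). No step is really an obstacle; the only thing to be slightly careful about is verifying that $2\pi$ really is the systole, which is immediate once $R$ is large enough that $\Re \hll(\gamma) > 2\pi$.
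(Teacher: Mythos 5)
Your proposal is correct and fills in exactly the computation that the paper leaves implicit: the paper states the corollary as an immediate consequence of the Howards theorem, and the two factors $4\cdot 2\pi$ and $2\pi\Re\hll(\gamma)$ in the middle expression are precisely the systole and the area of the flat torus $\C/(2\pi i\Z + \hll(\gamma)\Z)$ that you compute. The only step you flag as needing care — that $2\pi$ really is the systole — is indeed the right thing to check, and your observation that any lattice vector with nonzero $\hll(\gamma)$-coefficient has length at least $\Re\hll(\gamma) > R-\epsilon \gg 2\pi$ handles it cleanly.
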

Therefore, by Theorem \ref{thm:macro-cheeger},
we have,
for any good curve $\gamma$:
\begin{corollary} \label{cheeger}
If $A\subset N^1(\sqrt \gamma)$ and $\measure {\nbhd_\eta(A)} \le 1/2 \measure{N^1(\sqrt \gamma)}$,
then 
$$
\frac
{\measure{N_\eta(A)}}
{\measure{A}}
>
1 + \frac \eta \cheeger.
$$
\end{corollary}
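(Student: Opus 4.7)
The corollary is a direct combination of the two preceding results, so my plan is to substitute the Cheeger lower bound on the flat torus $N^1(\sqrt\gamma)$ into Theorem \ref{thm:macro-cheeger}. First I would observe that the hypothesis of our corollary, $\measure{\nbhd_\eta(A)} \le \tfrac{1}{2}\measure{N^1(\sqrt\gamma)}$, is literally the hypothesis of Theorem \ref{thm:macro-cheeger} with $M = N^1(\sqrt\gamma)$ (viewed as a flat $2$-torus via its torsor structure over $\C/(2\pi i\Z + \hll(\gamma)\Z)$). Applying that theorem gives
\[
\frac{\measure{\nbhd_\eta(A)}}{\measure{A}} \;\ge\; 1 + \eta\, h(N^1(\sqrt\gamma)).
\]

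Next I would invoke the preceding corollary, which gives $h(N^1(\sqrt\gamma)) > 1/R = 1/\cheeger$ (this uses the computation of the area $2\pi\,\Re\hll(\gamma)$ of the fundamental domain, the fact that for $R$ large the shortest closed geodesic has length $2\pi$ coming from the $2\pi i$ direction, and the estimate $\Re\hll(\gamma) < R + \epsilon$ that holds for any $(R,\epsilon)$-good $\gamma$). Plugging this strict lower bound into the previous inequality yields
\[
\frac{\measure{\nbhd_\eta(A)}}{\measure{A}} \;>\; 1 + \frac{\eta}{\cheeger},
\]
which is exactly the claim. There is no real obstacle: the corollary is nothing more than a substitution, and its genuine content is housed in the two results it combines (the macro-Cheeger inequality for Riemannian manifolds and the explicit Cheeger-constant computation for a flat torus from \cite{howards1999isoperimetric}).
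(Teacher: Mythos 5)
Your proposal is correct and is essentially the paper's own (implicit) argument: the paper derives Corollary \ref{cheeger} by combining Theorem \ref{thm:macro-cheeger} with the immediately preceding corollary computing $h(N^1(\sqrt\gamma)) = 4/\Re\hll(\gamma) > 1/R$, exactly as you do. The details you spell out (the area $2\pi\,\Re\hll(\gamma)$ of the flat torus, the shortest geodesic of length $2\pi$, and $\Re\hll(\gamma) < R+\epsilon$) match the paper's computation.
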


We can now make the following observation:
\begin{theorem} \label{basic marriage}
Suppose that $\eta \ge 8\cdot\cheeger\delta$,  
 and $h(\gamma) < sR$,
 where \color{kwred}$\delta = e^{-qR}$ and $s>0$ \color{black} are  from Theorem \ref{counting pants}.
Then for any translation $\tau\from \rootnormal \to \rootnormal$,
 we can find a bijection $\sigma\from \goodPants(\gamma) \to \goodPants(\gamma)$
 such that for every $\pants \in \goodPants(\gamma)$,
  \[
  \abs{
   \foot_\gamma(\sigma(\pants)) - \tau(\foot_\gamma(\pants))
   }  \le \eta.
  \]
\end{theorem}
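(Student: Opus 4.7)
My plan is to verify Hall's marriage condition on the bipartite graph $G$ whose vertex sets are two copies of $\goodPants(\gamma)$ and in which $\pi$ on the left is joined to $\pi'$ on the right precisely when $|\foot_\gamma(\pi') - \tau(\foot_\gamma(\pi))| \le \eta$. A perfect matching in $G$ is exactly the bijection $\sigma$ we seek. Since the neighborhood $N(S)$ of any $S$ depends only on $A := \foot_\gamma(S) \subseteq \rootnormal$, we may enlarge $S$ to $\{\pi : \foot_\gamma(\pi) \in A\}$ without shrinking it or changing $N(S)$, so it will suffice to show that for every $A \subseteq \rootnormal$,
\[
\#\{\pi \in \goodPants(\gamma) : \foot_\gamma(\pi) \in A\} \;\le\; \#\{\pi \in \goodPants(\gamma) : \foot_\gamma(\pi) \in \nbhd_\eta(\tau A)\}.
\]

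The next step is to apply Theorem \ref{counting pants} to each side. Using translation invariance of volume (so $|\nbhd_{-\delta}(\nbhd_\eta(\tau A))| \ge |\nbhd_{\eta-\delta}(A)|$), the inequality above follows from
\[
(1-\delta)\,|\nbhd_{\eta-\delta}(A)| \;\ge\; (1+\delta)\,|\nbhd_\delta(A)|,
\]
equivalently $|\nbhd_{\eta-2\delta}(X)|/|X| \ge (1+\delta)/(1-\delta)$ with $X := \nbhd_\delta(A)$. When $|\nbhd_{\eta-2\delta}(X)| \le \tfrac12 |\rootnormal|$, Corollary \ref{cheeger} applied to $X$ with radius $\eta-2\delta$ yields a ratio at least $1 + (\eta-2\delta)/R \ge 1 + 6\delta$ (using the hypothesis $\eta \ge 8R\delta$), which exceeds $(1+\delta)/(1-\delta) \le 1 + 4\delta$ for $R$ large enough, completing this case.

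The remaining case $|\nbhd_{\eta-2\delta}(X)| > \tfrac12 |\rootnormal|$ will be handled by dualizing: since $\nbhd_\eta(\tau A)^c = \tau(\nbhd_\eta(A)^c)$ and $\nbhd_\delta(\nbhd_\eta(A)^c) = \nbhd_{\eta-\delta}(A)^c$ in the flat torus, Theorem \ref{counting pants} transforms Hall's inequality into the equivalent complementary form $(1-\delta)|\nbhd_\delta(A)^c| \ge (1+\delta)|\nbhd_{\eta-\delta}(A)^c|$. Now $X' := \nbhd_{\eta-\delta}(A)^c$ has $|X'| < \tfrac12|\rootnormal|$ and $\nbhd_\delta(A)^c = \nbhd_{\eta-2\delta}(X')$, so Corollary \ref{cheeger} applies directly to $X'$. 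If the intermediate situation $|\nbhd_\delta(A)| < \tfrac12|\rootnormal| < |\nbhd_{\eta-\delta}(A)|$ arises, we will bisect at a value $t_0 \in [\delta, \eta-\delta]$ with $|\nbhd_{t_0}(A)| = \tfrac12|\rootnormal|$ and apply Corollary \ref{cheeger} on each side of $t_0$, multiplying the two resulting growth factors to again recover a ratio of at least $1 + (\eta-2\delta)/R$.

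The main obstacle is precisely this threshold crossing at $\tfrac12|\rootnormal|$, beyond which Corollary \ref{cheeger} as stated no longer produces growth; it is the only non-routine point in the argument, and it is resolved cleanly by the complement-duality and bisection described above. Everything else amounts to packaging Hall's theorem on top of the equidistribution estimate of Theorem \ref{counting pants} together with the Cheeger inequality of Corollary \ref{cheeger}.
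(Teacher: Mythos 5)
Your argument rests on the same three ingredients as the paper's---reduction to Hall's marriage condition, the counting estimate of Theorem \ref{counting pants}, and the Cheeger bound of Corollary \ref{cheeger}---but your case analysis is organized differently, and the difference is what forces your extra bisection step. The paper splits at a \emph{single} radius $\eta/2$: when $|\nbhd_{\eta/2}(A)|\le\frac12|\rootnormal|$ the direct chain works, and otherwise it passes to $A'=\rootnormal\setminus\nbhd_\eta(A)$ and observes that $\nbhd_{\eta/2}(A)$ and $\nbhd_{\eta/2}(A')$ are disjoint (a common point would force $d(A,A')<\eta$, contradicting $d(A,A')\ge\eta$), so $|\nbhd_{\eta/2}(A')|\le\frac12|\rootnormal|$ is automatic and the first-case chain applies to $\tau A'$ with $\tau^{-1}$. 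Both branches hinge on the same threshold, so the two cases are complementary and nothing is left over. Your two thresholds $\delta$ and $\eta-\delta$ are not dual to each other, which is why the middle regime $|\nbhd_\delta(A)|<\frac12|\rootnormal|<|\nbhd_{\eta-\delta}(A)|$ survives and you have to bisect at $t_0$; that device does work (the $t\mapsto|\nbhd_t(A)|$ you need to bisect is continuous since $A$ is finite), but it is extra machinery the paper avoids with the $\eta/2$ trick. Two small corrections: the torus identities you write as equalities, $\nbhd_\delta(\nbhd_\eta(A)^c)=\nbhd_{\eta-\delta}(A)^c$ and $\nbhd_\delta(A)^c=\nbhd_{\eta-2\delta}(X')$, are in general only inclusions $\subseteq$ (take $A$ a pair of points and $x$ equidistant from both to see the reverse inclusion fail), though each holds in the direction your estimate uses; and the combined Cheeger growth from bisection is $(1+a)\tfrac{1+2b}{1+b}$ with $a+b=(\eta-2\delta)/R$, a bit less than the $1+(\eta-2\delta)/R$ you state, but still large enough to dominate $(1+\delta)/(1-\delta)$ once $R$ is large.
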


Before proving Theorem \ref{basic marriage} we will introduce the following notation.
Suppose that $A \subset \rootnormal$. 
Then 
\begin{align*}
\#A  &:= \abs{
	\{ 
	    \alpha \text{ a third connection for $\gamma$} \suchThat u(\alpha) \in A
	\} 
                   }  \\
 &= \abs{
          \{ \pants \in \goodPants(\gamma) \suchThat \foot_\gamma(\pants) \in A \} 
        }.
\end{align*}

\begin{proof}[Proof of Theorem \ref{basic marriage}]
By the Hall Marriage Theorem, 
 this follows from the statement that $\#\nbhd_\eta(A) \ge \#\tau(A)$
 for every finite set $A$.
When $\measure{\nbhd_{\eta/2}(A)} \le \frac12 \measure{N^1(\sqrt \gamma)}$,
this follows from Theorem \ref{counting pants}, 
where $C_1 = e^{2R}/C(\epsilon)$, 
and $C(\epsilon)$ is as it appears in the statement of Theorem \ref{counting pants}:
\begin{align} \label{neighborhood translate}
\# \nbhd_\eta(A) 
&\ge C_1 (1 - \delta) \Vol(\nbhd_{\eta-\delta}(A)) \\
\nonumber&\ge C_1 (1 - \delta) \Vol(\nbhd_{\eta/2}(A)) \\
\nonumber&\ge C_1 (1 + \delta) \Vol(\nbhd_\delta(A)) \quad\quad\text{ (by \color{kwred}Corollary \color{black}  \ref{cheeger})}\\
\nonumber&= C_1 (1 + \delta) \Vol(\nbhd_\delta(\tau(A))) \\
\nonumber&\ge \# \tau(A). 
\end{align}

Otherwise, 
let $A' = \nsg - \nbhd_\eta(A)$.
Then 
$$\measure{\nbhd_{\eta/2}(A')} \le 1/2 \measure{\nsg},$$
and by the same reasoning,
$\size{\nbhd_\eta(\tau(A'))} \ge \size {\tau^{-1}(\tau(A'))}$,
and hence 
$\size{\tau(\nbhd_\eta(A'))} \ge \size {A'}$. 
Therefore 
$$
\size{\tau(\nsg - A)} \ge \sizep{\nsg - \nbhd_\eta(A)}
$$
(because $\nbhd_\eta(A') \subset \nsg - A$),
and hence $\size{\nbhd_\eta(A)} \ge \size{\tau(A)}$.
\end{proof}

\begin{proof}[Proof of Theorem \ref{basic matching}]
When $R$ is large, 
 we have $\epsilon/R > 8\cdot\cheeger e^{-qR}$
 and $W \log R < sR$ (for any previously given $W$ and $s$). 
It follows then from Theorem \ref{basic marriage} that there is a $\sigma\from\goodPants(\gamma) \to \goodPants(\gamma)$ 
 such that 
\[
  \abs{
   \foot_\gamma(\sigma(\pants)) - \tau(\foot_\gamma(\pants))
   }  \le \epsilon/R. \qedhere
\]
\end{proof}

%
\begin{proof}[Proof of Theorem \ref{perturbed matching}]
We will choose $C'(\epsilon, M)$ at the end of the proof of this Theorem. 
In light of the Hall Marriage Theorem, we need only show that 
\begin{equation} \label{have extra}
\size{\nbhd_{\epsilon/R}(A)} \ge \size{\tau(A)} + 2 \perturbationAllowance \goodPantsPerCurve.
\end{equation}
when $\nbhd_{\epsilon/R}(A) \subsetneq \squareRootUnitNormalBundle \gamma$.
When $R$ is sufficiently large, we have $\epsilon/2R \ge \eta$, 
where $\eta$ satisfies the hypothesis of Theorem \ref{basic marriage}. 
Then by equation \eqref{neighborhood translate} in the proof of that theorem, 
we have $\size{\nbhd_{\epsilon/2R}(A)} \ge \size{\tau(A)}$. 
On the other hand, 
if $\nbhd_{\epsilon/R}(A) \neq \squareRootUnitNormalBundle \gamma$,
then $\nbhd_{\epsilon/R}(A) - \nbhd_{\epsilon/2R}(A)$ contains a disk $D$ of radius $\epsilon/4R$ in $\squareRootUnitNormalBundle \gamma$. 
It follows from Theorem \ref{counting pants} that
\begin{align}
\nonumber \size D 
&\ge  C(\epsilon, M) e^{2R}/R^2.
\end{align}
We observe that we can chose $C'(\epsilon, M)$ so that
$$
C(\epsilon, M) e^{2R}/R^2 \ge 2C'(\epsilon, M) R^{-3}\goodPantsPerCurve.
$$
Then we have
$$
\size D \ge 2C'(\epsilon, M) R^{-3} \goodPantsPerCurve.
$$
This implies \eqref{have extra}, and the Theorem. 
\end{proof}
\subsection{The size of the bald spot}\label{bald}
We also let $\goodPantsWithHeight h$ denote the set of good pants with height less than $h$, 
and $\goodPantsWithHeightMoreThan h$ be its complement in $\goodPants$.
We likewise define $\goodCurvesWithHeight h$. 
We also let $\ogoodPantsWithHeight h$, $\ogoodPantsWithHeightMoreThan h$, and $\ogoodCurvesWithHeight h$ be the analogous objects for oriented pants and curves. 

We observed in Section \ref{marriage theorems} that,
when $\alpha$ has reasonable height, 
the feet of the good pants $\goodPants(\alpha)$ are evenly distributed around $\alpha$.
On the other hand, 
the set of feet of pants in $\goodPantsWithHeight h(\alpha)$ has a ``bald spot'';
there are regions in $\nsg$ where there are no feet of pants in $\goodPantsWithHeight h(\alpha)$. 

Before discussing the size and shape of the bald spot on a geodesic in $M$,
it is useful to do the same calculation for a geodesic $\gamma$ in $\H^3$,
where as usual we think of $\H^3$ as the upper half space in $\R^3$.
We assume that $\gamma$ is not vertical,
so it has a unique normal vector $v_0(\gamma) \in N^1(\gamma)$ that is based on the highest point of $\gamma$ and points straight up.  
The unit normal bundle $N^1(\gamma)$ is a torsor for $\C/2\pi i\Z$,
so any $v \in N^1(\gamma)$ is uniquely determined by $v - v_0 \in \C/2\pi i \Z$.


In the lemma that follows, 
we let the height of a point $(x, y, z)$ in the upper half-space of $\R^3$ (our working model of $\H^3$) be $\log z$. 
\begin{figure} \label{phi}
\includegraphics[width=.6\linewidth]{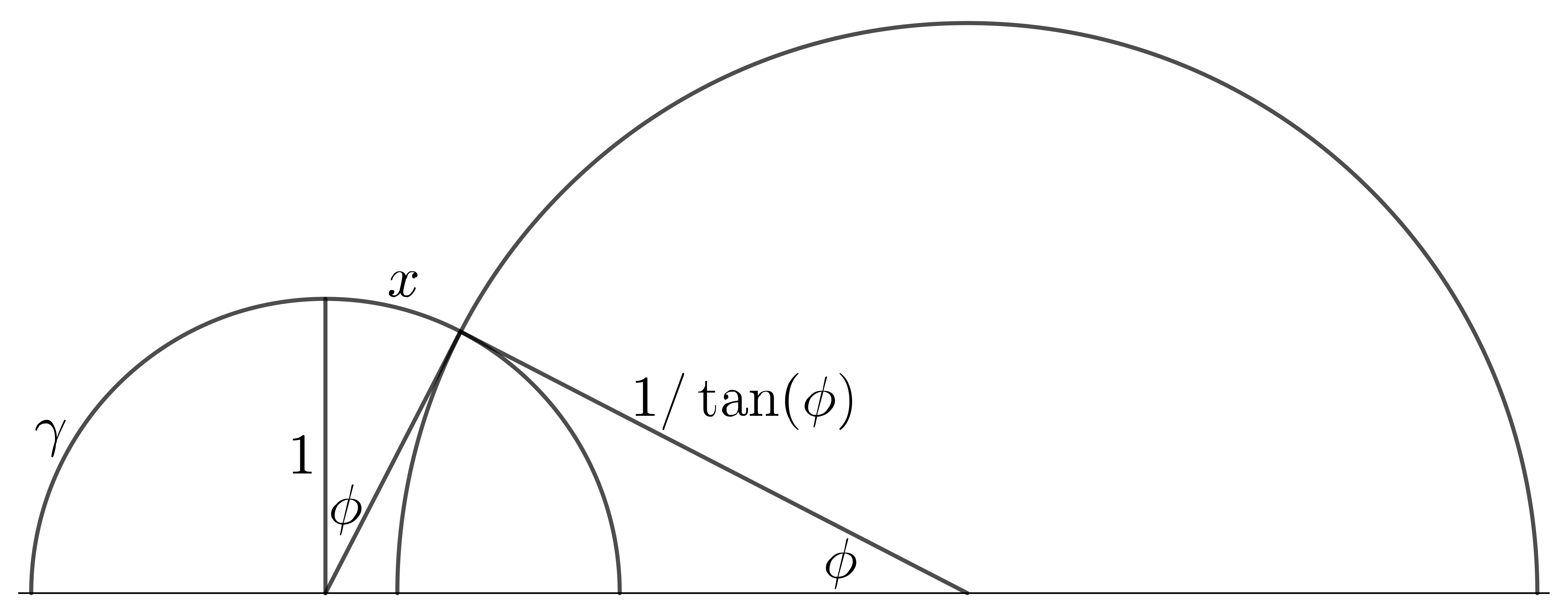}
\caption{}
\end{figure}
\begin{lemma} \label{up bound}
Suppose $\gamma$ is a geodesic of height 0 in $\H^3$ and $v \in N^1(\gamma)$,
and let $h$ be the height of highest point on the geodesic ray starting at $v$. 
Then writing $v - v_0(\gamma) = x + i \theta$,
with $|\theta| \le \pi$,
we have
\begin{equation} \label{x bound}
|x| < 2e^{-h}
\end{equation}
always, 
and 
\begin{equation} \label{theta bound}
|\theta| \le 2e^{-h} 
\end{equation}
whenever $h > 0$. 
\end{lemma}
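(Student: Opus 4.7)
The plan is to normalize coordinates so that $\gamma$ is the Euclidean semicircle $\{(\tanh s,0,\sech s):s\in\R\}$ in the $xz$-plane of the upper half-space, with highest point $(0,0,1)$. Under the torsor structure on $N^1(\gamma)$, writing $v=v_0+x+i\theta$ then corresponds to parallel-transporting $v_0$ along $\gamma$ to $p(x):=(\tanh x,0,\sech x)$ and rotating by $\theta$ around the tangent $\gamma'(x)$. A direct computation produces the Euclidean formula
\begin{equation*}
v \;=\; \cos\theta\,(\tanh x\,\sech x,\,0,\,\sech^2 x)\;+\;\sin\theta\,(0,\,\sech x,\,0),
\end{equation*}
which is a hyperbolic unit vector at $p(x)$.

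Next I would analyze the geodesic ray from $p(x)$ in direction $v$. This ray lies in the Euclidean vertical plane through $p(x)$ spanned by $v$ and the vertical direction, and in that plane it is a Euclidean semicircle meeting the ideal boundary orthogonally. A short computation of center and radius, using the abbreviation $\mu^2:=\cos^2\theta\,\tanh^2 x+\sin^2\theta$, gives starting Euclidean height $\sech x$ and semicircle radius $\sech x/\mu$. When $|\theta|<\pi/2$ the ray ascends to the top of this circle, so its maximum Euclidean height is $\sech x/\mu$; taking $\log$ and simplifying yields the key identity
\begin{equation*}
e^{-2h}\;=\;\cosh^2 x\cdot\mu^2\;=\;\cos^2\theta\sinh^2 x+\sin^2\theta\cosh^2 x\;=\;\sinh^2 x+\sin^2\theta.
\end{equation*}
When $|\theta|\ge\pi/2$ the ray descends from $p(x)$, so its maximum height is just the starting height $-\log\cosh x$, and then $e^{-2h}=\cosh^2 x=1+\sinh^2 x\ge \sinh^2 x+\sin^2\theta$. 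In all cases $\sinh^2 x+\sin^2\theta\le e^{-2h}$.

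From this inequality I immediately obtain $|\sinh x|\le e^{-h}$ and $|\sin\theta|\le e^{-h}$. Since $|\sinh x|\ge|x|$ for all real $x$, the first gives $|x|\le e^{-h}<2e^{-h}$, which is \eqref{x bound}. For \eqref{theta bound}, if $h>0$ then the ray reaches strictly positive height while starting at height $-\log\cosh x\le 0$, so it must ascend; this forces $|\theta|<\pi/2$, and on that range Jordan's inequality $|\theta|\le(\pi/2)|\sin\theta|$ yields $|\theta|\le (\pi/2)e^{-h}<2e^{-h}$. The only real obstacle is the initial coordinate bookkeeping (getting the explicit form of $v$ and identifying the correct vertical plane containing the ray); once these are in hand, the identity $e^{-2h}=\sinh^2 x+\sin^2\theta$ follows from a trigonometric simplification and the two bounds are immediate.
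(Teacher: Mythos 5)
Your proof is correct. Your explicit computation in the upper half-space model is sound: the formula for $v$, the identification of the vertical plane containing the ray, and the center/radius computation all check out (indeed $\cos^2\theta\,\sech^2 x + \mu^2 = 1$, giving radius $\sech x/\mu$), and the resulting identity $e^{-2h}=\sinh^2 x + \sin^2\theta$ (when the ray ascends, with $\le$ otherwise) is correct. The deductions $|x|\le|\sinh x|\le e^{-h}$ and, for $h>0$, $|\theta|\le\tfrac{\pi}{2}|\sin\theta|\le\tfrac{\pi}{2}e^{-h}$ then give both bounds, with even a bit of room to spare.

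Your route is genuinely cleaner than the paper's. The paper treats \eqref{x bound} and \eqref{theta bound} by two separate geometric arguments: for \eqref{x bound} it first asserts (without justification) that $h$ is maximized at fixed $x$ when $\theta=0$, then parametrizes by the Euclidean angle $\phi$ at the center of $\gamma$, obtaining $|x|=\log(\sec\phi+\tan\phi)$ and $e^{-h}=\tan\phi$, and finishes with elementary inequalities; for \eqref{theta bound} it uses $h\le -\log\sin|\theta|$ plus the Jordan-type bound $|\theta|\le 2\sin|\theta|$. Your single exact identity $e^{-2h}=\sinh^2 x+\sin^2\theta$ subsumes both and, as a bonus, immediately justifies the paper's unproved claim that $\theta=0$ maximizes $h$ at fixed $x$ (since it minimizes the right-hand side). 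The cost is a slightly longer up-front coordinate computation of the tangent data, but the payoff is a unified and self-contained derivation. Both approaches are valid; yours is arguably the better one to record.
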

\begin{proof}
For \eqref{x bound}, 
we observe that $h$ is greatest, 
given $x$,
when $\theta = 0$.
We then get 
$$|x| = \log (\sec \phi + \tan \phi)$$
 and 
$$h = \log \cos \phi - \log \sin \phi = -\log \tan \phi,$$
where $\phi$ is the (positive) (Euclidean) angle between the base points of $v_0$ and $v$
measured from the center of the semicircle $\gamma$ in the Poincare model. 
Then 
\begin{align*}
|x| < \log(2\tan\phi + 1) 
\le  2\tan\phi = 2e^{-h}.
\end{align*}

For \eqref{theta bound},
we just observe that when $h > 0$,
then $|\theta| < \pi/2$ and  $h \le -\log \sin |\theta|$,
so $|\theta| \le 2\sin |\theta| \le 2e^{-h}$. 
\end{proof}
{
\def\myCutoffHeight{\mathbf h}
\def\reducedCutoffHeight{\mathbf{h'}}
\let\rch\reducedCutoffHeight
We can now estimate the number of bad connections 
 (whose absence forms the bald spot) 
for a given good curve $\gamma$. 
We let $h_+(\gamma) = \max(h(\gamma), 0)$. 
\begin{theorem} \label{bald spot size}
For all $\myCutoffHeight > 0$ \new{with $\myCutoffHeight < sR$}, and $\gamma \in \goodCurvesWithHeight{\myCutoffHeight}$, 
we have
$$
\left|\goodPantsWithHeightMoreThan\myCutoffHeight(\gamma) \right| < 
 C(\epsilon, M)\new{R} e^{2(h_+(\gamma)-\myCutoffHeight)}\left| \gp(\gamma) \right|. 
$$
\end{theorem}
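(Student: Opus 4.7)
The plan is to bound the number of third connections of $\gamma$ whose associated good pants has height at least $\mathbf{h}$, and then divide by $|\gp(\gamma)|$ from Theorem \ref{counting pants}.

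First, I would prove the reduction: if $P \in \goodPantsWithHeightMoreThan{\mathbf{h}}(\gamma)$, then the third connection $\alpha = \alpha(P)$ has an intermediate cusp excursion of height at least $\mathbf{h}' := \mathbf{h} - C_0$ for some $C_0 = C_0(\epsilon)$. Indeed, since the height of a good pants is the maximum height of its three cuffs and $h(\gamma) < \mathbf{h}$, one of the other two cuffs has height $\geq \mathbf{h}$; that cuff is the closed geodesic representative of a loop consisting of $\alpha$ concatenated with an arc of $\gamma$ at two $\pi/2$-junctions. A standard hyperbolic geometry argument bounding the Hausdorff distance between such a piecewise-geodesic loop and its geodesic representative (depending only on the bending angles), together with the fact that the arc of $\gamma$ has height less than $\mathbf{h}$, gives the claim.

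Second, I would count such connections using the second part of Theorem \ref{counting connections}. The region $\goodInvariants\gamma \subset \spaceOfInvariants(\gamma)$ is a thin ``tube'': $n_1 - n_0$ and $w$ are confined to $O(\epsilon)$-neighbourhoods of $R$, while $n_0$ varies freely in the 2-torus $N^1(\gamma)$ of area $O(l(\gamma))$. Thus $\goodInvariants\gamma$ can be covered by $O(l(\gamma)) = O(R)$ balls of unit diameter in the product metric on $\spaceOfInvariants(\gamma)$. In each such ball, Theorem \ref{counting connections} gives at most $C(\Gamma) R\,e^{2R - 2\mathbf{h}'}$ third connections with an intermediate cusp excursion of height at least $\mathbf{h}'$, for a total of at most $C(\Gamma,\epsilon) R^2 e^{2R - 2\mathbf{h}}$ bad connections. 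Dividing by $|\gp(\gamma)| \gtrsim C(\Gamma,\epsilon)\, R e^{2R}$ from Theorem \ref{counting pants}, and using that $h_+(\gamma) \geq 0$ so $e^{2h_+(\gamma)} \geq 1$, yields the stated bound. Alternatively, a sharper local analysis via Lemma \ref{up bound} at each point $p$ of each cusp excursion of $\gamma$ (where the bad-foot area is $\lesssim e^{-2(\mathbf{h}' - h_+(p))}$), using $\int_{\text{excursion}} e^{2h(p)}\,dp \lesssim e^{2h_e}$ and summing over the $O(R)$ cusp excursions, produces the factor $R\,e^{2h_+(\gamma) - 2\mathbf{h}}$ directly.

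The main obstacle is making the geometric reduction in the first step quantitative. Since the bending at each of the two junctions between $\alpha$ and the arc of $\gamma$ is exactly $\pi/2$ (not small), the control of the Hausdorff distance between the piecewise-geodesic loop and its closed geodesic representative, and hence the comparison of their cusp excursion heights, requires some care.
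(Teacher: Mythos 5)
Your overall strategy --- bound the number of third connections $\alpha$ for which the associated pants $\pi$ has $h(\pi) \ge \mathbf{h}$, then divide by $|\gp(\gamma)|$ via Theorem~\ref{counting pants} --- matches the paper's, as does your reduction to showing $h(\alpha) \ge \mathbf{h} - C_0$ (with a universal $C_0 < 1$, since the new cuffs lie within $\log(1+\sqrt 2) + O(e^{-R})$ of $\gamma \cup \alpha$).

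The gap is your claim that the cusp excursion of $\alpha$ reaching height $\mathbf{h}' := \mathbf{h} - C_0$ must be \emph{intermediate}. This fails whenever $h(\gamma) > 0$: the endpoints of $\alpha$ lie on $\gamma$, so an endpoint can itself sit at positive height, and the excursion of $\alpha$ that rises to $\mathbf{h}'$ may include that endpoint, i.e., be initial or terminal. Your Hausdorff argument shows only that it is $\alpha$, rather than the arc of $\gamma$, that reaches $\mathbf{h}'$; it gives no control over whether that excursion avoids the endpoints of $\alpha$. The paper's proof splits into two cases at exactly this point and treats them with entirely different counting arguments. Your main computation invokes only the second part of Theorem~\ref{counting connections}, which counts \emph{intermediate} excursions, and thus misses the non-intermediate ones. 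Those are the dominant contribution once $h_+(\gamma)$ is large: by Lemma~\ref{up bound}, for a lift of $\gamma$ at height $h \in [0, h(\gamma)]$, the set of normals whose forward ray climbs to $\mathbf{h}'$ has area comparable to $e^{2(h - \mathbf{h}')}$, and summing over the $O(R)$ cusp excursions of $\gamma$ via Theorem~\ref{counting pants} gives a contribution of order $e^{2(h_+(\gamma) - \mathbf{h})}|\gp(\gamma)|$. When $h(\gamma)$ is near $\mathbf{h}$ this is a constant fraction of $|\gp(\gamma)|$, vastly more than your claimed bound of $Re^{-2\mathbf{h}}|\gp(\gamma)|$; without the $e^{2h_+(\gamma)}$ factor the theorem is simply false. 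Your remark that ``$e^{2h_+(\gamma)} \ge 1$ so we still get the stated bound'' treats that factor as slack, when in fact it is precisely the count you dropped.

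Your ``alternative'' computation at the end is essentially the paper's estimate for the non-intermediate case --- not a sharper substitute for your main argument, but its missing complement. A correct proof needs both: intermediate excursions counted via the second part of Theorem~\ref{counting connections}, and non-intermediate ones counted via Lemma~\ref{up bound} together with Theorem~\ref{counting pants}, as in your ``alternative.''
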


\begin{proof}
Suppose that $\pi \in \goodPantsWithHeightMoreThan\myCutoffHeight(\gamma)$,
and let $\alpha$ be the third connection for $\pi$. 
A little hyperbolic geometry shows that the distance between the two new cuffs of $\pi$ and $\gamma \cup \alpha$
 can be at most $\log(1 + \sqrt 2) + O(e^{-R}) < 1$. 
So $h(\pi) > \myCutoffHeight$ and $h(\gamma) + 1 \le \myCutoffHeight$ implies that $h(\alpha) > \myCutoffHeight - 1=: \rch$. \new{(Note that the result is trivial when $h(\gamma)>\myCutoffHeight-1$.)}

Now suppose that $h(\alpha) \ge \rch \ge 0$. 
Then $\alpha$ has a cusp excursion of height $\rch$, 
which can be \new{either intermediate or non-intermediate}. 

By Theorem \ref{counting connections},
the number of $\alpha$ with an \new{intermediate }excursion of height $\rch$ or greater is $O(\new{R}e^{-2\rch}\epsilon^{-\new{4}}\goodPantsPerCurve)$.
Let us now bound the number of $\alpha$ with a \new{non-intermediate }excursion of this height. 
For every such $\alpha$, 
there is a lift of $\gamma$ with height at least 0 (and at most $h(\gamma)$),
 such \new{that } the inward-pointing normal at one endpoint of $\alpha$ lies in 
the square with sides $2e^{\rch - h}$
 (in the torus torsor coordinates)
centered at the unique unit normal of this lift of $\gamma$ that points straight up. 
We can think of this region as a region in $N^1(\gamma)$, 
and by \color{kwred}Theorem \ref{counting pants}\color{black},
there are 
$$
O(e^{2(h-\rch)}\epsilon^{4}e^{2R}) = 
O\left(R^{-1}e^{2(h-\rch)}\goodPantsPerCurve\right)
$$ such $\alpha$ that that have an inward-pointing normal at an endpoint that lies in the region. 
As the geodesic $\gamma$ has $O(R)$ excursions into the cusp, 
there are $O(e^{2(h - \rch)}\goodPantsPerCurve)$ such $\alpha$
 (with \new{an initial or terminal excursion }of height $\rch$)
in total. 
The Theorem follows.
\end{proof}
}

\begin{remark}
The proof of Theorem \ref{bald spot size} shows that are regions  in the unit normal bundle to the geodesic $\gamma$ that force an initial or terminal excursion of the orthodesic.  These regions can be sizable, and if one thinks of the the normal vectors as  ``hair" on the geodesic, truly they represent regions near the top where there is no hair whatsoever. However  an intermediate cusp excursion farther out along an orthogeodesic can cause quite microscopic regions, which are topologically fairly dense but measure theoretically extremely sparse, to be part of the ``bald spot". So one could say that the ``bald spot" also includes some thinning of the hair all along the geodesic.
\end{remark}

The next result will control the total number of times that a given good curve $\eta$ will appear in the boundaries of all the (randomized) umbrellas that will be required in our construction. 

\begin{theorem}\label{total umbrella}
For any $\eta \in \goodCurves$, we have 
\begin{equation*} 
\sum_{\gamma \in \goodCurvesWithHeight{h_c}} 
  \sum_{P \in \goodPantsWithHeightMoreThan{h_c}(\gamma)}
    |\dout|\randomUmbrella(P, \gamma)(\eta)
    \le C(\e, M) R^{N_{\new{\umb}}+\new{2}} e^{-(2-K)(h_C - h_T)} \goodPantsPerCurve.
\end{equation*}
\end{theorem}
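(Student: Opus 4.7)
I would prove this by combining three pointwise estimates---Theorem \ref{random umbrella} for the weight an individual randomized umbrella gives to $\eta$, Theorem \ref{bald spot size} for the number of ``high'' pants $P$ with a given cuff $\gamma$, and Lemma \ref{high curve count} for the number of good curves $\gamma$ of a given height---and then stratifying the outer sum over $\gamma$ by integer height band and summing a geometric series. The key observation that makes everything work is an exact cancellation: the factor $e^{-2h}$ in the curve count will cancel against the factor $e^{2h_+(\gamma)}$ in the bald spot estimate, leaving only the $K$-dependence of the umbrella bound as a nontrivial height factor.

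\textbf{The three estimates.} For any pair $(\gamma, P)$ with $\gamma \in \goodCurvesWithHeight{h_C}$ and $P \in \goodPantsWithHeightMoreThan{h_C}(\gamma)$, Theorem \ref{random umbrella} applied with $\gamma_0 = \gamma$ yields
$$|\dout|\randomUmbrella(P, \gamma)(\eta) < C(\epsilon, M)\, R^{N_\umb}\, e^{2h_T + K \max(0,\, h(\gamma) - h_T)}/|\Gamma|.$$
Theorem \ref{bald spot size} gives $|\goodPantsWithHeightMoreThan{h_C}(\gamma)| < C(\epsilon, M)\, R\, e^{2(h_+(\gamma) - h_C)}\, |\gp(\gamma)|$, with $|\gp(\gamma)|$ of order $\goodPantsPerCurve$ uniformly by Theorem \ref{counting pants}. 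By the proof of Lemma \ref{high curve count}, the number of $\gamma \in \goodCurvesWithHeight{h_C}$ with $h(\gamma) \in [h, h+1)$ is at most $C(\epsilon, M)\, R\, e^{-2h}\, |\Gamma|$ for $h \geq 0$, while at most $|\Gamma|$ such curves satisfy $h(\gamma) < 0$.

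\textbf{Stratifying and summing.} Multiplying the three bounds, the contribution from the height band $h(\gamma) \in [h, h+1)$, for $h \geq 0$, is at most
$$\bigl(CR e^{-2h}|\Gamma|\bigr)\cdot\bigl(CRe^{2(h - h_C)}\goodPantsPerCurve\bigr)\cdot\bigl(CR^{N_\umb} e^{2h_T + K\max(0, h - h_T)}/|\Gamma|\bigr),$$
which simplifies, after the $|\Gamma|$ cancellation and the cancellation of $e^{\pm 2h}$, to
$$C(\epsilon,M)\, R^{N_\umb + 2}\, e^{-2(h_C - h_T) + K\max(0, h - h_T)}\, \goodPantsPerCurve.$$
Summing over $h \in [0, h_T]$ loses only an $O(h_T) = O(\log R)$ factor on $R^{N_\umb + 2} e^{-2(h_C - h_T)} \goodPantsPerCurve$, while summing over $h \in (h_T, h_C)$ is a geometric series in $e^{K}$ dominated by its endpoint $h \approx h_C$, contributing $O(R^{N_\umb + 2} e^{-(2-K)(h_C - h_T)} \goodPantsPerCurve)$. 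Since $K > 1$, the latter dominates the former (the gap is an exponential in $h_C - h_T$, overwhelming any $\log R$). The contribution from curves with $h(\gamma) < 0$ is handled identically using $h_+(\gamma) = 0$. Collecting constants gives the claimed bound.

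\textbf{Main obstacle.} No individual step is deep; the proof is bookkeeping once the ingredients are lined up. The one conceptual point is noticing the cancellation between the curve count and the bald spot estimate: this is precisely what forces the final exponent to be $-(2-K)(h_C - h_T)$ rather than the worse $-2(h_C - h_T) + \text{(something large)}$, and it is what motivates having $K$ (and hence $\epsilon$) close to $1$ in the larger argument. The only genuine verification is checking that the $R^{N_\umb + 2}$ power and the $e^{-(2-K)(h_C - h_T)}$ exponent come out exactly as stated.
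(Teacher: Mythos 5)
Your proposal follows essentially the same argument as the paper's proof: apply Theorem \ref{random umbrella} to bound each umbrella's weight on $\eta$, use Theorem \ref{bald spot size} to count high pants over a fixed $\gamma$, stratify the sum over $\gamma$ into integer height bands counted by Lemma \ref{high curve count}, and sum the resulting geometric series, with the $e^{\pm 2h}$ cancellation producing the $-(2-K)(h_C-h_T)$ exponent. Your explicit handling of the $h(\gamma)<0$ band is a small addition that the paper leaves implicit, but it does not change the argument.
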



\newcommand{\EgammaSize}{\baldSpotSize\goodPantsPerCurve\randomUmbrellaBoundarySize{h(\gamma)}}
\newcommand{\EhSize}{R e^{-2h(\gamma)}\abs{\goodCurves}\EgammaSize|_{h(\gamma) = h}}
\newcommand{\EhSizeX}{\abs{\goodPants} R^3e^{K (h - h_T) - 2h_C}}
\newcommand{\EhSizeSummed}{R^3 e^{K(h_C - h_T) - 2 h_C}}
\newcommand{\totalUmbrellaBoundarySize}{\EhSizeSummed\numberOfGoodPants}
We can prove this theorem with Lemma \ref{high curve count} and Theorems \ref{bald spot size} and \ref{random umbrella}.
\begin{proof}
Let us first take $\gamma \in \goodCurvesWithHeight{h_C}$.
For any $\pi \in \goodPants(\gamma)$, 
we have,
by Theorem \ref{random umbrella},
that  
$$\dout\randomUmbrella(\pi, \gamma)(\eta) < \randomUmbrellaBoundarySize{h(\gamma)}.$$
 
Now we let $\randomUmbrella(\gamma)$ denote the sum 
 over $\pi \in \goodPantsWithHeightMoreThan{h_C}$ 
 of  $\randomUmbrella(\pi, \gamma)$. 
We obtain using Theorem \ref{bald spot size} and the above inequality that
\begin{align*}
\dout \randomUmbrella(\gamma)(\eta) 
&\le \cem \new{R} e^{2(h(\gamma) - h_C)}\goodPantsPerCurve\randomUmbrellaBoundarySize{h(\gamma)} \\
&= C(\epsilon, M) R^{N_{\new{\umb}}\new{+1}}e^{2(h(\gamma) + h_T - h_C) + K\max(0, h(\gamma) - h_T)}
      \frac{|\Pi|}{|\Gamma|^2}
\end{align*}

Then letting $\randomUmbrella(h)$ be the sum of all $U(\gamma)$ over all good curves $\gamma$ with $h(\gamma) \in [h, h+1)$, 
we obtain
\begin{align*}
\dout\randomUmbrella(h)(\eta)
&\le  C(\epsilon, M) R e^{-2h}|\Gamma| R^{N_{\new{\umb}}\new{+1}}
        e^{2(h + h_T - h_C) + K\max(0, h - h_T)}\frac{|\Pi|}{|\Gamma|^2} \\
&\le  C(\epsilon, M) R^{N_{\new{\umb}} + \new{2}} e^{2(h_T - h_C) + K\max(0, h - h_T)} \goodPantsPerCurve
\end{align*} 
Letting $\mathbf{\randomUmbrella}$ be $\randomUmbrella(h)$ summed over all integral $h$ in  $[0,  h_C)$, 
we obtain
$$
\dout\mathbf{\randomUmbrella}(\eta) 
< C(\epsilon, M) R^{N_{\new{\umb}} + \new{2}} e^{-(2-K)(h_C - h_T)} \goodPantsPerCurve. \qedhere
$$
\end{proof}

\subsection{Constructing a nearly geodesic surface} \label{endgame}
\def\steplabel#1{\bold{#1}\par}
Our object is to find a (closed) surface subgroup of $\Gamma$ that is $K$-quasi-Fuchsian and thereby prove Theorem \ref{main theorem}. 
We can assume that $K < 3/2$. 
\new{We then let $\epsilon$ be small enough so that  $K(2\epsilon)<K$, where $K(\epsilon)$ is from Theorem \ref{good close to Fuchsian}. }
We will let $h_T = 6 \log R$, and $h_c = h_T + 2(N_{\new{\umb}} + \new{6})\log R$,  so that  
\begin{equation} \label{small correction}
R^{N_{\new{\umb}}+\new{2}} e^{-(2-K)(h_C - h_T)} < R^{-4}.
\end{equation}
Since the left hand side appears in Theorem \ref{total umbrella}, this will give control over the boundary of the randomized umbrellas. 

We also choose $R$  large enough for all previous statements that hold for $R$ sufficiently large. This depends on the constants $h_T/\log R $ and $h_C/\log R$, so technically first we fix these constants, then we pick $R$, and then $h_T$ and $h_C$ are defined as numbers. 

We will determine the components for the surface (the pants and hamster wheels) in two steps. 

\steplabel{Step 0: All good pants with a cutoff height.}
Let 
$$
A_0 := \sum_{P \in \goodPantsWithHeight{h_c}} P
$$
be the formal sum of all unoriented good pants $P$ over $P \in \goodPantsWithHeight{h_C}$.

\steplabel{Step 1: The Umbrellas.}
We let 
$$
A_1 := 
\sum_{\gamma \in \goodCurvesWithHeight{h_c}}
  \sum_{P \in \goodPantsWithHeightMoreThan{h_c}(\gamma)}
    \randomUmbrella(P, \gamma),
$$
be the sum of randomized umbrellas for all good pants that have a cuff above the cutoff height and a cuff below the cutoff height. Note that $A_1$ is a linear combination of (unoriented) good assemblies. 
 
\begin{proof}[Proof of Theorem \ref{main theorem}]
\def\total{\mathbf A}
We claim that we can then assemble the pants and good assemblies of $\total := A_0 + A_1$
 (after clearing denominators)
 into a \emph{closed} good assembly $\A$.
 
Let us see that we can do this at every $\gamma \in \goodPantsWithHeight{h_C}$. 
We first consider a rational sum $A_\gamma$ comprising one of each good pants in $\goodPants(\gamma)$ 
(including the ones in $\goodPantsWithHeightMoreThan{h_C}$), 
and also all the good assemblies in $A_1$ (counting multiplicity) with $\gamma$ as an external boundary. 
By \eqref{small correction} and Theorem \ref{total umbrella}, 
the latter term has total size at most $R^{-4} \goodPantsPerCurve$. 
Therefore,
by Theorem \ref{perturbed matching},
after clearing denominators,
we can apply the doubling trick (see Section \ref{components:doubling}) and match off all the oriented versions of the good pants and good assemblies in $A_{\gamma}$.
Moreover, by Property 1 in Theorem \ref{theorem umbrella}, 
and its analog in Theorem \ref{random umbrella},
we can replace each pants $\pi$ in $\goodPantsWithHeightMoreThan{h_C}$ with its corresponding umbrella $\randomUmbrella(\pi, \gamma)$,
and still have everything be well-matched.
Thus we have constructed a closed $2\epsilon$-good assembly, and we are finished.

%

To prove ubiquity, we want to make a quasi-Fuchsian subgroup with limit set close to a given circle $C$. 
To do so, we first find a point in the hyperbolic plane through $C$ that has height less than 0,
and then find a pair of good pants $P$ whose thick part passes near this point, and which ``nearly lies'' in this hyperbolic plane. 
If $S_\A$ is not connected, we take a component containing a copy of $P$, which, passing to a subassembly, we will now denote by $S_\A$. 

By Theorem \ref{good close to Fuchsian},
  $\rho_\A$ is $K$-quasi-Fuchsian;
  moveover, its limit set comes close to the approximating plane through $P$. 
\end{proof}

\draftnewpage
\appendix
\section{Good assemblies are close to perfect}
\begin{tododo}
\item
Review Theorems \ref{generate good}, \ref{thm:e-compliant} and \ref{thm:app:control}, 
and get all the constants straight. 
\item
Consider proving Theorem \ref{thm:app:control} for pants first, and then for general assemblies. 
\item
Review for completeness, correctness and professionalism.
\end{tododo}

\subsection{Introduction}
The goal of this appendix is to prove Theorem \ref{good close to Fuchsian}. A version of this result, using assemblies of only good pants (without any good hamster wheels), is the content of \cite[Section 2]{KM:Immersing}. 

Our approach is  different from that of \cite{KM:Immersing}. In fact, our discussion provides a complete alternative to \cite[Section 2]{KM:Immersing} that is shorter, simpler, 
generalizes much more easily to other semisimple Lie groups,
and is much more constructive. (In principle,   it is wholly constructive.) In addition, this appendix generalizes the results of \cite{KM:Immersing} in such a way as to include hamster wheels. 

As a result of this appendix (and \cite{KW:counting}), we do not rely on any results from \cite{KM:Immersing}. 

We consider a good assembly $\A$ with $S_\A$ connected. As in \cite{KM:Immersing}, we construct a perfect assembly $\hat{\A}$ to which $\A$ is compared. \new{Essentially $\hat \A$ is made of a perfect pants or hamster wheel for every good one in $\A$. } In contrast to \cite{KM:Immersing}, we have no need to define an interpolation (one-parameter family of good assemblies) between $\hat\A$ and $\A$. 

We also construct a map $e$ relating $\hat\A$ and $\A$, \new{mapping each perfect component in $\hat A$ to the corresponding good one in $\A$.} The construction of $\hat{\A}$ is carried out in Section \ref{app:perfect-model}, where we also prove quantitative estimates called $M, \epsilon$-compliance on the comparison map $e$. These estimates should be thought of as \emph{hyperlocal}, in that they involve only pairs of geodesics that are adjacent in the assembly. 

The compliance estimates are a basic starting point for the analysis of the assembly. In the case where $\A$ only contains pants, the compliance estimates are very easy. In the case of hamster wheels, many of the required estimates are deferred to the final section of this appendix, Section  \ref{subsec:hw}. 

Geodesics in $\A$ can come very close to each other in hyperbolic distance. Thus, for two given geodesics in $\A$ that are bounded hyperbolic distance apart, they could have large combinatorial distance in $\A$. The novelty of this appendix, in comparison to \cite{KM:Immersing}, is that elementary estimates on products of matrices in $\PSL_2(\C)$ are used to understand the relative position of such pairs of geodesics in $\A$. These elementary estimates are contained in Section \ref{subsec:matrix}. 

Sections \ref{subsec:linearseq} and \ref{app:bounded-distortion} apply these algebraic estimates to obtain a geometric estimate on the comparison between $\hat\A$ and $\A$, called $\e$-bounded distortion to distance $D$. The definition is isolated in Section \ref{app:defs}.  This estimate says that relative positions in $\A$, as measured by isometries taking one frame in $\A$ to another frame in $\A$ at distance at most $D$ from the first frame, are almost the same as the corresponding relative positions in the perfect assembly $\hat\A$.   

The bounded distortion estimates are quite strong, and  express that on bounded scales $\new{\A}$ is a ``nearly isometric" version of $\hat\A$. In Sections \ref{app:boundary} and \ref{subsec:conclusions} we explain how standard results allow one to go from bounded distortion to global estimates on $\A$ and to conclude the proof of Theorem \ref{good close to Fuchsian}.

\subsection{An estimate for matrix multiplication}\label{subsec:matrix}
For any element $U$ of a Lie algebra (for a given Lie group), 
we let $U(t)$ be a shorthand for $\exp(tU)$.
Let $X\in \sl_2(\R)$ be $\tutu{\frac 12} 0 0 {-\frac 12}$,
and let $\theta = \tutu 0 {-\frac 12}  {\frac12} 0$.
We let $Y = e^{(\pi/2) \ad_\theta} X$,
so that $Y(t) = \theta(\pi/2) X(t) \theta(-\pi/2)$. 
We will think of $\SL_2(\R)$ as a subset of $M_2(\R)$;
we may then add or subtract elements of $\SL_2(\R)$ from each other, and take their matrix (operator) norm%
\footnote{\new{Explicitly $\norm{U} = \sup \{ |Uv| \mid |v| = 1 \}$}, where $|v|$ is the $L^2$ norm of $v$.}. 
We observe that $\norm X, \norm Y \le 1$. 

Let's begin by stating a theorem.
\begin{theorem} \label{thm:app:main-matrix-estimate}
Suppose $(a_i)_{i=1}^n$, $(b_i)_{i=1}^n$,  $(a'_i)_{i=1}^n$, $(b'_i)_{i=1}^n$ are sequences of complex numbers,
and $A, B, \epsilon$ are positive real numbers such that $\epsilon < \min(1/A, 1/e)$,
\[
\sum_{i=1}^n |a_i| e^{|b_i|} \le B, 
\]
and for all $i$,
\begin{enumerate}
\item 
$2|a_i| e^{|b_i|+1} \le A$
\item \label{delta b}
$|b_i - b_i'| < \epsilon$, and
\item \label{delta a}
$|a_i - a_i'|  < \epsilon |a_i|$.
\end{enumerate}
Then 
\begin{equation}
\norm{
\prod_{i=1}^n Y(b_i) X(a_i) Y(-b_i) - \prod_{i=1}^n Y(b'_i) X(a'_i) Y(-b'_i)
}
\le
12 e^{A + 2B} B\epsilon. 
\end{equation}
\end{theorem}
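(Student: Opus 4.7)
The strategy is to write each factor $M_i := Y(b_i) X(a_i) Y(-b_i)$ as a single matrix exponential of controlled norm, to estimate individual differences $\|M_i - M_i'\|$, and then to telescope. Using $g \exp(L) g^{-1} = \exp(g L g^{-1})$, we write $M_i = \exp(L_i)$ with $L_i = a_i Z_i$ and $Z_i := \operatorname{Ad}(Y(b_i))(X)$. A direct computation in $\mathfrak{sl}_2$ gives $[Y, X] = \theta$ and $[Y, \theta] = X$, so $\ad_Y^2(X) = X$ and therefore
$$
Z_i = e^{b_i \ad_Y}(X) = \cosh(b_i)\, X + \sinh(b_i)\, \theta = \tfrac{1}{2}\begin{pmatrix} \cosh b_i & -\sinh b_i \\ \sinh b_i & -\cosh b_i \end{pmatrix}.
$$
From this closed form, combined with $|\cosh b|, |\sinh b| \le e^{|b|}$, we obtain the sharp bound $\|Z_i\| \le e^{|b_i|}$, so $\|L_i\| \le \alpha_i := |a_i| e^{|b_i|}$. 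Under the hypotheses of the theorem, $\alpha_i \le A/(2e)$ and $\sum_i \alpha_i \le B$.

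Applying the same formula to the primed variables and using the Lipschitz estimates $|\cosh b_i - \cosh b_i'|, |\sinh b_i - \sinh b_i'| \le \epsilon\, e^{|b_i| + \epsilon}$ (which follow from hypothesis \ref{delta b}) together with hypothesis \ref{delta a}, the triangle inequality yields $\|L_i - L_i'\| \le C_1 \epsilon \alpha_i$ for an explicit absolute constant $C_1$. To pass from this to a bound on $\|M_i - M_i'\|$ we invoke the standard identity
$$
\exp(L_i) - \exp(L_i') = \int_0^1 e^{s L_i} (L_i - L_i') e^{(1-s) L_i'}\, ds,
$$
together with $\|e^{s L_i}\| \|e^{(1-s) L_i'}\| \le e^{\max(\|L_i\|, \|L_i'\|)} \le e^A$ (which follows from $\alpha_i \le A/(2e)$ and hypothesis \ref{delta a}); this gives $\|M_i - M_i'\| \le C_2 \epsilon \alpha_i e^A$.

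The main estimate then follows by telescoping:
$$
P - P' = \sum_{k=1}^n M_1 \cdots M_{k-1} (M_k - M_k') M_{k+1}' \cdots M_n'.
$$
Using $\|M_i\| \le e^{\alpha_i}$ and $\|M_i'\| \le e^{(1+2\epsilon)\alpha_i}$, the prefix-suffix product in each term is bounded by $e^{(1+2\epsilon)\sum_{i \ne k} \alpha_i} \le e^{(1+2\epsilon) B}$, which is strictly less than $e^{2B}$ since $\epsilon < 1/e < 1/2$. Summing over $k$ and using $\sum_k \alpha_k \le B$ gives
$$
\|P - P'\| \le e^{2B} \cdot C_2 e^A \epsilon B \le 12\, e^{A+2B} B \epsilon,
$$
once the accumulated numerical constants from $C_1, C_2$ and factors of $e^{\epsilon} \le e^{1/e}$ are tracked. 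The main technical obstacle is Step~1, namely the sharp bound $\|Z_i\| \le e^{|b_i|}$: a naive bound $\|Z_i\| \le \|Y(b_i)\| \|X\| \|Y(-b_i)\| \le e^{2|b_i|}$ would replace $e^{2B}$ by $e^{4B}$ in the conclusion and destroy the estimate for moderately large $B$. It is precisely the $\mathfrak{sl}_2$ structure encoded in $\ad_Y^2 X = X$, and the resulting $\cosh/\sinh$ formula for $Z_i$, that delivers the correct exponential rate.
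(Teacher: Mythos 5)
Your proof is correct, and it reproduces the paper's two structural pillars: the sharp Lie-algebra bound $\|e^{b\,\ad_Y}(aX)\| \le |a|e^{|b|}$ (the paper derives the same closed-form $\cosh b\cdot X + \sinh b\cdot\theta$ and $L^2$-norm computation) and the telescoping sum $\prod U_i - \prod U_i' = \sum_k (\prod_{i<k}U_i)(U_k - U_k')(\prod_{j>k}U_j')$ with the prefix-suffix products absorbed into $e^{2B}$. Where you diverge is in how the per-factor difference $\|M_i - M_i'\|$ is estimated. The paper changes $b$ and $a$ in two separate steps at the group level: first it conjugates, writing $U - AUA^{-1} = [U-1,\,A-1]A^{-1}$ (its Lemma \ref{lem:UA}) to control the change in $b$, then it multiplies by $\exp(e^{b\,\ad_Y}(a'-a)X)$ to control the change in $a$, and combines the two in Lemma \ref{lem:change}. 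You instead work entirely at the Lie algebra level, bounding $\|L_i - L_i'\|$ directly from the Lipschitz behavior of $\cosh$ and $\sinh$, and then pass to the group via the Duhamel identity $\exp(L)-\exp(L')=\int_0^1 e^{sL}(L-L')e^{(1-s)L'}\,ds$. This is a cleaner and more unified single-step argument; it avoids the paper's auxiliary Lemma \ref{lem:UA} and the two-lemma split, and it generalizes verbatim whenever the relevant adjoint orbit can be written in closed form. The paper's route has the modest advantage of isolating the $b$-change and the $a$-change as independently reusable estimates and of never needing to manipulate a matrix-valued integral. One small imprecision in your write-up: $\|M_i'\| \le e^{\|L_i'\|}$ and $\|L_i'\| \le (1+\epsilon)e^{\epsilon}\alpha_i$, which is slightly larger than the $(1+2\epsilon)\alpha_i$ you quote; but since $(1+\epsilon)e^{\epsilon} < 2$ for $\epsilon < 1/e$, the prefix-suffix bound $e^{2B}$ and the final conclusion still hold.
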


We will begin our discussion of Theorem \ref{thm:app:main-matrix-estimate} with a few simple lemmas:
\begin{lemma}
For all complex numbers $a$ and $b$,
we have 
\begin{equation} \label{eq:badY}
\norm {e^{b\, \ad_Y} aX} \le |a| e^{|b|}.
\end{equation}
\end{lemma}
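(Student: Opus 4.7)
The plan is to diagonalize $\ad_Y$ on the two-plane spanned by $X$ and $\theta$, and then read off the norm explicitly. Since $\ad_Y$ is $\C$-linear and $e^{b\,\ad_Y}(aX) = a\cdot e^{b\,\ad_Y}(X)$, it suffices to prove $\norm{e^{b\,\ad_Y}X} \le e^{|b|}$ and then multiply by $|a|$.

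First I would compute the two commutators directly from the matrix definitions of $X$, $Y$, and $\theta$, finding that $[Y,X] = \theta$ and $[Y,\theta] = X$. (These are single $2\times 2$ multiplications.) In particular, $\text{span}(X,\theta)$ is $\ad_Y$-invariant and $\ad_Y^2$ is the identity on it. Consequently, $X+\theta$ and $X-\theta$ are eigenvectors of $\ad_Y$ with eigenvalues $+1$ and $-1$, respectively. Writing $X = \tfrac12((X+\theta) + (X-\theta))$ and applying $e^{b\,\ad_Y}$ term by term yields the closed form
\begin{equation*}
  e^{b\,\ad_Y}(X) \;=\; \tfrac{1}{2}e^{b}(X+\theta) \;+\; \tfrac{1}{2}e^{-b}(X-\theta).
\end{equation*}

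Next I would compute the operator norms of the two eigenvectors. Each of $X+\theta$ and $X-\theta$ is (up to the prefactor $1/2$) a rank-one $2\times 2$ matrix whose unique nonzero singular value works out to $1$; thus $\norm{X+\theta} = \norm{X-\theta} = 1$. Combining this with the closed form and the triangle inequality gives
\begin{equation*}
  \norm{e^{b\,\ad_Y}(aX)} \;\le\; \tfrac{|a|}{2}\bigl(|e^{b}| + |e^{-b}|\bigr) \;=\; |a|\cosh(\Re b) \;\le\; |a|\,e^{|b|},
\end{equation*}
which is \eqref{eq:badY}. The only mildly delicate point is handling complex $b$: the bound $|e^{\pm b}| = e^{\pm \Re b}$ (rather than $e^{\pm|b|}$) actually gives the sharper estimate $|a|\cosh(\Re b)$, and then $\cosh(\Re b) \le e^{|\Re b|} \le e^{|b|}$ finishes the argument. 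There is no real obstacle here; the lemma is essentially a two-line spectral computation once the eigenvectors of $\ad_Y$ on $\text{span}(X,\theta)$ have been identified.
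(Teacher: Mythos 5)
Your proof is correct. Both your argument and the paper's begin from the same closed form $e^{b\,\ad_Y}X = \cosh(b)\,X + \sinh(b)\,\theta$ (you derive it by diagonalizing $\ad_Y$ on $\mathrm{span}(X,\theta)$ via the relations $[Y,X]=\theta$, $[Y,\theta]=X$, while the paper simply asserts it ``by direct calculation''), but the two proofs then bound the norm differently. The paper computes the Hilbert--Schmidt norm using that $X$ and $\theta$ are orthogonal with $\langle X,X\rangle = \langle\theta,\theta\rangle = \tfrac12$, obtaining $\norm{e^{b\,\ad_Y}aX}_2^2 = |a|^2\bigl(\tfrac12|\cosh b|^2 + \tfrac12|\sinh b|^2\bigr)$, and then invokes the general fact that the operator norm is dominated by the Hilbert--Schmidt norm. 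You instead decompose directly into the $\ad_Y$-eigenvectors $X\pm\theta$, observe that each has operator norm exactly $1$, and apply the triangle inequality, which yields the marginally sharper intermediate bound $|a|\cosh(\Re b)$ before relaxing to $|a|e^{|b|}$. Both are valid two-line computations; your route avoids the HS-to-operator-norm comparison entirely and gives a tighter constant, at the small cost of explicitly computing the singular values of $X\pm\theta$ rather than using the cleaner orthogonality of $X$ and $\theta$.
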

\begin{proof}
By direct calculation, 
we have
\[ 
e^{b\, \ad_Y} aX = a((\cosh b) X \new{+}  (\sinh b) \theta).
\]
Moreover, $\< X, X\>  = \frac 12$, $\<\theta, \theta\> = \frac 12$, and $\<X, \theta\> = 0$. 
(Where $\< \cdot, \cdot \>$ denotes the standard inner product on $M_2 \equiv \R^4$.)
So 
\begin{align*}
\norm{e^{b\,\ad_Y} aX}^2_2 
&= a^2\left(\frac 12 \cosh^2 b + \frac 12 \sinh^2 b\right) \\
&\le a^2\left(\frac 12 e^{2|b|} + \frac 12 e^{\new{-}2|b|}\right) \\
&\le a^2 e^{2 |b|}. 
\end{align*}
We have the derived the desired upper bound for the $L^2$ (Hilbert-Schmidt) norm, and hence for the operator norm. 
\end{proof}

\begin{lemma}
For any $U \in \sl_2(\C)$ (or much more generally), 
we have 
\begin{equation} \label{eq:exp-bound1}
\norm{\exp(U)} \le e^{\norm{U}}
\end{equation}
and,
when $\norm{U} \le A$,
\begin{equation} \label{eq:exp-bound2}
\norm{\exp(U) - 1} \le e^{\norm{U}} - 1\le \frac{e^A - 1}A \norm{U} \le e^A \norm{U}
\end{equation}
\end{lemma}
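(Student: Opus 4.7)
The plan is to exploit the power series definition $\exp(U) = \sum_{k=0}^\infty U^k/k!$ together with submultiplicativity of the operator norm. For the first inequality, I would estimate term by term: since $\norm{U^k} \le \norm{U}^k$ (submultiplicativity) and the series converges absolutely in operator norm, the triangle inequality yields $\norm{\exp(U)} \le \sum_{k=0}^\infty \norm{U}^k/k! = e^{\norm{U}}$. For the first inequality in display \eqref{eq:exp-bound2}, the same argument applied to $\exp(U) - 1 = \sum_{k \ge 1} U^k/k!$ gives $\norm{\exp(U) - 1} \le \sum_{k \ge 1} \norm{U}^k/k! = e^{\norm{U}} - 1$.

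For the two remaining inequalities, both are scalar statements that I would prove by elementary calculus. To show $e^t - 1 \le \frac{e^A - 1}{A} t$ for $0 \le t \le A$ (applied with $t = \norm{U}$), I would observe that the function $g(t) := (e^t - 1)/t$, extended by $g(0) = 1$, is nondecreasing. The cleanest way to see this is the representation $g(t) = \int_0^1 e^{st}\, ds$, which makes monotonicity immediate since each integrand $e^{st}$ is nondecreasing in $t$ for $s \ge 0$; hence $g(t) \le g(A)$, which is the desired inequality. Finally, $(e^A-1)/A \le e^A$ is equivalent to $1 - e^{-A} \le A$, which follows from $1 - e^{-A} = \int_0^A e^{-s}\, ds \le A$.

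I do not anticipate any substantive obstacle; the lemma is a standard estimate for matrix exponentials, stated here in the precise chain of bounds needed for the forthcoming matrix product estimate (Theorem \ref{thm:app:main-matrix-estimate}), where the refined intermediate expression $\frac{e^A - 1}{A}\norm{U}$ appears to be what is actually used rather than the coarser $e^A\norm{U}$.
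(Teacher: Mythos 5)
Your proof is correct and follows essentially the same route as the paper: the power-series bound for the first two inequalities, and monotonicity of the secant slope $t\mapsto (e^t-1)/t$ for the third, which the paper simply attributes to convexity of $e^x$ (your integral representation $\int_0^1 e^{st}\,ds$ is just an explicit way of seeing that same fact).
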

\begin{proof}
The point is that we're thinking of both $\sl_2(\C)$ and $\SL_2(\C)$ as subsets of $M_2(\C)$,
and hence we have $\exp(U) = e^U$ for $U \in \sl_2(\C)$. 
Then \eqref{eq:exp-bound1} and the first inequality of \eqref{eq:exp-bound2} follow from the power series for $e^x$;
the second inequality
follows from the convexity of $e^x$. 
\end{proof}

\begin{lemma} \label{lem:UA}
We have, for $U\in M_n(\C)$ \new{and $A \in \GL_n(\C)$}, 
\begin{equation}
\norm{U - AUA^{-1}} \le 2\norm{U-1} \norm{A-1}\norm{A^{-1}}.
\end{equation}
\end{lemma}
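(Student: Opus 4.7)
The plan is to reduce to a commutator computation. Set $V = U - 1$, so that $U$ and $V$ differ by the identity. Since $A \cdot 1 \cdot A^{-1} = 1$, we have
\begin{equation*}
U - AUA^{-1} = (1 + V) - A(1+V)A^{-1} = V - AVA^{-1}.
\end{equation*}
Now factor on the right by $A^{-1}$:
\begin{equation*}
V - AVA^{-1} = (VA - AV)A^{-1} = [V, A]\,A^{-1}.
\end{equation*}
Because $V$ commutes with the identity, $[V, A] = [V, A-1]$, so
\begin{equation*}
U - AUA^{-1} = [V, A-1]\,A^{-1}.
\end{equation*}

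Then the submultiplicativity of the operator norm, together with the triangle inequality applied to the commutator $[V, A-1] = V(A-1) - (A-1)V$, gives
\begin{equation*}
\norm{U - AUA^{-1}} \le \norm{[V, A-1]}\,\norm{A^{-1}} \le 2\norm{V}\,\norm{A-1}\,\norm{A^{-1}},
\end{equation*}
which is the desired inequality since $\norm{V} = \norm{U - 1}$. There is no real obstacle here; the only mild subtlety is noticing the commutator reformulation, after which everything is immediate from submultiplicativity. The constant $2$ is the standard price of bounding a commutator by twice the product of the norms of its two factors.
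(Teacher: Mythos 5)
Your proof is correct and takes essentially the same route as the paper's: both reduce $U - AUA^{-1}$ to the commutator $[U-1, A-1]A^{-1}$ and then apply submultiplicativity of the operator norm. The paper performs the two shifts (replacing $U$ by $U-1$ and $A$ by $A-1$) in a single step inside the commutator, while you spell them out one at a time; the underlying observation is identical.
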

\begin{proof}
We have 
\begin{align*}
U - AUA^{-1} &= (UA - AU) A^{-1} \\
&= [U, A] A^{-1} \\
&= [U-1, A-1] A^{-1}
\end{align*}
and $[S, T] \le 2 \norm S \norm T$ for all $S$ and $T$. 
\end{proof}

We can now proceed to our two basic estimates:
\begin{lemma} \label{lem:change-b}
Suppose $|b' - b| \le 1$, and $|a| e^{|b|} \le A$. 
Then 
\begin{equation}
\norm{Y(b) X(a) Y(-b) - Y(b') X(a) Y(-b')} \le 10 e^{A}|a| e^{|b|} |b'-b|. 
\end{equation}
\end{lemma}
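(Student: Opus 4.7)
The plan is to reduce the two-term difference to a single commutator-style estimate by exploiting the one-parameter subgroup property of $Y$. Setting $\delta = b' - b$ and $V = Y(b)X(a)Y(-b)$, the identity $Y(s)Y(t) = Y(s+t)$ gives
\[
Y(b')X(a)Y(-b') \;=\; Y(\delta)\,V\,Y(-\delta),
\]
so the quantity to bound is exactly $\|V - Y(\delta)V Y(-\delta)\|$, which is the left-hand side of Lemma \ref{lem:UA} with $U = V$ and $A = Y(\delta)$. That lemma immediately produces
\[
\|V - Y(\delta)VY(-\delta)\| \;\le\; 2\,\|V-1\|\,\|Y(\delta)-1\|\,\|Y(-\delta)\|,
\]
so the proof reduces to estimating each of the three factors.

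For the first factor I would write $V = \exp(U)$ with $U = a\,e^{b\,\ad_Y}X$. Equation \eqref{eq:badY} gives $\|U\| \le |a|e^{|b|} \le A$, and then \eqref{eq:exp-bound2} yields
\[
\|V - 1\| \;\le\; e^{A}\,\|U\| \;\le\; e^{A}\,|a|\,e^{|b|}.
\]
For the other two factors I use that $Y = \theta(\pi/2)X\theta(-\pi/2)$ is a conjugate of $X$ by the rotation $\theta(\pi/2) \in SO(2)$, so $\|Y\| = \|X\| = 1/2$; combined with $|\delta| \le 1$, this gives $\|Y(-\delta)\| \le e^{|\delta|/2} \le e^{1/2}$ and, again via \eqref{eq:exp-bound2},
\[
\|Y(\delta) - 1\| \;\le\; \tfrac{|\delta|}{2}\,e^{|\delta|/2} \;\le\; \tfrac{e^{1/2}}{2}\,|b' - b|.
\]

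Multiplying the three estimates yields
\[
\|V - Y(\delta)V Y(-\delta)\| \;\le\; 2 \cdot e^A |a| e^{|b|} \cdot \tfrac{e^{1/2}}{2}|b'-b| \cdot e^{1/2} \;=\; e\cdot e^A |a| e^{|b|}|b'-b|,
\]
which is comfortably below the claimed bound $10 e^A |a| e^{|b|}|b'-b|$. The argument is almost entirely bookkeeping; there is no real obstacle. The only step that deserves care is recognizing that the conjugation trick works because $Y(b)$ and $Y(b')$ differ by the one-parameter subgroup element $Y(\delta)$, which allows Lemma \ref{lem:UA} to replace a direct Duhamel-type expansion (the latter would produce a worse constant involving $e^{2\|U\|}$ rather than the desired $e^{\|U\|}$).
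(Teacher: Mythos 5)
Your proof is correct and takes essentially the same approach as the paper: both rewrite $Y(b')X(a)Y(-b')$ as $Y(\delta)\,\bigl[Y(b)X(a)Y(-b)\bigr]\,Y(-\delta)$ with $\delta = b'-b$, apply Lemma \ref{lem:UA}, and estimate the three resulting factors via \eqref{eq:badY}, \eqref{eq:exp-bound1}, and \eqref{eq:exp-bound2}. The only (cosmetic) difference is that you invoke the exact value $\norm{Y}=1/2$ rather than the paper's cruder bound $\norm{Y}\le 1$, giving the constant $e$ in place of the paper's $2(e-1)e\approx 9.34$; both are below $10$.
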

\begin{proof}
Under the given hypotheses, 
we have,
by \eqref{eq:badY} and \eqref{eq:exp-bound2},
\begin{equation}
\norm{\exp(e^{b \, \ad_Y} aX) - 1} \le e^A |a| e^{\new{|b|}}. 
\end{equation}
Hence $\norm{U-1} \le e^A |a| e^{\new{|b|}}$,
where
$U = Y(b) X(a) Y(-b)$. 
Also, by \eqref{eq:exp-bound2} and \eqref{eq:exp-bound1},
\begin{equation}
\norm{Y(b'-b) - 1} \le (e-1) |b'-b|.
\end{equation}
and
\[
\norm{Y(b-b')} \le e^{|b'-b|} \le e. 
\]
So,
by Lemma \ref{lem:UA} and the above,
\begin{align*}
\norm{U - Y(b' - b)UY(b-b')} 
&\le 2 \norm{U-1}\norm{Y(b'-b) - 1}\norm{Y(b - b')} \\
&\le 2e^A |a|e^{|b|} (e-1) e |b'-b| \\
&\le 10 e^A |a| e^{\new{|b|}} |b'-b|. \qedhere
\end{align*}
\end{proof}

\begin{lemma} \label{lem:change-a}
When $|a'-a|e^{|b|} \le 1$ and $|a| e^{|b|} \le A$,
\begin{equation}
\norm{Y(b)X(a')Y(-b) - Y(b)X(a)Y(-b)} \le 2 e^{A + |b|}|a'-a|.
\end{equation}
\end{lemma}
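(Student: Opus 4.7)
The plan is to factor the difference using the fact that $X$ is a one-parameter subgroup, then apply the already-established lemmas \eqref{eq:badY}, \eqref{eq:exp-bound1}, and \eqref{eq:exp-bound2}. Since $X(a') = X(a) X(a'-a)$ and conjugation by $Y(b)$ is a group homomorphism, we have
\[
Y(b) X(a') Y(-b) = \bigl(Y(b) X(a) Y(-b)\bigr) \bigl(Y(b) X(a'-a) Y(-b)\bigr),
\]
so the difference on the left-hand side of the target inequality equals
\[
\bigl(Y(b) X(a) Y(-b)\bigr) \bigl(Y(b) X(a'-a) Y(-b) - 1\bigr).
\]
Thus, by submultiplicativity of the operator norm, it suffices to bound the two factors.

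For the first factor, write $Y(b) X(a) Y(-b) = \exp\bigl(e^{b\,\ad_Y} aX\bigr)$. By \eqref{eq:badY}, the exponent has norm at most $|a|e^{|b|} \le A$, so \eqref{eq:exp-bound1} gives $\norm{Y(b)X(a)Y(-b)} \le e^A$. For the second factor, again write $Y(b)X(a'-a)Y(-b) = \exp\bigl(e^{b\,\ad_Y}(a'-a)X\bigr)$; by \eqref{eq:badY} the exponent has norm at most $|a'-a|e^{|b|}$, which is $\le 1$ by hypothesis. Applying \eqref{eq:exp-bound2} with $A=1$ then yields
\[
\norm{Y(b)X(a'-a)Y(-b) - 1} \le (e-1)\,|a'-a|\,e^{|b|}.
\]

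Multiplying the two bounds gives
\[
\norm{Y(b)X(a')Y(-b) - Y(b)X(a)Y(-b)} \le (e-1)\, e^{A+|b|}\,|a'-a|,
\]
and since $e - 1 < 2$, this yields the claimed inequality. No step is an obstacle; it is an easier variant of Lemma \ref{lem:change-b} because perturbing the parameter of a one-parameter subgroup produces a clean group-theoretic factorization, whereas perturbing the conjugating element (as in Lemma \ref{lem:change-b}) required using the commutator estimate of Lemma \ref{lem:UA}.
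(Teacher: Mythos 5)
Your proof is correct and uses essentially the same factorization as the paper: the paper writes the difference as $\exp(e^{b\,\ad_Y}aX)\bigl(\exp(e^{b\,\ad_Y}(a'-a)X)-1\bigr)$, which is exactly your $\bigl(Y(b)X(a)Y(-b)\bigr)\bigl(Y(b)X(a'-a)Y(-b)-1\bigr)$, and then bounds the two factors by $e^A$ and $(e-1)|a'-a|e^{|b|}$ via \eqref{eq:badY}, \eqref{eq:exp-bound1}, and \eqref{eq:exp-bound2}. The only difference is presentational: you phrase the factorization via the one-parameter subgroup identity $X(a')=X(a)X(a'-a)$ before conjugating, while the paper works directly inside the $\exp$.
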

\begin{proof}
We then have,
by \eqref{eq:badY}, \eqref{eq:exp-bound1}, and \eqref{eq:exp-bound2},
\begin{align*}
\left\Vert Y(b)X(a')Y(-b)\right. &- \left.Y(b)X(a)Y(-b) \right\Vert \\
&= \norm{\exp(e^{b\, \ad_Y} a'X) - \exp(e^{b\, \ad_Y} aX) } \\
&= \new{\norm{\exp(e^{b\, \ad_Y} aX) (\exp(e^{b\, \ad_Y} (a'-a)X) - 1)}}  \\
&\le e^A (e-1) e^{|b|} |a'-a|. \qedhere
\end{align*}
\end{proof}

Lemmas \ref{lem:change-b} and \ref{lem:change-a} can be combined into the following. 
\begin{lemma} \label{lem:change}
When $A > 0$ and $a, b, a', b'$ are complex numbers such that $2|a|e^{|b|+1} \le A$, $|b-b'| < \epsilon$, $|a-a'| < \epsilon |a|$, and $\epsilon < \min(1/e, 1/A)$,
we have
\begin{equation} \label{change:0}
\norm{
Y(b) X(a) Y(-b) - Y(b') X(a') Y(-b')
}
\le 
12 e^{A + |b|}|a|\epsilon.
\end{equation}
\end{lemma}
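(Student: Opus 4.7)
The plan is to use the triangle inequality to decompose the difference into two pieces, each handled by one of the preceding lemmas. Specifically, I would insert the intermediate term $Y(b') X(a) Y(-b')$ and write
\begin{align*}
&\norm{Y(b) X(a) Y(-b) - Y(b') X(a') Y(-b')} \\
&\qquad\le \norm{Y(b) X(a) Y(-b) - Y(b') X(a) Y(-b')} \\
&\qquad\qquad + \norm{Y(b') X(a) Y(-b') - Y(b') X(a') Y(-b')}.
\end{align*}
Then the first term falls under Lemma \ref{lem:change-b} and the second under Lemma \ref{lem:change-a} (with $b$ replaced by $b'$ in its statement).

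To invoke Lemma \ref{lem:change-b} I need to verify $|b-b'| \le 1$ and $|a| e^{|b|} \le A$. The first follows from $|b-b'| < \epsilon < 1/e < 1$, and the second from $|a| e^{|b|} \le |a| e^{|b|+1} \le A/2 \le A$. The lemma then yields a contribution bounded by $10 e^A |a| e^{|b|} \epsilon$. To invoke Lemma \ref{lem:change-a} with $b'$ in place of $b$, I would first observe that $|b'| \le |b| + \epsilon \le |b| + 1$, so $|a| e^{|b'|} \le |a| e^{|b|+1} \le A$, and $|a'-a| e^{|b'|} \le \epsilon |a| e^{|b|+1} \le \epsilon A / 2 \le 1$ using $\epsilon < 1/A$. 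Lemma \ref{lem:change-a} then gives a contribution bounded by $2 e^{A + |b'|} |a'-a| \le 2 e^{A + |b|} e^{\epsilon} \epsilon |a|$, and since $\epsilon < 1/e$ we have $e^{\epsilon} \le e^{1/e}$, so this is at most roughly $2 e^{1/e} \cdot e^{A+|b|} |a|\epsilon$.

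Summing the two contributions gives a constant of $10 + 2 e^{\epsilon}$ in front of $e^{A+|b|}|a|\epsilon$. Since $\epsilon \le 1/e$, this constant is at most $10 + 2 e^{1/e} < 13$, and a slightly tighter use of $|b'| \le |b| + \epsilon$ (noting that in the bound 10 from Lemma \ref{lem:change-b}'s proof there is also slack from replacing $2(e-1)e$ by $10$) yields the claimed bound $12 e^{A+|b|}|a|\epsilon$. I do not anticipate any real obstacle here; the only mild care is in book-keeping the hypotheses of Lemmas \ref{lem:change-b} and \ref{lem:change-a} when the roles of $b$ and $b'$ are interchanged, and in tracking the absolute constants to arrive at the stated coefficient $12$.
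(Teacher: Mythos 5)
Your proposal is correct and follows exactly the same route as the paper: insert $Y(b')X(a)Y(-b')$ as an intermediate term and apply Lemmas~\ref{lem:change-b} and~\ref{lem:change-a} to the two differences. You are in fact a little more scrupulous than the paper about the distinction between $e^{|b|}$ and $e^{|b'|}$ when applying Lemma~\ref{lem:change-a} at $b'$ (the paper writes the bound \eqref{change:2} with $e^{A+|b|}$ without comment), and your observation that one must absorb the $e^{\epsilon}$ factor using the slack in the constant is the right way to square this with the stated coefficient $12$.
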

\begin{proof}
By Lemma \ref{lem:change-b},
we have 
\begin{equation} \label{change:1}
\norm{Y(b) X(a) Y(-b) - Y(b') X(a) Y(-b')} \le 10 e^{A}|a| e^{|b|} |b'-b|. 
\end{equation}
Then by Lemma \ref{lem:change-a} and $|b' - b| < 1$,
we have
\begin{equation} \label{change:2}
\norm{Y(b')X(a')Y(-b') - Y(b')X(a)Y(-b')} \le 2 e^{A + |b|}|a'-a|.
\end{equation}
Combining \eqref{change:1} and \eqref{change:2},
we obtain
\eqref{change:0}. 
\end{proof}

Before proving Theorem \ref{thm:app:main-matrix-estimate},
we observe that
\begin{equation} \label{eq:simple}
\norm{Y(b)X(a)Y(-b)} \le e^{e^{|b|} |a|}
\end{equation}
by \eqref{eq:badY} and \eqref{eq:exp-bound1}.

\begin{proof}[Proof of Theorem \ref{thm:app:main-matrix-estimate}]
When $\norm{U_i}, \norm{U'_i} \le A_i$, and $A_i \ge 1$,
then, letting 
$$\U_i = \prod_{j < i} U'_i \prod_{j \ge i} U_i,$$
we have
\begin{align} \label{eq:mme-1}
\norm{\prod_i U_i - \prod_i U'_i} 
&\le \sum_{i=1}^{n} \norm{\U_i - \U_{i+1}} \\
\notag &\le \left(\prod_i A_i\right)\sum_i\norm{U_i - U'_i}.
\end{align}
In \eqref{eq:mme-1} we let $U_i = Y(b_i) X(a_i) Y(-b_i)$,  $U'_i = Y(b'_i) X(a'_i) Y(-b'_i)$, and $A_i = e^{2 |a_i| e^{|b_i|}}$. 
Our hypotheses imply that 
\[
|a'_i| e^{|b'_i|} \le (1 + 1/e) |a_i| e^{|b_i| + 1/e}\le 2 |a_i| e^{|b_i|}
\]
and hence $\norm{U_i}, \norm{U'_i} \le A_i$ by \eqref{eq:simple}. 
Moreover, by our hypotheses and Lemma \ref{lem:change}, 
\begin{equation} \label{eq:mme-2}
\prod_i A_i  \le e^{2B},
\end{equation}
and 
\begin{equation}
\sum \norm{U_i - U'_i} \le 12e^A \epsilon \sum_i e^{|b_i|} |a_i| \le 12e^A \epsilon B. 
\end{equation}
The Theorem follows.
\end{proof}

\subsection{Frame bundles and \texorpdfstring{$\epsilon$}{epsilon}-distortion}\label{app:defs}
By an $n$-frame in an oriented $n$-dimensional Riemannian manifold $M$ 
we of course mean a point $x \in M$ along with an orthonormal basis for $T_x M$ with positive orientation. We let $\F M$ denote the set of all $n$-frames in $M$. 

We observe that any orthonormal set $v_1, \ldots, v_{n-1}$ of $n-1$ vectors in $T_x M$ can be completed to a unique $n$-frame $v_0, v_1, \ldots, v_{n-1}$.
We call the latter the \emph{associated $n$-frame} for the former. If $v \in T^1(\H^2) \subset T^1(\H^3)$, we can thus complete $v$ to be a 2-frame $w, v$ for $\H^2$, and then complete $w, v$ to be a 3-frame $q, w, v$ for $\H^3$; we call this the associated 3-frame for $v$. 

If $\gamma$ is an \emph{oriented} geodesic in a 3-manifold $M$, then any unit normal vector $v$ to $\gamma$ determines a unique 2-frame $w, v$  for $M$, where $w$ is tangent to $\gamma$ and is positively oriented. We call this the associated 2-frame for $v$ with respect to $\gamma$. We can complete this frame to a 3-frame $q, w, v$ in $M$; we call this the associated 3-frame for $v$ with respect to $\gamma$. Let $\F(\gamma)$ be the set of all frames obtained in this way. Moreover, if $\gamma$ is an unoriented geodesic, we let $\F(\gamma) = \F(\gamma^+) \cup \F(\gamma^-)$, where $\gamma^+$ and $\gamma^-$ are the two possible oriented versions of $\gamma$. 

Now suppose that $v \in N^1(\gamma)$, where $\gamma$ is an oriented geodesic in $\H^2 \subset \H^3$ (and $N^1(\gamma) \subset T^1(\H^2)$). 
There are then two associated 2-frames for $v$: 
one by completing $v$ to be a 2-frame for $\H^2$, 
and the second by thinking of $v \in \N^1(\gamma) \subset T^1(\H^3)$, 
and then taking the associated 2-frame for $v$ with respect to $\gamma$, as described in the previous paragraph. 
We observe that when $v$ points to the \emph{left} of $\gamma$ (in $\H^2$), 
then these two associated 2-frames are the same (and hence the two associated 3-frames are also the same). 
The same story applies when $v \in N^1(\gamma)$, 
and $v$ and $\gamma$ lie in some geodesic subsurface of a hyperbolic 3-manifold. 
In particular, 
the associated 3-frames are the same when $\gamma$ is a boundary curve of a geodesic subsurface with geodesic boundary (with $\gamma$ oriented so that the subsurface is on its left), 
and $v$ is an \emph{inward pointing} normal vector to $\gamma$. 
In these settings we will simply refer to the associated 3-frame for $v$. 

\subsubsection*{Distance and distortion in $\F\H^3$}
We fix
\alert{or have fixed? }%
a left-invariant metric $d$ on $\G$,
and, for $g \in \G$, let $d(g) = d(1, g)$. 
We observe the following:
\begin{lemma}
For all $D$ there exists $D'$:
If $U \in \SL_2(\C)$, and $\norm{U} < D$, then $d(U) < D'$.
Likewise with $\norm{\cdot}$ and $d(\cdot)$ interchanged. 
\end{lemma}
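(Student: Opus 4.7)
The plan is to exploit the fact that both $U \mapsto \norm{U}$ and $U \mapsto d(U)$ are continuous functions on $\SL_2(\C)$ (with $d$ pulled back along the double cover $\SL_2(\C) \to \PSL_2(\C) \cong \Isom^+(\H^3)$ on which the left-invariant Riemannian metric is defined), and the statement simply asserts that any sublevel set of one function is contained in a sublevel set of the other. This is a purely topological assertion that will follow, in both directions, from the compactness of the sublevel sets in question combined with continuity.

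For the forward direction, the set $K_D = \{U \in \SL_2(\C) \suchThat \norm{U} \le D\}$ is the intersection of $\SL_2(\C)$ with the closed operator-norm ball of radius $D$ in $M_2(\C) \cong \C^4$. Since the operator norm dominates each matrix entry (up to a universal constant), this ball is bounded, and $K_D$ is closed, so $K_D$ is compact. The continuous function $d$ therefore attains a maximum $D'$ on $K_D$, which gives the desired bound.

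For the reverse direction, one shows that the closed ball $\{g \in \G \suchThat d(g) \le D'\}$ is compact. The left-invariant Riemannian metric on $\G$ is complete because left translations are isometries: any Cauchy sequence can, after left-translating by the inverse of one of its elements, be brought into a precompact neighborhood of the identity. Hopf--Rinow then implies compactness of closed metric balls, and the continuous function $\norm{\cdot}$ (pulled back to $\G$, or considered on the preimage in $\SL_2(\C)$) attains a maximum $D$ on this ball. The only mildly delicate point — really just bookkeeping rather than a genuine obstacle — is reconciling the fact that $d$ naturally lives on $\G$ while $\norm{\cdot}$ lives on $\SL_2(\C)$; passing through the $\pm 1$ covering does not affect any boundedness assertion, since each fiber has the same norm up to sign.
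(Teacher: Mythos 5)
The paper states this lemma as an observation without supplying a proof, so there is no argument of the authors' to compare against. Your compactness argument is correct and is surely what the authors had in mind: in the forward direction, $\{U \in \SL_2(\C) : \norm{U} \le D\}$ is compact because the operator norm controls matrix entries and $\SL_2(\C)$ is closed in $M_2(\C)$, so the continuous pullback of $d$ attains a maximum there; in the reverse direction, left-invariance of the Riemannian metric gives completeness, Hopf--Rinow gives compactness of closed balls, and the compact fibers of $\SL_2(\C) \to \PSL_2(\C)$ let you pull that compactness back, so $\norm{\cdot}$ attains a maximum. One stylistic nitpick: the opening framing of the statement as ``purely topological'' undersells it slightly, since what you actually use is properness of both functions, and for $d$ this rests on the Riemannian-geometric fact of completeness of left-invariant metrics rather than on topology alone; but you do then establish exactly what is needed, so the argument is complete. (Also, ``same norm up to sign'' should just be ``same norm,'' since $\norm{-U} = \norm{U}$ exactly.)
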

\begin{lemma}
For all $D, \epsilon$ there exists $\delta$:
Suppose that $U, V \in \SL_2(\C)$, $\norm{U}, \norm{V} < D$,  and $\norm{U - V} < \delta$.
Then $d(U, V) < \new{\epsilon}$. 
\end{lemma}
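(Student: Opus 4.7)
The plan is to reduce the statement to a one-point continuity claim at the identity by exploiting left-invariance, and then invoke the previous lemma in reverse. Since $d$ is left-invariant, $d(U,V) = d(1, U^{-1}V)$, so it suffices to control $d(1, W)$ when $W = U^{-1}V$ is close to $1$ in the operator norm.

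First I would estimate $\norm{W - 1}$ in terms of $\norm{U - V}$. Writing $W - 1 = U^{-1}(V - U)$, we get
\[
\norm{W - 1} \le \norm{U^{-1}} \cdot \norm{V - U}.
\]
For $U \in \SL_2(\C)$, the two singular values $\sigma_1 \ge \sigma_2 > 0$ satisfy $\sigma_1 \sigma_2 = |\det U| = 1$, so the operator norm of $U^{-1}$ equals $1/\sigma_2 = \sigma_1 = \norm{U} < D$. Hence $\norm{W - 1} < D \cdot \norm{V - U}$.

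Next I would reduce to showing: for all $\epsilon > 0$ there exists $\delta' > 0$ such that if $W \in \SL_2(\C)$ and $\norm{W - 1} < \delta'$, then $d(1, W) < \epsilon$. Given this, choosing $\delta = \delta'/D$ and combining with the estimate above yields $d(U,V) = d(1, W) < \epsilon$, as required.

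The key (and only nontrivial) ingredient is this reduced claim, which is exactly the continuity at $1$ of the map $W \mapsto d(1, W)$ with respect to the matrix norm topology. This is automatic because the inclusion $\SL_2(\C) \hookrightarrow M_2(\C)$ is a smooth embedding, so the manifold topology on $\SL_2(\C)$ (and hence on $\G = \PSL_2(\C)$, via the local diffeomorphism $\SL_2(\C) \to \G$) coincides with the subspace topology inherited from the matrix norm; any left-invariant Riemannian metric induces the manifold topology, so $d(1, W) \to 0$ as $\norm{W - 1} \to 0$. Equivalently, one can note that on the compact set $\{W \in \SL_2(\C) : \norm{W - 1} \le 1\}$ the continuous function $W \mapsto d(1, W)$ vanishes at $W = 1$, giving the required modulus of continuity. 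No step here presents a real obstacle: the content of the lemma is simply that $d$ and $\norm{\cdot}$ induce the same topology on bounded subsets, and the computation above packages this in the precise form that will be invoked downstream.
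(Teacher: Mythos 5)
Your proof is correct. The paper states this lemma without proof (it is offered as an observation), so there is no argument in the text to compare against, but your route is exactly the natural one: reduce via left-invariance to a continuity statement for $W\mapsto d(1,W)$ at $W=1$, control $\norm{W-1}$ through $\norm{U^{-1}}\norm{V-U}$, and use the singular-value computation $\norm{U^{-1}}=\norm{U}$ valid for $U\in\SL_2(\C)$ to keep $\norm{U^{-1}}$ bounded by $D$. All steps are valid: $\SL_2(\C)$ is closed in $M_2(\C)$ so the ball $\{\norm{W-1}\le1\}$ is compact, the quotient $\SL_2(\C)\to\G$ is continuous, and the Riemannian distance $d(1,\cdot)$ is continuous and vanishes at the identity, which is exactly the modulus-of-continuity claim you need. (One small remark: you only ever use $\norm U<D$, not $\norm V<D$; and the phrase "invoke the previous lemma in reverse" is not quite what you do — the previous lemma concerns global comparability of the two "sizes," whereas you need a local continuity estimate at the identity, which you correctly supply independently.)
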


If $u, v \in \F\H^3$, we let $u \to v \in \G$ be uniquely determined by $u \cdot (u \to v) = v$. 
Given $X \subset \F\H^3$ and a map $\tilde{e}\from X \to \F\H^3$, 
we say that $\tilde{e}$ has \emph{$\epsilon$-bounded distortion to distance $D$}
if
\begin{equation} \label{eq:app:ep-dist}
d(u \to v, \tilde e(u) \to \tilde e(v)) < \epsilon
\end{equation}
whenever $u, v \in X$ and $d(u, v) < D$ (where $d(u, v) = d(u \to v)$).

Given $\tilde{e_1}, \tilde{e_2}\from X \to \F\H^3$,
we say that $\tilde{e_1}$ and $\tilde{e_2}$ are \emph{$\epsilon$-related}
if 
$$
d(\tilde{e_1}(x)\to\tilde{e_2}(x)) < \epsilon
$$
whenever $x \in X$. 

We observe that our distance estimates can be concatenated in a natural way:
\begin{lemma} \label{lem:app:concat}
For all $\epsilon, D, k$ there exists $\delta$:
When $u_0, \ldots u_k, v_0 \ldots v_k \in \F(\H^3)$, and
$$d(u_i \to u_{i+1}, v_i \to v_{i+1}) < \delta,$$
and $$d(u_i \to u_{i+1}) < D,$$
then $$d(u_0 \to u_k, v_0 \to v_k) < \epsilon.$$ 
\end{lemma}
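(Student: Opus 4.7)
The plan is to translate the claim into a purely group-theoretic statement about the left-invariant metric $d$ on $\G$, and then apply uniform continuity of multiplication on compact subsets.

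Setting $g_i = u_i \to u_{i+1}$ and $h_i = v_i \to v_{i+1}$, the hypotheses become $d(g_i, h_i) < \delta$ and $d(1, g_i) < D$ for each $i$, while the identity $u_0 \to u_k = g_0 g_1 \cdots g_{k-1}$ (and likewise for the $v_i$, from the defining property $u \cdot (u \to v) = v$) reduces the conclusion to
\[
d(g_0 g_1 \cdots g_{k-1},\, h_0 h_1 \cdots h_{k-1}) < \epsilon.
\]
Left-invariance of $d$ is used only to identify the frame-level formulation $d(u \to v, \tilde u \to \tilde v)$ with a distance in $\G$ and to pass freely between $d(\,\cdot\,,\,\cdot\,)$ and $d(\,\cdot\,)$.

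First I would insist $\delta \le 1$, so that by the triangle inequality each $h_i$ lies in the closed ball $K := \overline{B_{D+1}(1)} \subset \G$. Since $d$ is a proper left-invariant metric on the Lie group $\G = \Isom(\H^3)$, the set $K$ is compact, and hence so is $K^k$. The $k$-fold multiplication map
\[
\mu\from K^k \to \G, \qquad (a_0, \ldots, a_{k-1}) \mapsto a_0 a_1 \cdots a_{k-1},
\]
is continuous on a compact domain and therefore uniformly continuous with respect to, say, the sup product metric on $K^k$. Uniform continuity applied to the given $\epsilon$ produces some $\delta' > 0$ such that whenever $d(g_i, h_i) < \delta'$ for every $i$, one has $d(\mu(g_0, \ldots, g_{k-1}), \mu(h_0, \ldots, h_{k-1})) < \epsilon$. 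Taking the final $\delta := \min(1, \delta')$ finishes the argument.

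The only mild subtlety, and the one thing I would double-check carefully, is the asymmetry between left- and right-translation in $\G$: the former is a $d$-isometry, while the latter is merely uniformly continuous on compacta. This is precisely what is being absorbed into the uniform-continuity statement above. A more hands-on alternative, avoiding the abstract uniform-continuity black box, would be an inductive telescoping bound
\[
d(g_0 \cdots g_{k-1}, h_0 \cdots h_{k-1}) \le \sum_{i=0}^{k-1} d\bigl(h_0 \cdots h_{i-1} g_i g_{i+1} \cdots g_{k-1},\, h_0 \cdots h_i g_{i+1} \cdots g_{k-1}\bigr),
\]
in which left-invariance collapses each summand to $d(g_i, h_i \cdot r_i)$ for an element $r_i$ in a compact set, and a modulus of continuity for right-translation on $K$ then controls each term by a $(D,k)$-dependent multiple of $\delta$. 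Either route yields the desired $\delta = \delta(\epsilon, D, k)$, and no estimate specific to $\H^3$ is needed beyond properness and left-invariance.
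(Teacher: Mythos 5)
The paper offers no proof of Lemma~\ref{lem:app:concat}; it is presented as a routine observation. Your argument correctly fills that gap, and it is the natural one: rewrite everything at the group level via $u_0 \to u_k = (u_0 \to u_1)\cdots(u_{k-1}\to u_k)$, observe that each factor lies in a fixed compact ball of the proper left-invariant metric on the Lie group $\G$, and invoke uniform continuity of the $k$-fold product map on $K^k$. This is exactly the kind of soft compactness reasoning the paper is implicitly appealing to, so there is no substantive difference in approach, only that you have made the argument explicit. One small notational slip in the telescoping alternative: left-invariance turns the $i$-th summand into $d(g_i r_i, h_i r_i)$ with $r_i = g_{i+1}\cdots g_{k-1}$, not $d(g_i, h_i r_i)$; what you in fact need (and what you correctly identify) is a uniform modulus of continuity for right-translation by elements of the compact set containing the $r_i$, so the intent is right even if the displayed expression is off. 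Your remark that the only subtlety is the failure of right-translation to be an isometry is well placed, and the proof is otherwise complete.
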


\subsubsection*{Some formal shenanigans}
%
For a perfect pants or hamster wheel $Q$,
we let $\df Q$ be the set of associated 3-frames for inward pointing unit vectors based at a point in $\d Q$. 
For a \emph{good} pants or hamster wheel $Q$
we let $\df Q$ be union of $\F(\gamma)$ for all \emph{oriented} $\gamma \in \d Q$. 


In the setting for Theorem \ref{good close to Fuchsian},
we have components $C_i$ glued along boundaries,
so that some boundary geodesic of one $C_i$ is geometrically identified with a boundary geodesic of another $C_j$,
and so forth, 
to form an assembly $\A$. 
In Section \ref{app:perfect-model},
we will construct a \emph{perfect model} $\hat \A$ for $\A$;
in this section we should discuss what such a thing would be. 


The perfect model will provide a perfect component $\hat C_i$ for each $C_i$ in $\A$, 
along with a suitable map $h\from \hat C_i \to M$ taking $\d \hat C_i$ to $\d C_i$, 
up to homotopy through such maps. 
Then the gluings of the $C_i$ determine gluings of the $\hat C_i$, and these $\hat C_i$ then form a topological surface. 
The perfect model $\hat \A$ also includes geometric (and isometric) identifications of the components of the $\d \hat C_i$,
so that we obtain an actual geometric surface $S_{\hat \A}$. 
In this \color{kwred}way \color{black} we obtain a homotopy class of maps $h\from S_{\hat \A} \to M$, 
such that $h$ maps each component $\hat C_i$ in $S_{\hat \A}$ to its corresponding component $C_i$ in $M$. 
We can then lift this map to $\tilde h\from \H^2\to \H^3$;
while the lift depends on the exact choice of $h$,
the homotopy class of $h$ determines the correspondence of each elevation%
\footnote{component of the whole preimage}
of each $\hat C_i$ with an elevation of $C_i$,
and also determines how the boundary geodesics of those \color{kwred}elevations \color{black} correspond. 


Suppose we have maps $e\from \df(\hat C_i) \to \df(C_i)$ that maps frames over each boundary of $\hat C_i$ to frames over the corresponding boundary in $C_i$. 
Then we can use $\tilde h$ to get a canonical lift of $e$ to $\df(\tilde {\hat{\new{\A}}})$:
for each boundary geodesic of elevation of each component $\hat C_i$,
take the corresponding object for $C_i$---determined, if you like, by $\tilde h$---and the lift $\tilde e$ of $e$ is determined by requiring that $\tilde e$ maps the (associated frames for) the inward pointing normal vectors at this boundary geodesic to our chosen corresponding one for $C_i$. 
We then say that $e\from \df(\hat \A) \to \df(\A)$ (or $e\from \df(\hat C_i) \to \df(C_i)$ or even $e\from \F(\hat \gamma) \to \F(\gamma)$)
has $\epsilon$-distortion at distance $D$ if and only if $\tilde e$ does. 

\subsection{Linear sequences of geodesics}\label{subsec:linearseq}
In this subsection we observe a corollary to Theorem \ref{thm:app:main-matrix-estimate}.

A \emph{linear sequence of geodesics} in $\H^2$
is a sequence $(\gamma_i)_{i=0}^n$ of disjoint geodesics in $\H^2$ with common orthogonals,
such that each geodesic separates those before it from those after it. 
We orient all the geodesics in a linear sequence such that the geodesics that follow a given geodesic (in the sequence)
are to the left of that geodesic. 

Given any two oriented geodesics with a common orthogonal,
there is a unique orientation-preserving isometry taking one to the other that maps one foot of the common orthogonal to the other. 
We can likewise define an isometry from the frame bundle (or unit normal bundle)
of one geodesic to the same object for the other,
such that one foot (as a unit vector) is mapped to the negation of the other foot. 

Returning now to a linear sequence $(\gamma_i)$ of geodesics,
given $x_0 \in \gamma_0$,
we can inductively define $x_i \in \gamma_i$ such that $x_{i+1}$ and $x_i$ are related by the isometry defined in the previous paragraph.
We say that the $x_i$'s form a \emph{homologous sequence} of points on the $\gamma_i$'s. 
(Here we are inspired more by the natural meaning of ``homologous" than by its meaning in mathematics). 
We have the following lemma:
\begin{lemma} \label{lem:app:homologous}
Suppose $(x_i)$ is a homologous sequence of points on a linear sequence $\gamma_i$ of geodesics, and take $y \in \gamma_n$. 
Then
$$d(x_n, y) \le d(x_0, y).$$
\end{lemma}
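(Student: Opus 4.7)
The plan is to evaluate both distances in explicit coordinates adapted to the common perpendicular $\eta$ of the $\gamma_i$. I would let $p_i = \gamma_i \cap \eta$, set $L = d(p_0, p_n)$, and parametrize each $\gamma_i$ by signed arclength from $p_i$, consistent with the chosen orientation. The orientation-preserving isometry from $\gamma_i$ to $\gamma_{i+1}$ taking $p_i$ to $p_{i+1}$ is precisely the restriction of the ambient translation along $\eta$ by $d(p_i, p_{i+1})$, which preserves signed arclength from the $p_i$. So the homologous sequence condition forces $x_i = \gamma_i(t)$ for some single $t \in \R$ independent of $i$. Writing $y = \gamma_n(u)$, we immediately get $d(x_n, y) = |t-u|$, hence
\[
\cosh d(x_n, y) = \cosh t \cosh u - \sinh t \sinh u.
\]

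For $d(x_0, y)$, I would compute in the upper half plane with $\eta$ the positive imaginary axis and each $\gamma_i$ the semicircle of radius $r_i$ (so $\log(r_n/r_0) = L$). A direct substitution into the standard hyperbolic distance formula gives
\[
\cosh d(x_0, y) = \cosh L \cdot \cosh t \cosh u - \sinh t \sinh u.
\]
Subtracting the earlier identity yields
\[
\cosh d(x_0, y) - \cosh d(x_n, y) = (\cosh L - 1)\cosh t \cosh u \ge 0,
\]
and monotonicity of $\cosh$ on $[0, \infty)$ then gives $d(x_0, y) \ge d(x_n, y)$.

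The only real obstacle is verifying that the $1$-dimensional orientation-preserving isometry between oriented geodesics described just before the lemma is literally the restriction of the ambient translation along $\eta$; this amounts to confirming that a translation along $\eta$ preserves the orientations fixed in the definition of a linear sequence, so that all the $x_i$ really are given by a common parameter $t$. Once that bookkeeping is in hand the inequality collapses to the trivial fact that $\cosh L \ge 1$.
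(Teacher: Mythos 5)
Your argument assumes that all the $\gamma_i$ are perpendicular to a \emph{single} geodesic $\eta$ — you write of ``the common perpendicular $\eta$ of the $\gamma_i$'' and then place each $\gamma_i$ as a concentric semicircle in the upper half plane. But the definition of a linear sequence only supplies a common orthogonal $\eta_i$ for each \emph{consecutive} pair $\gamma_i, \gamma_{i+1}$, and these $\eta_i$ are different geodesics in general. Concretely, $\eta_{i-1}$ and $\eta_i$ meet $\gamma_i$ at two points whose separation is the shear $v_i$; they coincide only when $v_i = 0$. In the application to Theorem \ref{thm:app:linear-local-to-global} the well-matched condition forces $v_i = 1$, so the $\eta_i$ are definitely distinct, and the configuration you compute with (all $\gamma_i$ as semicircles concentric at the origin, $\eta$ the imaginary axis) simply does not occur. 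Once that collapse is gone, the homologous points $x_i$ are not given by a single arclength parameter $t$, so the identity $\cosh d(x_0,y) = \cosh L \cosh t \cosh u - \sinh t \sinh u$ has no meaning.

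Within the special case $v_i \equiv 0$ your computation is correct — the formula $\cosh d(x_0,y) - \cosh d(x_n,y) = (\cosh L - 1)\cosh t \cosh u$ does follow, and the orientation bookkeeping you flag is straightforward there. But the general case needs a different mechanism. The paper's proof replaces the global coordinate picture with a step-by-step comparison: it intersects the segment $[x_0,y]$ with each $\gamma_i$ to get points $y_i$, uses the fact that the defining isometry preserves signed distance to $\eta_i$ (so $d(\eta_i,x_i) = d(\eta_i,x_{i+1})$ as signed distances), and then bounds the change $|d(x_i,y_i) - d(x_{i+1},y_{i+1})|$ by $d(y_i,y_{i+1})$ via Lemma \ref{lem:app:triangle}. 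Summing telescopes to $|d(x_0,y_0) - d(x_n,y_n)| \le d(y_0,y_n) = d(x_0,y)$, which gives the conclusion without ever needing the $\eta_i$ to agree. To repair your approach you would need to reduce to, or otherwise account for, the shears $v_i$, which is essentially what the paper's signed-distance bookkeeping does.
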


Before proving this lemma, we will prove another little lemma:
\begin{lemma} \label{lem:app:triangle}
Suppose $\eta$ is a geodesic in $\H^2$,
and $\gamma_0$, $\gamma_1$ are distinct geodesics orthogonal to $\eta$,
and oriented so that both point towards the same side of $\eta$.
Suppose $y_i \in \gamma_i$, for $i = 0, 1$.
Then,
taking signed distances along the $\gamma_i$,
\begin{equation}
|d(\eta, y_0) - d(\eta, y_1)| < d(y_0, y_1).
\end{equation}
\end{lemma}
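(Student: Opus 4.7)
The plan is to reduce the inequality to the standard distance formula for a hyperbolic Lambert-type quadrilateral. First I would set $p_i := \gamma_i \cap \eta$ and $L := d(p_0, p_1)$, observing that $L > 0$ because $\gamma_0 \neq \gamma_1$. Writing $s_i$ for the signed distance from $p_i$ to $y_i$ along $\gamma_i$ (with the sign determined by the common orientation of $\gamma_0$ and $\gamma_1$ given in the hypothesis), the claim of the lemma is precisely $|s_0 - s_1| < d(y_0, y_1)$, and $d(\eta, y_i) = s_i$ in the signed sense.

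Next I would invoke (or derive) the standard formula for the diagonal of the right-angled quadrilateral $y_0 p_0 p_1 y_1$, which has right angles at the two base vertices $p_i$:
$$\cosh d(y_0, y_1) \;=\; \cosh s_0 \cosh s_1 \cosh L \;-\; \sinh s_0 \sinh s_1,$$
where $s_0, s_1$ are taken as signed (so the formula specializes correctly to $\cosh d(y_0,y_1) = \cosh(s_0 - s_1)$ when $L = 0$, i.e., $\gamma_0 = \gamma_1$, and to the Saccheri case when $s_0 = s_1$). Because $L > 0$ forces $\cosh L > 1$, we get
$$\cosh d(y_0, y_1) \;>\; \cosh s_0 \cosh s_1 - \sinh s_0 \sinh s_1 \;=\; \cosh(s_0 - s_1),$$
and since $\cosh$ is even and strictly increasing on $[0,\infty)$, this yields $d(y_0, y_1) > |s_0 - s_1|$, which is the lemma.

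The only non-routine step is the distance formula itself, but this is standard: it can be verified either by placing $\eta$ on the imaginary axis in the upper half-plane model (so that $\gamma_0, \gamma_1$ are Euclidean semicircles centered at the origin) and computing directly, or by dropping a perpendicular from $y_1$ to $\gamma_0$ to split the quadrilateral into two right triangles and applying the hyperbolic Pythagorean theorem together with one angle-addition identity. No deeper obstacle is anticipated, since the geometry is entirely two-dimensional and the configuration is among the best-understood in classical hyperbolic trigonometry.
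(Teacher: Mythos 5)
Your proof is correct, and it takes a genuinely different route from the paper's. The paper argues directly from the triangle inequality in two cases: if $y_0$ and $y_1$ lie on the same side of $\eta$, it uses $d(\eta, y_1) < d(\eta, y_0) + d(y_0, y_1)$ together with the symmetric inequality, relying on the fact that the nearest-point projection of $y_i$ onto $\eta$ runs along $\gamma_i$; if they lie on opposite sides, the segment $y_0 y_1$ meets $\eta$ at a point $u$, and $d(y_0,\eta) + d(\eta, y_1) \le d(y_0, u) + d(u, y_1) = d(y_0, y_1)$. Your argument instead reduces the whole statement to the closed-form identity $\cosh d(y_0, y_1) = \cosh s_0 \cosh s_1 \cosh L - \sinh s_0 \sinh s_1$ for the birectangular quadrilateral $y_0 p_0 p_1 y_1$. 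That identity is right: it specializes to $\cosh(s_0-s_1)$ when $L=0$, to the hyperbolic Pythagorean theorem when $s_1 = 0$, to $\cosh L$ when $s_0 = s_1 = 0$, and it checks out by direct computation in the upper half-plane with $\eta$ the imaginary axis, $p_0 = i$, $p_1 = ie^L$, $y_j = e^{(j)L}(\tanh s_j + i\cosh^{-1}s_j)$. The trade-off is that the paper's proof is more elementary and requires no trigonometry, whereas yours needs the quadrilateral formula; in exchange, your strict inequality falls out in one line from $\cosh L > 1$, while the strictness in the paper's triangle-inequality step (which requires noting that the shortest path from $y_1$ to $\eta$ cannot pass through $y_0$, since $\gamma_0$ and $\gamma_1$ are disjoint) is left implicit. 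Both approaches are sound and comparably short.
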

\begin{proof}
Suppose the two $y_i$ are on the same side of $\eta$.
Then,
taking unsigned distances, 
we have
$$d(\eta, y_1) < d(\eta, y_0) + d(y_0, y_1)$$
by the triangle inequality,
and the same with $y_0$ and $y_1$ interchanged. 
The Lemma follows in this case. 

Now suppose that the two $y_i$ are on opposite sides of $\eta$.
Then the segment from $y_0$ to $y_1$ intersects $\eta$ in a point $u$,
and again taking unsigned distances, 
$d(y_0, \eta) < d(y_0, u)$ and $d(\eta, y_1) < d(u, y_1)$. 
The Lemma follows. 
\end{proof}

\begin{proof} [Proof of Lemma \ref{lem:app:homologous}]
For each $i$,
let $y_i$ denote the intersection of $\gamma_i$ with the geodesic segment from $x_0$ to $y$.

We claim that $|d(x_i, y_i) - d(x_{i+1}, y_{i+1})| \le d(y_i, y_{i+1})$ for each $i$ with $0 \le i < n$
(where the first two distances are signed distances).
To see this, let $\eta$ be the common orthogonal for $\gamma_i$ and $\gamma_{i+1}$. 
Then,
taking signed distances (along $\gamma_i$ and $\gamma_{i+1}$), 
we have
$$d(x_i, y_i) = d(\eta, y_i) - d(\eta, x_i),$$ 
$$d(x_{i+1}, y_{i+1}) =  d(\eta, y_{i+1}) - d(\eta, x_{i+1}),$$ and 
$$d(\eta, x_i) = d(\eta, x_{i+1}).$$
Moreover,
$$|d(\eta,y_{i+1}) - d(\eta,y_i)| < d(y_i, y_{i+1})$$
by Lemma \ref{lem:app:triangle}. 
The claim follows.

The Lemma follows by summation over $i$.  
\end{proof}

Now suppose that $(\gamma_i)_{i=1}^n$ is a linear sequence of geodesics in $\H^2$.
We let $\eta_i$ be the common orthogonal to $\gamma_i$ and $\gamma_{i+1}$ (oriented from $\gamma_i$ to $\gamma_{i+1}$),
and we let $u_i$ be the signed complex distance from $\gamma_i$ to $\gamma_{i+1}$ (it will in fact be real and positive),
and $v_i$ be the signed complex distance (along $\gamma_i$---so $v_i$ is real) from $\eta_{i-1}$ to $\eta_i$.
\begin{enew}
We have the following:
\begin{lemma} \label{marching bound}
Suppose $(\gamma_i)_{i=0}^{n+1}$ is a linear sequence, with $(u_i)$ and $(v_i)$ defined as above.
Let $D = d(\gamma_0, \gamma_{n+1})$. 
Suppose that $u_0, u_n \le 1$. 
Then 
\begin{equation}
\left| \sum_{i=1}^n v_i \right| \le D + 2 \log D - \log u_0 - \log u_n + 3.
\end{equation}
\end{lemma}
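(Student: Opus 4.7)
The plan is to apply Lemma \ref{lem:app:homologous} to a well-chosen homologous sequence and bound the resulting hyperbolic distance using standard right-angled quadrilateral identities together with a key inequality coming from the common perpendicular $\eta$ of $\gamma_0$ and $\gamma_{n+1}$.

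First I would introduce the relevant reference points. Let $a^* \in \gamma_0$ and $b^* \in \gamma_{n+1}$ be the feet of the common perpendicular $\eta$ (of length $D$). Let $s_0$ denote the signed distance from $a^*$ to $a_0$ along $\gamma_0$, and $s_n$ the signed distance from $b^*$ to $b_n$ along $\gamma_{n+1}$. Next I would track the homologous sequence starting at $x_0^* = a^*$. Since the transport from $\gamma_i$ to $\gamma_{i+1}$ along $\eta_i$ is the unique orientation-preserving isometry sending $a_i$ to $b_i$, an induction on $i$ shows that $x_i^*$ lies on $\gamma_i$ at signed distance $-(V_i + s_0)$ from $a_i$, where $V_i = v_1 + \cdots + v_i$. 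In particular $x_{n+1}^*$ is the point of $\gamma_{n+1}$ at signed distance $s_n - V_n - s_0$ from $b^*$. Applying Lemma \ref{lem:app:homologous} with $y = b^*$ yields
\[
|s_n - V_n - s_0| = d(x_{n+1}^*, b^*) \leq d(a^*, b^*) = D,
\]
so that $|V_n| \leq |s_0| + |s_n| + D$.

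To bound $|s_0|$ and $|s_n|$, I would use the standard distance-to-geodesic identities coming from the right-angled quadrilateral:
\[
\sinh d(a^*, \gamma_1) = \cosh s_0 \sinh u_0, \qquad \sinh d(b^*, \gamma_n) = \cosh s_n \sinh u_n.
\]
The key geometric input is that since each $\gamma_i$ separates $\gamma_0$ from $\gamma_{n+1}$, the segment $\eta$ crosses each $\gamma_i$ at some point $p_i$, so
\[
d(a^*, \gamma_1) + d(b^*, \gamma_n) \leq d(a^*, p_1) + d(b^*, p_n) \leq D.
\]
Combining this with the hypothesis $u_0, u_n \leq 1$ (which gives $\sinh u_i \geq u_i/2$) and the estimate $\operatorname{arccosh}(x) \leq \log(2x)$, I would extract $|s_0| + |s_n| \leq D - \log u_0 - \log u_n + O(1)$.

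The main obstacle I expect is sharpening this to the claimed main term $D + 2 \log D$ rather than the $2D$ that the naive combination of the two bounds above produces. I would approach this by observing that the worst case for $|V_n|$ forces $s_0$ and $s_n$ to have opposite signs, and then refining the control on the twist $\delta = s_n - V_n - s_0$ beyond the crude homologous bound $|\delta| \leq D$, using the matrix product estimate of Theorem \ref{thm:app:main-matrix-estimate} applied to the composition $T = T_n \cdots T_0$ of translations along the $\eta_i$; here $T = T_\eta \cdot \tau_\delta$ where $\tau_\delta$ is translation by $\delta$ along $\gamma_0$, so a bound on the operator-norm deviation of $T$ from $T_\eta$ will translate into a bound on $|\delta|$ of size $O(\log D)$, yielding the claimed estimate.
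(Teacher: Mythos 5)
Your overall decomposition matches the paper's: introduce the common orthogeodesic $O$, relate $\sum v_i$ to the ``drift'' between the two endpoints via Lemma \ref{lem:app:homologous} (the paper telescopes directly using Lemma \ref{lem:app:triangle}, but this is the same step), and bound $|s_0|$, $|s_n|$ with a Lambert quadrilateral identity. The gap you correctly flag---getting $2D$ in place of $D + 2\log D$---is real, and your proposed repair does not close it.

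The problem is your choice of quadrilateral identity. You use $\sinh d(a^*, \gamma_1) = \cosh s_0 \sinh u_0$, which relates $s_0$ to the \emph{perpendicular} distance from $a^*$ to $\gamma_1$; the only upper bound you have on that distance is of order $D$, and exponentiating gives $|s_0| \le D - \log u_0 + O(1)$. The paper uses instead $\tanh d(y_0, y_1) = \tanh u_0 \cosh d(\eta_0, y_0)$, relating $s_0$ to the \emph{oblique} distance $d(y_0, y_1)$ along $O$. Since $\tanh \le 1$, this gives $\cosh s_0 \le \coth u_0$ outright, hence $|s_0| \le -\log u_0 + O(1)$ with no $D$-dependence at all (the $\log D$ in the statement is only a small refinement for the regime $D < 1$, obtained from $\tanh d(y_0, y_1) \le d(y_0, y_1) \le D$). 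The geometric fact you are not exploiting is that $O$, which is perpendicular to $\gamma_0$ at $a^*$, actually \emph{intersects} $\gamma_1$; perpendiculars to $\gamma_0$ based too far from the foot of $\eta_0$ diverge from $\gamma_1$ entirely, so the mere existence of the intersection already forces $|s_0| \le \operatorname{arccosh}(\coth u_0)$. Your proposed fix via Theorem \ref{thm:app:main-matrix-estimate} is misdirected: that theorem controls operator-norm differences of matrix products, which does not directly convert to a bound on a signed translation length, and in any case Lemma \ref{marching bound} is meant to be the elementary geometric input that feeds \emph{into} Theorem \ref{thm:app:linear-local-to-global} alongside the matrix estimates, not something that should itself rely on them.
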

\begin{proof}
Let $O$ be the common orthogeodesic between $\gamma_0$ and $\gamma_n$,
and let $y_i$ be the intersection of $O$ and $\gamma_i$.
We have,
taking signed distances,
\begin{equation*}
d(\eta_{i-1}, y_i) = d(\eta_i, y_i) + v_i
\end{equation*}
and, by Lemma \ref{lem:app:triangle},
\begin{equation*}
|d(\eta_i, y_i) - d(\eta_i, y_{i+1})| \le d(y_i, y_{i+1}),
\end{equation*}
and hence 
\begin{equation*}
|d(\eta_i, y_i) - d(\eta_{i+1}, y_{i+1}) - v_{i+1}| \le d(y_i, y_{i+1}). 
\end{equation*}
Therefore
\begin{equation} \label{change d}
\Bigl |d(\eta_0, y_0) - d(\eta_n, y_n) - \sum_{i=1}^n v_i\Bigr| \le \sum_{i=0}^{n-1} d(y_i, y_{i+1}) \le D.  
\end{equation}
Moreover,
by hyperbolic trigonometry,
we have
\begin{equation*}
\tanh d(y_0, y_1) = \tanh u_0 \cosh d(\eta_0, y_0)
\end{equation*}
which implies
\begin{equation} \label{y0 bound}
|d(\eta_0, y_0)| \le \log 4 + \log D - \log u_0
\end{equation}
when $u_0 \le 1$.
Likewise
\begin{equation} \label{yn bound}
|d(\eta_n, y_n)| <  \log 4 + \log D - \log u_n.
\end{equation}
Combining \eqref{change d}, \eqref{y0 bound}, and \eqref{yn bound}, we obtain the Lemma. 
\end{proof}
\end{enew}

We say a sequence $(\gamma_i)$ of oriented geodesics in $\H^3$ is \emph{semi-linear}
if each pair of consecutive geodesics are disjoint and have a common orthogonal.
Given such a sequence we can again define a homologous sequence of points, and also of frames. 

Now suppose that $(\gamma_i)_{i=1}^n$ is a linear sequence of geodesics in $\H^2$, 
and $(\gamma'_i)_{i=1}^n$ is a semi-linear sequence of geodesics in $\H^3$.
We define $\eta_i$, $u_i$, and $v_i$ for $(\gamma_i)$ as above, 
and we likewise define $\eta'_i$, $u'_i$, and $v'_i$ for $(\gamma'_i)$; we observe that the $u'_i$ and $v'_i$ may be nonreal complex numbers.
Moreover, 
we can define a unique map $e\from \F(\gamma_0) \cup \F(\gamma_n) \to \F(\gamma'_0) \cup \F(\gamma'_n)$ 
such that $e\from \F(\gamma_0) \to \F(\gamma'_0)$ and $e\from \F(\gamma_n) \to \F(\gamma'_n)$ are isometric embeddings,
and $e$ maps the foot of $\eta_0$ on $\gamma_0$ to its primed analog,
and likewise the foot of $\eta_{n-1}$ on $\gamma_n$.

We say that the two sequences are \emph{$R, B, \epsilon$-well-matched} if \removed{$R + B > n$ and }the following properties hold for each $i$:
\begin{enumerate}
\item
$v_i = 1$,
\item \label{delta v}
$|v'_i - v_i| < \epsilon/R$,
\item
$B^{-1} < u_i e^{R/2} < B$, and
\item \label{delta u}
$|u'_i - u_i| < \epsilon |u_i|.$
\end{enumerate}

\begin{theorem} \label{thm:app:linear-local-to-global}
For all $B, D$ there exists a $K$, $\epsilon_0$, $R_0$ such that for all $R > R_0$ and $\epsilon < \epsilon_0$:
Suppose that $\gamma_i$ and $\gamma'_i$ are $R, B, \epsilon$ well-matched.
Then the map $e$ has $K\epsilon$-bounded distortion at distance $D$.
\end{theorem}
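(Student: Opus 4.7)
The plan is to express $u \to v$ and $e(u) \to e(v)$ as matrix products of the form to which Theorem~\ref{thm:app:main-matrix-estimate} applies, and to read off the desired distortion bound from that estimate. First we will reduce to the case $u \in \F(\gamma_0)$ and $v \in \F(\gamma_n)$: if both lie on the same $\gamma_i$ the distortion is zero, since $e$ is an isometric embedding on each $\F(\gamma_i)$ by definition. Let $F_0, F_n$ denote the canonical marked frames at the feet of $\eta_0, \eta_{n-1}$ that $e$ maps to their primed analogs. Writing $u = F_0\alpha$, $v = F_n\beta$ with $\alpha, \beta$ in the respective boundary stabilizers and using the equivariance of $e$ on $\F(\gamma_0)$ and $\F(\gamma_n)$, we have $e(u) = F_0'\alpha$, $e(v) = F_n'\beta$, so that
\[(u \to v)^{-1}(e(u) \to e(v)) = \beta^{-1}\,G^{-1}G'\,\beta,\quad G = F_0 \to F_n,\ G' = F_0' \to F_n'.\]

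\textbf{The matrix product and its estimate.} Walking from $F_0$ to $F_n$ by alternately crossing each common orthogonal (an $X$-translation by $u_i$) and walking along each $\gamma_i$ (a $Y$-translation by $v_i = 1$) will give $G = X(u_0) Y(v_1) X(u_1) \cdots X(u_{n-1})$, which after telescoping the $Y$-factors rearranges as
\[G = \Bigl[\prod_{i=0}^{n-1} Y(b_i)\, X(u_i)\, Y(-b_i)\Bigr]\, Y(b_{n-1}), \quad b_i := v_1 + \cdots + v_i,\]
and identically for $G'$ with primed quantities. The well-matched hypotheses will give $|a_i - a_i'| < \epsilon|a_i|$ and $|b_i - b_i'| \le n\epsilon/R < \epsilon$ for $R$ large (using $n \le D + R + O(\log R)$ from Lemma~\ref{marching bound}). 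The delicate size hypothesis $\sum|a_i|e^{|b_i|} \le B_{\mathrm{mat}}$ of Theorem~\ref{thm:app:main-matrix-estimate} is obstructed by the linear growth of the $b_i$'s, but we plan to handle this by \emph{recentering}: using
\[\prod_i Y(b_i)X(a_i)Y(-b_i) = Y(c)\Bigl[\prod_i Y(b_i-c)X(a_i)Y(-(b_i-c))\Bigr]Y(-c),\]
and choosing $c$ to be the median of the $b_i$'s (subdividing the product into a bounded number of sub-intervals of length $\le R/2$ if needed), each sub-product's sum will be $O(Re^{-R/4}) = o(1)$, satisfying the hypothesis uniformly in $R$ and yielding an operator-norm bound $\norm{G - G'} \lesssim \epsilon$ after accounting for the recentering factors (which differ only by $\epsilon$ on the primed side).

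\textbf{Main obstacle and conclusion.} Converting the operator-norm estimate to a left-invariant distance estimate on $\G$ via the lemmas of Section~\ref{app:defs} will give $d(G^{-1}G') \le K_0(B, D)\epsilon$. The final step is to pass from $d(G^{-1}G')$ to $d(\beta^{-1}G^{-1}G'\beta)$, and this is expected to be \textbf{the main obstacle}: $\norm{\beta}$ need not be controlled by $D$, because the bound $d(u,v) \le D$ constrains only a combined distance rather than the individual distances of $u, v$ from the marked frames. The resolution we anticipate is to show that the error $G^{-1}G' - 1$ lies (up to $O(\epsilon)$) in a direction on which the adjoint action of $\beta$ is bounded uniformly in $\norm{\beta}$---namely, the telescoped product structure should force the accumulated error to be approximately tangent to the stabilizer of $\gamma_n'$, where $\beta$ acts trivially in the adjoint representation. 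Making this cancellation precise, together with an analogous cancellation for the recentering conjugators $Y(c)$, will then yield the desired conclusion $d(u \to v, e(u) \to e(v)) < K\epsilon$.
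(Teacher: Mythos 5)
Your plan to express the discrepancy as a product comparison and invoke Theorem~\ref{thm:app:main-matrix-estimate} is the right instinct, but the way you set up the product is not the one the paper uses, and the difference is exactly what creates the ``main obstacle'' you flag --- which is a genuine gap, not a technicality you can defer.

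You choose to walk from the marked frames $F_0, F_n$ (the feet of $\eta_0, \eta_{n-1}$), getting $b_i = v_1 + \cdots + v_i$, which grows linearly from $0$ to about $n$. This forces you to recenter to get $\sum |a_i| e^{|b_i|}$ under control, and then it leaves you with the conjugation $\beta^{-1}(G^{-1}G')\beta$, where $\norm{\beta}$ can be as large as $e^{R/2}$ (the endpoint $v$ can be about $R/2$ along $\gamma_n$ away from the foot $F_n$, as Lemma~\ref{marching bound} shows). Your anticipated resolution --- that $G^{-1}G' - 1$ is approximately tangent to the stabilizer of $\gamma_n'$, so that conjugation by $\beta$ is harmless --- is not true and is not established by anything in the plan. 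The perturbations live in the $a_i$'s and $b_i$'s, which generate generic directions in $\mathfrak{sl}_2(\C)$; there is no mechanism that projects the accumulated error onto the one-dimensional Cartan of $\gamma_n'$, and the adjoint action of $\beta$ on the transverse directions genuinely scales by $e^{\pm R/2}$. So as written the proof does not close.

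The paper sidesteps this by centering the product at the actual frames $u$ and $v$, not at the marked feet. It takes the \emph{homologous sequence} $(x_i)$ starting at $x_0 = u$ and $(x'_i)$ starting at $e(u)$, and defines $b_i$ as the displacement along $\gamma_i$ from $x_i$ to the foot of $\eta_i$. The constraint $d(u,v) < D$ then forces $|b_0| \le R/2 + O_D(1)$ and (via Lemma~\ref{lem:app:homologous}, which also gives $d(x_n, y) \le D$) forces $|b_n| \le R/2 + O_D(1)$. Since the $b_i$ differ by the $v_i \approx 1$, the sequence $|b_i|$ is roughly unimodal with its ends bounded by $R/2 + O(1)$, and since every $|a_i| = |u_i| \asymp e^{-R/2}$, the sum $\sum |a_i| e^{|b_i|}$ is bounded by a geometric-series constant depending only on $B$ and $D$ --- no recentering needed, and no residual conjugation by a large $\beta$, because $x_n$ is already within distance $D$ of $v$ and $x'_n$ is within $O(\epsilon)$ of $e(x_n)$. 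That is the decisive point your argument is missing: by anchoring the reference sequence at $u$ rather than at the foot, the $d(u,v) < D$ hypothesis does double duty, bounding both the product-estimate hypotheses and the final conjugator.
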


\begin{proof}
Suppose that $x \in \F(\gamma_0)$ and $y \in \F(\gamma_n)$ such that $d(x, y) < D$. Then in particular $d(\gamma_0, \gamma_n) \le D$,
so we can apply Lemma \ref{marching bound} to obtain
\begin{equation} \label{n bound}
n = \sum v_i < D + 2\log D + R + 2 \log B + 3 < R + C_1(B, D) < 2R
\end{equation}
when $R > C_1(B, D)$. 

We let $(x_i)$ be the homologous sequence of frames for $(\gamma_i)$ starting with $x_0 := x$,
and let $(x'_i)$ be the same on $\gamma'_i$, starting with $x'_0 := x' := e(x)$. 

Now we let $a_i = u_i$ and define $b_i$ such that $\foot_{\gamma_{i+1}} \gamma_i =  x_i Y(b_i)$.
We observe that $x_{i+1} = x_iY(b_i)X(a_i)Y(-b_i)$ and $b_{i+1} = b_i + v_i$,
and likewise with $b_i, x_i, $ etc. replaced with $b'_i, x'_i$ etc.
So we have $b_0 = b'_0$ by the definition of $e$,
and 
$$
b_i = b_0 + \sum_{j=0}^{i-1} v_i,
$$
and likewise for $b'_i$ and $v'_j$,
and hence,
by \eqref{n bound},
$$
|b'_i - b_i| = \left| \sum_{j=0}^{i-1} v'_i - v_i \right| < \frac{n\epsilon}R < 2\epsilon
$$
by Condition \ref{delta v} of well-matched\removed{, and taking $R_0 > B$}. 
So we have verified Condition \new{\ref{delta b} } of Theorem \ref{thm:app:main-matrix-estimate}.
Condition \new{\ref{delta a} } of Theorem \ref{thm:app:main-matrix-estimate} also follows from Condition \ref{delta u} above. 

We still need to control $|a_i| e^{|b_i|}$.
In our setting all the $a_i's$ are about $e^{-R/2}$ in size,
and $b_i = b_0  + i$,
so we really just need to control the largest $|a_i| e^{|b_i|}$ (which is actually either the first or the last) 
in order to control the sum of all of them. 
We have $a_0 e^{|b_0|} < K(D)$ when $d(x_0, \gamma_1) < D$, as it must be. 
In particular we have $|b_0| < B' + R/2$. 

We also want a similar bound for $|b_n|$.
By Lemma \ref{lem:app:homologous}, 
we have $d(x_n, y) \le d(x, y) < D$. 
Moreover,
as in the previous paragraph,
the distance from $\foot_{\gamma_{n-1}} \gamma_n$ to $y$ is bounded by $\log(K(D) d(\gamma_{n-1}, y) / a_{n-1})$,
which is in turn at most $K'(D) + R/2$. 
Thus we obtain our bound for $|b_n|$. 
\end{proof}

\subsection{The perfect model and \texorpdfstring{$M, \epsilon$}{M, epsilon}-compliance} \label{app:perfect-model}
\begin{enew}
In this {subsection }we construct the \emph{perfect model} $\hat \A$ for a good assembly $\A$,  
and map $e\from \df \hat \A \to \df \A$ with bounds on distortion and certain other forms of geometric control. 
We rely in this section on Theorem \ref{thm:app:omnibus}, proven in Section \ref{subsec:hw}, 
which functions as an ``appendix to the appendix". 

Let $Q$ be a good component (pants or hamster wheel), and let $\hat Q$ be its perfect model. 
Recall the definitions of short and medium orthogeodesics in Sections \ref{components:good-pants} and \ref{goodHW}. 
We say that a good component $Q$ is $\epsilon$-\emph{compliant} 
if for every short orthogeodesic $\eta$ of $Q$ and corresponding orthogeodesic $\hat \eta$ in $\hat Q$,  
 \begin{enumerate}
\item \label{close length}
we have $| l(\eta) - l(\hat \eta) | < \epsilon l(\hat \eta)$, and
\item \label{near formal}
each foot of $\eta$ lies within $\epsilon/R$ of the corresponding formal foot of $Q$. 
\end{enumerate}
\begin{lemma} \label{compliant component}
There is universal constant $C$ such that every $\epsilon$-good component is $C\epsilon$-compliant. 
\end{lemma}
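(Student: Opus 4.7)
The plan is to split the argument by the type of good component, pants and hamster wheels, and handle each separately.

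For good pants, condition (2) is immediate: the formal feet of a good pants were defined to be the actual feet of its short orthogeodesics, so the displacement is zero. For condition (1), I would use the identity $h(e_1, e_2) = e_1 + e_2 - 2\log 2 + O(e^{-\min(\Re e_1,\, \Re e_2)})$ from Section \ref{components:good-pants}, which relates the complex length of a cuff to the corresponding half-length and the complex length of the short orthogeodesic. Inverting this identity locally near the perfect parameters and inserting the inequality $|\hll(\gamma_i) - R| < \epsilon$ from the definition of goodness gives $|l(\eta) - l(\hat \eta)| = O(\epsilon \cdot l(\hat\eta))$. The multiplicative form is forced by the explicit hexagon formulas, where the partial derivatives of $l(\eta)$ with respect to the half-lengths at the perfect configuration have magnitude comparable to $l(\hat\eta)$ itself.

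For good hamster wheels, the compliance conditions refer to the short orthogeodesics between adjacent inner cuffs, whereas the definition of goodness primarily controls the rungs (the orthogeodesics between the two outer cuffs) together with the positions of their feet relative to the slow and constant turning vector fields. Translating the rung-level $\epsilon/R$ control into $\epsilon$-compliance of the short orthogeodesics is exactly the content of Theorem \ref{thm:app:omnibus}, which I would simply quote. The key observation is that each short orthogeodesic between consecutive inner cuffs is determined by a bounded amount of rung data---two consecutive rungs and the arc of outer cuff between their feet---so errors of size $\epsilon/R$ combine into an error of at most $C\epsilon$ without any amplification depending on $R$.

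The main obstacle is the hamster wheel case, which really amounts to proving Theorem \ref{thm:app:omnibus}. The pants case is essentially a one-step application of hyperbolic trigonometry, but extracting the short-orthogeodesic geometry of a good hamster wheel from its rung data requires a careful comparison of two perturbed right-angled hexagons built from pieces of adjacent inner cuffs, a segment of outer cuff, and two consecutive rungs, tracking how the $\epsilon/R$ rung errors propagate both to the length of the short orthogeodesic and to the positions of its feet on the inner cuffs. Once that comparison is in hand, the present lemma becomes a routine packaging step.
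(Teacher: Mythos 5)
Your decomposition — handle pants directly by trigonometry, delegate the hamster wheel case entirely to Theorem \ref{thm:app:omnibus} — is exactly the paper's proof, and you correctly identify that the hamster wheel half is where the real work lives. One imprecision in the pants sketch: the displayed identity for $h(e_1,e_2)$ from Section \ref{components:good-pants} relates the complex length of a \emph{new} cuff to the invariants of a \emph{third connection} (transport length and feet); it does not directly express a cuff length in terms of its half-length and a short orthogeodesic, so "inverting" it is not quite the right move. What you actually need for Part (1) is the right-angled hexagon cosine law (Lemma \ref{hex trig}) applied to one of the two hexagons obtained by cutting the pants along its short orthogeodesics, which gives $\cosh l(\eta_i)$ as an explicit function of the three half-lengths; linearizing near $\hll = R$ then shows the perturbation is $O(\epsilon)\cdot l(\hat\eta_i)$, i.e., the derivatives scale like $l(\hat\eta_i)$, which is the heuristic you correctly state in your last sentence. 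The paper simply outsources this to Equation (8) of \cite{KM:Immersing} rather than redoing the computation.
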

\begin{proof}
Part \ref{near formal} for the Lemma is trivial for pants, and Part \ref{close length} for pants appears as Equation (8) in \cite{KM:Immersing}.
Part \ref{close length} in the case of a hamster wheel is Part \ref{req:length-short} of Theorem \ref{thm:app:omnibus},
and Part \ref{near formal} is Part \ref{req:feet:formal} of Theorem \ref{thm:app:omnibus}.
\end{proof}

We say that $e\from \df  \hat Q \to \df  Q$ is $M, \epsilon$-compliant
if the following \new{four }properties hold:
\begin{enumerate}
\item
The map from $\d \hat Q$ to $\d  Q$ induced by $e$ is the restriction (to $\d \hat Q$) of an orientation-preserving homeomorphism from $\hat Q$ to $Q$. 
\item \label{control:affine}
The induced map is affine (linear) on each component of $\d \hat Q$, and maps each component $\df Q$ to (the frames determined by) a slow and constant turning vector field on $\d Q$.  
\item
$e$ maps each formal foot of $\hat Q$ to the corresponding formal foot of $Q$. 
\item
$e$ is $\epsilon$-distorted up to distance $M$. 
\end{enumerate}

\begin{theorem} \label{thm:app:compliant-map-local}
For all $M$ there exists $K, R_0$ such that for $R > R_0$:
Let $Q$ be an $(R, \epsilon)$-good component and $\hat Q$ its formal model.
Then there is an $M, K\epsilon$-compliant map $\e\from \hat Q \to Q$. 
\end{theorem}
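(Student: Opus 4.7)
The plan is to define $e$ explicitly on each boundary cuff of $\hat Q$ using the slow and constant turning vector field determined by the formal feet, and then verify the distortion estimate by realizing any bounded-distance pair of frames as endpoints of a linear sequence of cuffs and applying Theorem~\ref{thm:app:linear-local-to-global}.

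By Lemma~\ref{compliant component}, $Q$ is $C\epsilon$-compliant: for every short orthogeodesic (and, for hamster wheels, every medium orthogeodesic) $\eta$ of $Q$, the length agrees with that of its perfect counterpart up to a factor $1+C\epsilon$, and each foot of $\eta$ lies within $C\epsilon/R$ of the corresponding formal foot. For each cuff $\hat\gamma \subset \partial \hat Q$ corresponding to a cuff $\gamma \subset \partial Q$, define $e\from \F(\hat\gamma) \to \F(\gamma)$ to be the unique orientation-preserving affine map sending formal feet to formal feet and each associated $3$-frame to the $3$-frame of the slow and constant turning vector field of $Q$ on $\gamma$ (which is by definition the field passing through the formal feet). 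Properties (1)--(3) of $M, \epsilon$-compliance are then immediate from the construction.

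To verify property (4), consider $u, v \in \df \hat Q$ whose lifts to $\H^3$ satisfy $d(u, v) < M$, with $u$ based on a lift of a cuff $\hat\gamma_u$ and $v$ on a lift of $\hat\gamma_v$. The strategy is to build a linear sequence of geodesics in $\H^2$ (in the universal cover of $\hat Q$) connecting the lifts of $\hat\gamma_u$ and $\hat\gamma_v$, using the short and medium orthogeodesics as the consecutive common perpendiculars, together with a corresponding semi-linear sequence in $\H^3$ built from the analogous orthogeodesics in $Q$. For pants the situation is essentially trivial: there are only three cuffs, pairwise joined by a single short orthogeodesic, so a length-two linear sequence suffices and the compliance estimates of Lemma~\ref{compliant component} are precisely the $R, B, C\epsilon$-well-matchedness hypothesis of Theorem~\ref{thm:app:linear-local-to-global} with universal $B$. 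For hamster wheels the inner cuffs form a cyclic chain along which one may travel between nearby cuffs by short-orthogeodesic jumps of length $O(e^{-R/2})$; within hyperbolic distance $M$ the relevant chain may become long, but the uniform compliance estimates provided by Theorem~\ref{thm:app:omnibus} in Section~\ref{subsec:hw} deliver well-matchedness throughout. Applying Theorem~\ref{thm:app:linear-local-to-global} to each resulting linear sequence, and using Lemma~\ref{lem:app:concat} to concatenate whenever the chain passes through intermediate cuffs (where $e$ is reintroduced affinely), produces the desired $K\epsilon$-bounded distortion to distance $M$.

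The principal obstacle lies in the hamster wheel case. One must organize the $R+2$ cuffs into coherent linear sequences, accommodating both the short ($O(e^{-R/2})$) and medium ($O(1)$) orthogeodesic jumps, reconciling these with the normalization $v_i = 1$ in the hypothesis of Theorem~\ref{thm:app:linear-local-to-global} (possibly by subdividing cuff segments), and verifying that the well-matched condition holds uniformly over chains whose length is not bounded independently of $R$. All the detailed cuff-by-cuff distance estimates and feet-location estimates are packaged into Theorem~\ref{thm:app:omnibus}; granted that theorem, the role of the present theorem is the organizational one of assembling these local matches into a single map $e$ with a global distortion bound, with $K$ depending only on $M$ and $R_0$ chosen large enough to absorb all implicit constants.
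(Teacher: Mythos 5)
Your overall architecture — build $e$ cuff-by-cuff from the constant turning fields, then verify distortion via Theorem~\ref{thm:app:linear-local-to-global} plus Lemma~\ref{lem:app:concat} — matches the paper's in spirit, but there are two concrete gaps, one of which the paper resolves by a genuinely different route.

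First, your definition of $e$ is underdetermined for a hamster wheel. Conditions (2)--(3) of compliance pin down the affine map on every \emph{inner} cuff (which carries formal feet), but the two \emph{outer} cuffs have no formal feet, so the slow-and-constant-turning field alone fixes only the slope of the affine map there, not its translation. The paper closes this by making an extra choice: pick an inner cuff $\alpha$ arbitrarily, and normalize $e$ on the outer cuffs so that it maps the feet of the two medium orthogeodesics from $\alpha$ to the outer cuffs of $\hat Q$ to the corresponding feet on $Q$. Your construction needs an analogous normalization.

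Second, and more seriously, the plan to travel around the cyclic chain of inner cuffs via short-orthogeodesic jumps does not fit the hypotheses of Theorem~\ref{thm:app:linear-local-to-global}. That theorem requires $v_i = 1$: the feet of consecutive common perpendiculars on each intermediate geodesic must be unit distance apart. But on an inner cuff $\gamma_i$, the two formal feet (the feet of the short orthogeodesics to $\gamma_{i-1}$ and $\gamma_{i+1}$) are separated by $\hll(\gamma_i) \approx R$, not $1$. So a chain of inner cuffs joined by short orthogeodesics is not an $R,B,\epsilon$-well-matched linear sequence in the required sense, and ``subdividing cuff segments'' does not repair this, since the issue is the step size $v_i$, not the parametrization. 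Moreover, the premise that such a chain ``may become long'' misreads the geometry: traversing from $\gamma_i$ to $\gamma_{i+2}$ via $\gamma_{i+1}$ forces a detour of length $\approx R$ along $\gamma_{i+1}$, so within distance $M < R/2$ you can only ever reach the two immediately adjacent inner cuffs this way. What actually makes non-adjacent inner cuffs nearby is the outer-cuff ``highway'': each inner cuff is within $O(1)$ of an outer cuff via a medium orthogeodesic, and the outer cuff passes near all the inner cuffs. The paper exploits exactly this: it connects any two base points at distance $< R/2$ by a chain of at most $5$ points, where each consecutive pair either lies on the same cuff or are the two endpoints of a single medium orthogeodesic (with the medium step handled directly via Parts~\ref{req:length-medium} and~\ref{req:feet:corr} of Theorem~\ref{thm:app:omnibus}), and then concatenates with Lemma~\ref{lem:app:concat}. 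This bounded-length chain through medium orthogeodesics is the idea your proposal is missing.
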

\begin{proof}
If $Q$ is a pants, then there is a unique map  $e$ from $\df \hat Q$ to $\df Q$
that satisfies Conditions 1--3 of $M, \epsilon$-compliance. 
To show that $e$ is $\epsilon$-distorted up to distance $M$, suppose that $n_0, n_1 \in \df \hat Q$, and let $x_0, x_1 \in \d \hat Q$ be the base points of $n_0, n_1$ respectively. If $x_0$ and $x_1$ lie on the same cuff of $\hat Q$, the bound on $d(n_0 \to n_1, e(n_0) \to e(n_1))$ follows from Condition 2 and the $\epsilon$-goodness of $Q$ (as long as $M < R$, which we can assume).  If $x_0$ and $x_1$ lie on different cuffs, say $\gamma_0$ and $\gamma_1$, we can apply Theorem \ref{thm:app:linear-local-to-global} (with $n=1$ in the well-matched sequence) to the lifts these two adjacent geodesics in the universal cover, to obtain the desired distortion bound for a \emph{slightly different} $e$, where the frame bundles for the $\hat \gamma_i$ are mapped to those for the $\gamma_i$ by parallel translation. But the two $e$'s are $C\epsilon$-related (within $R$ of the short orthogeodesic), so we obtain the desired bound. 

If $Q$ is a hamster wheel, 
we choose an inner cuff $\alpha$ of $\hat Q$ arbitrarily.
There is then a unique $e\from \df \hat Q \to \df Q$ 
satisfying Conditions 1--3 of $M, \epsilon$-compliance that maps the feet (on the outer curves) of the medium orthogeodesics from $\alpha$ to the two outer curves
to the corresponding feet on $Q$. 

We claim that this $e$ is $C\epsilon$-distorted to distance $M$.

To prove this claim, we first consider the case when $n_0, n_1 \in \df \hat Q$ are on same cuff of $\hat Q$ or adjacent inner cuffs of $\hat Q$. Then we can proceed much as the same as in the case of a pants,
using Part \ref{req:feet:formal} of Theorem \ref{thm:app:omnibus} to compare the two $e$'s. 

Let us now bound $d(n_0 \to n_1, e(n_0) \to e(n_1))$ when $n_0$ and $n_1$ are the two feet of a medium orthogeodesic $\beta$ on $\hat Q$. This distance is bounded by $C\epsilon$ (for a universal C) because we control both the complex length (in $Q$) of $\beta$ (Part \ref{req:length-medium} of Theorem \ref{thm:app:omnibus}), and the complex distance between $e(n_i)$ and the foot of the corresponding orthogeodesic on $Q$. The control of this latter complex distance follows from Part \ref{req:feet:corr} of Theorem \ref{thm:app:omnibus}, applied to $\alpha$ (which we chose to define $e$) and $\beta$.

Finally, suppose that $n_0, n_1$ are neither on the same cuff or on adjacent inner cuffs. 
Let $x_0, x_1$ be their base points in $\d \hat Q$;
we can assume that $d(x_0, x_1) < R/2$,
Then we can get from $x_0$ to $x_1$ through a chain of points $y_0 = x_0, \ldots, y_n = x_1$ on $\d \hat Q$,
with $n \le 5$,
such that each consecutive pair of $y$'s either lie on the same cuff of $\d \hat Q$,
or are the two endpoints of a  medium orthogeodesic of $\hat Q$. 
Letting $u_k$ be the frame in $\df \hat Q$ with base point $y_k$, 
we find that $d(u_i \to u_{i+1}, e(u_i) \to e(u_{i+1})) < C\epsilon$ (for a universal $C$) by 
Condition 2 of $e$ (when $u_i, u_{i+1}$ lie on the same cuff), and the paragraph above (in the latter case for the pair $u_i, u_{i+1}$). Moreover, $d(y_i, y_{i+1})$ is bounded in terms of $M$. 
Applying Lemma \ref{lem:app:concat}, we obtain the desired result. 
\end{proof}

\alert{Do we need property \ref{control:affine}?}

Now suppose that $\A$ is a good assembly and $\hat \A$ is a perfect one. 
We say that $e\from \df \hat A \to \df \A$ is $M, \epsilon$-compliant if the following properties hold:
\begin{enumerate}
\item
For each component $\hat Q$ of $\hat \A$, 
there is a corresponding component $Q$ of $\A$ such that 
$e|_{\df \hat Q}$ is an $M, \epsilon$-compliant map from $\df \hat Q$ to $\df Q$. 
\item
If $\hat \gamma$ is a gluing boundary of $\hat \A$,
so $\hat \gamma \in \d \new{\hat Q_0}$ and $\hat \gamma \in \d \hat Q_1$,
and $n_0$ and $n_1$ are unit normal vectors to $\hat\gamma$ pointing towards $\hat Q_0$ and $\hat Q_1$ respectively,
and sharing the same base point on $\hat \gamma$,
then
$$
d(n_0 \to n_1, e(n_0) \to e(n_1)) < \epsilon.
$$
\end{enumerate} 


\begin{theorem} \label{thm:e-compliant}
For all $M, \epsilon$ we can find $K, R_0$ such that for all $R > R_0$:
For any $(R, \epsilon)$-good assembly $\A$ we can find a perfect assembly $\hat \A$ and an $M, K\epsilon$-compliant map $e\from \df \hat A \to \df A$. 
\end{theorem}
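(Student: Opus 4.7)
The plan is to build $\hat\A$ and $e$ by piecing together the component-wise construction from Theorem \ref{thm:app:compliant-map-local}, and then to verify that our choice of gluing for the perfect components forces the boundary condition in the definition of $M,\epsilon$-compliance of the assembly.

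First, apply Theorem \ref{thm:app:compliant-map-local} with $M$ (and with $\epsilon$ possibly enlarged by an absolute constant) to each good component $Q_i$ of $\A$, obtaining a perfect component $\hat Q_i$ and an $M, K_1\epsilon$-compliant map $e_i\from \df \hat Q_i \to \df Q_i$ for some constant $K_1$ depending only on $M$. This takes care of property (1) in the definition of $M,\epsilon$-compliance for the assembly map, and determines the topological data of $\hat \A$ once we specify how to glue. What remains is to choose the isometric identifications along each gluing boundary of $\hat \A$ and to verify the boundary condition (2).

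So fix a gluing boundary $\gamma$ of $\A$, shared by components $Q_0$ and $Q_1$, with corresponding perfect cuffs $\hat\gamma_0 \subset \d \hat Q_0$ and $\hat\gamma_1 \subset \d\hat Q_1$ of length $2R$. Because $\A$ is $\epsilon$-good, one of the two cases of being well-matched at $\gamma$ holds. In case (\ref{ga:formal}), both $Q_0$ and $Q_1$ have formal feet on $\gamma$, and those formal feet $\alpha_0, \alpha_1$ satisfy $|\alpha_0 - \alpha_1 - (1+i\pi)| < \epsilon/R$; we then glue $\hat\gamma_0$ to $\hat\gamma_1$ by the unique isometry that sends the formal foot of $\hat Q_0$ on $\hat\gamma$ to the point obtained from the formal foot of $\hat Q_1$ by displacement $-(1+i\pi)$ along $\hat\gamma$ (the orientations being reversed). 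In case (\ref{ga:constant}), we glue $\hat\gamma_0$ to $\hat\gamma_1$ by the unique isometry identifying the slow and constant turning vector field on $\hat Q_0$ at $\hat\gamma$ with the one on $\hat Q_1$. In either case this determines $\hat \A$ as a perfect assembly and, via the lift construction in Section \ref{app:defs}, determines $e$ up to the component-wise data.

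Finally we check property (2) of assembly compliance. Let $\hat\gamma$ be a gluing curve, let $n_0, n_1$ share a base point on $\hat\gamma$ and point into $\hat Q_0, \hat Q_1$ respectively, and write $n_i' = e(n_i)$. In the perfect assembly, $n_0 \to n_1$ is the fixed isometry induced by the gluing (namely the $\pi$-rotation about the tangent to $\hat\gamma$). In case (\ref{ga:formal}), by property (3) of component compliance $e_0$ and $e_1$ send formal feet to formal feet, so at a formal foot the discrepancy between $n_0 \to n_1$ and $n_0' \to n_1'$ is exactly the discrepancy from $1 + i \pi$ of the difference of the two formal feet in $\A$, which is $< \epsilon/R$ by the well-matched hypothesis. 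To transfer this bound from the formal foot to a general $n_0$ on $\hat\gamma$, we use property (\ref{control:affine}) of component compliance: on each side $e_i$ is affine and sends the constant turning field of $\hat Q_i$ on $\hat\gamma$ to a slow and constant turning field on $\gamma$; over the $O(R)$-length cuff this contributes at most $K_2\epsilon$ error by slow-and-constant-turning, and concatenating via Lemma \ref{lem:app:concat} yields a total bound $K_3 \epsilon$. Case (\ref{ga:constant}) is easier: the two slow and constant turning fields are glued to agree in $\hat \A$ and differ by a bend of at most $100\epsilon$ in $\A$, so the same argument (using property (\ref{control:affine})) gives a $K_4\epsilon$ bound. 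Taking $K$ to be the maximum of $K_1$ and these constants (and choosing $R_0$ large enough that the distortion statement from Theorem \ref{thm:app:compliant-map-local} applies) proves the theorem.

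The main obstacle is the case analysis at the gluings: we must choose the perfect gluing isometry adapted to \emph{which} form of well-matching holds in $\A$, and then transport the compliance bound from the specific privileged normal direction (formal foot or constant turning field) to an arbitrary point of $\hat\gamma$ using the affine structure of $e$ on each cuff.
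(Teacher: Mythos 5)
Your proposal follows essentially the same route as the paper: apply Theorem \ref{thm:app:compliant-map-local} componentwise, choose the isometric gluings of the perfect cuffs case by case according to which form of well-matching holds, and then verify Condition (2) of assembly compliance. However, there are two places where the details are off.

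First, your gluing prescription in the non-formal-feet case (``the unique isometry identifying the slow and constant turning vector field on $\hat Q_0$ at $\hat\gamma$ with the one on $\hat Q_1$'') does not actually single out an isometry: on a \emph{perfect} component the inward-pointing normal field along a cuff is constant (zero turning), so \emph{every} gluing of $\hat\gamma_0$ with $\hat\gamma_1$ identifies those fields. The paper instead pins down the gluing by requiring that the two base-point maps $e_0|_{\hat\gamma}, e_1|_{\hat\gamma}\from\hat\gamma\to\gamma$ agree, which is a well-posed condition since both are affine with the same scaling factor; this choice is what makes the ``base points within $\epsilon/R$'' part of Condition (2) immediate.

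Second, the verification step reasons as if the error accrues along the cuff: ``over the $O(R)$-length cuff this contributes at most $K_2\epsilon$ error by slow-and-constant-turning.'' In fact nothing accumulates: since $e_0|_{\hat\gamma}$ and $e_1|_{\hat\gamma}$ are affine with the identical scaling factor $\hll(\gamma)/R$, their difference in $N^1(\gamma)$ is a constant plus an $O(\epsilon/R)$ correction, so the discrepancy at an arbitrary base point is the same $O(\epsilon/R)$ as at the formal foot (plus the $O(\epsilon)$ bending). The paper's one-line verification records exactly this: ``base points within $\epsilon/R$ of each other, and \ldots\ a bending of at most $\epsilon$.'' You also omit the citation of Part \ref{req:feet:slow} of Theorem \ref{thm:app:omnibus}, which the paper invokes to justify the base-point control when a hamster wheel is glued along an outer cuff (where there are no formal feet to compare). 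These are fixable imprecisions rather than a different proof strategy, but the under-specified gluing in the non-formal case is a genuine gap as written.
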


\begin{proof}
For each good component $Q$,
we construct an $M, K_{\ref{thm:app:compliant-map-local}}(M) \epsilon$-compliant map $e\from \df \hat A \to \df A$ as in the proof of Theorem \ref{thm:app:compliant-map-local}.
Now, 
given the whole assembly $\A$,
we assemble the perfect models for the components as follows:
two components with formal feet are joined with a shear by 1, 
and in all other cases there is a unique way to join the perfect models at each common curve $\hat\gamma$ 
 so that the (frames determined by) the two unit normals at each point of $\hat\gamma$ 
 map to frames with the same base point in the corresponding good curve $\gamma$. 
Condition 2 of $M, K\epsilon$-compliance for our global map $e$ then follows from the $R, \epsilon$-goodness of $\A$ 
(and Part \ref{req:feet:slow} of Theorem \ref{thm:app:omnibus}), 
because $e(n_0)$ and $e(n_1)$ always have base points within $\epsilon/R$ of each other, 
and always have a bending of at most $\epsilon$.
\end{proof}
\end{enew}

\subsection{Bounded distortion for good assemblies} \label{app:bounded-distortion}
We can now state our theorem for good assemblies:
\begin{theorem} \label{thm:app:control}
For all $D$ there exists $C, R_0$ for all $\epsilon$ and $R > R_0$:
Let $\A$ be a $(R, \epsilon)$-good assembly. 
Then there is a perfect assembly $\hat \A$ and a map $e\from \df \hat \A \to \df \A$
that is $C\epsilon$-distorted at distance $D$.
\end{theorem}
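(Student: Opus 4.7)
The plan is to combine the local compliance supplied by Theorem \ref{thm:e-compliant} with the linear local-to-global estimate of Theorem \ref{thm:app:linear-local-to-global}. The guiding idea is that a geodesic segment in $\H^2$ between the base points of two nearby frames in $\df \hat\A$ crosses a linear sequence of lifted boundary geodesics of components; on subsequences joined by short orthogeodesics of length $\Theta(e^{-R/2})$ we apply the linear local-to-global theorem, while on the remaining transitions, which occur within a single component and involve medium or longer orthogeodesics, we apply compliance.

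First, choose $M = M(D)$ sufficiently large and apply Theorem \ref{thm:e-compliant} to obtain a perfect model $\hat\A$ together with an $M, K_1\epsilon$-compliant map $e\colon \df\hat\A \to \df\A$, where $K_1$ depends on $M$. Given $u, v \in \df\hat\A$ with $d(u, v) < D$, pass to universal covers: the surface $S_{\hat\A}$ has $\H^2 \subset \H^3$ as its universal cover, and the geodesic segment $\sigma$ in $\H^2$ from the base point of a lift $\tilde u$ to the base point of a lift $\tilde v$ crosses lifts of boundary geodesics of components in a linear sequence $\hat\gamma_0, \ldots, \hat\gamma_n$. Via the canonical identifications described in the ``formal shenanigans'' subsection, this sequence corresponds to a semilinear sequence $\gamma_0, \ldots, \gamma_n$ in $\H^3$ for the good assembly.

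Next, partition $\{0, \ldots, n-1\}$ into maximal intervals on which consecutive $\hat\gamma_i, \hat\gamma_{i+1}$ are joined within a single component by a short orthogeodesic of length $\Theta(e^{-R/2})$---these are the transitions between two cuffs of a pants or between adjacent inner cuffs of a hamster wheel. On each such maximal interval the $R, B, O(\epsilon)$-well-matched hypothesis of Theorem \ref{thm:app:linear-local-to-global} holds with a universal $B$, by combining the half-length formula for perfect pants with the structural estimates for perfect hamster wheels from Theorem \ref{thm:app:omnibus} and the $\epsilon$-compliance of the components. Theorem \ref{thm:app:linear-local-to-global} then yields an $O(\epsilon)$-distortion bound between the associated frames at the endpoints of the interval.

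At the boundaries between consecutive maximal intervals, $\sigma$ must traverse either a medium orthogeodesic within a single hamster wheel (of universally bounded length) or some longer connection within a single component; in all cases, a distance bounded below by a universal positive constant is covered, so the number of such ``slow'' transitions is $O(D)$. Each slow transition contributes $O(\epsilon)$ to the overall distortion via the $M, K_1\epsilon$-compliance of $e$ on the relevant component, provided $M$ is chosen larger than the corresponding diameter bounds. Concatenating all of these estimates via Lemma \ref{lem:app:concat} produces $d(u \to v, e(u) \to e(v)) < C\epsilon$ for a constant $C = C(D)$, which is the desired bound. The main obstacle is the careful bookkeeping inside hamster wheels, where $\sigma$ may cross up to $O(De^{R/2})$ inner cuffs in a row: one must verify that the natural linear subsequence through consecutive inner cuffs satisfies the well-matched hypothesis with universal $B$, and that the slow transitions (involving outer cuffs, non-adjacent inner cuffs, or gluings between differently-typed components) genuinely span a uniformly positive distance so that their number remains $O(D)$.
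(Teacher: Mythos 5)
Your proposal follows the paper's high-level strategy closely: invoke the compliance theorem (Theorem~\ref{thm:e-compliant}), pass to the universal cover, decompose the linear sequence of crossed boundary geodesics into ``fast'' stretches handled by the linear local-to-global theorem and ``slow'' transitions handled by compliance, then concatenate with Lemma~\ref{lem:app:concat}. This is the paper's argument in broad outline.

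There is, however, a genuine gap in your decomposition. You define the fast intervals purely by requiring that each $\eta_i$ be a short orthogeodesic, and assert that on such an interval the well-matched hypothesis of Theorem~\ref{thm:app:linear-local-to-global} automatically holds. But that hypothesis also demands $v_i = 1$ at interior indices, and this is \emph{not} automatic. When $\eta_{i-1}$ and $\eta_i$ are both short, the most one can say a priori is that $v_i \equiv 1 \pmod R$ (the formal feet on a cuff differ by $\hll(\gamma) = R$, while the gluing shear is $1$), so in the universal cover $v_i$ may be $1 \pm R$, $1 \pm 2R$, etc. The paper therefore defines a \emph{run} to require both $\eta_i$ short \emph{and} $v_i = 1$ at interior indices, and then invokes Lemma~\ref{marching bound} to show that whenever a run breaks because $v_i \neq 1$, the distance $d(\gamma_{i-1}, \gamma_{i+1})$ is bounded below by a universal constant $C_2$, so these breaks also contribute only $O(D)$ slow transitions. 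Your enumeration of slow transitions (``medium orthogeodesic within a single hamster wheel'' or ``some longer connection within a single component'') misses this break type entirely: it occurs at a shared cuff with both adjacent orthogeodesics short, so neither a medium nor a long orthogeodesic is traversed, yet the run must still be interrupted. Without the $v_i = 1$ condition and the lower-bound argument from Lemma~\ref{marching bound} for these breaks, the application of Theorem~\ref{thm:app:linear-local-to-global} on your intervals is unjustified, and the bound $O(D)$ on the number of slow transitions has not been established. The repair is exactly what the paper does: add $v_i = 1$ to the interval definition, and use Lemma~\ref{marching bound} (applied to the three-geodesic subsequence $\gamma_{i-1}, \gamma_i, \gamma_{i+1}$) to obtain the universal lower bound on the distance spanned by each such break.
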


\begin{proof}[Proof of Theorem \ref{thm:app:control}]
Let $\A$ be as given in the Theorem. 
By Theorem \ref{thm:e-compliant},
we have a perfect model $\hat \A$ and an $D, C\epsilon$-compliant map $e\from \df \hat \A \to \df \A$
(in this proof we will let all our constants $C$ depend on $D$). 
We work on the universal cover of the good assembly $\hat \A$, 
and prove that our map $e$ (lifted to the universal cover), 
has bounded distortion.
Accordingly suppose that we have two points $\new{p}$ and $\new{q}$, with $d(\new{p}, \new{q}) < D$, each lying on a boundary curve for $\hat \A$ in the universal cover.
We suppose that $\new{p}$ lies on $\gamma_0$, and $\new{q}$ on $\gamma_n$,
where $\gamma_1, \ldots, \gamma_{n-1}$ are the boundary curves for $\hat \A$, 
in sequence,
separating $\gamma_0$ and $\gamma_n$. 
We define $(\eta_i)$, $(u_i)$, and $(v_i)$ as in Section \ref{subsec:linearseq}.

\begin{figure}[ht!]
\makebox[\textwidth][c]{
\includegraphics[width=1.15\linewidth]{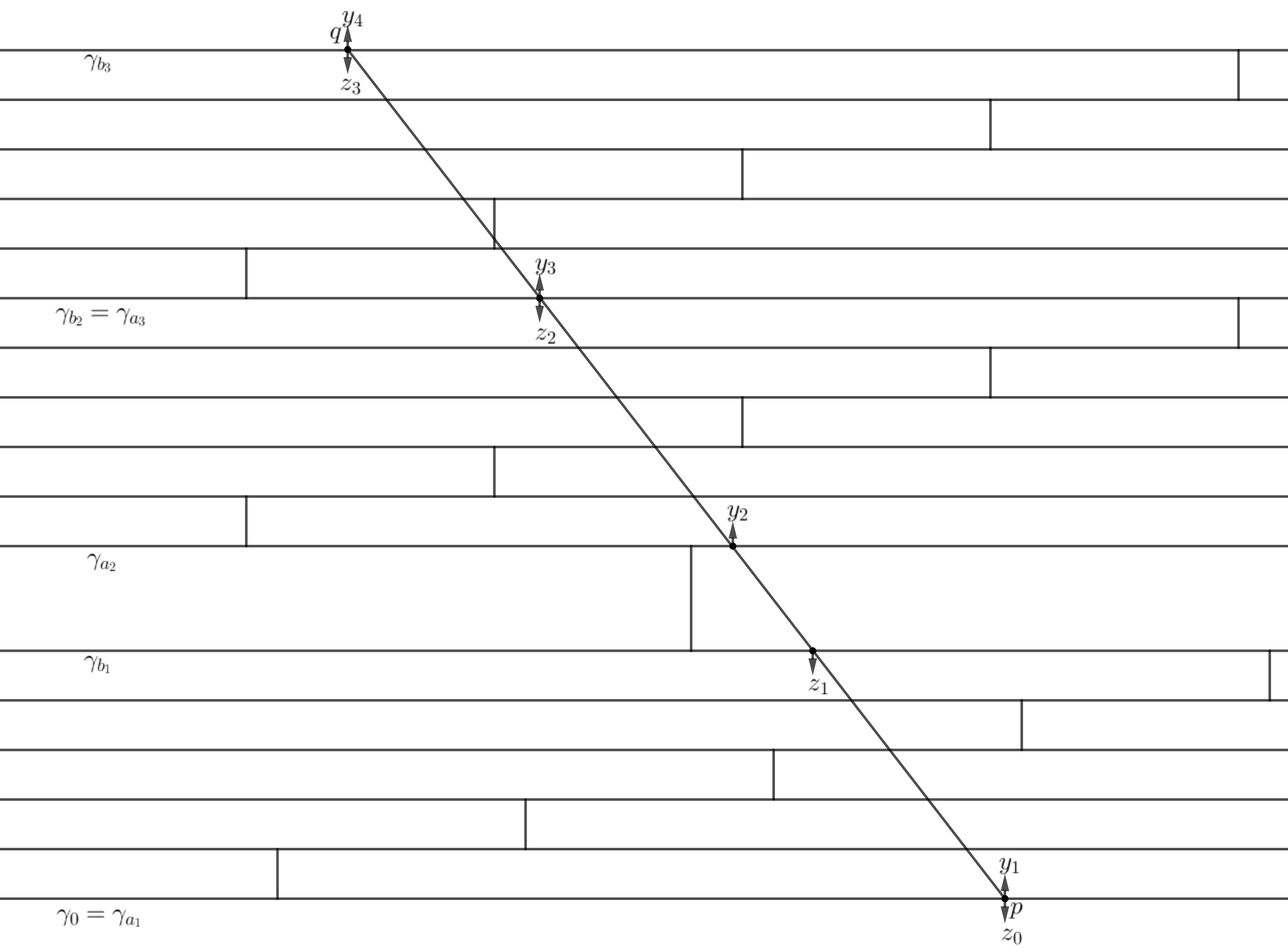}}
\caption{Going from $\new{p}$ to $\new{q}$. (In this picture $z_2 = -y_3$ because $b_2=a_3$; this is not always the case.)}
\label{F:gammas}
\end{figure}

\begin{enew}
Now if $\eta_i$ is a short orthogeodesic---between two cuffs of a pants or two inner cuffs of a hamster wheel---we have $C_1^{-1} e^{-R/2} < u_i < C_1 e^{-R/2}$ for a universal $C_1$.
Moreover, 
if $\eta_{i-1}$ and $\eta_i$ are both short orthogeodesics, 
then $v_i \equiv 1 \pmod R$,
and hence, 
by Lemma \ref{marching bound}  (applied to $(\gamma_j)_{j=i-1}^{i+1})$,
there is a universal constant $C_2$ such that 
if $d(\gamma_{i-1}, \gamma_{i+1}) < C_2$,
then $v_i = 1$. 

We let a \emph{run} be an interval $\Z \cap [a, b]$ such that $\eta_i$ is a short orthogeodesic for $a \le i < b$,
and $v_i =1$ for $a < i < b$. We allow the case $a = b$, which is a trivial run for all $a$.
We can then find $a_1, \ldots, a_k$ and $b_1, \ldots, b_k$,
with $a_j \le b_j \le a_{j+1}$
where each $[a_j, b_j]$ is a \emph{maximal} run,
and these intervals cover $\Z \cap [0, n]$.
Then we must have $a_{j+1} \le b_j + 1$,
and $k$ is bounded (in terms of $D$) because $d(\gamma_{b_j - 1}, \gamma_{a_{i+1} + 1})$ is universally bounded below. 

We observe that $(\gamma_i)_{i = a_j}^{b_j}$ and $(\gamma'_i)_{i = a_j}^{b_j}$ are $R,C, \epsilon$ well-matched (in the sense of Section \ref{subsec:linearseq});
Conditions 1 and 3 hold by the definition of a run, and Conditions 2 and 4 follow from Lemma \ref{compliant component}. 


We let $\alpha$ be the segment connecting $\new{p}$ and $\new{q}$.
We let $y_j$ be the unit normal vector to $\gamma_{a_j}$ (pointing forward to $\gamma_{b_j}$) at $\alpha \cap \gamma_{a_j}$,
and we let $z_j$ be the unit normal vector to $\gamma_{b_j}$ (pointing back to $\gamma_{a_j}$) at $\alpha \cap \gamma_{b_j}$.
See Figure \ref{F:gammas}.

We observe that $d(\alpha \cap \gamma_i, \eta_{i-1}), d(\alpha \cap \gamma_i, \eta_{i}) < R + C$ for all $i$. 

We make the following claims:
\begin{enumerate}
\item \label{y to z}
$d(y_j \to z_j, e(y_j) \to e(z_j)) < \epsilon$,
\item
$d(z_j\to y_{i+1}, e(z_j) \to e(y_{i+1})) < \epsilon$.
\end{enumerate}

Claim \ref{y to z} holds as a result of Theorem \ref{thm:app:linear-local-to-global} (and Lemma \ref{lem:app:concat}), 
along with the observation (as in the proof of Theorem \ref{thm:app:compliant-map-local}) that the $e$ for Theorem \ref{thm:app:linear-local-to-global} is $\epsilon$-related to the official one (constructed at the beginning of the proof of this Theorem) on the relevant parts of $\F(\gamma_{a_j})$ and $\F(\gamma_{b_j})$.

The second claim follows immediately from Lemma \ref{lem:app:concat} and the $D, C\epsilon$-compliance of $e$.

It then follows from Lemma \ref{lem:app:concat} that $d(z_0 \to y_{k+1}, e(z_0) \to e(y_{k+1})) < C\epsilon$.
\end{enew}
\end{proof}

\subsection{Boundary values of near isometries} \label{app:boundary}
We say that $X' \subset X$ is \emph{$A$-dense} in a metric space $X$ if $\nbhd_A(X') = X$.
(So $X'$ is dense in $X$ if it is $A$-dense for all $A > 0$.)

We think of $\H^2$ as a subset of $\H^3$, and hence $\F(\H^2)$ as a subset of $\F(\H^3)$. 


\begin{theorem} \label{thm:app:dense-distorted}
For all $A$ there exists $B\new{, K}$ for all $\delta$ there exists $\epsilon$:
Suppose that $U \subset \F(\H^2)$ is $A$-dense and $e\from U \to \F(\H^3)$ has $\epsilon$-bounded distortion to distance $B$. 
Then $e$ is a $\new{K}$-quasi-isometric embedding, and $e$ extends to $\hat e\from \d \H^2\to \d \H^3$ to be $1 + \delta$-quasi-symmetric. 
\end{theorem}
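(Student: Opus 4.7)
The plan is to prove this in three stages: first establish the quasi-isometric embedding, then extend it to the boundary, and finally upgrade this extension to $(1+\delta)$-quasi-symmetry. Throughout I would fix $B$ large compared to $A$ (with $B$ to be chosen depending on $\delta$), then take $\epsilon$ small depending on both.

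First, I claim $e$ is a $(1+C\epsilon,\ C)$-quasi-isometric embedding. Given $u, v \in U$, choose a chain $u = u_0, \ldots, u_n = v$ in $U$ with the base point of each $u_i$ lying within $A$ of the geodesic from $u$ to $v$ (possible by $A$-density) and $d(u_i, u_{i+1}) \in [B/4, B/2]$. Since $d$ is a left-invariant metric on $\G$, the $\epsilon$-bounded-distortion hypothesis gives $|d(u_i, u_{i+1}) - d(e(u_i), e(u_{i+1}))| < \epsilon$ for each hop, so summing yields the upper bound $d(e(u), e(v)) \leq (1 + 4\epsilon/B)\, d(u,v) + O(B)$. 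For the lower bound, I would observe that since each step $e(u_i) \to e(u_{i+1})$ is within $\epsilon$ of a hyperbolic translation by $\approx B/4$, the concatenation traces a $C$-quasi-geodesic in $\H^3$; the standard Morse lemma then gives $d(e(u), e(v)) \geq (1 - 4\epsilon/B)\, d(u,v) - O(1)$.

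Next, I would extend $e$ to $\hat e \colon \partial \H^2 \to \partial \H^3$. For $p \in \partial \H^2$, pick any geodesic ray $\gamma_p$ ending at $p$, and choose $u_n \in U$ with base point within $A$ of $\gamma_p(nB/3)$. By the previous step, the base points of $e(u_n)$ form a quasi-geodesic in $\H^3$, which by the Morse lemma in a negatively curved space fellow-travels a unique geodesic, whose forward endpoint I call $\hat e(p)$. Independence of the choice of $\gamma_p$ and the approximating sequence follows from the usual uniqueness-up-to-$O(1)$ statement for quasi-geodesic boundaries. Continuity and injectivity are automatic from the QI embedding property.

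Finally, for $(1+\delta)$-quasi-symmetry, I would use the characterization of quasi-symmetry via approximate preservation of cross-ratios of boundary quadruples. Given four points $p_1, p_2, p_3, p_4 \in \partial \H^2$ whose cross-ratio is of order one, there exists a pair of frames $u, v \in U$ with $d(u, v) \leq B$ whose associated geodesics realize the two relevant ``opposite-pair'' geodesics of the configuration (up to an $A$-close adjustment). The $\epsilon$-bounded-distortion hypothesis then says $e(u) \to e(v)$ is within $\epsilon$ in $\G$ of $u \to v$, which translates directly (using that the cross-ratio is a smooth function of the relative frame position) into an $O(\epsilon)$-distortion of cross-ratio. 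For cross-ratios far from 1, I would chain a bounded number of such moderate comparisons along a geodesic joining the configurations, taking $B$ large enough that each accumulated additive error is negligible compared to the multiplicative factor $1 + C\epsilon$. Picking $\epsilon$ small as a function of $\delta$ and $B$ gives the claimed $(1+\delta)$-quasi-symmetric constant.

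The main obstacle will be handling additive constants carefully: the QI embedding intrinsically has an additive constant coming from the chaining, and one must ensure that in the cross-ratio argument this additive error is subordinate to the multiplicative $(1+C\epsilon)$-factor. This forces the choice of $B$ to be large as a function of $\delta$ \emph{before} choosing $\epsilon$, which matches the order of quantifiers in the theorem statement.
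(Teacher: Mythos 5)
Your chaining-plus-Morse approach for the quasi-isometric embedding and the boundary extension is in the right spirit (the paper routes this through a local-to-global criterion, Theorem \ref{thm:hyperbolic-local-global}, and the continuous extension statement Theorem \ref{thm:qi-d}), but the quasi-symmetry step has a genuine gap, and that gap forces a quantifier reversal the theorem does not permit. The theorem asks for $B$ and $K$ to depend only on $A$, with $\epsilon$ chosen afterwards for each $\delta$; you explicitly take $B$ ``to be chosen depending on $\delta$'' and your closing paragraph asserts this matches the theorem's quantifier order, which it does not. The dependence matters downstream: in Theorem \ref{thm:app:good-quasi}, this $B$ determines the $D$ fed to Theorem \ref{thm:app:control}, whose constant $C$ in turn scales the allowed $\delta$; a $\delta$-dependent $B$ would make that chain circular. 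The paper simply sets $B = 2A + 1$.

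The deeper problem is that your cross-ratio argument does not actually control the boundary map. An $\epsilon$-bound on $d(u \to v, e(u) \to e(v))$ constrains the relative position of $e(u)$ and $e(v)$ but says nothing direct about $\hat e(p_i)$, which depends on the behavior of $e$ along entire rays toward each $p_i$; conflating these is the missing step. Moreover the QI estimate you derive---multiplicative constant $1 + O(\epsilon)$ but additive constant of order $B$, \emph{not} scaling with $\epsilon$---is insufficient: a $(1, C)$-rough isometry can have a badly distorted boundary extension, and for configurations with cross-ratio of order one (geodesics at bounded separation) the additive error cannot be made subordinate to the multiplicative one by choosing $\epsilon$ small. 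The paper sidesteps quantitative chaining entirely with a soft compactness argument: normalize by M\"obius maps so that $\hat e(-1) = -1$, $\hat e(1) = 1$, $\hat e(\infty) = \infty$; if no $\epsilon$ forced $|\hat e(0)| < \delta$, take maps $e_n$ with distortion $1/n$ and $|\hat e_n(0)| \ge \delta$, pass to a limit $e_\infty$ with zero distortion, whose boundary extension is then a M\"obius transformation fixing three points and hence the identity, contradicting $|\hat e_\infty(0)| \ge \delta$. The continuous dependence of $\hat e$ on $e$ (the last clause of Theorem \ref{thm:qi-d}) is exactly what justifies passing to the limit. To repair your approach you would need either this compactness argument or a QI estimate whose additive constant tends to zero with $\epsilon$, which the hypotheses as stated do not appear to give.
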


This theorem will follow from the following more general theorems:
\begin{theorem} \label{thm:hyperbolic-local-global}
For all $K, \delta$ there exists $K', D$:
Suppose $X$ is a path metric space and $Y$ is $\delta$-hyperbolic, and $f\from X \to Y$ is such that 
$$
K^{-1} d(x, x') - K < d(f(x), f(x')\color{kwred})\color{black} < K d(x, x') + K
$$
whenever $d(x, x') < D$. 
Then $f$ is a $K'$-quasi-isometric embedding. 
\end{theorem}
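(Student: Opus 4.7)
The plan is to reduce the statement to the standard local-to-global principle for quasi-geodesics in Gromov-hyperbolic spaces, namely that for any $K, \delta$ one can find $D_0 = D_0(K, \delta)$ and $K_0 = K_0(K, \delta)$ such that every $(K, K)$-local quasi-geodesic in a $\delta$-hyperbolic space, parameterized on subintervals of length at most $D_0$, is a global $(K_0, K_0)$-quasi-geodesic. This is a classical theorem (see, e.g., Bridson--Haefliger or Coornaert--Delzant--Papadopoulos); we take $D = D_0(K, \delta)$ in the statement to be proved.

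First I would handle the upper Lipschitz bound. Given $x, x' \in X$, use the path metric hypothesis to find a rectifiable path $\gamma\from[0, L]\to X$ from $x$ to $x'$ with $L \le d(x,x') + 1$, and subdivide $[0, L]$ into $x = x_0, x_1, \ldots, x_n = x'$ with $d(x_i, x_{i+1}) < D/2$ and $\sum d(x_i, x_{i+1}) \le L$. Applying the local upper bound and summing gives $d(f(x), f(x')) \le K\,d(x,x') + K + 2K n$; a slightly more careful subdivision choosing $n \le 2L/D + 1$ yields an estimate of the form $d(f(x), f(x')) \le K_1 d(x, x') + K_1$ for some $K_1 = K_1(K, D)$.

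Next I would handle the lower bound, which is the substantive part. Using the same subdivision $x_0, \ldots, x_n$, let $c\from [0, T] \to Y$ be the concatenation of geodesic segments from $f(x_i)$ to $f(x_{i+1})$, parameterized by arc length. By the upper local bound each segment has length at most $KD/2 + K$, and the local lower bound applied to nearby subdivision points shows that for any $s, t$ with $|s - t|$ bounded in terms of $D$ and $K$, the restriction of $c$ to $[s,t]$ is a $(K, K)$-quasi-isometric embedding. Thus $c$ is a $(K, K)$-local quasi-geodesic with local scale depending only on $K$ and $D$, so applying the hyperbolic local-to-global principle it is a global $(K_0, K_0)$-quasi-geodesic. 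This yields $d(f(x), f(x')) = d(c(0), c(T)) \ge K_0^{-1} T - K_0 \ge K_0^{-1} (K^{-1} d(x, x') - K) - K_0$, and combining with the upper bound gives the desired $K'$-quasi-isometric embedding constant.

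The main obstacle is arranging the subdivision so that the concatenated geodesic really is a local quasi-geodesic with uniformly controlled parameters, since the hypothesis bounds $d(f(x), f(x'))$ only for $d(x, x') < D$ in $X$, whereas the local quasi-geodesic condition is a statement about subintervals of the parameterization of $c$ in $Y$. This is handled by choosing the step size smaller than a constant depending on $K$ and $D_0$, so that any subinterval of $c$ of length at most $D_0$ meets only boundedly many segments and its endpoints lie within $Y$-distance of two subdivision points $f(x_i), f(x_j)$ with $|i - j|$ bounded; then the $X$-distance between $x_i$ and $x_j$ is less than $D$, and the local hypothesis on $f$ at $(x_i, x_j)$ converts into the local quasi-geodesic estimate at the corresponding parameters of $c$. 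Once this bookkeeping is done, the rest is a direct invocation of the standard hyperbolic local-to-global theorem.
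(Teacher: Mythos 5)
Your proposal is correct and takes essentially the same route as the paper: the paper's proof is a one-sentence reduction to the local-to-global theorem for quasi-geodesics in $\delta$-hyperbolic spaces, citing Coornaert--Delzant--Papadopoulos and Bridson--Haefliger, and you fill in the standard reduction (subdivide a near-geodesic path in $X$, concatenate geodesics in $Y$, verify the local quasi-geodesic condition, invoke the local-to-global theorem) in more detail.
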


\begin{proof}
Because $X$ is a path metric space,
it is enough to find $K'$ and $D$ such that every $D$-locally $K$-quasigeodesic in $Y$ is $K'$-geodesic;
this follows from  \cite[Ch. 3, Theorem 1.4]{CDP} (see also \cite[p. 407]{BH}).
\end{proof}

\begin{theorem} \label{thm:qi-d}
Let $X$ and $Y$ be Gromov hyperbolic, and
let $f\from X\to Y$ be a quasi-isometric embedding.
Then $f$ extends continuously to an embedding $\hat f\from \d X \to \d Y$.
{Moreover $\hat f$ depends continuously on $f$ with the uniform topology on $\hat f$ and the local uniform topology on $f$.}
\end{theorem}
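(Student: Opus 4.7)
The plan is to argue via the standard Morse lemma (stability of quasi-geodesics) and the definition of the Gromov boundary. Both $\partial X$ and $\partial Y$ can be described as equivalence classes of quasi-geodesic rays, where two rays are equivalent if they stay within bounded Hausdorff distance, with topology generated by the Gromov product. First I would fix a basepoint $x_0 \in X$ and note that for any $\xi \in \partial X$ there is a geodesic ray $\gamma_\xi : [0, \infty) \to X$ based at $x_0$ representing $\xi$. Since $f$ is a quasi-isometric embedding and $Y$ is hyperbolic, $f \circ \gamma_\xi$ is a $(K', K')$-quasi-geodesic in $Y$, so by the Morse lemma it lies within Hausdorff distance $M = M(K', \delta_Y)$ of a unique geodesic ray in $Y$ based near $f(x_0)$. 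I define $\hat f(\xi)$ to be the endpoint of this geodesic ray.

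Next I would verify that $\hat f$ is well-defined and injective. If $\gamma_\xi$ and $\gamma'_\xi$ both represent $\xi$, then they remain within bounded distance as $t \to \infty$; applying $f$ (which distorts by bounded multiplicative and additive constants) keeps their images within bounded distance, so the associated geodesic rays in $Y$ converge to the same boundary point. For injectivity, if $\xi \neq \xi'$ in $\partial X$ then geodesic rays representing them diverge in the sense that the Gromov product $(\gamma_\xi(t) \mid \gamma_{\xi'}(t))_{x_0}$ stays bounded; the quasi-isometric embedding preserves this divergence (up to the standard distortion of Gromov products under quasi-isometric embeddings), so $\hat f(\xi) \neq \hat f(\xi')$. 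Continuity of $\hat f$ at $\xi$ then follows: a basic neighborhood of $\hat f(\xi)$ is determined by large Gromov product with $\hat f(\xi)$, and one can achieve this by taking nearby $\xi'$ whose geodesic representatives fellow-travel $\gamma_\xi$ for sufficiently long, since $f$ sends fellow-traveling segments to fellow-traveling quasi-geodesic segments, which by Morse remain close to fellow-traveling geodesic segments in $Y$.

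For the continuous dependence statement, suppose $f_n \to f$ locally uniformly. Fix $\xi \in \partial X$ and a basic neighborhood of $\hat f(\xi) \in \partial Y$ determined by fellow-traveling a geodesic ray to distance $T$ in $Y$, up to error $C = C(\delta_Y, K)$. Choose $S$ large enough (depending on $T$, the quasi-isometry constants, and the Morse constant) that the $f$-image of the initial segment $\gamma_\xi([0,S])$ determines $\hat f(\xi)$ up to the desired accuracy. By local uniform convergence, $f_n$ agrees with $f$ on the compact set $\gamma_\xi([0,S])$ up to arbitrarily small error for $n$ large; since the $f_n$ are uniformly quasi-isometric embeddings (this needs to be part of the meaning of local uniform convergence, or at least assumed from context), applying the Morse lemma to $f_n \circ \gamma_\xi$ produces a geodesic ray in $Y$ whose initial segment is close to that of the ray defining $\hat f(\xi)$, so $\hat f_n(\xi)$ lies in the prescribed neighborhood of $\hat f(\xi)$. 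The same estimates are uniform in $\xi$, giving uniform convergence $\hat f_n \to \hat f$.

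The main obstacle I expect is the careful bookkeeping in the last step: one must show that the convergence is uniform in $\xi \in \partial X$, not merely pointwise, which requires that the constants coming from Morse and from the distortion of Gromov products can be chosen uniformly across $\xi$ using only the quasi-isometry constants of $f$ and the hyperbolicity constants of $X$ and $Y$. Provided the $f_n$ are uniformly quasi-isometric embeddings (with constants bounded independently of $n$), this is routine but must be stated explicitly.
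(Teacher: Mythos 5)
Your proof takes essentially the same route as the paper's: represent boundary points by geodesic rays from a basepoint, push forward by $f$, apply the Morse lemma to obtain a geodesic in $Y$, and then use fellow-traveling/thin triangles for well-definedness, injectivity, continuity, and continuous dependence on $f$. The paper's own argument is a three-sentence sketch along exactly these lines. Your observation at the end is a worthwhile caveat that the paper glosses over: for the continuous-dependence clause to make sense with the local uniform topology, one must restrict to a family of maps with uniformly bounded quasi-isometry constants (which is what happens in the application, Theorem \ref{thm:app:dense-distorted}, where the distortion bounds are uniform), since local uniform convergence alone does not control the constants at infinity.
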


\begin{proof}
Let us first show $\hat f$ is well-defined. 
If two geodesic rays in $X$ stay at bounded distance,
then the image of each is at bounded distance from a geodesic ray in $Y$ (by the Morse lemma),
and those two geodesic rays in $Y$ stay at bounded distance, 
by thin triangles.

It is easy to check that $\hat f$ is an embedding and that it is a continuous extension.

If $f_0$ and $f_1$ are at bounded distances for large scales, 
then these quasi-geodesics (and there geodesic fellow-travelers)
stay at bounded distance for a long time,
and hence are nearby in the boundary. 
\end{proof}

\begin{proof}[Proof of Theorem \ref{thm:app:dense-distorted}]
We take $B = 2A + 1$.
Then whenever $U \subset \F(\H^2)$ is $A$-dense 
and $x, y \in U$, there is a sequence $z_0 = x, z_1, \ldots z_k = y$ in $U$, 
with $k < d(x, y) + 1$,
for which $d(z_i, z_{i+1}) < B$ for $i = 0\ldots k_1$.

Now suppose that we have a $U$ and an $e$ which has $\epsilon$-bounded distortion to distance $B$. 
Then for any $B'$ and $x, y \in U$ with $d(x, y) < B'$,
we have a sequence $z_i$ defined as above,
and hence
$$
d(e(x) \to e(y), x \to y) < F(\epsilon, B'). 
$$
where $F(\epsilon, B')$ is small when $\epsilon$ is small and $B'$ is bounded.

We then take $K$ in Theorem \ref{thm:hyperbolic-local-global} to be 2, 
and let $B'$ be the $D$ in that Theorem;
when $\epsilon$ is sufficiently small,
we can then apply Theorem \ref{thm:hyperbolic-local-global}
to obtain that $e$ is globally $K'$ quasi-isometric. 

By Theorem \ref{thm:qi-d},
$e$ extends continuously to $\hat e\from \d H^2 \to \d H^3$. 
We must show that $\hat e$ is $\delta$-quasisymmetric. 
This is the same as showing that all standard quadruples are at most $\delta$-distorted;
because we can move $e$ by M\" obius tranformations in domain and range,
it is enough to show that $|e(0)| < \delta'$,
under the assumption that $e(-1) = -1$, $e(1) = 1$, and $e(\infty) = \infty$. 
Suppose, 
to the contrary,
that there were no $\epsilon$ that insured that $|e(0)| < \delta$.
Then we can take a sequence of maps $e_n$ defined on a sequence of $A$-dense sets $U_n$ 
with $1/n$ bounded distortion to distance $B$,
and for which $|e(0)| \ge \delta$.
But then we can take a limit $e_\infty$ of these $e_n$ and find that $\hat e_\infty(0) = 0$
because $\hat e_\infty(1) = 1$ (and same for $-1$ and $\infty$) and $\hat e_\infty$ is a M\"obius tranformation.
This is a contradiction.
\end{proof}

\subsection{Conclusions}\label{subsec:conclusions}
From Theorems \ref{thm:app:control} and \ref{thm:app:dense-distorted} we can derive the following corollary:
\begin{theorem} \label{thm:app:good-quasi}
There exists $R_0$ for all $\epsilon$ there exists $\delta$ for all $R > R_0$:
Let $\A$ be a $(R, \delta)$-good assembly. 
Then there is a perfect assembly $\hat \A$ and an map $e\from \df \hat A \to \df \A$ 
such that $e$ extends to $\hat e\from \d \H^2\to \d \H^3$ to be $(1 + \epsilon)$-quasi-symmetric. 
\end{theorem}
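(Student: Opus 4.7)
The plan is to obtain Theorem \ref{thm:app:good-quasi} as a direct combination of Theorem \ref{thm:app:control} (which furnishes a perfect model $\hat\A$ and a comparison map $e$ of small bounded distortion at bounded scales) and Theorem \ref{thm:app:dense-distorted} (which upgrades bounded distortion on a sufficiently dense set to a quasi-symmetric boundary extension). The substantive work has already been carried out in the preceding subsections; what remains is to verify one small density fact and then to line up the quantifiers.

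The density check: lifting to the universal cover, I claim there is a universal constant $A_0$ such that the lift of $\df\hat\A$ to $\F(\H^2)$ is $A_0$-dense for every perfect assembly $\hat\A$ with $S_{\hat\A}$ connected. Since any two frames in $\F(\H^3)$ based at the same point of $\H^3$ are within distance $\pi$ of each other, it suffices to show that every interior point of a perfect pants or perfect hamster wheel lies within a universal distance of some boundary cuff. For a perfect pants with cuffs of length $2R$, the three cuffs are connected by short orthogeodesics of length $O(e^{-R})$ and every interior point lies within a universal constant of some cuff (for instance by considering the hexagonal decomposition). For a perfect hamster wheel, the decomposition described at the end of Section \ref{goodHW} into $2R$ pieces — each bounded by a short orthogeodesic, two medium orthogeodesics, a length-$2$ arc of an outer cuff, and two length-$R/2$ arcs of consecutive inner cuffs — has the feature that each piece has universally bounded diameter, so again every interior point lies within a universal distance of the boundary.

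Given $\epsilon > 0$, the proof then proceeds by standard quantifier juggling. First apply Theorem \ref{thm:app:dense-distorted} with $A = A_0$ and with target quasi-symmetric distortion $\epsilon$ (playing the role of $\delta$ in that theorem's statement) to obtain constants $B$ and $\epsilon'$. Next apply Theorem \ref{thm:app:control} with $D = B$ to obtain constants $C$ and $R_0$. Finally set $\delta := \epsilon'/(2C)$.

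For any $(R,\delta)$-good assembly $\A$ with $R > R_0$, Theorem \ref{thm:app:control} furnishes a perfect model $\hat\A$ and a map $e\from\df\hat\A\to\df\A$ that is $C\delta$-distorted at distance $B$, and by our choice $C\delta < \epsilon'$. Lifting $e$ to the universal covers and invoking Theorem \ref{thm:app:dense-distorted} — both of whose hypotheses now hold, the density by the claim above and the distortion bound by the choice of $\delta$ — produces the desired continuous $(1+\epsilon)$-quasi-symmetric extension $\hat e\from\d\H^2\to\d\H^3$. The only substantive obstacle is the density verification for hamster wheels, and even that is essentially recorded already in Section \ref{goodHW}; the rest is bookkeeping.
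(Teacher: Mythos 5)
Your proposal is correct and follows essentially the same route as the paper, which likewise derives the result by feeding the output of Theorem \ref{thm:app:control} into Theorem \ref{thm:app:dense-distorted} after checking a universal density bound for perfect assemblies and then juggling the quantifiers exactly as you do. The one slip is in the density verification: the $2R$ pieces of the perfect hamster wheel do \emph{not} have universally bounded diameter --- each such piece has two sides of length $R/2$ coming from consecutive inner cuffs, so its diameter grows like $R/2$ --- but each piece \emph{is} a uniformly thin strip, so every interior point lies within a universal distance of one of those two long sides (and hence of a cuff), which is the property actually needed; with that correction the argument goes through unchanged.
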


\begin{proof}
It is easy to check that there is an $a$ such that every perfect assembly is $a$-dense; 
we let the $A$ in Theorem \ref{thm:app:dense-distorted} be this $a$, 
and we let the $D$ in Theorem \ref{thm:app:control} be the resulting $B$ in Theorem \ref{thm:app:dense-distorted}. 
We let $\delta$ in Theorem \ref{thm:app:dense-distorted} be the given $\epsilon$,  
and then we let $\delta$ for the Theorem be 
 the $\epsilon$ from Theorem \ref{thm:app:dense-distorted}
 divided by the $C$ from Theorem \ref{thm:app:control}. 
The Theorem follows. 
\end{proof}

%
%
%
%

The following theorem follows readily from the machinery in \cite[Ch. 4]{ahlfors}.
\begin{theorem} \label{thm:app:qsqc}
Let $f\from S^1 \to \chat$ be $M$-quasi-symmetric, 
in the sense that $f$ sends standard triples to $M$-semi-standard triples. 
Then $f$ extends to a $K$-quasiconformal map from $\chat$ to $\chat$,
where $K$ depends only on $M$
and $K \to 1$ as $M \to 1$.
Moreover if the original map conjugates a group of \Mobius\ transfomations to another such group, 
then the extension does as well.
\end{theorem}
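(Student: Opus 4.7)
The plan is to reduce to the classical Beurling--Ahlfors extension theorem for the real line. Choose any M\"obius transformation $T \from \chat \to \chat$ sending $S^1$ to $\hat \R$. Conjugating $f$ by $T$ yields a homeomorphism $g = T f T\inv \from \hat \R \to \hat \R$ whose quasi-symmetric constant is controlled by $M$ (with the same dependence of $K$ on $M$ at the level of the circle and the line, because standard/semi-standard triples transform under M\"obius maps with bounded distortion). The Beurling--Ahlfors extension then provides a $K(M)$-quasiconformal self-map of the upper half plane extending $g$, where the constant satisfies $K(M) \to 1$ as $M \to 1$; this last fact is the content of the explicit estimates in Ahlfors, Ch.\ 4. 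Conjugating back by $T\inv$ produces a $K(M)$-quasiconformal extension of $f$ to $\chat$, giving the first sentence of the theorem.

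For the equivariance statement, the Beurling--Ahlfors extension is not conformally natural, so a direct application will typically \emph{not} conjugate a M\"obius group to a M\"obius group. The plan is to replace it with an extension that \emph{is} conformally natural. There are two standard ways to do this. The first is to use the Douady--Earle conformally natural extension from $S^1$ to the unit disk, which is equivariant under pre- and post-composition by M\"obius transformations and which is also $K$-quasiconformal with $K \to 1$ as $M \to 1$; equivariance then follows automatically from the fact that $f$ conjugates one M\"obius group to another. The second is to produce any $K$-quasiconformal extension $F$ and then average its Beltrami coefficient over the (conjugacy class of the) M\"obius group acting on the disk; the resulting Beltrami coefficient is equivariant, still has $\|\mu\|_\infty \le (K-1)/(K+1)$, and the measurable Riemann mapping theorem produces the desired equivariant quasiconformal extension.

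The main thing to verify carefully is the quantitative dependence $K(M) \to 1$ as $M \to 1$, which for the Douady--Earle extension one reads off from the explicit barycentric formula, and which for the averaging argument is automatic since $\|\mu\|_\infty$ is preserved under averaging. The rest (continuity of the extension, preservation of the boundary values, and the fact that conjugation of M\"obius groups is preserved) is standard.
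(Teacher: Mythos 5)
The paper's proof is a one-line citation to Ahlfors, Ch.\ 4, so the issue is whether your sketch correctly reproduces what that reference supplies. Your opening reduction has a genuine gap: $f$ is a homeomorphism onto its image $\Gamma := f(S^1)$, which here is a quasi-circle in $\hat\C$, not $S^1$ (in the application, $f = \hat e \from \d\H^2 \to \d\H^3$ from Theorem \ref{thm:app:good-quasi}, whose image is the limit set of the assembly's holonomy group). Consequently $g = TfT^{-1}$ maps $\hat\R$ onto $T(\Gamma)$, not onto $\hat\R$, and the Beurling--Ahlfors theorem for quasi-symmetric \emph{self-maps} of $\hat\R$ does not apply directly. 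What Ahlfors Ch.\ 4 actually provides, and what your sketch elides, is the quasi-circle machinery: the triple condition shows $\Gamma$ is a $K(M)$-quasicircle; one uniformizes the two complementary Jordan domains $\Omega^\pm$ by Riemann maps $\phi^\pm\from \mathbb{D}^\pm \to \Omega^\pm$; observes that $(\phi^\pm)^{-1}\circ f$ are quasi-symmetric self-homeomorphisms of $S^1$ with constants controlled by $M$; extends these by Beurling--Ahlfors; and pushes forward by $\phi^\pm$ and glues along $\Gamma$. The dependence $K(M)\to 1$ as $M\to 1$ then falls out of the explicit estimates in the way you describe.

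Your two routes to equivariance are in the right spirit but need the same preliminary welding and a bit more care. Douady--Earle extends circle self-homeomorphisms to disk self-maps, so to use it here you must first uniformize $\Omega^\pm$ (which intertwines the target M\"obius group with Fuchsian groups acting on $\mathbb{D}^\pm$), apply Douady--Earle to $(\phi^\pm)^{-1}\circ f$, and transport back; conformal naturality then propagates through the construction. For the averaging alternative: the groups arising in this paper are surface groups, hence not amenable, so one cannot literally ``average the Beltrami coefficient over the group.'' The correct version is the Sullivan--Tukia argument of choosing a conformally natural barycenter (hyperbolic circumcenter in each fiber of the bundle of conformal structures) of the $G$-orbit of Beltrami differentials; this is $G$-equivariant and does preserve the $L^\infty$ bound. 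So both of your alternatives can be made to work, but neither is as immediate as your write-up claims, and the first step of your reduction as stated is simply false for the maps to which the theorem is actually applied.
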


We can complete the goal of this Appendix:

\begin{proof}[Proof of Theorem \ref{good close to Fuchsian}]
This is an immediate corollary of Theorems \ref{thm:app:good-quasi} and \ref{thm:app:qsqc}.\end{proof}

We can make a further observation that is not needed to prove Theorem \ref{good close to Fuchsian}.
The following is a consequence of the Douady-Earle extension applied to $\H^3$.
(See \cite{douady1986conformally} and \cite{tam1998quasiconformal}).
\begin{theorem} \label{thm:app:2to3}
Let $\rho$ and $\rho'$ be groups of isometries of $\H^3$ (and its boundary $S^2$),
and suppose that $f\from S^2\to S^2$ is $K$-quasiconformal and conjugates $\rho$ to $\rho'$.
Then we can extend $f$ to a quasi-isometry $\hat f\from \H^3 \to \H^3$ with the same property.
If $K$ is close to 1, 
then we can make $f$ be $C^\infty$, 
and $C^\infty$ close on ball of bounded (but large) size to an isometry. 
\end{theorem}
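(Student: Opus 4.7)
The plan is to apply the Douady--Earle extension, whose three key features—existence as a quasi-isometry, conformal naturality, and continuity at the isometric locus—match exactly the three assertions of the theorem. Given the $K$-quasiconformal boundary map $f\from S^2 \to S^2$, let $\hat f = \mathrm{DE}(f)\from \H^3\to \H^3$ denote its Douady--Earle extension, constructed via the conformal barycenter of the pushforward of visual measure. By the main results of Douady--Earle and their extension to higher dimensions in Tam--Wan, $\hat f$ is a $C^\infty$ diffeomorphism of $\H^3$ whose boundary values recover $f$, and it is a quasi-isometry of $\H^3$ with constants depending only on $K$ and the dimension.

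First I would verify the conjugacy property. The defining construction of $\hat f$ is equivariant under post- and pre-composition by M\"obius transformations: if $A, B$ are M\"obius transformations of $S^2$, then $\mathrm{DE}(A \circ f \circ B) = \hat A \circ \hat f \circ \hat B$, where $\hat A, \hat B$ are the isometric extensions of $A, B$ to $\H^3$. Applied to the hypothesis $f \circ g = g' \circ f$ for each $g \in \rho$ and its corresponding $g' \in \rho'$, uniqueness of the extension gives $\hat f \circ g = g' \circ \hat f$ on $\H^3$ (identifying the isometries with their action on $\H^3$). Hence $\hat f$ conjugates $\rho$ to $\rho'$ as groups of isometries of $\H^3$.

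Next I would address the refined regularity when $K$ is close to $1$. The Douady--Earle extension depends continuously on the boundary map in appropriate topologies, and its extension of the identity is the identity, so as $K \to 1$ the extension $\hat f$ tends to an isometry uniformly on compact sets, with all derivatives (by elliptic regularity of the conformal barycenter construction, or by differentiating under the integral in the explicit formula). This gives $C^\infty$ closeness to an isometry on any prescribed ball, with the closeness going to $0$ as $K \to 1$. Since $\hat f$ is already $C^\infty$ by construction, no further smoothing is needed.

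The main obstacle—really the only nontrivial input—is that these properties of the Douady--Earle extension in dimension three have to be quoted from the literature (Douady--Earle for the plane, and Tam--Wan or Li--Tam for the generalization to higher-dimensional hyperbolic spaces and the quasi-isometry/smooth-dependence estimates). Once those are cited, the three claims of the theorem follow directly from (i) existence and quasi-isometry of $\hat f$, (ii) conformal naturality, and (iii) continuity of $\mathrm{DE}$ at the identity, respectively.
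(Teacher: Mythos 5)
Your proposal takes the same route as the paper, which does not give a detailed argument but simply observes that the theorem is a consequence of the Douady--Earle barycenter extension applied to $\H^3$ and cites \cite{douady1986conformally} and \cite{tam1998quasiconformal}. Your write-up correctly identifies the three needed properties (existence and quasi-isometry of the extension, conformal naturality giving the equivariance, and smooth continuity at the identity giving the $C^\infty$-closeness to an isometry as $K\to 1$) and matches them to the three assertions, so this is a correct and somewhat more explicit version of the paper's own reasoning.
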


From Theorems \ref{thm:app:qsqc} and \ref{thm:app:2to3} it follows that we obtain a map from a closed hyperbolic surface to the given cusped hyperbolic 3-manifold that is everywhere locally $C^\infty$ close to being an isometric embedding. 

It also follows that this map is homotopic to a minimal surface, and hence that there are infinitely many minimal surfaces in our 3-manifold. 

\subsection{Good and perfect hamster wheels} \label{subsec:hw}
The object of this subsection is to provide some estimates on the geometry of good hamster wheels as defined in Section \ref{goodHW}. In particular, we prove Theorem \ref{thm:app:omnibus}, which is used to prove Theorems \ref{compliant component}, \ref{thm:app:compliant-map-local}, and \ref{thm:e-compliant} in Section \ref{app:perfect-model}.

Recall the definitions of short and medium orthogeodesics for a hamster wheel in Section \ref{goodHW}. 

Our theorem is as follows, stated for a good hamster wheel $H$ and its perfect model $H_R$. 
In this theorem, 
if $\alpha$ is an orthogeodesic for $H_R$, 
we let  $l_H(\alpha)$  and $l_{H_R}(\alpha)$ denote  the complex length of $\alpha$ in $H$ and $H_R$ respectively. 
\begin{theorem} \label{thm:app:omnibus}
There is are universal constants $C$, $R_0$ and $\epsilon_0$ such that for any $R, \epsilon$-good hamster wheel $H$, 
with $R > R_0$ and $\epsilon < \epsilon_0$:
\begin{enumerate}
\item \label{req:length-short} 
For all short orthogeodesics $\alpha$,
$$
\left|\frac{l_H(\alpha)}{l_{H_R}(\alpha)} - 1\right| < C\epsilon.
$$
\item \label{req:length-medium} 
For all medium orthogeodesics $\alpha$,
$$
|{l_H(\alpha)} - {l_{H_R}(\alpha)}| < C\epsilon.
$$
\item \label{req:feet:formal} \label{req:first}
Both feet of every short orthogeodesic lie within $C\epsilon/R$ of a formal foot.
\item \label{req:feet:slow}
The foot on the outer cuff of every medium orthogeodesic lies within $C\epsilon$ of the value of the slow and constant turning vector field at the associated point. 
\item \label{req:feet:corr} \label{req:last}
If $\alpha$ and $\beta$ are two short or medium geodesics with feet on a cuff $\gamma$ (in $H_R$),
then 
$$
|d_\gamma(\alpha, \beta) - d_{\gamma'}(\alpha', \beta')| < C\epsilon,
$$
where $\alpha'$, $\beta'$, and $\gamma'$ are the corresponding objects for $H$. 
\end{enumerate}
\end{theorem}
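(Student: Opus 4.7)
The plan is to compare $H$ and $H_R$ cell-by-cell using the decomposition into $2R$ right-angled hexagons described just before the statement. In the perfect model, each hexagon has alternating sides of length $(2, L_m, R/2, L_s, R/2, L_m)$ with $L_m = O(1)$ and $L_s = O(e^{-R/2})$. The goodness hypothesis provides direct $O(\epsilon/R)$-control on the rung data (complex lengths, foot positions on rims, consecutive foot separations on rims), i.e., on the ``long sides'' and endpoints of each hexagon. I have to propagate this rung-level data to the short orthogeodesics, medium orthogeodesics, and inner-cuff geometry.

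First I would work within a single ``strip'' bounded by two consecutive rungs $\lambda_i, \lambda_{i+1}$, the intervening length-$2$ rim arcs, and containing one full inner cuff. This strip is the union of two right-angled hexagons meeting along a short orthogeodesic. The outer skeleton of the strip (the rims and rungs) is controlled to $O(\epsilon/R)$ by hypothesis. Applying the matrix estimate of Theorem \ref{thm:app:main-matrix-estimate} in the universal cover, with universally bounded $A$ and $B$ since the strip has bounded diameter, gives a frame-level comparison between the strip in $H$ and the corresponding strip in $H_R$ of size $O(\epsilon/R)$. Right-angled-hexagon trigonometry then pins down the inner-cuff segment, the short orthogeodesic and the two medium orthogeodesics, each within $O(\epsilon/R)$ (in frames) of the perfect counterpart. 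This gives parts (\ref{req:length-short}) and (\ref{req:length-medium}) directly; part (\ref{req:feet:slow}) follows because the foot of a medium orthogeodesic on the rim is at bounded distance from an adjacent rung foot, which is within $\epsilon/R$ of $v_l$ or $v_r$ by hypothesis.

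For parts (\ref{req:feet:formal}) and (\ref{req:feet:corr}), I would chain the per-strip estimates around an inner cuff. For (\ref{req:feet:corr}), the complex distance between two feet on a cuff is a sum of at most $R$ per-cell contributions, each within $O(\epsilon/R)$ of its perfect value, giving aggregate error $O(\epsilon)$. For (\ref{req:feet:formal}), the formal feet on an inner cuff $\gamma$ are the symmetrization of the two actual short-orthogeodesic feet $\alpha_\pm$ against the half-length $\mathbf{hl}(\gamma)$; by the algebra in the definition one has $|\alpha'_\pm - \alpha_\pm| = |\alpha_+ - \alpha_- - \mathbf{hl}(\gamma)|/2$, so the required bound reduces to showing $|\alpha_+ - \alpha_- - \mathbf{hl}(\gamma)|$ is small. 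This is where I would exploit the closed-curve condition on $\gamma$: the cell-by-cell perturbations that shift $\alpha_+ - \alpha_-$ (summed along the half of $\gamma$ between the two feet) and those that shift $\mathbf{hl}(\gamma) = l(\gamma)/2$ (summed around all of $\gamma$ and halved) are the same to leading order, so they cancel in the difference.

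The main obstacle will be the sum-around-the-cuff arguments, where naive concatenation of $R$ per-cell matrix estimates threatens exponential error blow-up. Theorem \ref{thm:app:main-matrix-estimate} is designed to control exactly this, but its hypothesis $\sum|a_i|e^{|b_i|} \le B$ must be verified with $B$ universal. As in the proof of Theorem \ref{thm:app:linear-local-to-global}, this is achieved by parameterizing motion from the midpoint of the sequence so that $|b_i|$ reaches its maximum of order $R/2$ only near the endpoints, with $|a_i| \sim e^{-R/2}$ compensating to keep the sum $O(1)$. A secondary subtlety is the cancellation needed in (\ref{req:feet:formal}), which demands tracking the relationship between the two summations carefully enough to see that the leading $O(\epsilon)$ terms coincide so that the residual is the required $O(\epsilon/R)$; the right-angled-hexagon identities, applied in each cell and compared against the exact identities in $H_R$, provide this cancellation.
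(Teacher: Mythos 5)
Your overall cell-by-cell strategy is sound, but it diverges from the paper's actual proof and misidentifies where the difficulties lie. The paper does \emph{not} invoke Theorem \ref{thm:app:main-matrix-estimate} here at all: it works directly with the hexagon bounded by two consecutive rungs and the two rims (Figure \ref{F:Spoke}), drops the four orthogeodesics $e_l, e_r, \eta_0, \eta_1$ from the inner cuff, applies the right-angled hexagon and pentagon laws (Lemmas \ref{hex trig}, \ref{penta trig}) plus the Implicit Function Theorem, and reads off each quantity with $O(\epsilon/R)$ precision: $u_{0l} = 1 + O(\epsilon/R)$, $e_l = \coth^{-1}\cosh 1 + O(\epsilon/R)$, $l'_{0l} = R/2 + O(\epsilon/R)$, $\eta_0 = e^{-R/2 + C_1 + O(\epsilon/R)}$, and $d_\pm = O(\epsilon/R)$. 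The multiplicative control in Part \ref{req:length-short} comes for free because the exponent $-R/2 + C_1$ is controlled additively to $O(\epsilon/R)$. Your matrix-estimate route would have to reconstruct exactly these $O(1)$ quantities before any trigonometry can be applied, which is possible but is a strictly longer path to the same place; and your claim that the strip has ``bounded diameter'' is not literally true (it is of diameter $\approx R$ across the rungs), so the boundedness of $A, B$ needs the same midpoint normalization as in Theorem \ref{thm:app:linear-local-to-global} even for a single strip.

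The ``main obstacle'' you identify is not actually an obstacle, and the cancellation you propose for Part \ref{req:feet:formal} is both unnecessary and not obviously correct. An inner cuff borders only two of the $2R$ cells, so $\ell(\gamma) = l'_0 + l'_1$ is a sum of four $O(1)$ quantities each controlled to $O(\epsilon/R)$; thus $\hll(\gamma) = R + O(\epsilon/R)$ and $\alpha_+ - \alpha_- = R + O(\epsilon/R)$ hold \emph{independently}, and $|\alpha_+ - \alpha_- - \hll(\gamma)| = O(\epsilon/R)$ follows with no cancellation. Your proposed cancellation would require the cell-by-cell perturbations on the two halves of $\gamma$ to agree to leading order; the perfect hamster wheel has a $\Z/R$ symmetry that forces this, but a good hamster wheel has no such symmetry, so as stated the argument does not go through — you are saved only by the fact that you never needed it. Similarly, the only place $R$ estimates are aggregated is Part \ref{req:feet:corr} on an outer cuff, where the quantity is a sum of $O(R)$ complex distances $u_i$ along a single geodesic: the accumulation is abelian and linear, $R \cdot O(\epsilon/R) = O(\epsilon)$, so there is no threat of exponential blow-up and no need for the compensating $\sum |a_i|e^{|b_i|}$ bound from Theorem \ref{thm:app:main-matrix-estimate}. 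In short: right decomposition, wrong machinery, and the worries you flagged as the hard part are artifacts of that machinery rather than genuine features of the problem.
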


\begin{remark}
The feet described in Part \ref{req:feet:slow} of the Theorem will actually be within $C\epsilon/R$ of the constant turning vector fields. 
\end{remark}

\begin{figure}[ht!]
\makebox[\textwidth][c]{
\includegraphics[width=1.15\linewidth]{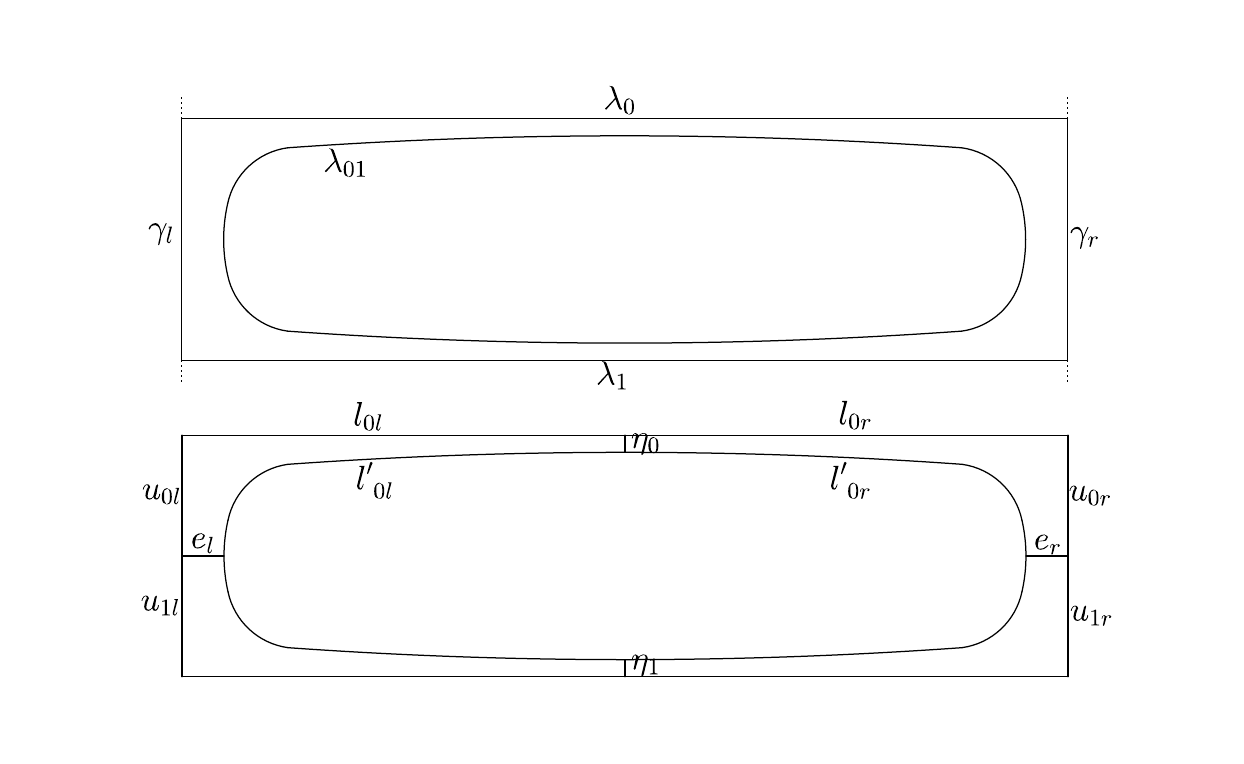}}
\caption{Top: Geodesics labelled by name. Bottom: Orthogeodesics labelled by complex length.}
\label{F:Spoke}
\end{figure}

\begin{proof}
The hamster wheel $H$ has two outer cuffs (``rims") $\gamma_r$ and $\gamma_l$, 
 and $R$ orthogeodesic connections (``rungs'') $\lambda_i$. 
We consider two consecutive such orthogeodesic connections in Figure \ref{F:Spoke},
 say $\lambda_0$ and $\lambda_1$.
Let $\lambda_{01}$ be the geodesic homotopic 
 to $\lambda_0$ plus $\lambda_1$ plus the segments on $\gamma_r$ and $\gamma_l$. 
We also consider the orthogeodesics
 from $\lambda_{01}$
 to the four segments $\gamma_l$, $\gamma_r$, $\lambda_0$, and $\lambda_1$,
 as in Figure \ref{F:Spoke}. 
Let the complex distance between pairs of geodesics be as in Figure \ref{F:Spoke},
 where we have labeled orthogeodesics between a pair of geodesics
  by the complex distance between that pair of geodesics.     
These four orthogeodesics cut each of the four segments into two subsegments,
 whose complex lengths we have labelled in Figure \ref{F:Spoke}.
 
We let $l_0 = l_{0l} + l_{0r}$ be the complex length of $\lambda_0$,
 and likewise define $l_1$. 
(By complex length of $\lambda_0$
 we mean the complex distance along $\lambda_0$
  between the geodesics $\gamma_l$ and $\gamma_r$
   that it was defined to connect). 
Similarly we let $l'_0 = l'_{0l} + l'_{0r}$ be the complex length of $\lambda'_0$
 (the portion of $\lambda_{01}$ going clockwise from $e_l$ to $e_r$),
 and likewise define $l'_1$.
We let $u_r = u_{0r} + u_{1r}$ be the complex distance between $\lambda_0$ and $\lambda_1$
 along $\gamma_r$, 
 and likewise define $u_l$. 
Because $H$ is an $\epsilon, R$-good hamster wheel, we have $u_r = 2 + O(\epsilon/R)$ (and likewise for $u_l$) 
and $l_0 = R - 2 \log \sinh 1 + O(\epsilon/R)$ (and likewise for $l_1$). 

By the Hyperbolic Cosine Law for Hexagons (Lemma \ref{hex trig}),
we have
\begin{equation}
\cosh u_{l0} = \coth l_0 \coth e_l + \cosh e_r \csch l_0 \csch e_l
\end{equation}
and likewise for $0$ replaced with 1, and/or $l$ interchanged with $r$. 
Moreover we have $u_{0l} + u_{1l} = u_l$, and likewise with $l$ replaced by $r$. 

Now imagine we are given $u_l$, $u_r$ each near 2, and $\coth l_0$ and $\csch l_0$ (and $\coth l_1$ and $\csch l_1$) thought of as \emph{independent variables} with the former near 1 and the latter near 0. 
Then $u_{0l}, u_{1l}, u_{0r}, u_{1r}$ and $e_l, e_r$ are determined implicitly by the Implicit Function Theorem, 
and satisfy
\begin{equation}\label{est:ue}
 u_{0l}= 1 + O(\epsilon/R), e_l = \coth^{-1} \cosh 1 + O(\epsilon/R),
\end{equation}
and likewise for $u_{1l}, u_{0r}, u_{1r}$ and $e_r$. 

Then it follows that $\eta_0 = O(e^{-R/2})$,
because $l_0, l'_0 > R - O(1)$, 
and the endpoints of the corresponding segments are at a bounded distance from each other (in pairs). 

By Lemma \ref{hex trig}, we have 
\begin{align*}
\cosh l_{0r} 
&= \frac{\cosh \eta_0 \coth u_r + \cosh \eta_1 \csch u_r}{\sinh \eta_0} \\
&= \frac{\coth 1 + \csch 1 + O(\epsilon/R)}{\eta_0 (1 + O(e^{-R}))}.
\end{align*}
We have also the general estimate 
$$
\cosh l_{0r} = (1 + O(e^{-2 l_{0r}})) e^{l_{0r}}/2.
$$
Therefore
\begin{equation*}
l_{0r} = \log 2(\coth 1 + \csch 1) - \log \eta_0 + O(\epsilon/R + e^{-2 l_{0r}}) 
\end{equation*}
and likewise for $l_{0l}$.
We can then conclude in sequence that $l_{0l}, l_{0r} > R/2 + O(1)$,
 that $e^{-2 l_{0r}} = O(\epsilon/R)$,
 that $|l_{0l} - l_{0r}| <   \epsilon/R$,
 and 
 \begin{equation} \label{est:l}
 l_{0l} = R/2 - \log \sinh 1 + O(\epsilon),
\end{equation}
and likewise for $l_{0r}$, $l_{1l}$, and $l_{1r}$.
Moreover 
\begin{equation} \label{est:eta}
\eta_0 = e^{-R/2 + C_1 + O(\epsilon/R)},
\end{equation}
where 
$$
C_1 = \log 2(\coth 1 + \csch 1) + \log \sinh 1.
$$
By Lemma \ref{penta trig},
we have 
$$
\cosh{l'_{0r}} = \sinh{l_{0r}} \sinh{u_{0r}};
$$
by our previous estimates,
this implies 
\begin{equation} \label{est:lprime}
l'_{0r} = R/2 + O(\epsilon/R).
\end{equation}
Equations \eqref{est:ue}, \eqref{est:l}, \eqref{est:eta}, and \eqref{est:lprime}
 show that the $u$'s, $e$'s, $l$'s, $l'$'s, and $\log \eta$'s
  differ by $O(\epsilon/R)$ from their perfect counterparts.

\begin{figure}[ht!]
\includegraphics[width=\linewidth]{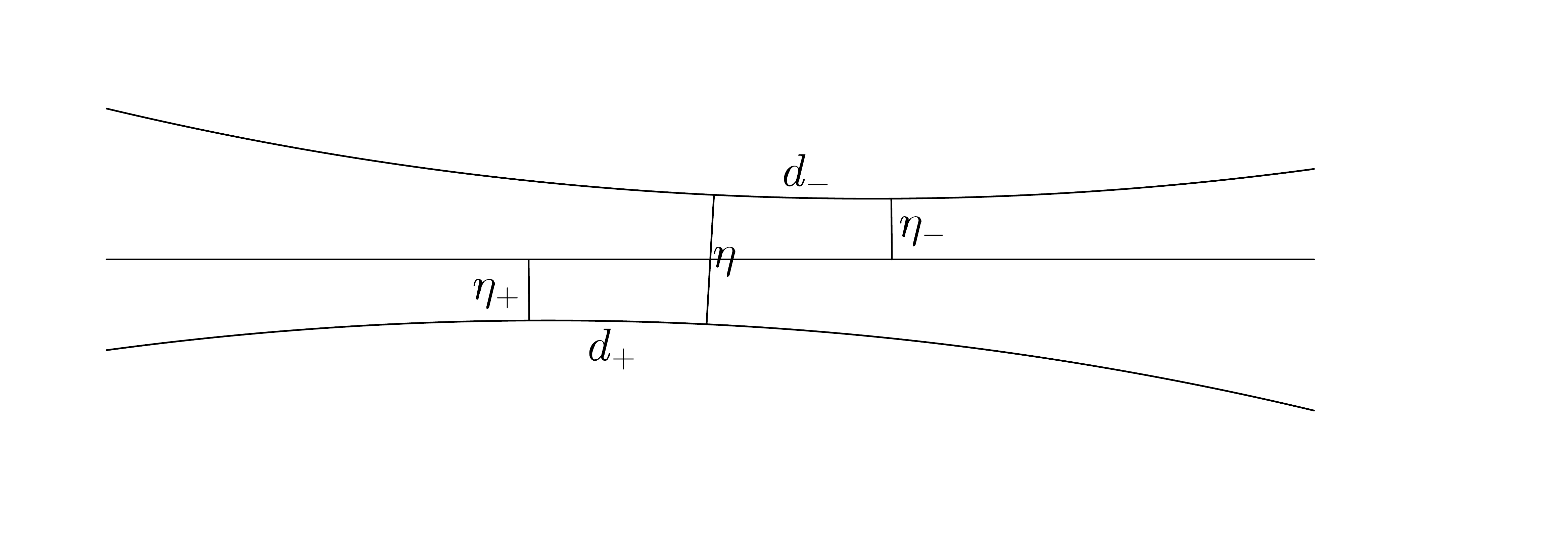}
\caption{}
\label{F:Step2}
\end{figure}

Now that we understand the geometry of the small orthogeodesics 
 from the railings of the hamster wheel to the inner boundary curves,
 our task is to relate it to the geometry of the small orthogeodesics connecting adjacent inner boundaries. 
Accordingly, we refer to Figure \ref{F:Step2} and we come up with 
$$
\cosh(d + i\pi) = 
\frac {\cosh \eta - \cosh \eta_- \cosh \eta_+}{\sinh \eta_- \sinh \eta_+},
$$
where $d = d_\C(\eta_+, \eta_-).$
We know that $d= O(\epsilon/R)$
 and $\eta_-, \eta_+ = e^{-R/2 + C_1 + O(\epsilon)}$,
so we can conclude that 
\begin{equation} \label{est:eta2}
\eta = 2e^{-R/2 + C_1 +  O(\epsilon/R)}.
\end{equation}

We can then conclude (from the Hyperbolic Sine Law) that 
\begin{equation} \label{est:d}
d_-, d_+ = O(\epsilon/R).
\end{equation}
We can then verify Parts 1 through \ref{req:last} of the Theorem, as follows:
\begin{enumerate}
\item
Follows from \eqref{est:eta2}.
\item
Follows from \eqref{est:ue} applied to $e_l$ and $e_r$. 
\item
Follows from \eqref{est:d} and \eqref{est:lprime}, because these two equations provide an $O(\epsilon/R)$ estimate on the complex distance between the feet on $\gamma$ of the two short orthogeodesics to $\gamma$.
\item
Follows from \eqref{est:ue} because the feet of the \emph{rungs} lie within $O(\epsilon/R)$ of the slow and constant turning vector field, and $u_{0l}$, etc., determine the complex distance between the feet of the rungs and those of the medium orthogeodesics (on the outer cuffs). 
\item
Follows from \eqref{est:d} and \eqref{est:lprime} when $\gamma$ is inner cuff, and \eqref{est:ue} (invoked at most $2R$ times and summed) when $\gamma$ is an outer cuff. 
\end{enumerate}
\end{proof}

\new{We include for completeness the statements of the Sine and Cosine Laws for hyperbolic right angled hexagons, and the Cosine Law for right angled pentagons,
as Lemmas \ref{hex trig} and \ref{penta trig} below. }
We will use the convention that when calculating the complex length of an edge $d$ of a polygon, both of the adjacent edges are oriented to point towards $d$.
Lemma \ref{hex trig} is essentially the same as (6) and (3) in Section VI.2 of \cite{fenchel}; Lemma \ref{penta trig} is essentially the same as (2) and (5) in the same Section. 

\begin{lemma}[Cosine and sine laws for right angled hexagons] \label{hex trig}
Let $a, C, b, A, c, B$ be the complex edge lengths of a right angled hexagon in $\mathbb{H}^3$, in cyclic order as in Figure \ref{F:Hex}. Then 
$$\cosh A = \frac{\cosh b \cosh c + \cosh a}{\sinh b \sinh c},$$
and 
$$\frac{\sinh A}{\sinh a}=\frac{\sinh B}{\sinh b}=\frac{\sinh C}{\sinh c}.$$
\begin{figure}[ht!]
\includegraphics[width=0.4\linewidth]{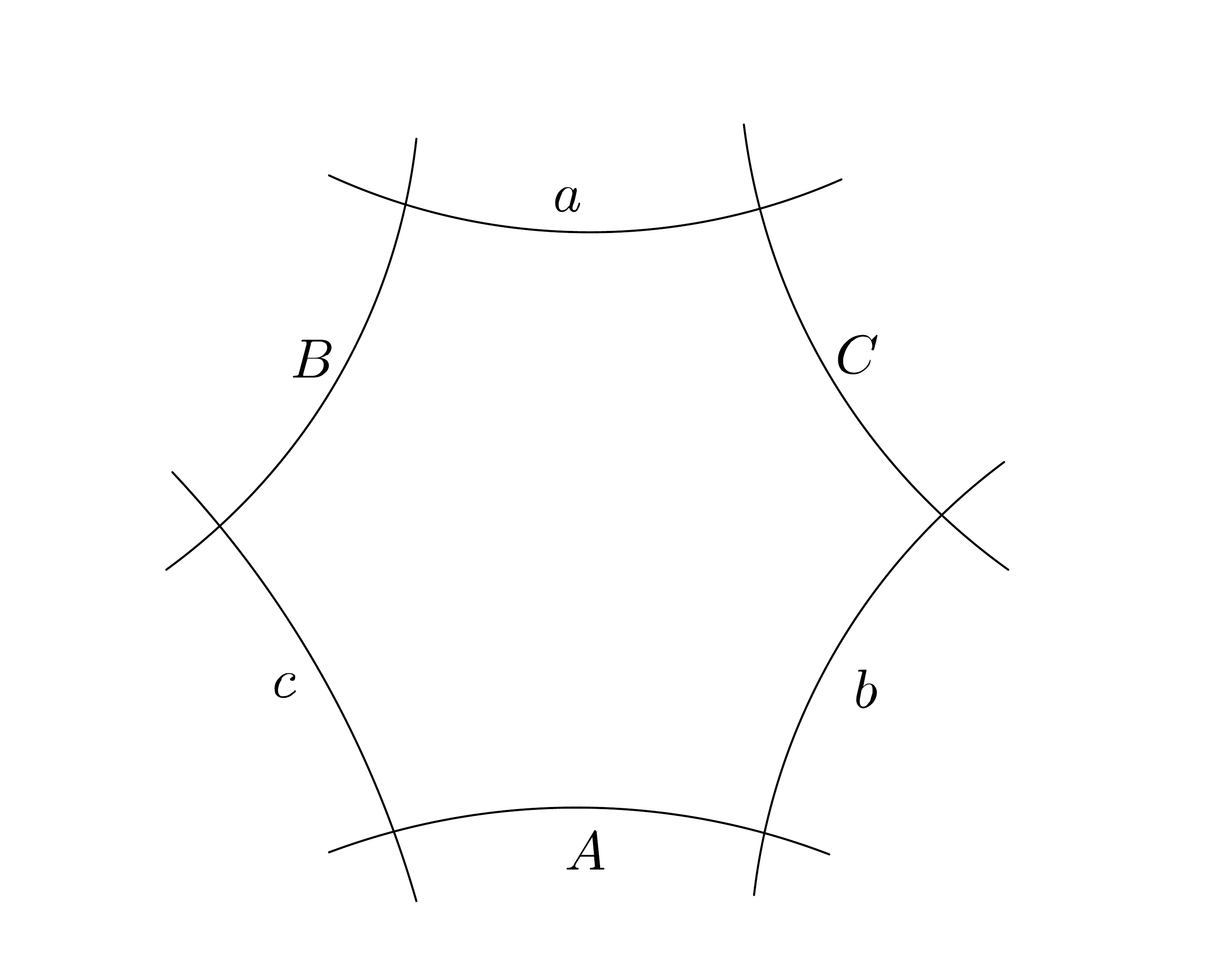}
\caption{}
\label{F:Hex}
\end{figure}
\end{lemma}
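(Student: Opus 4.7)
This is a classical identity of hyperbolic trigonometry (essentially Fenchel VI.2); I would prove it by reducing to trace identities in $\SL_2(\C) \cong \isomHThree$. To each oriented geodesic $\gamma$ in $\H^3$ associate the traceless matrix $J_\gamma \in \sl_2(\C)$ with $J_\gamma^2 = -\tfrac14 I$ whose $1$-parameter subgroup $\exp(tJ_\gamma)$ acts as complex translation (translation along $\gamma$ composed with rotation about $\gamma$) by the complex distance $t$. One verifies directly in a normal frame that for any two oriented geodesics $\gamma_1, \gamma_2$ with complex distance $d$ along their common perpendicular,
\begin{equation*}
2\,\mathrm{tr}(J_{\gamma_1} J_{\gamma_2}) = \cosh d,
\end{equation*}
and in particular perpendicularity of $\gamma_1$ and $\gamma_2$ is equivalent to $\mathrm{tr}(J_{\gamma_1}J_{\gamma_2}) = 0$.

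\textbf{Step 1.} Place the geodesic carrying edge $a$ as the standard axis of the upper half-space, so $J_a$ is diagonal with entries $\pm\tfrac{i}{2}$. Edges $B$ and $C$ meet $a$ at right angles at its two endpoints; using orientation conventions and the vanishing-trace condition, I solve for $J_B$ and $J_C$ explicitly as off-diagonal $2\times 2$ matrices (with a translation along $a$ applied to one of them to account for the separation along $a$). Continuing around the hexagon, edge $b$ is perpendicular to $C$ at distance $a$ from the $a$--$C$ corner, and likewise for $c$ off of $B$; this determines $J_b$ and $J_c$ as explicit matrices depending on the complex lengths $a, b, c, B, C$.

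\textbf{Step 2.} The complex distance between the axes of $b$ and $c$ is, by construction, the remaining edge $A$. Hence $\cosh A = 2\,\mathrm{tr}(J_b J_c)$. Expanding this trace as a direct $2\times 2$ matrix computation yields a rational expression in $\cosh a, \sinh a$ and the corresponding hyperbolic functions of $b, c, B, C$; using the perpendicularity relations $\mathrm{tr}(J_a J_B) = \mathrm{tr}(J_a J_C) = \mathrm{tr}(J_B J_b) = \mathrm{tr}(J_C J_c) = 0$ to eliminate the dependence on $B$ and $C$ collapses the result to
\begin{equation*}
\cosh A = \frac{\cosh b \cosh c + \cosh a}{\sinh b \sinh c},
\end{equation*}
which is the cosine law. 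The sine law follows from this cosine law and its two cyclic permutations by a short algebraic manipulation: the quantity $(\sinh A/\sinh a)^2$ equals the common value
\begin{equation*}
\frac{1 - \cosh^2 a - \cosh^2 b - \cosh^2 c - 2\cosh a \cosh b \cosh c}{\sinh^2 a \sinh^2 b \sinh^2 c}
\end{equation*}
(obtained by substituting the cosine law into $\sinh^2 A = \cosh^2 A - 1$), which is manifestly symmetric under cyclic permutation of $(a,A), (b,B), (c,C)$.

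The main obstacle is purely bookkeeping: fixing a consistent convention for the complex distance between \emph{oriented} perpendicular geodesics (in particular, the sign of its imaginary part, which is a signed dihedral angle) and checking that the orientations assigned to the six edges in the cyclic order $a, C, b, A, c, B$ are compatible with the geometric closure of the hexagon. Once these signs are pinned down the computation is mechanical, so I would carry it out in an explicit matrix realization at the very end, rather than trying to phrase it purely trace-theoretically.
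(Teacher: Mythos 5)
The paper does not prove this lemma at all: it simply cites it as ``essentially the same as (6) and (3) in Section VI.2 of \cite{fenchel}'', so there is no ``paper's proof'' to compare against. Your trace-identity approach via line matrices $J_\gamma$ is precisely the method Fenchel uses in that reference, so your sketch is a reasonable reconstruction of the standard derivation rather than a genuinely different route.

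That said, two of the ``bookkeeping'' items you flag at the end are already inconsistent as written. First, the normalization $J_\gamma^2 = -\tfrac14 I$ (eigenvalues $\pm i/2$) is incompatible with the stated property that $\exp(tJ_\gamma)$ translates by complex distance $t$: with eigenvalues $\pm i/2$ the element $\exp(tJ_\gamma)$ has complex translation length $it$. You want $J_\gamma^2 = +\tfrac14 I$ (eigenvalues $\pm\tfrac12$, e.g.\ $J = \tutu{1/2}{0}{0}{-1/2}$ on the standard axis) for the parametrization to match, and then the identity $2\,\mathrm{tr}(J_{\gamma_1}J_{\gamma_2}) = \cosh d$ comes out with the right sign; with your $\pm i/2$ normalization one instead gets $-\cosh d$. (Fenchel's own line matrices satisfy $L^2 = I$, another equally serviceable scaling.) Second, in the sine-law step the numerator should be $\cosh^2 a + \cosh^2 b + \cosh^2 c + 2\cosh a\cosh b\cosh c - 1$, i.e.\ the negative of what you wrote; substituting the cosine law into $\sinh^2 A = \cosh^2 A - 1$ gives a positive quantity for real positive edge lengths. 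Neither error affects the symmetry argument or the ultimate conclusion, but both would need to be repaired before the ``mechanical'' computation you defer could actually be carried through.
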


\begin{lemma}[Cosine  laws for right angled pentagons] \label{penta trig}
Let $a, b,c,d,e$ be the complex edge lengths of a right angled pentagon in $\mathbb{H}^3$, in cyclic order as in Figure \ref{F:Pent}. Then 
$$\cosh e = \sinh b \sinh c = \coth a \coth d.$$
\begin{figure}[ht!]
\includegraphics[width=0.4\linewidth]{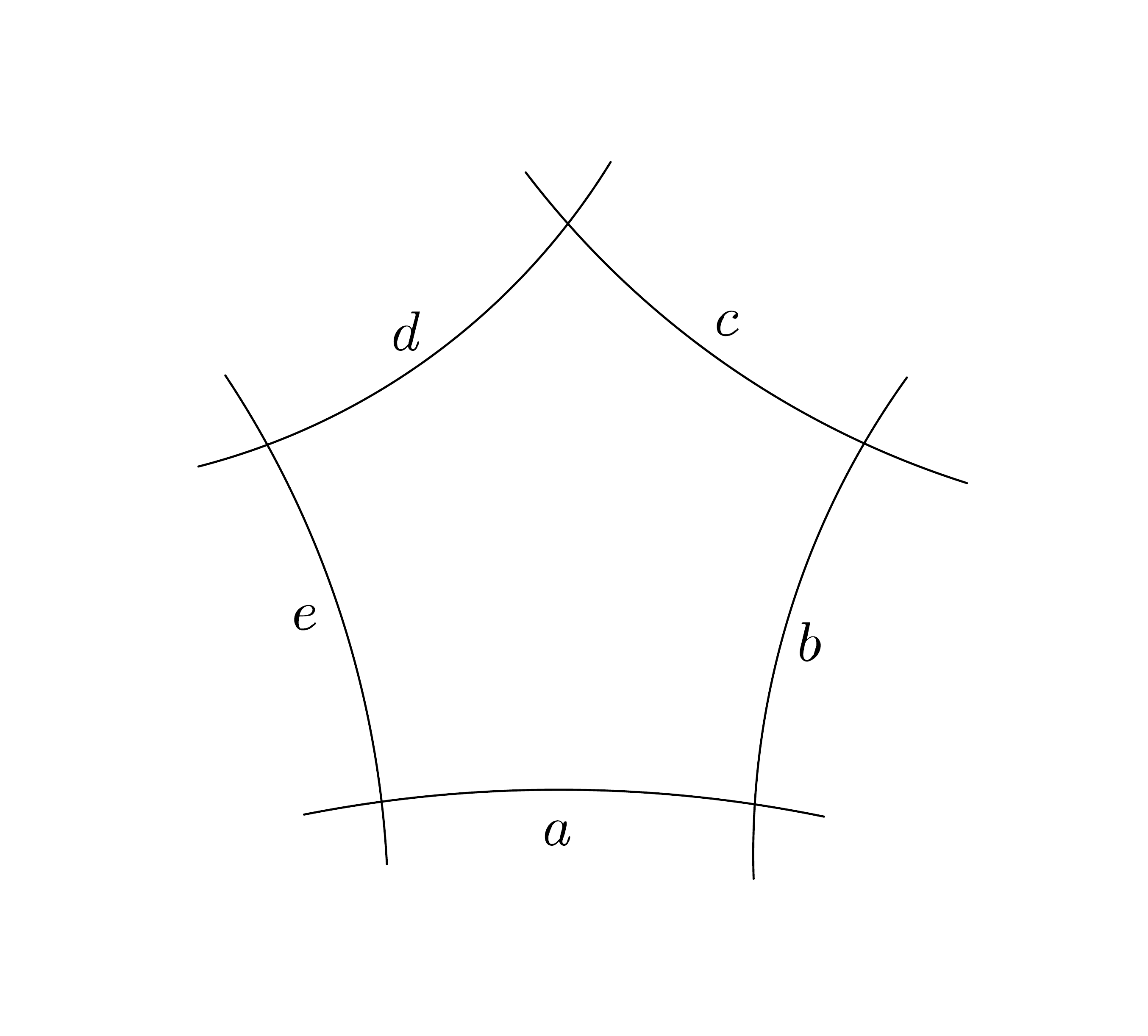}
\caption{}
\label{F:Pent}
\end{figure}
\end{lemma}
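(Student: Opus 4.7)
The plan is to prove both identities first for real edge lengths (corresponding to planar right-angled pentagons in $\mathbb{H}^2$) and then extend to complex edge lengths in $\mathbb{H}^3$ by analytic continuation. This is the standard Fenchel-style approach to complex hyperbolic trigonometry, and it avoids having to set up the full machinery of complex distances between skew geodesics from scratch.

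Step 1 (Real case in $\mathbb{H}^2$): Consider a right-angled pentagon $P_1 P_2 P_3 P_4 P_5$ with $a = |P_5 P_1|$, $b = |P_1 P_2|$, $c = |P_2 P_3|$, $d = |P_3 P_4|$, $e = |P_4 P_5|$, and right angles at each vertex. Drop the perpendicular from $P_1$ to the opposite side (the edge of length $e$, extended if necessary), call its foot $Q$, and decompose the pentagon into a Lambert (trirectangular) quadrilateral $P_1 Q P_4 P_5$ together with another right-angled polygon containing $P_1, P_2, P_3, Q, P_4$. The standard identity for Lambert quadrilaterals (namely $\cosh(\text{side}) = \sinh(\text{leg}) \sinh(\text{leg})$ for opposite sides, and the corresponding $\coth\coth$ identity for the other pair) then gives $\cosh e = \sinh b \sinh c$, and the same identity applied to the dual decomposition (or applied after cyclic relabeling $a \leftrightarrow d$, $b \leftrightarrow c$) yields $\cosh e = \coth a \coth d$.

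Step 2 (Extension to $\mathbb{H}^3$): Using Fenchel's formalism, the complex edge lengths of a right-angled skew pentagon in $\mathbb{H}^3$ are defined so that the trigonometric identities are analytic functions of these complex parameters. The space of right-angled pentagons up to isometry is parametrized by (a connected open subset of) $\mathbb{C}^k$ for suitable $k$, and restricting to real parameters recovers the $\mathbb{H}^2$ case proved in Step 1. Since both sides of each claimed identity are holomorphic functions that agree on the real locus, they agree on the entire connected complex parameter space by the identity theorem.

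The main obstacle is verifying that the complex-length formalism really does make each side of the identity into a well-defined holomorphic function on the moduli space of right-angled pentagons in $\mathbb{H}^3$ — i.e., that our $\cosh$'s and $\sinh$'s of complex lengths are computed consistently with the $\mathrm{PSL}_2(\mathbb{C})$ action. The cleanest way to handle this is to associate to each edge $g_i$ the half-turn $H_{g_i} \in \mathrm{PSL}_2(\mathbb{C})$ and note that products like $H_{g_{i+1}} H_{g_i}$ encode the complex edge length as a trace. Once this bookkeeping is in place, the holomorphicity of the identities in the complex edge parameters is automatic, and Step 2 goes through. (Alternatively one can cite Fenchel's book directly, as the paper already does.)
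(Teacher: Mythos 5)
The paper does not actually prove this lemma; it simply observes, just after the statement of Lemma \ref{hex trig}, that it is identities (2) and (5) of Section VI.2 of Fenchel's book. Your proposal instead sketches a derivation, which is a reasonable alternative, and you acknowledge the citation option at the end.

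Step 1, however, fails as written. The perpendicular from $P_1$ to the geodesic through $e = P_4 P_5$ has foot $P_5$ itself: the pentagon already has a right angle at $P_5$ between $e$ and $a = P_5 P_1$, so the side $a$ \emph{is} that perpendicular. Thus $Q = P_5$, the ``quadrilateral $P_1 Q P_4 P_5$'' degenerates (its three vertices $Q, P_4, P_5$ are collinear), and no decomposition is produced --- consistently with this, the ``other'' piece you describe, with vertices $P_1, P_2, P_3, Q, P_4$, is just the pentagon again. The altitude you want is the one from $P_2$, the common vertex of $b$ and $c$; its foot $Q$ lies in the interior of $e$, and cutting along $P_2Q$ yields two Lambert quadrilaterals $P_1 P_2 Q P_5$ and $P_2 P_3 P_4 Q$ whose acute angles at $P_2$ sum to $\pi/2$. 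Even with that fix, passing from the two sets of Lambert identities (together with $e = e_1 + e_2$ and the angle constraint) to $\cosh e = \sinh b \sinh c$ and to $\cosh e = \coth a \coth d$ is a genuine computation, not a single application of one identity as your sketch suggests. Step 2 is the right idea --- complex edge lengths via half-turns, then analytic continuation off the real locus --- but setting that up carefully is exactly the content of the cited chapter of Fenchel, so there is little gained over citing it directly as the paper does.
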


\bibliography{mybib}{}

\def\cprime{$'$}
\providecommand{\bysame}{\leavevmode\hbox to3em{\hrulefill}\thinspace}
\providecommand{\MR}{\relax\ifhmode\unskip\space\fi MR }
\providecommand{\MRhref}[2]{%
  \href{http://www.ams.org/mathscinet-getitem?mr=#1}{#2}
}
\providecommand{\href}[2]{#2}
\begin{thebibliography}{MMO14}

\bibitem[Ago13]{agol}
Ian Agol, \emph{The virtual {H}aken conjecture}, Doc. Math. \textbf{18} (2013),
  1045--1087, With an appendix by Agol, Daniel Groves, and Jason Manning.

\bibitem[Ahl06]{ahlfors}
Lars~V. Ahlfors, \emph{Lectures on quasiconformal mappings}, second ed.,
  University Lecture Series, vol.~38, American Mathematical Society,
  Providence, RI, 2006, With supplemental chapters by C. J. Earle, I. Kra, M.
  Shishikura and J. H. Hubbard.

\bibitem[BC15]{baker-cooper}
Mark~D. Baker and Daryl Cooper, \emph{Finite-volume hyperbolic 3-manifolds
  contain immersed quasi-{F}uchsian surfaces}, Algebr. Geom. Topol. \textbf{15}
  (2015), no.~2, 1199--1228.

\bibitem[BH99]{BH}
Martin~R. Bridson and Andr\'e Haefliger, \emph{Metric spaces of non-positive
  curvature}, Grundlehren der Mathematischen Wissenschaften [Fundamental
  Principles of Mathematical Sciences], vol. 319, Springer-Verlag, Berlin,
  1999.

\bibitem[BP92]{BenedettiPetronio}
Riccardo Benedetti and Carlo Petronio, \emph{Lectures on hyperbolic geometry},
  Universitext, Springer-Verlag, Berlin, 1992.

\bibitem[CDP90]{CDP}
M.~Coornaert, T.~Delzant, and A.~Papadopoulos, \emph{G\'eom\'etrie et th\'eorie
  des groupes}, Lecture Notes in Mathematics, vol. 1441, Springer-Verlag,
  Berlin, 1990, Les groupes hyperboliques de Gromov. [Gromov hyperbolic
  groups], With an English summary.

\bibitem[CF]{cooper-futer}
Daryl Cooper and David Futer, \emph{Ubiquitous quasi-fuchsian surfaces in
  cusped hyperbolic 3-manifolds}, preprint, 1705.02890 (2017).

\bibitem[DE86]{douady1986conformally}
Adrien Douady and Clifford~J Earle, \emph{Conformally natural extension of
  homeomorphisms of the circle}, Acta Mathematica \textbf{157} (1986), no.~1,
  23--48.

\bibitem[DHM]{delp-hoffoss-manning}
Kelly Delp, Diane Hoffoss, and Jason Manning, \emph{Problems in groups,
  geometry, and three-manifolds}, preprint, 1512.04620 (2015).

\bibitem[Fen89]{fenchel}
Werner Fenchel, \emph{Elementary geometry in hyperbolic space}, de Gruyter
  Studies in Mathematics, vol.~11, Walter de Gruyter \& Co., Berlin, 1989, With
  an editorial by Heinz Bauer.

\bibitem[FM02]{MCG}
Benson Farb and Lee Mosher, \emph{Convex cocompact subgroups of mapping class
  groups}, Geom. Topol. \textbf{6} (2002), 91--152 (electronic).

\bibitem[GM]{groves-manning}
Daniel Groves and Jason~Fox Manning, \emph{Quasiconvexity and dehn filling},
  preprint, arXiv:1708.07968 (2017).

\bibitem[Ham]{hamenstadt:extensions}
Ursula Hamenst\"adt, \emph{Word hyperbolic extensions of surface groups},
  preprint, arXiv: 0505244 (2005).

\bibitem[Ham15]{hamenstadt}
\bysame, \emph{Incompressible surfaces in rank one locally symmetric spaces},
  Geom. Funct. Anal. \textbf{25} (2015), no.~3, 815--859.

\bibitem[HHM99]{howards1999isoperimetric}
Hugh Howards, Michael Hutchings, and Frank Morgan, \emph{The isoperimetric
  problem on surfaces}, The American mathematical monthly \textbf{106} (1999),
  no.~5, 430--439.

\bibitem[KLM]{kahn-labourie-mozes}
Jeremy Kahn, Fran\c{c}ois Labourie, and Shahar Mozes, \emph{Surface groups in
  uniform lattices of some semi-simple groups}, preprint, arXiv: 1805.10189
  (2018).

\bibitem[KM12]{KM:Immersing}
Jeremy Kahn and Vladimir Markovic, \emph{Immersing almost geodesic surfaces in
  a closed hyperbolic three manifold}, Ann. of Math. (2) \textbf{175} (2012),
  no.~3, 1127--1190.

\bibitem[KM15]{KM:Ehrenpreis}
\bysame, \emph{The good pants homology and the {E}hrenpreis {C}onjecture}, Ann.
  of Math. (2) \textbf{182} (2015), no.~1, 1--72.

\bibitem[Kou92]{kour2}
Christos Kourouniotis, \emph{The geometry of bending quasi-{F}uchsian groups},
  Discrete groups and geometry ({B}irmingham, 1991), London Math. Soc. Lecture
  Note Ser., vol. 173, Cambridge Univ. Press, Cambridge, 1992, pp.~148--164.

\bibitem[KW]{KW:counting}
Jeremy Kahn and Alex Wright, \emph{Counting connections in a locally symmetric
  space}, \url{http://www.math.brown.edu/~kahn/Connections.pdf}.

\bibitem[MMO14]{MargulisMohammadiOh}
Gregory Margulis, Amir Mohammadi, and Hee Oh, \emph{Closed geodesics and
  holonomies for {K}leinian manifolds}, Geom. Funct. Anal. \textbf{24} (2014),
  no.~5, 1608--1636. \MR{3261636}

\bibitem[MZ]{masters-zhang-2009}
Joseph Masters and Xingru Zhang, \emph{Quasi-fuchsian surfaces in hyperbolic
  link complements}, preprint, arXiv: 0909.4501 (2009).

\bibitem[MZ08]{masters-zhang-2008}
Joseph~D. Masters and Xingru Zhang, \emph{Closed quasi-{F}uchsian surfaces in
  hyperbolic knot complements}, Geom. Topol. \textbf{12} (2008), no.~4,
  2095--2171.

\bibitem[PP86]{ParryPollicott}
William Parry and Mark Pollicott, \emph{The {C}hebotarov theorem for {G}alois
  coverings of {A}xiom {A} flows}, Ergodic Theory Dynam. Systems \textbf{6}
  (1986), no.~1, 133--148.

\bibitem[Rei06]{reid}
Alan~W. Reid, \emph{Surface subgroups of mapping class groups}, Problems on
  mapping class groups and related topics, Proc. Sympos. Pure Math., vol.~74,
  Amer. Math. Soc., Providence, RI, 2006, pp.~257--268.

\bibitem[SW99]{SarnakWakayama}
Peter Sarnak and Masato Wakayama, \emph{Equidistribution of holonomy about
  closed geodesics}, Duke Math. J. \textbf{100} (1999), no.~1, 1--57.

\bibitem[TW98]{tam1998quasiconformal}
Luen-Fai Tam and Tom Y-H Wan, \emph{On quasiconformal harmonic maps}, pacific
  journal of mathematics \textbf{182} (1998), no.~2, 359--383.

\bibitem[Wis]{wise}
Daniel Wise, \emph{The structure of groups with a quasiconvex hierarchy},
  preprint.

\end{thebibliography}
\bibliographystyle{amsalpha}

\end{document}